\pdfoutput=1
\documentclass[11pt]{article}
\usepackage{amsmath,amssymb,amsthm,mathtools}
\usepackage{bbm}

\usepackage[svgnames]{xcolor}
\usepackage[
backend=biber,
natbib=true,
style=alphabetic,
maxbibnames=15,
maxalphanames=10,
minalphanames=6,
doi=false,
isbn=false,
url=false,
eprint=false,
backref=true,
]{biblatex}
\usepackage[colorlinks=true,
            citecolor=blue,
            linkcolor=magenta,
            urlcolor=magenta]{hyperref}
\usepackage[capitalize,noabbrev]{cleveref}
\usepackage[a4paper,margin=1in]{geometry}
\usepackage{booktabs}
\usepackage{microtype}
\usepackage{bm}
\usepackage{url}
\usepackage{enumitem}
\usepackage{thmtools}

\newtheorem{theorem}{Theorem}[section]
\newtheorem*{theorem*}{Theorem}
\newtheorem{lemma}[theorem]{Lemma}

\newtheorem{proposition}[theorem]{Proposition}
\newtheorem{corollary}[theorem]{Corollary}
\newtheorem{fact}[theorem]{Fact}
\newtheorem{definition}[theorem]{Definition}

\newtheorem{question}{Question}

\newtheorem{remark}[theorem]{Remark}

\newcommand{\eps}{\varepsilon}
\newcommand{\dtv}{d_\mathrm{TV}} 
\newcommand{\dhel}{d_\mathrm{hel}}
\newcommand{\norm}[1]{\left\|#1\right\|}

\newcommand{\TV}{\mathrm{TV}}
\newcommand{\Hub}{\mathrm{Hub}}
\newcommand{\Adv}{\mathrm{Adv}}
\newcommand{\Sub}{\mathrm{Sub}}
\newcommand{\prob}{\mathbb P}

\newcommand{\ptv}{p^{*}_{\TV}}
\newcommand{\qtv}{q^{*}_{\TV}}
\newcommand{\phub}{p^{*}_{\Hub}}
\newcommand{\qhub}{q^{*}_{\Hub}}
\newcommand{\psub}{p^{*}_{\Sub}}
\newcommand{\qsub}{q^{*}_{\Sub}}

\newcommand{\cl}{c'}                     %
\newcommand{\cu}{c''}                    %

\newcommand{\clHub}{\cl_{\Hub}}
\newcommand{\cuHub}{\cu_{\Hub}}
\newcommand{\clTV}{\cl_{\TV}}
\newcommand{\cuTV}{\cu_{\TV}}

\newcommand{\epsTVup}[1]{\eps_{\TV}^{\mathrm{up}}\!\left(#1\right)}
\newcommand{\epsTVlow}[1]{\eps_{\TV}^{\mathrm{low}}\!\left(#1\right)}
\newcommand{\epsHubup}[1]{\eps_{\Hub}^{\mathrm{up}}\!\left(#1\right)}
\newcommand{\epsHublow}[1]{\eps_{\Hub}^{\mathrm{low}}\!\left(#1\right)}

\newcommand{\nsHub}{n^*_{\Hub}}
\newcommand{\nsTV}{n^*_{\TV}}

\newcommand{\nsSub}{n^*_{\Sub}}

\newcommand{\ns}{n^*}

\newcommand{\nsAHub}{n^*_{\mathrm{A\!-\!\Hub}}}
\newcommand{\nsASub}{n^*_{\mathrm{A}\!-\!\Sub}}
\newcommand{\nsATV}{n^*_{\mathrm{A}\!-\!\TV}}

\newcommand{\ptvpr}{\ptv{}'}   %
\newcommand{\qtvpr}{\qtv{}'}   %
\newcommand{\Ber}{\mathrm{Ber}}

\newcommand{\Hsq}{H^{2}}
\newcommand{\E}{\mathbb{E}}
\newcommand{\Var}{\operatorname{Var}}
\newcommand{\hstat}{h}
\newcommand{\olh}{\overline{h}_n}

\newcommand{\cA}{\mathcal{A}}

\newcommand{\cP}{\mathcal{P}}
\newcommand{\cQ}{\mathcal{Q}}

\newcommand{\cX}{\mathcal{X}}
\newcommand{\cY}{\mathcal{Y}}

\title{On the Sample Complexity of Robust Binary Hypothesis Testing}

\author{
Shankar Vallinayagam$^{1}$ \and
Ankit Pensia$^{2}$ \and
Varun Jog$^{1}$
}

\date{
$^{1}$Department of Pure Mathematics and Mathematical Statistics, University of Cambridge \\
$^{2}$Department of Statistics, Carnegie Mellon University %
}

\begin{document}
\maketitle

\begin{abstract}
We study the sample complexity of robust binary hypothesis testing under three standard contamination models: $\eps$-additive (Huber), $\eps$-subtractive, and $\eps$-total variation (TV), denoted by $\nsHub(\eps)$, $\nsSub(\eps)$, and $\nsTV(\eps)$, respectively. For subtractive contamination, we show that least favourable distributions exist and provide explicit formulas for the same, bringing this model in line with the classical Huber and TV models.  Next we show that in all three models, sample complexity may be highly unstable in the contamination parameter $\eps$, increasing by polynomial factors even for $o(\eps)$ perturbations. Similarly, there may be polynomial factor gaps between the sample complexities when $\eps$ is known exactly versus when it is known up to $o(\eps)$ error. 
Despite the instability of the sample complexity in all models, we show that the sample complexities across models are comparable up to constant-factor rescaling of $\eps$. Specifically, for any fixed $\delta_0>0$, the following hold for all distributions $p$ and $q$:
(i) $\nsHub(\eps) \lesssim \nsTV(\eps) \lesssim \nsHub(2\eps)$,
(ii) $\nsSub(\eps) \lesssim \nsTV(\eps) \lesssim \nsSub((2+\delta_0)\eps)$, and
(iii) $\nsSub(\eps) \lesssim \nsHub(\eps) \lesssim \nsSub((1+\delta_0)\eps)$,
and the scaling constants are tight. Finally, we extend our results to adaptive versions of the contamination models.
\end{abstract}

\tableofcontents

\section{Introduction}\label{sec: intro}

A central problem in statistics is simple binary hypothesis testing. Given i.i.d.\ samples drawn from either $p$ (hypothesis 0) or $q$ (hypothesis 1), the goal is to decide which distribution generated the data while optimising a chosen performance criterion, such as the error probability, asymptotic error rates, or sample complexity. In practice, however, the assumption that samples are drawn exactly from $p$ or $q$ is often unrealistic: the underlying distributions may only be approximately known, and the observed data may be corrupted or contain missing entries. This motivates the study of \emph{robust binary hypothesis testing}, a classical topic in robust statistics, where the i.i.d.\ assumption is replaced by models that explicitly account for data contamination.

The earliest work in this area, due to Huber~\citep{Hub65}, considers a setting where (under hypothesis 0) samples are not drawn exactly from $p$, but instead from an unknown distribution $p'$ lying in a neighbourhood of $p$, referred to as an uncertainty set. \citet{Hub65} introduced two canonical contamination models, depending on the structure of this uncertainty set: \emph{Huber contamination} (or additive contamination) and \emph{total variation (TV) contamination} (or general contamination).

The classical likelihood ratio test between $p$ and $q$, which is optimal when there is no data contamination, may be grossly suboptimal when samples are contaminated. Intuitively, a robust test should limit the impact of ``outlier'' samples; i.e., samples where the likelihood ratio tends to the extremes. One way to do so is to ``clip'' the likelihood ratio so that values above an upper threshold are rounded down to the threshold, and values below a lower threshold are rounded up to the threshold. For Huber and TV contamination, \citet{Hub65} proved a remarkable result that if the upper and lower thresholds are chosen carefully depending on $p$, $q$, and the contamination level $\eps$, the resulting ``clipped likelihood ratio'' test is \emph{minimax optimal} for the usual testing criteria. Moreover, the clipped likelihood ratio test corresponds to the usual likelihood ratio test between a pair of \emph{least-favourable distributions} (LFDs) $p^*$ and $q^*$ in their respective uncertainty sets. In effect, the results in \citet{Hub65} reduce the robust hypothesis testing problem to the \emph{simple} hypothesis testing between $p^*$ and $q^*$; i.e., the optimal achievable errors or sample complexities for the robust problem are the same as those for the simple binary hypothesis testing problem.

In this paper, our goal is to study the \emph{sample complexity} of robust binary hypothesis testing. Sample complexity is the smallest number of samples needed to make the sum of the (worst-case) type-I and (worst-case) type-II error below a constant, say 1/10. The preceding discussion might suggest that this is essentially well-understood, at least for Huber and TV contamination, as the sample complexity of the simple binary hypothesis testing between $p^*$ and $q^*$ is well known to be $\Theta(1/\dhel^2(p^*, q^*))$. 

Unfortunately, this expression is difficult to interpret in terms of $p$, $q$, and $\eps$ because the LFD-formulas in \cite{Hub65} are expressed in terms of the lower and upper clipping thresholds, which themselves are defined implicitly as solutions to fixed-point equations. As a result, the LFD perspective does not yield satisfactory answers to some natural questions of interest, such as:

\begin{question}
\label{ques:1}
How does the sample complexity depend on $\eps$?
\end{question}

The point of interest here is whether the sample complexity, denoted by $\ns(\eps)$, depends smoothly on $\eps$, or whether small changes in $\eps$ can cause large jumps. There is no known formula for the sample complexity of robust hypothesis testing directly in terms of $p$, $q$, and $\eps$. One reason for studying \Cref{ques:1} is to gain insight into what such a formula might look like, if it were to exist. Intuitively, if the sample complexity is highly unstable in $\eps$, then it seems unlikely that there is a simple expression for it in terms of an easy to interpret divergence between $p$ and $q$. Another reason to study \Cref{ques:1} is the possibility of ``black-box'' robustness amplification mechanisms. Could a test that is robust to $\eps$-contamination be modified into one that is robust to $1.01\eps$ contamination at the cost of a constant factor enlargement in the sample complexity? If the sample complexity turns out to be highly unstable in $\eps$, we can refute the existence of any such amplification mechanism for robust hypothesis testing. %

\Cref{ques:1} is also closely related to problem settings where the exact value of contamination is unknown, but is known to lie in an interval $[\eps_1, \eps_2]$. If the contamination was known to be exactly $\eps_1$, we would only need to draw $\ns(\eps_1)$ samples. When the contamination lies in a range, it is intuitive that we should use the clipped likelihood ratio test calibrated for $\eps_2$ contamination and draw $\ns(\eps_2)$ samples. For all the models considered in this paper, this intuition is correct. If $\ns(\eps_2)$ is significantly larger than $\ns(\eps_1)$ despite $\eps_2 = \eps_1 + o(\eps_1)$, this indicates the cost (in terms of increased sample complexity) of not knowing the contamination exactly can be very high. One may also consider the reverse situation where we underestimate contamination; i.e., the contamination is thought be at most $\eps_1$ but is, in fact, $\eps_2 > \eps_1$. In this case, we may ask whether the clipped likelihood ratio test calibrated to $\eps_1$ continues to work when $\eps_2 = \eps_1 + o(\eps_1)$. 

\Cref{ques:1} pertains to fixing a contamination model and analysing the sample complexity as the level of contamination changes. In a different direction, we may also seek to analyse relationships between the different contamination models themselves. Specifically, we consider the following question:

\begin{question}
\label{ques:2}
How do different contamination models compare in terms of sample complexity?
\end{question}
For the same contamination level $\eps$, TV contamination is strictly stronger than Huber contamination, and hence the corresponding sample complexity can only be larger. But how much larger? If the Huber contamination is slightly more, say $1.01\eps$, would it now be stronger than $\eps$-TV contamination? Such comparisons make sense for any pair of contamination models. An important example is \emph{subtractive contamination}, which is a natural third model that complements the additive (Huber) and general (TV) contamination models. A related comparison asks whether allowing the adversary to act adaptively, after seeing the full dataset, changes the sample complexity. Adaptive contamination is typically stronger than non-adaptive (or oblivious) contamination, but how much larger is the sample complexity with adaptive contamination compared to oblivious contamination? 

The final question we study concerns connections between the sample complexity of robust hypothesis testing and differentially private hypothesis testing. Recent results show that, up to constant factor changes in the privacy and robustness parameters, and the sample complexity, differentially private algorithms and robust algorithms are equivalent for binary hypothesis testing~\cite{DwoLei09,AsiUZ23,HopKMN23}. Thus, the sample complexity of robust hypothesis testing may be studied through the lens of privacy as well. As noted in contribution (ii) below, the sample complexity of robust hypothesis testing may be highly unstable in the contamination level. Interestingly, the sample complexity of private hypothesis testing is always stable in the privacy parameter. This leads to an apparent paradox: despite robustness and privacy being equivalent, the sample complexity has polynomial jumps in one setting but not the other. How to resolve this paradox?

\subsection{Our contributions} We summarise our main contributions below.\\

(i) \textbf{Least favourable distributions for subtractive contamination:} Our first contribution is Theorem~\ref{prop: lfd_sub_formulas}, where we bring subtractive contamination in line with the classical Huber and TV contamination models. Specifically, we show that least favourable distributions exist for the hypothesis testing problem with subtractive contamination and we find explicit formulas for these. Because subtractive contamination is a natural counterpart to additive and general contamination, it is surprising that the corresponding LFD theory was not developed earlier, especially given that the theory for the latter two models dates back more than 60 years~\citep{Hub65}.\\

(ii) \textbf{Sample complexity jumps:} Next, we study \Cref{ques:1} in detail for the oblivious (non-adaptive) Huber, subtractive, and TV contamination models. Our main observation is that in all three models, the sample complexity can exhibit big jumps with small changes in $\eps$. Informally, we show the following:

\begin{theorem*}[Theorem~\ref{thm:poly-jumps}, informal]
For each contamination model $A \in \{\Hub,\TV,\Sub\}$, the robust sample complexity
$\ns_A(\eps)$ can be highly unstable in $\eps$.
In particular, there exist distributions $p,q$
and contamination levels $\eps_2=\eps_1 + o(\eps_1)$ such that
\begin{align*}
\ns_A(\eps_1) \ll \ns_A(\eps_2).
\end{align*}
In fact, the same distribution pair $(p,q)$ works in all three models.
\end{theorem*}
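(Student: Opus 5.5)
We will exhibit one explicit pair $(p,q)$ and two nearby contamination levels, and in each model compute $\ns_A$ at both levels via Hellinger distances between least favourable distributions. The guiding idea is that the sample complexity is governed by $1/\dhel^2(p^*,q^*)$, where the LFDs are clipped versions of $p$ and $q$. Small changes in $\eps$ can move the clipping thresholds across a region of the likelihood ratio that carries a large share of the Hellinger affinity. So we design $(p,q)$ whose likelihood ratio has a ``cliff'': a small mass at which $p$ and $q$ are very different, while everywhere else they are nearly identical.

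\textbf{Construction.} Take a three-point space $\{a,b,c\}$. On $a$ and $b$, $p$ and $q$ are close: $p(a)=q(b)=\tfrac12-\eta$ and $p(b)=q(a)=\tfrac12-\eta+\gamma$ (or a similar near-symmetric choice), with $\gamma$ tiny. The contribution of $\{a,b\}$ to $\dhel^2$ is then of order $\gamma^2$. The point $c$ carries the distinguishing mass, say $p(c)=0$ and $q(c)=\beta$, with normalisation adjusted.

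\textbf{Level $\eps_1$.} For $\eps_1$ slightly below $\beta$ (e.g. $\eps_1=\beta-\beta^{3/2}$), the adversary cannot fully erase the discrepancy at $c$. A residual mass $\beta-\eps_1$ remains, and it contributes roughly $\beta-\eps_1$ to the Hellinger distance between the LFDs. Hence $\ns(\eps_1)\lesssim 1/(\beta-\eps_1)$, provided the contamination budget left over for the $\{a,b\}$ part does not cancel this residual.

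\textbf{Level $\eps_2$.} For $\eps_2=\beta+o(\beta)$, the adversary can match $p$ and $q$ at $c$ exactly and spend the tiny excess partly on $\{a,b\}$. The Hellinger distance is then at most about $\gamma^2$ plus lower-order terms, so $\ns(\eps_2)\gtrsim 1/\gamma^2$.

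Choosing $\gamma^2\ll\beta-\eps_1$, e.g. $\gamma=\beta$ and $\beta-\eps_1=\beta^{3/2}$, gives $\ns(\eps_1)\lesssim\beta^{-3/2}\ll\beta^{-2}\lesssim\ns(\eps_2)$, which is a polynomial gap with $\eps_2-\eps_1=o(\eps_1)$.

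\textbf{Model-specific bounds.} Each model needs a lower and an upper bound, and these need different tools.

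For the lower bound on $\ns_A(\eps_2)$, no LFD theory is needed. It suffices to exhibit explicit $p'\in$ the $\eps_2$-uncertainty set of $p$ and $q'$ in that of $q$ with $\dhel^2(p',q')\lesssim\gamma^2$. The simple-testing lower bound then applies.
\begin{itemize}
\item For Huber, add mass at $c$ to $p$, i.e. $p'=(1-\eps_2)p+\eps_2\delta_c$, and adjust $q'$ similarly by mixing in a point mass on $a$ or $b$.
\item For subtractive, remove mass from $q$ at $c$.
\item For TV, move mass from $c$ to $\{a,b\}$.
\end{itemize}

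For the upper bound on $\ns_A(\eps_1)$, we use the clipped likelihood ratio test, i.e. the LFDs: Huber's classical ones for Huber and TV, and Theorem~\ref{prop: lfd_sub_formulas} for subtractive. We show that the LFDs still disagree on $c$ by $\Omega(\beta-\eps_1)$ in mass. A simpler alternative avoids the LFD formulas: test the indicator of $c$ by counting occurrences of $c$. Under any contaminated $p'$ in the $\eps_1$-set, the frequency of $c$ is at most about $\eps_1$ (or $\eps_1/(1-\eps_1)$). Under any contaminated $q'$, it is at least about $\beta-\eps_1$ (rescaled appropriately).

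With this simpler test, the constants must be tuned so that the two frequency bands separate by a gap of order $\beta^{3/2}$. A Bernoulli test distinguishing frequencies $f$ and $f+\Delta$ needs about $f/\Delta^2$ samples. With $f\approx\beta$ this is $\beta/\beta^3=\beta^{-2}$, which fails to beat the lower bound.

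The main obstacle is therefore calibrating the parameters so that the upper bound at $\eps_1$ really is polynomially smaller, since the counting test's variance is of order $\beta$. To fix this, I would make the atom at $c$ behave as in the zero-versus-positive regime. Give $p$ large mass near $c$'s role, and let the contamination subtract or clip all of it on one side only, so the relevant frequency under one hypothesis is $0$ and the cost is $1/\Delta$ rather than $f/\Delta^2$.

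If that is awkward, rescale so that the residual gap dominates. Take $\eps_1=\beta/2$ so that $\ns(\eps_1)\asymp 1/\beta$, and place the threshold behaviour at a second atom where $\eps_2=\eps_1(1+o(1))$ just suffices to erase the difference. This requires two-scale mass placement, tuning a second atom $d$ with $q(d)-p(d)$ calibrated so that it is erased exactly at $\eps_1(1+o(1))$. I would verify this via the explicit LFD thresholds in each of the three models.
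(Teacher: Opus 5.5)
Your construction matches the paper's in spirit: it is the paper's example with $p$ and $q$ swapped. One atom carries irrefutable evidence and is exactly neutralised at $\eps_2$, and an $O(\beta)$ discrepancy on the other two atoms contributes $\Theta(\beta^2)$. Splitting the argument into a two-point lower bound at $\eps_2$ and an LFD upper bound at $\eps_1$ is also fine. The gap is your claim that at $\eps_1=\eps_2-\delta$ the residual at $c$ contributes $\asymp\delta$ to $\dhel^2$. That holds only for subtractive contamination, where $p(c)=0$ forces $p'(c)=0$ and the LFD has $q^*(c)\asymp\delta$. Under TV and Huber contamination the LFD likelihood ratio is confined to $[\cl,\cu]\subset(0,\infty)$, so near the neutralisation threshold both LFDs put mass $\Theta(\beta)$ at $c$. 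For TV, $p^*(c)=\eps_1$ and $q^*(c)=\beta-\eps_1$, and the threshold is $\beta/2$, not $\beta$. For Huber, $p^*(c)=\eps_1$ and $q^*(c)=(1-\eps_1)\beta$, with threshold $\beta/(1+\beta)$. The contribution is therefore of the form $(\sqrt{x+\delta'}-\sqrt{x-\delta'})^2\asymp\delta^2/\beta$ with $x\asymp\beta$ and $\delta'\asymp\delta$, not $\delta$. With your choice $\delta=\beta^{3/2}$ and $\gamma\asymp\beta$ this equals $\beta^2$, so $\ns(\eps_1)\asymp\ns(\eps_2)\asymp\beta^{-2}$ in the TV and Huber models: no jump. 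The $f/\Delta^2$ obstruction you found for the counting test is not a weakness of that test; it is the true Hellinger behaviour. The fix is to recalibrate with $\delta=\beta^{1+t}$, $0<t<1/2$. Then $\delta^2/\beta=\beta^{1+2t}\gg\beta^2$ and $\ns(\eps_1)\asymp\beta^{-(1+2t)}$. In the subtractive model the $\asymp\delta$ contribution allows any $0<t<1$. This is exactly how Theorem~\ref{thm:poly-jumps} is calibrated.

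Neither fallback remedy works for TV and Huber. The zero-versus-positive regime is impossible there, because no LFD atom can have zero mass on one side and positive mass on the other. The two-scale scheme with $\ns(\eps_1)\asymp 1/\eps_1$ is ruled out outright. By Lemma~\ref{lemma: tv_ball_at_p_eps}, $\dhel^2\le 2\dtv$ and the triangle inequality (as in the proof of Proposition~\ref{prop: no_jumps}), the minimal Hellinger distance between the uncertainty sets at level $\eps_2$ is at least the one at level $\eps_1$ minus $2\sqrt{2(\eps_2-\eps_1)}$. So if that distance squared is $\asymp\eps_1$ at $\eps_1$ and $\eps_2-\eps_1=o(\eps_1)$, the two sample complexities are comparable. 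More generally, any polynomial jump across a gap $\delta$ forces $\ns(\eps_1)\gtrsim 1/\delta$, which is why $\ns(\eps_1)$ must sit strictly between $1/\eps$ and $1/\eps^2$. Finally, the discrepancy on $\{a,b\}$ must be a sufficiently large multiple of $\beta$. The paper uses $10\eps$ and $8\eps$, with $\dtv(p,q)=10\eps$, so that the $\eps_2$-contamination neutralises only the special atom and the uncertainty sets stay separated. Your near-symmetric choice with $\gamma=\beta$ does not guarantee this. In TV, for instance, the same budget that neutralises $c$ also shifts $p$ and $q$ toward each other on the high-ratio atom.
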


On the positive side, we show that sufficiently small contamination, $\eps\lesssim \dhel^2(p,q)$, does not change the uncontaminated sample complexity, and that $O(\eps^2)$ perturbations of the contamination level can change the robust sample complexity by at most constant factors.

We also consider settings where the model is misspecified; i.e., contamination assumed to be $\eps$ but it is actually either $\eps - o(\eps)$ or $\eps + o(\eps)$. In all three models, we show that $o(\eps)$ overestimation of the true contamination may lead to a polynomial increase in the sample size. This is a direct consequence of Theorem~\ref{thm:poly-jumps} and monotonicity of the contamination models. In the opposite direction, $o(\eps)$ underestimation of the true contamination may lead to a complete breakdown of the test. This is shown in the following theorem, stated informally here:
\begin{theorem*}[Theorem~\ref{thm:underestimate_breakdown} (informal)]
For each contamination model $A\in\{\Hub,\TV,\Sub\}$, there
exist distributions $p,q$ and contamination levels $\eps_1<\eps_2$ with $\eps_1=\eps_2-o(\eps_2)$ such that the likelihood ratio test calibrated to
$\eps_1$-contamination fails under $\eps_2$-contamination. More precisely, the sum of the type-I and type-II errors of the $\eps_1$-calibrated test tends to $1$ as $n\to\infty$. In fact, the same distribution pair $(p,q)$ works in all three models.

\end{theorem*}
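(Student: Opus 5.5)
The plan is to build one explicit pair $(p,q)$ on a small finite alphabet and show that the clipped likelihood-ratio test tuned to $\eps_1$ has no separating power once the contamination is $\eps_2$. That test is the likelihood ratio test between the least favourable pair $p^*_{\eps_1},q^*_{\eps_1}$ (Huber's LFDs for Hub/TV, and Theorem~\ref{prop: lfd_sub_formulas} for Sub), so it is an average of i.i.d.\ terms $\hstat(X_i)=\log\bigl(q^*_{\eps_1}(X_i)/p^*_{\eps_1}(X_i)\bigr)$ compared to a threshold. By the law of large numbers the test can fail only if the adversary can push the mean of $\hstat$ under the contaminated null above the threshold, or the mean under the contaminated alternative below it.

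Concretely, I will show the following. If, with $\eps_2$-contamination, there exist $p'\in\cP_{\eps_2}(p)$ and $q'\in\cQ_{\eps_2}(q)$ with $\E_{p'}\hstat>\E_{q'}\hstat$, then any fixed threshold fails on one side. This requires care because the threshold may be chosen at $\E$-level midpoints. A cleaner route is to find a single distribution $r\in\cP_{\eps_2}(p)\cap\cQ_{\eps_2}(q)$. Then for every test the sum of worst-case errors is at least $1$, and in particular it tends to $1$ for the calibrated test. However, that route yields impossibility for all tests, which contradicts sample-complexity finiteness only if $\eps_2$ exceeds the breakdown point.

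I therefore want a pair where $\eps_2$ is below breakdown (so that some other test, calibrated at $\eps_2$, succeeds) but the $\eps_1$-calibrated statistic is fooled. To get this, I take the alphabet $\{a,b,c\}$. The point $a$ carries the bulk of the signal through a small likelihood-ratio difference. The rare points $b$ and $c$ have extreme likelihood ratios, and their masses are tuned, in terms of $\eps_1$, so that the $\eps_1$-clipping thresholds $\clTV,\cuTV$ land just at these ratios. The rare points then retain large clipped log-ratio weight in $\hstat$. The extra $\eps_2-\eps_1=o(\eps_2)$ of contamination is spent moving mass onto the rare point whose clipped log-ratio opposes the true hypothesis. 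Because each such point carries weight $|\hstat|\gg$ the per-sample separation from $a$, a tiny amount of mass reverses the sign of $\E_{p'}\hstat-\E_{q'}\hstat$. Here $p'$ and $q'$ are explicit perturbations of the LFDs $p^*_{\eps_1},q^*_{\eps_1}$, and adding $\eps_2-\eps_1$ mass to them keeps them in the $\eps_2$-balls for all three models (for Sub one removes mass instead, sending it away from the favourable point, which has the same effect after renormalisation). Once the means are reversed, Chebyshev's inequality gives type-I plus type-II error $\to1$ for any threshold, since one of the two error probabilities tends to $1$ and the other is at least $0$. Matching the threshold side is handled by choosing which hypothesis to corrupt.

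The main obstacle is the bookkeeping. The clipping thresholds are defined implicitly by fixed-point equations, so I will choose parameters that make them explicit (for instance, masses for which the clip points exactly equal the likelihood ratios at $b,c$), and then verify the mean reversal with a single $o(\eps_2)$ budget. The same pair has to work for Hub, TV and Sub simultaneously. For this I would reuse the distribution pair from Theorem~\ref{thm:poly-jumps}, checking the three LFD formulas in turn.
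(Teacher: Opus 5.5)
\textbf{There is a genuine gap.} Your argument rests on a mean reversal: some $p'$ and $q'$ in the $\eps_2$-sets with $\E_{p'}\hstat>\E_{q'}\hstat$. For the pair you propose to reuse (the one from Theorem~\ref{thm:poly-jumps}), this is false in all three models.

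Write $Z=-\hstat=\log(p^*_{\eps_1}/q^*_{\eps_1})$. The gap $\E_{p'}Z-\E_{q'}Z$ is linear in $(p',q')$, and over the $\eps_2$-sets it is minimised exactly at the $\eps_2$-LFDs $(P_2,Q_2)$. In TV this means moving $\eps$ mass from symbol $3$ to symbol $1$ under $p$, and from $1$ to $3$ under $q$. At this pair symbol $3$ is exactly neutralised, and the minimum is $8\eps\,(Z(2)-Z(1))=256\eps^2(1+o(1))>0$. The analogous computations give $256\eps^2(1+o(1))$ for Huber and for subtractive contamination.

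Your heuristic misses that the extra $\eps_2-\eps_1$ of contamination at the heavy point can at best cancel that point's own contribution to the gap. It cannot overshoot: for example, in TV $p'(3)\ge p(3)-\eps\ge q'(3)$. So the $\Theta(\eps^2)$ separation carried by symbols $1$ and $2$ survives.

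In TV this can be quantified. Beyond the $\eps_1$-LFDs, the extra budget moves each mean by at most $(\eps_2-\eps_1)\log(\cuTV(\eps_1)/\clTV(\eps_1))$. The clip equations give $\E_{p^*_{\eps_1}}Z-\E_{q^*_{\eps_1}}Z\ge\tfrac{\eps_1}{2}\log^2(\cuTV(\eps_1)/\clTV(\eps_1))$. Hence a reversal requires $\log(\cuTV(\eps_1)/\clTV(\eps_1))<4(\eps_2-\eps_1)/\eps_1=o(1)$, i.e.\ every clipped weight, including those at your rare points, must be $o(1)$. So "large clipped weight" is not what drives the failure.

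In fact, for the paper's pair, any threshold strictly between $\E_{Q_2}Z$ and $\E_{P_2}Z$ gives a test that succeeds under $\eps_2$-contamination. The any-threshold failure you aim for is therefore simply not true there. Separately, your $\{a,b,c\}$ design is not that pair, and you never exhibit parameters that work in all three models at once.

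\textbf{The missing idea} is that no reversal is needed, because the calibrated test has a fixed threshold. It is the likelihood ratio test between $p^*_{\eps_1}$ and $q^*_{\eps_1}$ for the sum-of-errors criterion: it declares $p$ iff $\sum_i\log\bigl(p^*_{\eps_1}(X_i)/q^*_{\eps_1}(X_i)\bigr)\ge 0$. It therefore suffices to find one $\eps_2$-contaminated distribution under which the mean of $Z$ has the wrong sign; the law of large numbers then sends that error to $1$.

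The paper takes $Q_2=q^*_{\eps_2}$ for TV and Huber, and $P_2=p^*_{\eps_2}$ for subtractive contamination. With $\delta=\eps^{1+t}$ and $\eta=\delta/\eps$, Taylor expansion gives:
\begin{itemize}
\item TV: $\E_{Q_2}Z=\tfrac23\eps\eta^3-128\eps^2+\dots>0$;
\item Huber: $\E_{Q_2}Z=\delta^2/(4\eps)-128\eps^2+\dots>0$;
\item subtractive: $\E_{P_2}Z=-\delta+128\eps^2+\dots<0$.
\end{itemize}
This is consistent with the positive gap above: in each model both means lie on the same side of $0$, so only one error tends to $1$.

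The TV case is delicate. The contributions of order $\eps\eta$ and $\eps\eta^2$ cancel, and the sign is decided by the cubic term, which is why $t<1/3$ is needed. Heuristic bookkeeping of the kind you propose would not detect this. Your closing remark about choosing which hypothesis to corrupt points in the right direction, but the argument as structured depends on the reversal.
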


We prove Theorem~\ref{thm:poly-jumps} and Theorem~\ref{thm:underestimate_breakdown} by analysing the sample complexity of robust hypothesis testing under each model for a carefully chosen pair of distributions on the support $\{1, 2, 3\}$. Despite its simplicity, this example captures several key phenomena and serves as a useful guide for understanding the behaviour of sample complexity across contamination models.\\

(iii) \textbf{Comparisons across contamination models:} We consider \Cref{ques:2} which asks about relationships between the sample complexities across the models. Our main result, summarised below, shows that up to scaling of $\eps$ by constants, the three types of contamination are essentially equally powerful.

\begin{theorem*}[Theorems~\ref{theorem: tv_hub},~\ref{theorem: sub_tv}, and~\ref{theorem: hub_sub} (informal)]
Denote the sample complexities under Huber, TV, and subtractive contamination at level $\eps$ by
$\nsHub(\eps)$, $\nsTV(\eps)$, and $\nsSub(\eps)$, respectively. Then the three contamination
models are equivalent up to constant-factor rescalings of $\eps$. Specifically, for any fixed
$\delta_0>0$, the following hold for all distributions $p$ and $q$:
\begin{itemize}
    \item[(a)] $\nsHub(\eps) \lesssim \nsTV(\eps) \lesssim \nsHub(2\eps)$,

    \item[(b)] $\nsSub(\eps) \lesssim \nsTV(\eps) \lesssim \nsSub((2+\delta_0)\eps)$,

    \item[(c)] $\nsSub(\eps) \lesssim \nsHub(\eps) \lesssim \nsSub((1+\delta_0)\eps)$.
\end{itemize}
The implied constants are universal in part~(a), and may depend on $\delta_0$ in parts~(b)
and~(c), but never on $p$, $q$, or $\eps$. Moreover, these rescalings of $\eps$ are essentially optimal: the factor $2$ in part~(a)
cannot be improved in general, and the slack $\delta_0>0$ in parts~(b) and~(c) cannot
in general be removed.
\end{theorem*}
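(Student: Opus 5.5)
The plan is to reduce every comparison to a containment between uncertainty sets, possibly up to a constant-factor loss in samples. Write $\cP_A(\eps)$ for the $\eps$-uncertainty set around $p$ under model $A\in\{\Hub,\TV,\Sub\}$, and similarly $\cQ_A(\eps)$ around $q$. Recall that $p'\in\cP_{\Hub}(\eps)$ means $p'=(1-\eps)p+\eps r$, that $p'\in\cP_{\Sub}(\eps)$ means $p'\ge 0$ with $(1-\eps)p'\le p$, and that $\cP_{\TV}(\eps)$ is the TV ball of radius $\eps$. The basic tool is monotonicity: if $\cP_A(\eps)\subseteq \cP_B(\eps')$ and $\cQ_A(\eps)\subseteq\cQ_B(\eps')$, then $\ns_A(\eps)\le \ns_B(\eps')$.

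\textbf{Lower halves.} These follow directly from monotonicity. Huber$(\eps)\subseteq$ TV$(\eps)$ gives $\nsHub(\eps)\le\nsTV(\eps)$. If $(1-\eps)p'\le p$, then $\dtv(p,p')\le\eps$, so Sub$(\eps)\subseteq$ TV$(\eps)$ and $\nsSub(\eps)\le\nsTV(\eps)$. For $\nsSub(\eps)\lesssim\nsHub(\eps)$ there is no containment, so I will use the LFDs instead. By the paper's earlier subtractive LFD theorem (\Cref{prop: lfd_sub_formulas}), $\nsSub(\eps)\asymp 1/\dhel^2(\psub,\qsub)$, and likewise $\nsHub(\eps)\asymp 1/\dhel^2(\phub,\qhub)$. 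The LFDs are clipped likelihood ratio families: Huber adds mass where the ratio is extreme, while subtractive removes mass there. I will compare the two Hellinger distances through their clipping thresholds. Alternatively, a Huber test can be simulated on subtractively contaminated data: a subtractive $p'$ satisfies $p = (1-\eps)p' + \eps r$, so $p'$ is one step "below" $p$ in the same sense that Huber moves above. The cleanest route I would try is to show directly that $\dhel^2(\psub,\qsub)\gtrsim \dhel^2(\phub,\qhub)$, using the fact that both are obtained from $(p,q)$ by clipping the likelihood ratio at comparable thresholds.

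\textbf{Upper halves.} These rest on decomposing TV contamination. Any $p'$ with $\dtv(p,p')\le\eps$ can be written as
\[
p' = p - (p-p')_+ + (p'-p)_+ .
\]
Take $m=(1-\eps)p+\eps r_1$ with $\eps r_1=(p'-p)_+$ plus a suitable share of $(p-p')_+$, so that $m$ is a Huber contamination of $p$ and $p'$ is a subtractive contamination of $m$. This TV$\subseteq$Sub$\circ$Hub picture gives the following:
\begin{itemize}
\item[(a)] Using $p'=(1-2\eps)p+2\eps r$ where possible, a TV-$\eps$ set contained in a mixture $\frac12 p' + \frac12$(something) shows that $\cP_{\TV}(\eps)\subseteq$ conv-type sets reachable by Huber $2\eps$. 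More precisely, I will show that the TV LFDs are dominated by the Huber-$2\eps$ LFDs in Hellinger distance, using Huber's explicit threshold formulas, namely $\dhel(\ptv,\qtv)\gtrsim\dhel(\phub[2\eps],\qhub[2\eps])$.
\item[(b), (c)] The strategy is to show that a Huber-$\eps$ perturbation, or a TV-$\eps$ perturbation, of $p$ is close to a subtractive $(1+\delta_0)\eps$, respectively $(2+\delta_0)\eps$, perturbation of $p$. The discrepancy is the mass added in regions where $p$ is tiny. The slack $\delta_0$ pays for ignoring those regions: whenever the added mass is not absorbable, i.e.\ $r\not\le Cp$, it lies on a set where $p$ has small mass. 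A tester can then discard samples in that set at constant-factor cost, or the relevant LFDs coincide after a $\delta_0\eps$ rescaling.
\end{itemize}

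\textbf{Main obstacle.} The hardest step is the upper halves of (b) and (c), because they need a genuinely non-containment argument. I would work at the level of LFDs and Hellinger distances, expressing all three LFD pairs via clipping thresholds $(\cl,\cu)$ and showing that the thresholds for Sub at $(1+\delta_0)\eps$ are more extreme than those for Hub at $\eps$. The constant would come from a bound of the form $\dhel^2(\text{clip}_{a})\asymp\dhel^2(\text{clip}_{b})$ for thresholds within a $\delta_0$-dependent factor.

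Tightness is not part of this plan. For it I would reuse the three-point example from \Cref{thm:poly-jumps}: at $2\eps-o(\eps)$ for (a), and at $(2+o(1))\eps$ and $(1+o(1))\eps$ for (b) and (c), the sample complexities jump polynomially, so the rescaling constants cannot be improved.
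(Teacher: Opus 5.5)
Your containment arguments for $\nsHub(\eps)\le\nsTV(\eps)$ and $\nsSub(\eps)\le\nsTV(\eps)$ are correct, and so is your plan to use the three-point example for optimality; both agree with the paper. The genuine gap is in the upper halves of (b) and (c), where neither of your mechanisms works.

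Comparing perturbations of $p$ side by side is the wrong comparison. The Huber LFD clips the ratio on $L$ by \emph{adding} mass to $p$, whereas the subtractive LFD clips it by \emph{deleting} mass from $q$. So the ``non-absorbable'' part is not a small correction, and discarding it is not cheap: for $p=(0,1)$, $q=(10\eps,1-10\eps)$ that set is $\{1\}$, and discarding it makes the hypotheses identical.

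The threshold plan fails as well. The subtractive Hellinger increases as the clips move away from $1$. At equal clips only $\dhel^2(\phub,\qhub)\gtrsim\dhel^2(\psub,\qsub)$ holds; at $i\in H$ the two contributions differ by a factor of about $r(i)/\cu$. So to get $\nsHub(\eps)\lesssim\nsSub((1+\delta_0)\eps)$ from clips, you would need the subtractive clips at $(1+\delta_0)\eps$ to be \emph{closer} to $1$ than the Huber clips at $\eps$. The ordering you state (further out) gives nothing, and the one you need fails for every constant $C$. If one point has $p(i)=2C\eps$, $q(i)=\delta$, and all other ratios lie in $[1/2,2]$, then $\cuHub(\eps)\le 2C$ while $\cu_{\Sub}(C\eps)\ge C\eps/\delta\to\infty$; this is the same phenomenon as Remark~\ref{rem: csub}. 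For the same reason, no comparison for ``thresholds within a $\delta_0$-dependent factor'' is available.

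The missing idea is that once a clip exceeds $1+\delta_0'$, its exact value is irrelevant. Use $\tilde h_A^2=\sum_{A_1}(p-q)^2/p+p(A_2)$ from Lemma~\ref{prop: approx_hell}. Both LFDs leave $A_1$ unchanged up to a factor $1\pm\eps$ and only alter masses inside $A_2$, where $\tilde h_A^2$ sees only the $p$-mass. The Huber (resp.\ TV) clip equation forces $p(A_2)\ge p(H)$ to exceed the deleted mass $\eps/(1+\eps)$ by a constant factor, so $\tilde h_A^2$ changes only by a constant, however far out the subtractive clip lies.

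When the clip is below $1+\delta_0'$, choose $\eps'$ with $\cu_{\Sub}(\eps')=\cuHub(\eps)$ (resp.\ $\cuTV(\eps)$). Matching $\eps'/(1+\eps')$ to $\cu\eps/(1-\eps)$ (resp.\ $(1+\cu)\eps$) is exactly what yields $\eps'\le(1+\delta_0)\eps$ (resp.\ $(2+\delta_0)\eps$), and at equal clips the inequality now points the right way. Because the resulting levels differ between the $A$- and $B$-sides, you also need the separate approximate monotonicity of Lemma~\ref{prop: monotonic_hell}.

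The lower half of (c) has the same flaw. The two models' clips at the same $\eps$ need not be comparable: the example above makes $\cu_{\Sub}(\eps)$ arbitrarily larger than $\cuHub(\eps)$. At equal clips the Hellinger inequality points the wrong way for this direction.

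What is needed here is a direct construction. Write $\psub=(1+\eps)(p-\eta)$ on $A=\{p\ge q\}$ and $\qsub=(1+\eps)(q-\eta)$ on $B$, with $\eta\ge 0$ of mass $\eps/(1+\eps)$ on each side. Then convert the deletions into Huber additions to the \emph{other} hypothesis: $p'_{\Hub}=\frac{1+\eps}{1+2\eps}(p+\eta\mathbbm{1}_B)$ and $q'_{\Hub}=\frac{1+\eps}{1+2\eps}(q+\eta\mathbbm{1}_A)$. This is a valid $\eps$-Huber pair. Its coordinatewise gaps are proportional to those of $(\psub,\qsub)$, and its dominant coordinates are at least as large up to constants. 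Hence $(\sqrt a-\sqrt b)^2\asymp(a-b)^2/\max\{a,b\}$ gives $\dhel^2(p'_{\Hub},q'_{\Hub})\lesssim\dhel^2(\psub,\qsub)$.

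For the upper half of (a), your target inequality is right, but the containment sentence preceding it cannot hold (Lemma~\ref{lemma: containment}). You still need three ingredients: comparability of the TV and Huber Hellinger formulas at equal clips, the ordering $\clTV(\eps/2)\le\clHub(\eps)$ and $\cuHub(\eps)\le\cuTV(\eps/2)$, and monotonicity of the TV Hellinger in the clips. There the clip ordering does go the right way, which is exactly what breaks for the subtractive model.
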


The main technical tool used here is analysing how, in each model, the upper and lower clips that characterise the LFDs behave with respect to $\eps$, and how the clips for one model at $\eps$ relate to the clips for a different model after scaling $\eps$.\\ %

(iv) \textbf{Adaptive contamination:} We also consider adaptive variants of the three contamination models. Our main result, summarised below, shows that adaptive and oblivious contamination are essentially equally powerful.

\begin{theorem*}[Theorems~\ref{thm: adaptive_simulates} and~\ref{thm: adaptive_coupling} (informal)]
Fix any oblivious contamination model $O \in\{\Hub,\TV,\Sub\}$, and let $A$ denote
its adaptive counterpart. Then the sample complexity under adaptive contamination at
level $\eps$ is sandwiched between the corresponding oblivious sample complexities at
levels $(1-\delta_0)\eps$ and $(1+\delta_0)\eps$. Specifically, for any fixed
$\delta_0>0$,
\begin{align*}
    \ns_O((1-\delta_0)\eps)
    \lesssim
    \ns_{A}(\eps)
    \lesssim
    \ns_O((1+\delta_0)\eps).
\end{align*}
The constants implicit in $\lesssim$ may depend on $\delta_0$, but are independent of
$p$, $q$, and $\eps$. 
\end{theorem*}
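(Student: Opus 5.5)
We prove the two inequalities separately. The left inequality $\ns_O((1-\delta_0)\eps)\lesssim \ns_A(\eps)$ says the adaptive adversary at level $\eps$ is at least as strong as an oblivious one at level $(1-\delta_0)\eps$, up to constants. The right inequality $\ns_A(\eps)\lesssim\ns_O((1+\delta_0)\eps)$ says the oblivious LFD test calibrated at $(1+\delta_0)\eps$ is robust to adaptive $\eps$-contamination.

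\textbf{Lower bound (adaptive simulates oblivious).} Let $p'$ and $q'$ be the LFDs of model $O$ at level $(1-\delta_0)\eps$, so $p'=(1-\eps')p+\eps' r$ (or the analogous subtractive/TV form) with $\eps'=(1-\delta_0)\eps$. Given $n$ clean samples from $p$, the adaptive adversary simulates i.i.d.\ draws from $p'$ via a coupling: each sample is independently marked for replacement (Huber/TV) or deletion-and-resample (subtractive) with probability $\eps'$. The number of marked points is $\mathrm{Bin}(n,\eps')$. By Chernoff, this is at most $\eps n$ except with probability $\exp(-c\delta_0^2\eps n)$, and on that event the adversary's budget is respected. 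On the bad event, the adversary does nothing, which costs at most that probability in total variation.

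It follows that any test succeeding against adaptive $\eps$-contamination with $n$ samples distinguishes $p'^{\otimes n}$ from $q'^{\otimes n}$ with error at most $1/10+2e^{-c\delta_0^2\eps n}$. If $\eps n\gtrsim \delta_0^{-2}$, this error is a constant below $1/2$. The sample complexity $\ns_O((1-\delta_0)\eps)\asymp 1/\dhel^2(p',q')$ then follows by standard boosting/Hellinger arguments. If instead $\eps n\lesssim\delta_0^{-2}$, we use the fact that $\ns_O\gtrsim 1/\dhel^2(p,q)$ and treat it as the uncontaminated regime. In that regime the earlier result, that contamination $\eps\lesssim\dhel^2(p,q)$ leaves the sample complexity unchanged up to constants, should cover it: $n\lesssim 1/(\delta_0^2\eps)$ forces $\eps\lesssim 1/(\delta_0^2 n)$, and $n\gtrsim 1/\dhel^2$.

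\textbf{Upper bound (coupling).} Use the clipped log-likelihood-ratio statistic $\sum_i \ell(X_i)$ with $\ell=\log(q^*/p^*)$, where $p^*$ and $q^*$ are the LFDs at level $(1+\delta_0)\eps$. Then $\ell$ is bounded in $[\log c',\log c'']$. Since the model at $(1+\delta_0)\eps$ contains every distribution obtained by corrupting a $\delta_0\eps$ fraction beyond the $\eps$ budget, the LFD pair remains separated under level-$\eps$ contamination. Concretely, we bound the mean gap of $\ell$ between the worst $\eps$-contaminated null and alternative from below by a constant multiple of $\dhel^2(p^*,q^*)$, using the extra $\delta_0\eps$ slack.

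An adaptive adversary changes at most $\eps n$ terms, each by at most the clip range $\log(c''/c')$. So the adaptive shift of the statistic is at most the oblivious worst-case shift at level $\eps$, and we compare it to the $\delta_0\eps n$ slack. The main obstacle is exactly this step: showing that $\delta_0\eps\,\log(c''/c')$ dominates the fluctuation $\sqrt{n\Var\ell}$ and is compatible with the Hellinger scale. This requires the earlier monotonicity analysis of the clips $c',c''$ in $\eps$; I would first try to express the mean gap as an integral over $\eps$ of the clip range. Concentration then comes from Bernstein applied to the uncorrupted terms, giving error below $1/10$ once $n\gtrsim_{\delta_0}\ns_O((1+\delta_0)\eps)$.
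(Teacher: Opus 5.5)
\textbf{Lower bound.} Your argument here is essentially the paper's. You couple clean samples with i.i.d.\ draws from the level-$(1-\delta_0)\eps$ contaminated distribution, use Chernoff to show that at most $\eps N$ positions disagree with high probability, and let the adaptive adversary copy the oblivious one on that event. Two small points:
\begin{itemize}
\item Your fallback for $\eps n\lesssim\delta_0^{-2}$ does not follow from the small-$\eps$ proposition. From $n\gtrsim 1/\dhel^2(p,q)$ and $\eps n\lesssim\delta_0^{-2}$ you only get $\eps\lesssim\dhel^2(p,q)/\delta_0^2$, not $\eps\le\dhel^2(p,q)/9$. The paper instead restricts to $\eps>\dhel^2(p,q)/9$, where $N\gtrsim 1/\eps$, and takes $N$ to be a $\delta_0$-dependent multiple of $1/\eps$.
\item In the subtractive case the adaptive adversary can only replace samples by $\perp$. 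What it simulates is therefore the random-size censoring version of the oblivious model, not a ``deletion-and-resample''.
\end{itemize}

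\textbf{Upper bound: the genuine gap.} You leave the decisive step open, and the criterion you set yourself is false in general. That criterion is that the $n$-sample slack $\delta_0\eps n\log(c''/c')$ dominates the fluctuation $\sqrt{n\Var\ell}$. Take $p=(\tfrac12+\gamma,\tfrac12-\gamma)$, $q=(\tfrac12-\gamma,\tfrac12+\gamma)$ and $\eps=\gamma^2$, so that $\eps>\dhel^2(p,q)/9$ and $\eps\le\dtv(p,q)/4$.
\begin{itemize}
\item The TV-LFDs at level $(1+\delta_0)\eps$ are still $\asymp\gamma^2$ apart in Hellinger, so $n\asymp_{\delta_0}\gamma^{-2}$.
\item Meanwhile $\log(c''/c')\asymp\gamma$ and $\Var_p\ell\asymp\gamma^2$.
\item So the slack is $O_{\delta_0}(\gamma)$, whereas $\sqrt{n\Var_p\ell}\asymp_{\delta_0}1$.
\end{itemize}
The theorem still holds in this example because the clean drift of the statistic pays for its own fluctuations. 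More generally, redoing concentration for the whole clipped statistic via Bernstein brings in the range $\log(c''/c')$. That range is not controlled independently of $p,q$, and it is infinite when the subtractive upper clip fails to exist, a case you do not treat. The clip-monotonicity and integral representation you propose are not needed.

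\textbf{The missing idea.} The LFD contamination acts only where the clipped statistic $\psi=\log(p^*/q^*)$ is extremal, so each oblivious corruption already does the maximal possible damage. You can therefore reduce to the oblivious test's guarantee instead of re-proving concentration.

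For TV at level $(1+\delta_0)\eps$, there is a coupling of $X\sim p$ and $Y\sim p^*$ whose off-diagonal mass lies on $H\times L$. Hence $\psi(X_i)-\psi(Y_i)\in\{0,\Delta\}$ with $\Delta=\log c''-\log c'$, and $\sum_i\psi(X_i)=\sum_i\psi(Y_i)+\Delta K$ with $K\sim\mathrm{Bin}(N,(1+\delta_0)\eps)$. Any adaptive $\eps$-adversary gives $\sum_i\psi(Y_i')\ge\sum_i\psi(X_i)-\eps N\Delta$, so
\[
\prob\Bigl(\textstyle\sum_i\psi(Y'_i)\le 0\Bigr)\le\prob\Bigl(\textstyle\sum_i\psi(Y_i)\le 0\Bigr)+\prob(K<\eps N).
\]
The first term is the oblivious error at level $(1+\delta_0)\eps$. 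The second is a Chernoff tail, small once $N\eps\gtrsim\delta_0^{-2}$.

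The same pathwise domination works in the other two models:
\begin{itemize}
\item Huber: the contaminating $h$ is supported on $L$, so added points take the minimal value $\log c'$.
\item Subtractive: only points of $H$, which take the maximal value $\log c''$, are censored. If $c''=\infty$, you instead count the surviving points of $\bar H$.
\end{itemize}
Only the binomial count fluctuates; no variance of $\psi$ enters.
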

As a direct consequence of this theorem, we note that comparison results between Huber, TV, and subtractive contamination as in Theorems~\ref{theorem: tv_hub},~\ref{theorem: sub_tv}, and~\ref{theorem: hub_sub} continue to hold for adaptive contamination after the same constant-factor rescalings of $\eps$. Moreover, polynomial sample complexity jumps as in Theorem~\ref{thm:poly-jumps} occur in the adaptive settings as well. These extensions to adaptive settings are proved using a coupling interpretation of contamination followed by standard concentration inequalities.\\ 

(v) \textbf{Connections to privacy:} Our main contribution here is to resolve the apparent paradox between private and robust hypothesis testing. We show the impact on the sample complexity from transformations from private to robust algorithms or from robust to private algorithms is quite subtle, and once correctly identified, there is no paradox.  As a consequence, we show that it is possible to conclude quadratic jumps in the sample complexity for $O(\eps)$ perturbations in the adaptive-TV contamination model (also called strong contamination model) by appealing to known results in the privacy literature.

\subsection{Paper structure}
In Section~\ref{sec: classical_lfd}, we review the classical least favourable distribution
theory for Huber and TV contamination, and recall how the existence of LFDs reduces robust
hypothesis testing to ordinary binary hypothesis testing between the LFDs. In
Section~\ref{sec: pseudo_lfd}, we introduce subtractive contamination and prove that LFDs
exist for this model, giving explicit formulas analogous to the classical Huber and TV formulas.
In Section~\ref{sec: example}, we study how the robust sample complexity depends on the
contamination parameter $\eps$, and show that small perturbations of $\eps$ can lead to
polynomial-factor jumps in the sample complexity. In Section~\ref{sec: mismatch}, we consider
model misspecification in the contamination level, showing in particular that underestimating
$\eps$ may cause the calibrated clipped likelihood ratio test to fail completely. In
Section~\ref{sec: sandwich}, we prove our main comparison results between Huber, TV, and
subtractive contamination, showing that the corresponding sample complexities are comparable
after constant-factor rescalings of $\eps$. In Section~\ref{sec: adaptive}, we extend
these comparisons to adaptive contamination models and show that adaptive and oblivious
contamination have comparable sample complexities up to an arbitrarily small constant-factor
slack in $\eps$. Finally, in Section~\ref{sec: privacy}, we explore connections between private binary hypothesis testing and its robust counterpart.

\subsection{Related work} 
Hypothesis testing is a fundamental task in statistics with a rich history; see, for example, \cite{NeyPea33,LeCam86,DevLug01,PolWu25}. 
Robust binary hypothesis testing converts a simple binary hypothesis testing into a composite binary hypothesis testing problem.
When the two candidate hypotheses sets are convex (which is the case for TV, Huber, and Subtractive contamination models in our case), the resulting sample complexity (up to constants) is characterized by the minimum Hellinger divergence between the two sets (see, for example, \cite{LeCam86}).
When these composite sets are given by the Huber contamination model or the TV contamination model, 
the classical works of \cite{Hub65} and \citet{HubStra73} developed minimax optimal tests for this problem (see~\cite{LehRC86} for more details). 
In addition to these contamination models, robust hypothesis testing with Hellinger uncertainty sets has been studied in \cite{Birge83,LeCam86,Baraud10}.

Our paper is closely related to a recent line studying the sample complexity of hypothesis testing under \emph{resource constraints}. In particular, several works have analysed the sample complexity of simple binary hypothesis testing under communication constraints and under local and central differential privacy constraints~\citep{CanEtal19, PenEtal23, PenEtal24, PenEtal24b, KazEtal25}. Robustness can also be viewed as arising from a constraint; namely, a (statistical) \emph{sampling constraint} that limits the extent to which clean data can be observed~\citep{Pen23}. Our contribution can thus be interpreted as a sampling-constrained counterpart to the resource-constrained literature on simple binary hypothesis testing. 

Another motivation for studying the robust sample complexity comes from a recent paper \citep{AsiEtal24} that, among other results, proved that the sample complexity under central differential privacy (first established in \cite{CanEtal19}) may be equivalently characterised in terms of a divergence-like quantity between the two distributions when the privacy parameter satisfies $\eps \leq 1$ (high privacy regime). There are many similarities between robustness and privacy, since both objectives drive algorithms towards some version of stability (see \Cref{sec: privacy} for the precise connections). In fact, the expression for the sample complexity in \cite{CanEtal19} is in terms of a Hellinger divergence between two distributions that look remarkably similar to the least favourable distributions from \citet{Hub65}. These results suggested that perhaps the sample complexity of robust hypothesis testing also had a neat expression, and finding such an expression was our original motivation for this work.

The relationship between different types of contamination has received recent attention~\citep{DiaKS17,CanHLLN23, BlaVal25}. For the same level of contamination $\eps$ and a particular estimation problem (Gaussian mean estimation in high dimensions), \cite{DiaKS17} showed that the computational sample complexity (within general, restricted families of efficient algorithms) for Huber and TV contamination adversaries could be super-polynomially different.
Turning to statistical rates, \cite{CanHLLN23} show that for the same contamination $\eps$ and for a particular non-convex testing problem (Gaussian mean testing in high dimensions), the sample complexity with adaptive contamination is (at least) polynomially more than its oblivious counterpart. On the other hand, \cite{BlaVal25} prove a reverse relation showing that the sample complexity with adaptive contamination is at most polynomially more than that with oblivious contamination. The bound in \cite{BlaVal25} is  general, but is limited to the discrete setting.
\cite{CanHLLN23} and \cite{BlaVal25} compare oblivious and adaptive adversaries of the same type for the same level of contamination (for example, oblivious $\eps$-Huber versus adaptive $\eps$-Huber, or oblivious $\eps$-TV versus adaptive $\eps$-TV) so their focus is substantially different from ours. In our setting, establishing a polynomial (in fact, a quadratic) relationship between the oblivious and adaptive sample complexities for the same $\eps$ is relatively straightforward.

Finally, our results imply that the cost (in terms of increased sample complexity) of not knowing $\eps$ exactly may be very high. Results of a similar nature have appeared in recent work~\citep{LuoGao24}, although the problem considered there---providing tight confidence intervals that adapt to the unknown contamination $\eps$---is quite different from the one considered here.

\section{Preliminaries and Classical LFD theory}\label{sec: classical_lfd}

We start by describing the simple binary hypothesis testing setup. Let $p$ and $q$ be two probability distributions on a finite discrete sample space $\cX$\footnote{Throughout the paper, we shall assume the sample space $\cX$ is discrete and finite.}. In simple binary hypothesis testing, given $n$ i.i.d.\ samples $X^n = (X_1,\dots,X_n)$ drawn from either $p$ or $q$, the goal is to decide which of the two distributions generated the samples. A (possibly randomised) test based on $n$ samples is a measurable function $\phi : \cX^n \to \{0,1\},$ where $\phi(X^n)=0$ corresponds to deciding in favour of $p$, and $\phi(X^n)=1$ corresponds to deciding in favour of $q$. The performance of a test $\phi$ is measured by the sum of the worst-case type-I and type-II errors over the uncertainty sets:
\begin{align*}
e_n(\phi;p,q) :=  \prob_{X^n \sim p^{\otimes n}}(\phi(X^n)=1) +  \prob_{X^n \sim q^{\otimes n}}(\phi(X^n)=0).
\end{align*}
The optimal test to minimise $e_n(\phi; p,q)$ is simply the likelihood ratio test, and corresponding error is simply 
\begin{align*}
\min_{\phi} e_n(\phi; p, q) =: e_n^*(p,q) = \left(1 - \dtv(p^{\otimes n}, q^{\otimes n})\right).
\end{align*}
The sample complexity of simple binary hypothesis testing, denoted by $\ns(p,q)$, is the smallest $n$ such that the optimal error is smaller than $1/10$. Equivalently, the sample complexity is
\begin{align*}
\ns(p,q) = \min\{n : \dtv(p^{\otimes n}, q^{\otimes n}) \geq 9/10 \}.
\end{align*}
As the total variation distance does not tensorise, the following fact is used to consider the Hellinger divergence instead:
\begin{fact}\label{fact: hel_tv}
The Hellinger divergence between $p$ and $q$, defined as $\dhel^2(p,q) = \sum_{x \in \cX} \left(\sqrt p(x) - \sqrt q(x) \right)^2$ satisfies
\begin{align*}
\dtv^2(p,q) \leq \dhel^2(p,q) \leq 2\dtv(p,q).
\end{align*}
\end{fact}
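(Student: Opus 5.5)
The plan is to prove both inequalities pointwise on $\cX$, starting from the factorisation $|p(x)-q(x)| = |\sqrt{p(x)}-\sqrt{q(x)}|\,(\sqrt{p(x)}+\sqrt{q(x)})$. Recall that $\dtv(p,q)=\tfrac12\sum_{x}|p(x)-q(x)|$, and that $\dhel^2$ is defined here without the factor $\tfrac12$, which is exactly why the constants come out as $1$ and $2$.

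\textbf{Upper bound.} For each $x$ we have $|\sqrt{p(x)}-\sqrt{q(x)}|\le \sqrt{p(x)}+\sqrt{q(x)}$, since both square roots are nonnegative. Hence
\begin{align*}
\bigl(\sqrt{p(x)}-\sqrt{q(x)}\bigr)^2 \le |\sqrt{p(x)}-\sqrt{q(x)}|\,\bigl(\sqrt{p(x)}+\sqrt{q(x)}\bigr) = |p(x)-q(x)|.
\end{align*}
Summing over $x$ gives $\dhel^2(p,q)\le \sum_x|p(x)-q(x)| = 2\dtv(p,q)$.

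\textbf{Lower bound.} Write $2\dtv(p,q)=\sum_x |\sqrt{p(x)}-\sqrt{q(x)}|\,(\sqrt{p(x)}+\sqrt{q(x)})$ and apply Cauchy--Schwarz:
\begin{align*}
2\dtv(p,q) \le \Bigl(\sum_x (\sqrt{p(x)}-\sqrt{q(x)})^2\Bigr)^{1/2}\Bigl(\sum_x (\sqrt{p(x)}+\sqrt{q(x)})^2\Bigr)^{1/2}.
\end{align*}
The first factor is $\dhel(p,q)$. For the second, expand the square:
\begin{align*}
\sum_x (\sqrt{p(x)}+\sqrt{q(x)})^2 = 2 + 2\sum_x\sqrt{p(x)q(x)} \le 4,
\end{align*}
using $\sum_x\sqrt{p(x)q(x)}\le 1$ (by Cauchy--Schwarz, or AM--GM). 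Therefore $2\dtv(p,q)\le 2\dhel(p,q)$, and squaring gives $\dtv^2(p,q)\le\dhel^2(p,q)$.

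There is no real obstacle. The only point needing care is keeping the normalisation conventions straight: $\dtv$ carries the factor $\tfrac12$ while $\dhel^2$ does not. If desired, the same computation gives the sharper bound $4\dtv^2\le \dhel^2(4-\dhel^2)$, obtained by using $\sum_x\sqrt{p(x)q(x)} = 1-\dhel^2/2$ exactly instead of bounding it by $1$.
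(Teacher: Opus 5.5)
Your proof is correct. The factorisation $|p(x)-q(x)| = |\sqrt{p(x)}-\sqrt{q(x)}|\,(\sqrt{p(x)}+\sqrt{q(x)})$ gives the upper bound directly, and Cauchy--Schwarz with $\sum_x(\sqrt{p(x)}+\sqrt{q(x)})^2 = 4-\dhel^2(p,q)\le 4$ gives the lower bound, with the conventions $\dtv=\tfrac12\sum_x|p(x)-q(x)|$ and the unnormalised $\dhel^2$ handled consistently. The paper states this as a standard fact without proof, and your argument is the standard textbook derivation.
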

The sample complexity of simple binary hypothesis testing is characterised by the Hellinger divergence between the two distributions~\citep{LeCam86}. We state the following fact:
\begin{fact}\label{fact: sc_simple}
The sample complexity of simple binary hypothesis testing satisfies $\ns(p,q) \asymp \frac{1}{\dhel^2(p,q)}.$
\end{fact}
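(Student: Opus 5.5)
We prove the two inequalities $\ns(p,q) \gtrsim 1/\dhel^2(p,q)$ and $\ns(p,q) \lesssim 1/\dhel^2(p,q)$ separately, working with the Bhattacharyya coefficient $BC(p,q) := \sum_x \sqrt{p(x)q(x)}$. With the normalisation in Fact~\ref{fact: hel_tv} we have $\dhel^2(p,q) = 2(1 - BC(p,q))$. Moreover, $BC$ tensorises exactly: $BC(p^{\otimes n}, q^{\otimes n}) = BC(p,q)^n$. Hence
\begin{align*}
\dhel^2(p^{\otimes n}, q^{\otimes n}) = 2\left(1 - \left(1 - \tfrac{1}{2}\dhel^2(p,q)\right)^n\right).
\end{align*}
The whole argument combines this identity with Fact~\ref{fact: hel_tv} applied to the product measures.

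\emph{Lower bound.} Suppose $\dtv(p^{\otimes n}, q^{\otimes n}) \geq 9/10$. By the left inequality of Fact~\ref{fact: hel_tv} applied to the product measures,
\begin{align*}
\dhel^2(p^{\otimes n},q^{\otimes n}) \geq 81/100.
\end{align*}
By Bernoulli's inequality, $(1-x)^n \geq 1 - nx$ for $x \in [0,1]$, so
\begin{align*}
\dhel^2(p^{\otimes n},q^{\otimes n}) \leq n\,\dhel^2(p,q).
\end{align*}
Combining the two gives $n \geq 0.81/\dhel^2(p,q)$. Since this holds for every $n$ with $\dtv(p^{\otimes n}, q^{\otimes n}) \geq 9/10$, it holds in particular for $n = \ns(p,q)$.

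\emph{Upper bound.} Fact~\ref{fact: hel_tv} only gives $\dtv \geq \dhel^2/2$, and a Hellinger value near $2$ yields $\dtv$ near $1$ only through the sharper Le Cam inequality $\dtv(P,Q) \geq 1 - BC(P,Q)$. This is the one step needing a short argument beyond the stated facts. It follows from $\sum_x \min(P(x),Q(x)) \leq \sum_x \sqrt{P(x)Q(x)}$, which holds pointwise because $\min(a,b) \leq \sqrt{ab}$ for $a,b \geq 0$.

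Applying this to the product measures gives
\begin{align*}
\dtv(p^{\otimes n},q^{\otimes n}) \geq 1 - BC(p,q)^n \geq 1 - \exp\!\left(-n\,\dhel^2(p,q)/2\right),
\end{align*}
using $1 - x \leq e^{-x}$. The right-hand side is at least $9/10$ once $n \geq 2\ln 10/\dhel^2(p,q)$. Therefore
\begin{align*}
\ns(p,q) \leq \left\lceil 2\ln 10/\dhel^2(p,q)\right\rceil.
\end{align*}

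\emph{Conclusion.} Since $\dhel^2 \leq 2$, the ceiling costs at most a constant factor (we have $1 \leq 2/\dhel^2$). This completes $\ns(p,q) \asymp 1/\dhel^2(p,q)$.

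The degenerate case $p = q$ is excluded implicitly: there $\dhel^2 = 0$ and $\ns = \infty$, consistent with the convention $1/0 = \infty$.
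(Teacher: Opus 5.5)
Your proof is correct and complete. The paper does not prove this statement; it records it as a Fact and cites Le Cam's book. Your argument is the standard derivation behind that citation:
\begin{itemize}
\item exact tensorisation $\mathrm{BC}(p^{\otimes n},q^{\otimes n})=\mathrm{BC}(p,q)^n$;
\item for the lower bound, the left inequality of Fact~\ref{fact: hel_tv} together with Bernoulli's inequality;
\item for the upper bound, $\dtv\ge 1-\mathrm{BC}$ together with $1-x\le e^{-x}$.
\end{itemize}
It also gives explicit constants, $0.81/\dhel^2(p,q)\le \ns(p,q)\le \lceil 2\ln 10/\dhel^2(p,q)\rceil$.

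One remark in your upper bound is mistaken, though harmless. The inequality $\dtv(P,Q)\ge 1-\mathrm{BC}(P,Q)$ is not sharper than Fact~\ref{fact: hel_tv}. With the paper's normalisation $\dhel^2=2(1-\mathrm{BC})$, the right-hand inequality $\dhel^2\le 2\dtv$ is literally $\dtv\ge 1-\mathrm{BC}$. Applying Fact~\ref{fact: hel_tv} directly to $p^{\otimes n}$ and $q^{\otimes n}$ therefore already gives $\dtv(p^{\otimes n},q^{\otimes n})\ge 1-\mathrm{BC}(p,q)^n$. Your separate derivation via $\min(a,b)\le\sqrt{ab}$ is correct but redundant.
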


The robust binary hypothesis problem was introduced in~\citet{Hub65}. Let $p$ and $q$ be two probability distributions on a discrete sample space $\cX$, and let $\cP,\cQ \subseteq \Delta(\cX)$ be uncertainty sets around $p$ and $q$, respectively. Given $n$ i.i.d.\ samples $X^n = (X_1,\dots,X_n)$ drawn from an unknown distribution that belongs either to $\cP$ or to $\cQ$, the goal is to decide which of the two uncertainty sets generated the data. A (possibly randomised) test based on $n$ samples is a measurable function $\phi : \cX^n \to \{0,1\},$ where $\phi(X^n)=0$ corresponds to deciding in favour of $\cP$, and $\phi(X^n)=1$ corresponds to deciding in favour of $\cQ$. The performance of a test $\phi$ is measured by the sum of the worst-case type-I and type-II errors over the uncertainty sets:
\begin{align*}
e_n(\phi;\cP,\cQ) := \sup_{P \in \cP} \prob_{X^n \sim P^{\otimes n}}(\phi(X^n)=1) + \sup_{Q \in \cQ} \prob_{X^n \sim Q^{\otimes n}}(\phi(X^n)=0).
\end{align*}
The minimax error $e_n^*(\cP,\cQ)$ is given by minimising the above for over all tests $\phi$; i.e.,
\begin{align*}
e_n^*(\cP,\cQ) := \inf_{\phi}  e_n(\phi;\cP,\cQ).
\end{align*}

Analogous to the simple hypothesis testing setting, the sample complexity of robust hypothesis testing is defined as
\begin{align*}
n^*(\cP,\cQ) := \inf\left\{n \in \mathbb{N} : e_n^*(\cP,\cQ) \le \frac{1}{10}\right\}.
\end{align*}
That is, $n^*(\cP,\cQ)$ is the minimum number of samples such that there exists a test with the sum of its type-I and type-II errors at most $1/10$.\footnote{The choice of $1/10$ is arbitrary. If the target error is $\delta$, the LFD-based
sample complexities acquire the usual multiplicative factor $\log(1/\delta)$, and the
comparisons in this paper continue to hold with the same constant-factor rescalings of
$\eps$ after the standard adjustment of constants.}

A key concept in robust hypothesis testing is that of \emph{least favourable distributions} (LFDs), defined below.

\begin{definition}[Least favourable distributions]\label{def: lfd}
Consider the robust binary hypothesis testing problem between $p$ and $q$ with corresponding uncertainty sets $\cP$ and $\cQ$. A sample $X$ is drawn from some $P \in \cP$ under hypothesis 0 or some $Q \in \cQ$ under hypothesis 1.  Distributions $p^* \in \cP$ and $q^* \in \cQ$ are said to be least favourable distributions for the hypothesis testing problem if for every likelihood ratio test $\phi^*: \cX \to \{0,1\}$ between $p^*$ and $q^*$, the following holds:
\begin{align*}
\sup_{P \in \cP} \prob_{X \sim P}(\phi^*(X) = 1) &\leq \prob_{X \sim p^*}(\phi^*(X) = 1) \quad \text{ and }\\
\sup_{Q \in \cQ} \prob_{X \sim Q}(\phi^*(X) = 0) &\leq \prob_{X \sim q^*}(\phi^*(X) = 0).
\end{align*}
Here, a likelihood ratio test $\phi^*$ is a test parametrised by $T \in \mathbb R$ and $\kappa \in [0,1]$ of the form
\begin{align*}
\prob(\phi^*(x) = 0) =
\begin{cases}
1 &\quad \text{ if } p^*(x)/q^*(x) > T,\\
\kappa &\quad \text{ if } p^*(x)/q^*(x) = T, \text{ and }\\
0 &\quad \text{ if } p^*(x)/q^*(x) < T.
\end{cases}
\end{align*}
\end{definition}

In words, given a single sample, the type-I and type-II errors for $\phi^*$ are maximised by $p^*$ and $q^*$ among all distributions in $\cP$ and $\cQ$, respectively. Consider the following minimax problem that is relevant to this paper:
\begin{align*}
\min_{\phi} \sup_{P \in \cP, Q \in \cQ} \left[ \prob_{X \sim P}(\phi(X)=1) + \prob_{X \sim Q}(\phi(X)=0) \right].
\end{align*}
When least favourable distributions exist, this problem is solved at $(p^*, q^*)$ and the likelihood ratio test $\phi^*$ that minimises  
\begin{align*}
\prob_{X \sim p^*}(\phi(X)=1) + \prob_{X \sim q^*}(\phi(X)=0).
\end{align*}
Using a stochastic domination argument (\citet[Lemma 2]{Hub65} or a more general statement in \citet[Corollary 4.2]{HubStra73}), it can be shown that the optimal test given $n \geq 1$ samples continues to be the likelihood ratio test between $p^*$ and $q^*$. In effect, the existence of least favourable distributions converts the robust hypothesis testing problem to a simple hypothesis testing problem between $p^*$ and $q^*$. Using Fact~\ref{fact: sc_simple}, we may state the following fact for the sample complexity of robust hypothesis testing: 
\begin{fact}[Sample complexity when LFDs exist]
Suppose LFDs $p^*$ and $q^*$ exist for a robust testing problem between $\cP$ and $\cQ$. Then the sample complexity $n^*(\cP, \cQ)$ satisfies the following: 
\begin{align*}
\ns(\cP, \cQ) \asymp \frac{1}{\dhel^2(p^*, q^*)}.
\end{align*}
\end{fact}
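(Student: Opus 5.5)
The plan is to show that, once least favourable distributions exist, the robust problem with $n$ samples is exactly as hard as the simple problem between $p^*$ and $q^*$ with $n$ samples. Concretely, the goal is
\begin{align*}
e_n^*(\cP,\cQ) = e_n^*(p^*,q^*) = 1 - \dtv\bigl((p^*)^{\otimes n},(q^*)^{\otimes n}\bigr)
\end{align*}
for every $n$. Then $\ns(\cP,\cQ) = \ns(p^*,q^*)$, and Fact~\ref{fact: sc_simple} gives $\ns(\cP,\cQ)\asymp 1/\dhel^2(p^*,q^*)$ immediately.

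\textbf{Lower bound.} Since $p^*\in\cP$ and $q^*\in\cQ$, every test $\phi$ satisfies $e_n(\phi;\cP,\cQ)\ge e_n(\phi;p^*,q^*)\ge e_n^*(p^*,q^*)$. Taking the infimum over $\phi$ gives $e_n^*(\cP,\cQ)\ge e_n^*(p^*,q^*)$. This direction is trivial.

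\textbf{Upper bound.} Let $\phi_n$ be the optimal $n$-sample likelihood ratio test between $(p^*)^{\otimes n}$ and $(q^*)^{\otimes n}$. It thresholds the statistic $\sum_i \log\bigl(q^*(X_i)/p^*(X_i)\bigr)$ at the threshold appropriate for equal priors, with randomisation at ties. It suffices to show that for every $P\in\cP$,
\begin{align*}
\prob_{P^{\otimes n}}(\phi_n=1)\le \prob_{(p^*)^{\otimes n}}(\phi_n=1),
\end{align*}
and symmetrically for $Q\in\cQ$. Then $e_n(\phi_n;\cP,\cQ)=e_n^*(p^*,q^*)$.

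For this I would use the single-sample LFD property of Definition~\ref{def: lfd}. Write $\ell(x)=\log(q^*(x)/p^*(x))$, allowing values in $[-\infty,\infty]$. For any threshold $t$, the test "$\ell(X)>t$" is a likelihood ratio test, and so is "$\ell(X)\ge t$" (take $\kappa=0$). The definition therefore gives
\begin{align*}
\prob_P(\ell(X)>t)\le\prob_{p^*}(\ell(X)>t) \quad\text{for all } t,
\end{align*}
that is, $\ell(X)$ under $P$ is stochastically dominated by $\ell(X)$ under $p^*$. By the standard quantile coupling, there are couplings with $\ell(X_i)\le \ell(X_i^*)$ almost surely for each $i$, independently across $i$, where $X_i\sim P$ and $X_i^*\sim p^*$. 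Hence $\sum_i\ell(X_i)$ is stochastically dominated by $\sum_i\ell(X_i^*)$.

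Because $\phi_n$ is a monotone (possibly randomised at the boundary) function of the sum, its rejection probability under $P^{\otimes n}$ is at most that under $(p^*)^{\otimes n}$. The boundary randomisation is handled by writing the rejection probability as $\prob(S>T)+\kappa\,\prob(S=T)$, which is a convex combination of $\prob(S>T)$ and $\prob(S\ge T)$, both of which are monotone. The type-II side is identical, with the domination reversed. This is exactly the content of \citet[Lemma 2]{Hub65}, which I would cite.

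\textbf{Main obstacle.} The delicate step is making this domination rigorous when $\ell$ takes infinite values, i.e.\ when $p^*$ or $q^*$ vanishes somewhere, and when ties occur. On a finite $\cX$ this is manageable: $\ell$ takes finitely many extended-real values, the coupling is explicit, and sums involving $\pm\infty$ are interpreted consistently with the likelihood ratio test. Once equality of minimax errors holds for every $n$, the sample complexity identity is immediate.
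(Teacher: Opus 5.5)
Your proposal is correct and takes essentially the paper's route. The paper also argues that the LFD property, via a stochastic-domination argument (Huber's Lemma~2, or Huber--Strassen's Corollary~4.2), makes the likelihood ratio test between $p^*$ and $q^*$ minimax optimal for every $n$, so the robust problem reduces to simple testing between $p^*$ and $q^*$, and then applies Fact~\ref{fact: sc_simple}; you simply write out the coupling and boundary-randomisation details that the paper leaves to the citation.
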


Least favourable distributions need not always exist for all robust testing problems. For Huber and TV contamination models~\citep{Hub65} and more generally for uncertainty sets defined via \emph{two alternating capacities}~\citep{HubStra73}, LFDs are guaranteed to exist. We recall the definitions of Huber and TV contamination below.

\begin{definition}[$\eps$-Huber uncertainty set]
Let $p \in \Delta(\cX)$ and $\eps < 1$. The $\eps$-Huber uncertainty set around $p$ is defined as 
$$\cP_\Hub(p, \eps) := \{p' : p' = (1-\eps)p + \eps h, h \in \Delta(\cX)\}.$$
Equivalently,
$$\cP_\Hub(p, \eps) := \{p' : p' \in \Delta(\cX) \text{ and } p'(x) \geq (1-\eps)p(x) \text{ for all } x \in \cX \}.$$
\end{definition}

\begin{definition}[$\eps$-TV uncertainty set]
Let $p \in \Delta(\cX)$ and $\eps < 1$. The $\eps$-TV uncertainty set around $p$ is defined as 
$$\cP_\TV(p, \eps) := \{p' : \dtv(p',p) \leq \eps\}.$$
\end{definition}

The main contribution of \citet{Hub65} is finding explicit formulas for the LFDs for Huber and TV contamination. These formulas are expressed in terms of two clips $\cl$ and $\cu$ which are guaranteed to exist. For $\cl < 1 < \cu$, partition the support into the following three sets depending on whether the likelihood ratio is low, medium, or high:
\begin{align*}
L &\coloneqq \{i : p(i)/q(i) < \cl \},\\
M &\coloneqq \{i : p(i)/q(i) \in [\cl, \cu]\},\\
H &\coloneqq \{i : p(i)/q(i) > \cu \}.
\end{align*}
We recall the formulas for Huber and TV-LFDs below. We note that at the boundaries $\cl$ and $\cu$, the LFD-formulas are continuous, so the closed and open intervals in the definitions of $L$, $M$, and $H$ may be chosen as per our convenience.

\begin{fact}[Huber-LFDs]
For $\eps$-Huber contamination, the LFDs are given by
\begin{align}
\phub(i)=
\begin{cases}
(1-\eps)\cl q(i), & i\in L,\\
(1-\eps)p(i), & i\in M,\\
(1-\eps)p(i), & i\in H,
\end{cases}
\qquad
\qhub(i)=
\begin{cases}
(1-\eps)q(i), & i\in L,\\
(1-\eps)q(i), & i\in M,\\
\frac{(1-\eps)p(i)}{\cu}, & i\in H.
\end{cases}
\label{eq: lfd_hub}
\end{align}
where the clips $\cl$ and $\cu$ are chosen to ensure that $\phub$ and $\qhub$ integrate out to 1; i.e.,
\begin{align}
\cl q(L) - p(L)  = \frac{\eps}{(1-\eps)} \quad \text{ and } \quad 
\frac{p(H)}{\cu} - q(H)  = \frac{\eps}{(1-\eps)}.
\label{eq: clips_hub}
\end{align}
\end{fact}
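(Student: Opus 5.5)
To prove the Huber-LFD statement, I would first show that the clips exist, then check membership of $\phub,\qhub$ in the uncertainty sets, and finally verify the least-favourability inequalities of Definition~\ref{def: lfd} directly.

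\textbf{Existence of the clips.} Consider $f(c) := c\,q(\{p/q<c\}) - p(\{p/q<c\})$ for $c\ge 0$. This function is continuous, because at a jump point of the set the added term is $c\,q(i)-p(i)=0$. It is also nondecreasing, since it equals $\sum_i (c q(i) - p(i))_+$. Moreover $f(0)=0$ and $f(c)\to\infty$ as $c\to\infty$. Hence a solution $\cl$ of $f(\cl)=\eps/(1-\eps)$ exists. The same argument applied to $g(c)=\sum_i (p(i)/c - q(i))_+$, which is continuous, nonincreasing, and tends to $0$ as $c\to\infty$, gives $\cu$.

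To ensure $\cl<1<\cu$, we need $f(1)=\dtv(p,q)>\eps/(1-\eps)$ and likewise for $g$. If this fails, the two uncertainty sets intersect and testing is impossible, so this nondegeneracy is assumed. With the clips in hand, the constraints \eqref{eq: clips_hub} say exactly that $\phub$ and $\qhub$ sum to one.

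\textbf{Membership in the uncertainty sets.} Use the second characterisation of $\cP_\Hub$. On $L$ we have $\cl q(i) > p(i)$, so $\phub(i)\ge(1-\eps)p(i)$ everywhere, and $\phub\in\cP_\Hub(p,\eps)$. Symmetrically, on $H$ we have $p(i)/\cu > q(i)$, so $\qhub\in\cP_\Hub(q,\eps)$.

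\textbf{Likelihood ratio structure.} The ratio $\phub/\qhub$ equals $\cl$ on $L$, equals $p/q$ on $M$, and equals $\cu$ on $H$. So it is the clipped ratio $\min(\max(p/q,\cl),\cu)$, and in particular it is a nondecreasing function of $p/q$. Every likelihood ratio test $\phi^*$ therefore rejects (outputs 1) on a set that is a lower set in the order of $p/q$, with randomisation on a tie level.

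\textbf{Least-favourability.} Fix $\phi^*$ with rejection function $r(x)=\prob(\phi^*(x)=1)$, which is nonincreasing in $p/q$. For $P=(1-\eps)p+\eps h$ we have
\[
P(r) = (1-\eps)\,p(r) + \eps\, h(r) \le (1-\eps)\,p(r) + \eps \max_x r(x),
\]
so it suffices to show $\phub(r)=(1-\eps)p(r)+\eps\max r$. Write $\phub-(1-\eps)p$, which is the mass $\eps\cdot h^*$ with $h^*$ supported on $L$. On $L$ the ratio $\phub/\qhub$ is constantly $\cl$, its minimum value. There are two cases.
\begin{itemize}
\item If $T>\cl$, then $r=1$ on $L$, so $r$ attains its maximum $1$ on $L$, and $\phub(r)$ equals the bound.
\item If $T=\cl$, then $r=\kappa$ on $L$, and $r$ is $0$ at ratios above $\cl$ and $\kappa$ at ratio exactly $\cl$. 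Hence $\max r=\kappa$ is again attained on the support of $h^*$.
\item If $T<\cl$, then $r\equiv 0$ and the claim is trivial.
\end{itemize}
The type-II inequality is symmetric, with the extra mass of $\qhub$ placed on $H$, where the acceptance probability $1-r$ is maximal.

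The main obstacle is this case analysis of how ties at $T=\cl$ or $T=\cu$ interact with the randomisation $\kappa$, including ties with points of $M$ at the boundary. I would handle it by noting that the extra mass sits exactly on the extremal level set of the clipped ratio, where the test's rejection (respectively acceptance) probability is maximal.
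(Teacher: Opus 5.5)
Your proof is correct. The paper gives no proof of this fact, since it is quoted from \citet{Hub65}. Its proof of the subtractive analogue, Theorem~\ref{prop: lfd_sub_formulas}, follows Huber's route. It shows the clipped ratio $s(X)$ is stochastically smallest under the LFD, i.e.\ $\prob_P(s(X)<t)\le\prob_{p^*}(s(X)<t)$ and $\prob_P(s(X)\le t)\le\prob_{p^*}(s(X)\le t)$ for all $t$. This uses the pointwise constraint on $P$ and the fact that the LFD saturates that constraint on $\{s<t\}$. The type-I error of any likelihood ratio test is then a $\kappa$-mixture of these two probabilities.

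You instead work with the rejection function $r$ directly and use the mixture form $P=(1-\eps)p+\eps h$. You bound $h(r)\le\max r$ and observe that the contamination part of $\phub$ is supported on $L\subseteq\{s=\cl\}$, the bottom level set of the clipped ratio, where $r$ is maximal. The two approaches are equivalent. Applying your conclusion with $\kappa\in\{0,1\}$ for every $T$ recovers exactly the marginal stochastic domination that the paper tensorises by coupling to pass to $n$ samples. So nothing is lost, and your version is arguably more transparent for the additive model.

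You also supply three things the paper takes for granted:
\begin{itemize}
\item existence of the clips, via the monotone functions $\sum_i(cq(i)-p(i))_+$ and $\sum_i(p(i)/c-q(i))_+$;
\item the nondegeneracy condition $\dtv(p,q)>\eps/(1-\eps)$, which is exactly when the two Huber sets are disjoint, since they meet iff $(1-\eps)\sum_i\max\{p(i),q(i)\}\le 1$;
\item membership of the pair in the uncertainty sets.
\end{itemize}

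One small slip: in Definition~\ref{def: lfd}, $\kappa$ is the probability of deciding $0$ at a tie. So when $T=\cl$ the rejection probability on $L$ is $1-\kappa$, not $\kappa$. The argument is unaffected.
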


\begin{fact}[TV-LFDs]
For $\eps$-TV contamination, the LFDs are given by
\begin{align}
\ptv(i)=
\begin{cases}
\frac{\cl\big(p(i)+q(i)\big)}{1+\cl}, & i\in L,\\
p(i), & i\in M,\\
\frac{\cu\big(p(i)+q(i)\big)}{1+\cu}, & i\in H,
\end{cases}
\qquad
\qtv(i)=
\begin{cases}
\frac{p(i)+q(i)}{1+\cl}, & i\in L,\\
q(i), & i\in M,\\
\frac{p(i)+q(i)}{1+\cu}, & i\in H.
\end{cases}
\label{eq: lfd_tv}
\end{align}
where the clips $\cl$ and $\cu$ are chosen to ensure $\dtv(\ptv, p) = \eps$ and $\dtv(\qtv, q) = \eps$:
\begin{align}
\frac{p(H) - \cu q(H)}{1+\cu} = \eps, \quad \text{ and } \quad
\frac{\cl q(L) - p(L) }{1+\cl} = \eps.
\label{eq: clips_tv}
\end{align}
\end{fact}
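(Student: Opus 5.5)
The plan is to verify directly that the distributions in \eqref{eq: lfd_tv} satisfy Definition~\ref{def: lfd}. This takes four steps: choose the clips, check membership in the uncertainty sets, identify the likelihood ratio of $(\ptv,\qtv)$, and then show that for every likelihood ratio test the worst-case error over the TV ball is attained at $\ptv$, respectively $\qtv$.

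\emph{Clips and membership.} The map $c \mapsto (\cl q(L_c) - p(L_c))/(1+\cl)$, with $L_c=\{i: p(i)/q(i)<c\}$, is continuous and nondecreasing in $c$. It vanishes for $c \le \min_i p(i)/q(i)$. So, in the nontrivial regime where $\eps$ is small enough that the sets $\cP_\TV(p,\eps)$ and $\cP_\TV(q,\eps)$ are disjoint, the intermediate value theorem gives $\cl<1$ solving \eqref{eq: clips_tv}; the same argument gives $\cu>1$. Given the clips, we compute the mass that $\ptv$ moves.
\begin{itemize}
\item On $H$ we have $\ptv(i) = \cu(p(i)+q(i))/(1+\cu) < p(i)$, since $p(i) > \cu q(i)$. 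The total mass removed is $p(H)-\ptv(H) = (p(H)-\cu q(H))/(1+\cu) = \eps$.
\item On $L$ we have $\ptv(i)>p(i)$, and the total mass added is $(\cl q(L)-p(L))/(1+\cl)=\eps$.
\item On $M$ nothing changes.
\end{itemize}
Hence $\ptv$ is a probability distribution with $\dtv(\ptv,p)=\eps$. The symmetric computation gives $\dtv(\qtv,q)=\eps$. Reading off the formulas, $\ptv(i)/\qtv(i)$ equals $\cl$ on $L$, $p(i)/q(i)$ on $M$, and $\cu$ on $H$. In other words, the LFD likelihood ratio is the clipped ratio $\min(\cu,\max(\cl,p/q))$, which is a nondecreasing function of $p/q$.

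\emph{Least favourability.} Let $\phi^*$ be any likelihood ratio test between $\ptv$ and $\qtv$, and set $\psi(i)=\prob(\phi^*(i)=1)$. Since the LFD ratio is nondecreasing in $p/q$, $\psi$ is a nonincreasing function of the clipped ratio. It is therefore constant on $L$, where it attains its maximum $\psi_{\max}$, and constant on $H$, where it attains its minimum $\psi_{\min}$. For any $P$ with $\dtv(P,p)\le\eps$, the elementary bound
\[
E_P\psi - E_p\psi \le \eps\,(\psi_{\max}-\psi_{\min})
\]
holds, because $P-p$ has positive and negative parts each of mass at most $\eps$. On the other hand, $\ptv$ moves exactly $\eps$ mass from $H$ (value $\psi_{\min}$) to $L$ (value $\psi_{\max}$). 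It therefore attains this bound: $E_{\ptv}\psi = E_p\psi+\eps(\psi_{\max}-\psi_{\min})$. This gives the type-I inequality of Definition~\ref{def: lfd}. The type-II inequality follows by the mirror argument with $1-\psi$, using that $\qtv$ moves $\eps$ mass from $L$ to $H$.

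The main subtlety is the randomised tests with threshold $T\in\{\cl,\cu\}$. In those cases $\psi$ is fractional on $L$ or on $H$, and one has to make sure the extremal values of $\psi$ still sit exactly on $L$ and $H$. The monotone-in-clipped-ratio observation handles all thresholds $T$ and all $\kappa$ uniformly, so I would state it as a separate lemma before running the $\eps(\psi_{\max}-\psi_{\min})$ argument. The other point needing care is that $H$ and $L$ carry enough mass for these shifts. This follows from \eqref{eq: clips_tv} itself, since $p(H)\ge p(H)-\ptv(H)=\eps$.
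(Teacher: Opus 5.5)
Your proposal is correct. It verifies Definition~\ref{def: lfd} directly and also fills in the existence and membership checks that the paper only asserts.

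Note that the paper does not prove this Fact; it quotes it from \citet{Hub65}. Huber's argument, and the paper's own parallel proof for subtractive LFDs (Theorem~\ref{prop: lfd_sub_formulas}), work at the level of CDFs. With $s=\min\{\cu,\max\{\cl,p/q\}\}$, one shows that $s(X)$ is stochastically smallest under $\ptv$: $\prob_P(s(X)<t)\le\prob_{\ptv}(s(X)<t)$, and the same with $\le t$, for every $t$ and every $P$ in the ball. For TV this takes one line. When $t\in(\cl,\cu]$, the event $\{s<t\}$ contains $L$ and misses $H$, so $\prob_P(s<t)\le p(s<t)+\eps=\ptv(s<t)$. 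This per-sample domination is then lifted to $\sum_i\log s(X_i)$ by coupling, which is what yields the $n$-sample minimaxity the paper uses afterwards.

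Your transport bound $E_P\psi-E_p\psi\le\eps(\psi_{\max}-\psi_{\min})$, attained by $\ptv$, is the same mechanism in expectation form. Applying it to $\psi=\mathbbm{1}\{s<t\}$, which is maximal on $L$ and minimal on $H$, recovers exactly that stochastic domination, and conversely. Each packaging has an advantage:
\begin{itemize}
\item Yours checks Definition~\ref{def: lfd} for all thresholds $T$ and all randomisations $\kappa$ at once. It needs only that $\psi$ is maximal on all of $L$ and minimal on all of $H$, not full monotonicity.
\item The CDF form makes the tensorisation to $n$ samples immediate.
\end{itemize}
Your explicit clip-existence argument also brings out a real contrast with the subtractive case. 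The factor $1/(1+c)$ forces the TV clip equations to have solutions whenever $\eps<\dtv(p,q)/2$, even when $p$ charges points with $q=0$.

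One small caveat: the step ``$\psi$ attains its maximum on $L$'' must hold at every point where a TV adversary can place mass. You should therefore take $\cX=\mathrm{supp}(p)\cup\mathrm{supp}(q)$, since points with $p=q=0$ have no defined likelihood ratio. This is the standard convention.
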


As a sanity check, we see that in both Huber and TV contamination, the likelihood ratio for the LFD-pair always in the interval $[\cl, \cu]$. Thus, the likelihood ratio test for the LFDs is indeed a clipped-version of the likelihood ratio test for the uncontaminated distributions.

\section{Subtractive contamination}\label{sec: pseudo_lfd}

Subtractive contamination is a natural analogue to additive and general contamination. We show the surprising fact that binary hypothesis testing with subtractive contamination is just as amenable to analyses as Huber and TV contamination. Specifically, we show that least favourable distributions exist for subtractive contamination and we find exact expressions for these.

\subsection{Subtractive contamination models}

In this section, we first define what is meant by subtractive contamination. Our definitions are identical to the established convention~\citep[Chapter~1]{DiaKan23} up to reparametrisation.

\begin{definition}[$\eps$-subtractive uncertainty set]
Let $p \in \Delta(\cX)$ and $\eps < 1$. The $\eps$-subtractive uncertainty set around $p$ is defined as 
$$\cP_\Sub(p, \eps) := \{p' : p' \in \Delta(\cX) \text{ and } p'(x) \leq (1+\eps)p(x) \text{ for all } x \in \cX \}.$$
\end{definition}

With this definition, the sample complexity with $\eps$-subtractive contamination is defined similarly to that with Huber and TV contamination. Specifically, the sample complexity $\nsSub(\eps)$ is the smallest number of samples such that there exists some test under which the sum of worst-case type-I and type-II errors is at most 1/10.

The ``subtractive'' aspect of this contamination becomes clearer via an alternate and  equivalent interpretation of  $p' \in \cP_\Sub(p, \eps)$. Consider the distribution of $X \sim p$ conditioned on some event $E$ with $\prob(E) \geq 1/(1+\eps)$, and call this conditional distribution $p'$. For each $x \in \cX$, define $a(x) = \prob(E|X=x) \leq 1$. Then
\begin{align*}
p'(x) = \prob(X=x|E) =  \frac{a(x) p(x)}{\prob(E)} \leq \frac{p(x)}{\prob(E)} \leq p(x)(1+\eps).
\end{align*}
Hence, $p' \in \cP_\Sub(p, \eps)$. Conversely, given any $p' \in \cP_{\Sub}(p, \eps)$, define $a(x) = \frac{p'(x)}{p(x)(1+\eps)} \leq 1$. Consider $U \sim \mathrm{Unif}[0,1]$ independent of $X$ and define the event 
\begin{align*}
E := \{U \leq a(X)\}.
\end{align*}
Then $\prob(E) = \E a(X) = 1/(1+\eps)$. This shows that subtractive contamination is equivalent to conditioning on an event with probability at least $1/(1+\eps)$. This motivates an alternate definition of subtractive contamination given below:
\begin{definition}[Subtractive contamination as selective censoring]
Let $p \in \Delta(\cX)$ and let $\eps < 1$. Let $p' \in \cP_\Sub(p, \eps)$ and set \(a(x)=p'(x)/((1+\eps)p(x))\) when \(p(x)>0\), and set \(a(x)=0\) when \(p(x)=0\). For $X \sim p$, define $Y = X$ with probability $a(X)$, and $Y \,=\, \perp$ with probability $1-a(X)$. Then the distribution of $Y$ conditioned on $Y \,\neq\, \perp$ is $p'$, and $Y$ is thought of as the result of $X$ subjected to subtractive contamination. %
\end{definition}

Given $N$ i.i.d.\ samples $X_1,\dots, X_N$ from $p$, the contaminated dataset replaces each $X_i$ by $Y_i$, independently over all samples. Removing the $\perp$ symbols from the contaminated dataset, the result is a random-sized dataset of size $n_R \sim \mathrm{Bin}(N, 1/(1+\eps))$ wherein samples are generated i.i.d.\ from $p'$. Thus, the only difference is that instead of having exactly $N$ samples from $p'$ as we had earlier, we have $N_R$ samples from $p'$. Let us call these two models as the fixed-size model and the random-sized model. Note that $\E [N_R] = N/(1 + \eps)$. By a multiplicative Chernoff bound, with probability at least $1 - \exp\left(- \frac{N}{8(1+\eps)} \right)$,
$$\frac{N}{2(1+\eps)} \leq N_R \leq N.$$
Indeed, any fixed-size test \(T_N\) can be lifted by drawing \(M=CN\) pre-censoring
samples and applying \(T_N\) to the first \(N\) retained samples, declaring arbitrarily if
fewer than \(N\) remain. Conditional on retaining at least \(N\) samples, these inputs are
i.i.d. from the corresponding subtractive contamination set; the failure event has
arbitrarily small constant probability by Chernoff for large enough universal \(C\). Conversely, any test for the random-sized model can be applied directly in the fixed-size model since the latter provides exactly $N$ samples from $p'$. Therefore, the sample complexities of the two models match up to constant factors.

\subsection{LFDs for subtractive contamination} 

We first argue that it is natural to expect LFDs to exist for subtractive contamination. 

\begin{lemma}[Existence of LFDs for subtractive contamination]\label{lemma: exists_lfd}
Least favourable distributions $(p^*, q^*)$ exist for robust binary hypothesis testing with subtractive contamination.
\end{lemma}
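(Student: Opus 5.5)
The plan is to reduce to the Huber--Strassen theory of two-alternating capacities~\citep{HubStra73}, which guarantees LFDs whenever each uncertainty set is the core of a two-alternating capacity. Since $\cX$ is finite, for $\cP_\Sub(p,\eps)$ I would define the upper probability
\begin{align*}
v(A) := \sup_{p' \in \cP_\Sub(p,\eps)} p'(A) = \min\{1, (1+\eps)p(A)\}, \qquad A \subseteq \cX,
\end{align*}
and define $w$ analogously for $q$. The first step is to verify this formula. The upper bound is immediate from $p'(x) \le (1+\eps)p(x)$. For attainment, when $(1+\eps)p(A)\le 1$, set $p'=(1+\eps)p$ on $A$ and spread the remaining mass $1-(1+\eps)p(A)$ over $A^c$ proportionally to $p$, which stays below $(1+\eps)p$ on $A^c$. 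When $(1+\eps)p(A) > 1$, scale $p$ up on $A$ to total mass $1$ and put zero mass on $A^c$.

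The second step is to check that $v$ is a capacity: $v(\emptyset)=0$, $v(\cX)=1$, and $v$ is monotone. It is also two-alternating, i.e.\ $v(A\cup B)+v(A\cap B) \le v(A)+v(B)$. The reason is that $v = f\circ \mu$, where $\mu=(1+\eps)p$ is a measure and $f(t)=\min\{1,t\}$ is concave and nondecreasing. Write $a=\mu(A\cap B)$, $b=\mu(A)$, $c=\mu(B)$, and $d=\mu(A\cup B)$, so that $a+d=b+c$ and $a\le b,c\le d$. Concavity of $f$ then gives $f(a)+f(d)\le f(b)+f(c)$. This is the standard fact that a concave function of a measure is submodular.

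The third step is to check that $\cP_\Sub(p,\eps)$ equals the core $\{P : P(A)\le v(A)\ \forall A\}$. One inclusion holds by the definition of $v$. For the other, taking singletons $A = \{x\}$ gives $P(x)\le \min\{1,(1+\eps)p(x)\}\le (1+\eps)p(x)$, so $P \in \cP_\Sub(p,\eps)$.

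Finally, I would invoke \citet[Corollary 4.2]{HubStra73}. Their theorem also requires the two cores to be disjoint, or handles the degenerate overlapping case trivially: if $\cP_\Sub(p,\eps)\cap\cP_\Sub(q,\eps)\ne\emptyset$, take $p^*=q^*$ equal to a common element. Every likelihood ratio test between $p^*$ and $q^*$ then has a constant decision, so $\prob_{X\sim P}(\phi^*(X)=1)$ equals the same constant $1-\kappa$ under every $P$, and the LFD inequalities hold with equality. Otherwise, Huber--Strassen yields $p^*, q^*$ in the cores together with the stochastic-ordering property of the likelihood ratio $\pi = p^*/q^*$: $P(\pi > t)\le p^*(\pi>t)$ for all $P\in\cP$, and $Q(\pi<t)\le q^*(\pi<t)$ for all $Q\in\cQ$, for every $t$.

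The remaining work, translating this ordering into the inequalities of Definition~\ref{def: lfd} including the randomization at $\pi = T$, is routine. The type-I error of $\phi^*$ is $P(\pi<T)+(1-\kappa)P(\pi=T)$, which is a convex combination of $P(\pi\le T)$ and $P(\pi<T)$; the type-II error decomposes in the same way, and the non-strict versions of the ordering follow by taking limits over finitely many values of $t$. The main obstacle is matching the precise hypotheses and conventions in Huber--Strassen, such as their orientation of the likelihood ratio and the disjointness condition. As a fallback, I would verify the stochastic ordering directly once explicit candidate LFDs are available, i.e.\ clipped forms analogous to \eqref{eq: lfd_hub}.
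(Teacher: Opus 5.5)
Your proposal is correct and follows essentially the same route as the paper. Both identify $\cP_\Sub(p,\eps)$ with the core of the capacity $v(A)=\min\{(1+\eps)p(A),1\}$, obtain two-alternation from the concavity of $t\mapsto\min\{1,t\}$ applied to the measure $(1+\eps)p$, and invoke \citet{HubStra73}; your extra steps (attainment of the supremum, the singleton argument for the core identity, the overlapping case, and the translation to randomized tests) are details the paper treats as routine.
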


This lemma is proved by showing that subtractive uncertainty set can be associated with the set-valued function $v(A) = \min\{(1+\eps)p(A), 1\}$, which we show is a two-alternating capacity. The existence of LFDs follows from the results of \citet{HubStra73}. The proof of the lemma is deferred to Appendix~\ref{app: exists_lfd}.

Lemma~\ref{lemma: exists_lfd} only guarantees the existence of LFDs without giving explicit formulas. In \citet{Hub65}, such explicit formulas were presented for Huber and TV contamination. We shall now do the same for subtractive contamination. 

\begin{theorem}[LFDs for subtractive contamination]\label{prop: lfd_sub_formulas}
For clips $\cl < 1 < \cu$, define the sets $L, M,$ and $H$ as before based on thresholding the likelihood ratio. Let $\bar H = \{x : p(x)/q(x) = \infty\}$ and $\bar L = \{x : p(x)/q(x) = 0\}$. 
Consider the fixed point equations:
\begin{align}
p(H) - \cu q(H) &= \frac{\eps}{1+\eps}, \quad \text{ and } \quad q(L) - \frac{p(L)}{\cl} = \frac{\eps}{1+\eps}. \label{eq: clips_sub}
\end{align}
\begin{itemize}
\item[(a)] If solutions $\cl$ and $\cu$ can be found to the fixed point equations~\eqref{eq: clips_sub}, then LFDs are given by
\begin{align}
\psub(i)=
\begin{cases}
(1+\eps)p(i), & i \in L,\\
(1+\eps)p(i), & i \in M,\\
\cu(1+\eps)q(i), & i \in H,
\end{cases}
\qquad
\qsub(i)=
\begin{cases}
\frac{(1+\eps)}{\cl}\,p(i), & i \in L,\\
(1+\eps)q(i), & i \in M,\\
(1+\eps)q(i), & i \in H,
\end{cases}
\label{eq:lfd_sub}
\end{align}

\item[(b)] A solution does not exist for $\cu$ only when $p(\bar H) > \eps/(1+\eps)$. When this happens, the LFD $\psub$ is
\begin{align}
\psub(i)=
\begin{cases}
(1+\eps)p(i), & i \notin \bar H,\\
(1+\eps)p(i) \left( 1 - \frac{\eps}{(1+\eps)p(\bar H)}\right) & i \in \bar H.
\end{cases}
\end{align}
In this case, the formula for \(\qsub\) is still given by the lower-clip formula in \eqref{eq:lfd_sub} when the lower clip exists, and by part (c) when the lower clip also fails to exist.

\item[(c)] A solution does not exist for $\cl$ only when $q(\bar L) > \eps/(1+\eps)$. When this happens, the LFD $\qsub$ is
\begin{align}
\qsub(i)=
\begin{cases}
(1+\eps)q(i), & i \notin \bar L,\\
(1+\eps)q(i) \left( 1 - \frac{\eps}{(1+\eps)q(\bar L)}\right) & i \in \bar L.
\end{cases}
\end{align}
In this case, the formula for \(\psub\) is still given by the upper-clip formula in \eqref{eq:lfd_sub} when the upper clip exists, and by part (b) when the upper clip also fails to exist.
\end{itemize}
\end{theorem}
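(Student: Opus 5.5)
The plan is to verify \Cref{def: lfd} directly. Lemma~\ref{lemma: exists_lfd} already guarantees that LFDs exist, so all that remains is to check that the explicit pair in \eqref{eq:lfd_sub} lies in the uncertainty sets and is maximally bad for every likelihood ratio test between $\psub$ and $\qsub$. The tool is a \emph{greedy characterisation} of the worst case over $\cP_\Sub(p,\eps)$. For any $f:\cX\to[0,1]$, the quantity $\sup_{P\in\cP_\Sub(p,\eps)}\E_P f$ is a fractional knapsack: mass may be placed on $x$ up to the capacity $(1+\eps)p(x)$, the total mass must be $1$, and the optimum fills points in decreasing order of $f$. The analogous statement holds for $\qsub$ with capacities $(1+\eps)q(x)$. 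A (randomised) likelihood ratio test between $\psub,\qsub$ has rejection probability $f(x)=\prob(\phi^*(x)=1)$ that is nonincreasing in $\psub(x)/\qsub(x)$. I will therefore show that $\psub$ is one of these greedy fillings for every such $f$, and symmetrically for $\qsub$ with $1-f$.

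\textbf{Step 1 (membership and normalisation).} On $H$ we have $p(i)>\cu q(i)$, so $\psub(i)=\cu(1+\eps)q(i)\le(1+\eps)p(i)$, and on $L\cup M$ we have equality with the capacity. Hence $\psub\le(1+\eps)p$ pointwise. The total mass is $(1+\eps)\bigl(1-p(H)+\cu q(H)\bigr)$, which equals $1$ exactly when the first equation of \eqref{eq: clips_sub} holds. The same computation with the second equation handles $\qsub$: on $L$, $p(i)/\cl<q(i)$, and the total mass is $(1+\eps)\bigl(1-q(L)+p(L)/\cl\bigr)=1$.

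\textbf{Step 2 (clipped ratio and the greedy property).} Compute $\psub/\qsub$. It equals $\cl$ on $L$, $p/q$ on $M$, and $\cu$ on $H$. So the LFD ratio is a clipped version of $p/q$, and it is nondecreasing in $p/q$. Consequently any test $f$ that is nonincreasing in $\psub/\qsub$ is nonincreasing in $p/q$ and constant on $H$.

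The greedy filling for such $f$ saturates the capacities $(1+\eps)p$ on $L\cup M$, which are the points of largest $f$. Because $f$ is constant on $H$, the remaining mass $1-(1+\eps)p(L\cup M)=\psub(H)$ may be spread on $H$ in any way that respects the capacities. The profile $\cu(1+\eps)q$ is such an admissible spreading, so $\E_{\psub}f$ attains the supremum. Ties at $T=\cl$ or $T=\cu$, and the randomisation $\kappa$, are covered automatically, because the argument only uses that $f$ is monotone and constant on each clip region. The type-II bound follows symmetrically: $1-f$ is nondecreasing in the ratio and constant on $L$, and $\qsub$ saturates $(1+\eps)q$ on $M\cup H$.

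\textbf{Step 3 (existence of clips and the degenerate cases (b), (c)).} Consider $g(c)=p(H_c)-c\,q(H_c)=\sum_x(p(x)-c\,q(x))_+$ with $H_c=\{p/q>c\}$. This function is continuous and nonincreasing in $c$, and $g(c)\to p(\bar H)$ as $c\to\infty$. So a solution $\cu$ with $g(\cu)=\eps/(1+\eps)$ exists unless $p(\bar H)>\eps/(1+\eps)$ (modulo the range constraint $\cu>1$ discussed below); the lower clip is treated symmetrically.

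In case (b), $\psub$ has infinite ratio exactly on $\bar H$ (where $q=0$), so every nontrivial test has $f=0$ there. The greedy filling then saturates all of $\cX\setminus\bar H$ and puts the leftover mass $1-(1+\eps)(1-p(\bar H))$ on $\bar H$. Spread proportionally to $p$, this is exactly the stated formula. It is admissible because the leftover is at most $(1+\eps)p(\bar H)$, and it is positive precisely when $p(\bar H)>\eps/(1+\eps)$. The hypothesis-1 side is unaffected because the two constructions act on disjoint regions ($H$ or $\bar H$ versus $L$ or $\bar L$), and Step 2 runs verbatim for the mixed cases.

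I expect the main obstacle to be this bookkeeping around the clips rather than the greedy argument itself. The constraint $\cl<1<\cu$ requires $g(1)=\dtv(p,q)\ge\eps/(1+\eps)$. When $\eps$ is so large that this fails, the two sets may overlap; in that case I would treat the theorem as implicitly assuming separated hypotheses, or allow $\cl=\cu$. One must also check that the partitions $L,M,H$ are consistent with the ordering of ratios when boundary points with $p/q=\cl$ or $\cu$ are assigned. Here the continuity of the formulas at the clips, noted earlier for the Huber and TV LFDs, makes the choice irrelevant.
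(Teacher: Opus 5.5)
Your proposal is correct and is essentially the paper's argument. Your fractional-knapsack optimality of $\psub$ is the same fact the paper expresses as the stochastic domination $\prob_{X\sim P}(s(X)<t)\le(1+\eps)\prob_{X\sim p}(s(X)<t)=\prob_{X\sim\psub}(s(X)<t)$ for $t\le\cu$, where $s$ is the clipped ratio. The paper then lifts this to $n$-sample tests by coupling, whereas you verify the single-sample Definition~\ref{def: lfd} directly. Your clip-existence argument via $\sum_x(p(x)-c\,q(x))_+$ matches the paper's.

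The only loose end is the overlapping case, which the paper proves rather than assumes. If $\dtv(p,q)\le\eps/(1+\eps)$, then $(1+\eps)\sum_x\min\{p(x),q(x)\}\ge 1$, so some $r\le(1+\eps)\min\{p,q\}$ lies in both subtractive sets, and one may take $\psub=\qsub=r$.
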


\begin{remark}
The upper clip doesn't exist only when there is significant mass for $p$ at likelihood ratios of $+\infty$, and the lower clip doesn't exist only when $q$ has significant mass at likelihood ratios of $0$. Note that without loss of generality, we may take $\bar H$ and $\bar L$ to be singletons when they are non-empty. In case (b) above, if we denote the singleton point as $x = \bar H$ then the LFD is  easier to state: $\psub(x) = (1+\eps)p(x)$ for $x \notin \bar H$, and $\psub(\bar H) = (1+\eps)p(\bar H) - \eps$. Similar simplification may be done in formula (c) as well. Both clips are guaranteed to exist when $p$ and $q$ are mutually absolutely continuous, which can be ensured by perturbing them slightly. The settings when clips don't exist are often easier to analyse.
\end{remark}

\begin{proof}[Proof of Theorem~\ref{prop: lfd_sub_formulas}]
We may assume without loss of generality that $\dtv(p,q) > \frac{\eps}{1+\eps}.$ Indeed, if \(\dtv(p,q) \leq \eps/(1+\eps)\), then the two subtractive uncertainty sets intersect. To see this, note that
\begin{align*}
    (1+\eps)\sum_{x\in\cX}\min\{p(x),q(x)\}
    =
    (1+\eps)(1-\dtv(p,q))
    \geq 1.
\end{align*}
Hence there exists a distribution \(r\in\Delta(\cX)\) such that
\begin{align*}
    r(x) \leq (1+\eps)\min\{p(x),q(x)\}
\end{align*}
for all \(x\in\cX\). Therefore $r \in \cP_\Sub(p,\eps)\cap \cP_\Sub(q,\eps).$ In this case the robust testing problem is degenerate, and we may take $\psub=\qsub=r$ as least favourable distributions. Thus, in the nontrivial case, we may assume
\begin{align*}
    \dtv(p,q) > \frac{\eps}{1+\eps}.
\end{align*}

We now address the existence of $\cu$. First, observe that the left hand in~\eqref{eq: clips_sub} may be expressed as
\begin{align*}
p(H) - \cu q(H) = \sum_{x \in \cX} (p(x) - \cu q(x))_+,
\end{align*}
which shows it is monotonically decreasing in $\cu$. As $\cu = 1$, this quantity equals $\dtv(p,q) > \frac{\eps}{1+\eps}$. As $\cu \to \infty$, the left hand side reaches its minimum possible value which is $p(\bar H)$. Thus, a solution exists for $\cu$ if and only if $p(\bar H) \leq \eps/(1+\eps)$. A similar argument shows that a solution exists for $\cl$ iff $q(\bar L) \leq \eps/(1+\eps)$. 

We now proceed with the proof that the stated formulas are indeed LFDs. The proof is along similar lines of the Huber and TV proofs in \citet{Hub65}. Let us first consider case (a) when both clips exist. Let $r(x)=\frac{p(x)}{q(x)}$ and let the clips be $\cl < 1 <\cu$. Let the clipped likelihood ratio be defined as
\begin{align*}
s(x)
:=
\min \left\{\cu,\max\left\{\cl, r(x)\right\}\right\}.
\end{align*}
Given samples $(x_1,\dots, x_n)$, let
\begin{align*}
\gamma(x_1, \dots, x_n) := \sum_{i=1}^n \log s(x_i).
\end{align*}
Consider any likelihood ratio test $\phi$ between $\psub$ and $\qsub$ given by
\begin{align*}
\prob(\phi(x_1,\dots,x_n) = 0)
= 
\begin{cases}
1 &\text{ if } \gamma(x_1,\dots,x_n) > T,\\
\kappa &\text{ if } \gamma(x_1,\dots,x_n) = T,\\
0 &\text{ if } \gamma(x_1,\dots,x_n) < T.\\
\end{cases}
\end{align*}
For any $P \in \cP_\Sub(p,\eps)$, the type-I error for this test is
\begin{align*}
\prob_{X \sim P}(\phi = 1) &= \prob_{X \sim P}(\gamma(X_1,\dots,X_n) < T) + (1-\kappa) \prob_{X \sim P}(\gamma(X_1,\dots,X_n) = T)\\
&= (1-\kappa)\prob_{X \sim P}(\gamma(X_1,\dots,X_n) \leq T) + \kappa \prob_{X \sim P}(\gamma(X_1,\dots,X_n) < T).
\end{align*}
The statistic $\gamma$ is a sum of $n$ i.i.d.\ terms. We show that the distribution of each term satisfies a stochastic domination property. Specifically, we claim that the following inequality holds for all $t$:
\begin{align}\label{eq: sub_less}
\prob_{X \sim P}(s(X) < t) \leq \prob_{X \sim \psub(\eps)}(s(X) < t),
\end{align}
and similarly
\begin{align}\label{eq: sub_leq}
\prob_{X \sim P}(s(X) \le t) \leq \prob_{X \sim \psub(\eps)}(s(X) \le t).
\end{align} 
These bounds are trivially true if $t \notin [\cl, \cu]$, so we may consider $t \in [\cl(\eps), \cu(\eps)]$. In this case, we have
\begin{align*} 
\prob_{X \sim P}(s(X) < t) &\stackrel{(a)}\leq (1+\eps)\prob_{X \sim p}(s(X) < t)\\
&\stackrel{(b)}= \prob_{X \sim \psub(\eps)}(s(X) < t).
\end{align*}
Here, in $(a)$ we used that $P(x) \leq (1+\eps)p(x)$, and in $(b)$ we used the LFD formula~\eqref{eq:lfd_sub}. A similar proof goes through for $\leq t$ as well, and we conclude that
\begin{align*}
\prob_{X \sim P} (\phi = 1) \leq \prob_{X \sim \psub(\eps)} (\phi = 1).
\end{align*}
By a standard coupling, stochastic domination of $s(X)$ on each marginal implies stochastic domination of $\gamma(X^n)=\sum_i\log
s(X_i)$, from which the bound on the type-I error follows. This is precisely the property LFDs need to satisfy as noted in Definition~\ref{def: lfd}.

If $\cu_{\Sub}(\eps)$ does not exist, then $\psub(\eps) = (1+\eps)p$ for all $x \notin \{x: p(x)/q(x) = \infty\}$. Thus, for all finite $t$, inequalities~\eqref{eq: sub_less} and~\eqref{eq: sub_leq} continue to hold. A similar argument works for bounding the type-II error as well, which completes the proof.
\end{proof}

\begin{remark}
The proof of Theorem~\ref{prop: lfd_sub_formulas} also gives the LFDs when the contamination
levels around $p$ and $q$ are different. Specifically, consider testing between
$\cP_\Sub(p,\eps_1)$ and $\cP_\Sub(q,\eps_2)$. The fixed-point equations become
\[
    p(H)-\cu q(H)=\frac{\eps_1}{1+\eps_1},
    \qquad
    q(L)-\frac{p(L)}{\cl}=\frac{\eps_2}{1+\eps_2},
\]
and, whenever the corresponding clips exist, the LFDs are
\[
\psub(i)=
\begin{cases}
(1+\eps_1)p(i), & i\in L,\\
(1+\eps_1)p(i), & i\in M,\\
\cu(1+\eps_1)q(i), & i\in H,
\end{cases}
\qquad
\qsub(i)=
\begin{cases}
\dfrac{1+\eps_2}{\cl}p(i), & i\in L,\\
(1+\eps_2)q(i), & i\in M,\\
(1+\eps_2)q(i), & i\in H.
\end{cases}
\]
The cases in which the clips do
not exist are handled exactly as in Theorem~\ref{prop: lfd_sub_formulas}, with
$\eps_1/(1+\eps_1)$ replacing $\eps/(1+\eps)$ for the upper clip and
$\eps_2/(1+\eps_2)$ replacing $\eps/(1+\eps)$ for the lower clip.
\end{remark}

\section{Sample complexity dependency on $\eps$}\label{sec: example}

In this section, we explore \Cref{ques:1} in detail. We start by stating some baseline bounds on the sample complexity that are well-known in the literature. Then we construct an explicit example for which the sample complexity is highly unstable with respect to the contamination parameter $\eps$. Finally, we organise our observations from this example which guide the research questions explored in the rest of the paper.
 
\subsection{Baseline and small-$\eps$ bounds}\label{sec: prelim_baseline}
Our first result is folklore, but we repeat it here for completeness.

\begin{proposition}[Folklore]\label{prop:baseline}
Let $p$ and $q$ be probability distributions and let $\eps \le \frac{\dtv(p,q)}{4}$. Then  
\begin{align*}
    \frac{1}{\dtv(p,q)} \, \lesssim \, \nsHub(\eps),\, \nsTV(\eps),\, \nsSub(\eps)\, \lesssim \,\frac{1}{\dtv^2(p,q)}.
\end{align*}
\end{proposition}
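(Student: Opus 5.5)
Write $\delta := \dtv(p,q)$. The plan is to prove the two sides separately. For the lower bound we note that every contamination model contains the clean pair, so robust testing is at least as hard as simple testing. For the upper bound we exhibit a single test (a thresholded count on the set where $p>q$) and show it works for all three models at once. For each model $A\in\{\Hub,\TV,\Sub\}$ we have $p\in\cP_A(p,\eps)$ and $q\in\cP_A(q,\eps)$. Hence $e_n^*(\cP_A(p,\eps),\cP_A(q,\eps)) \ge e_n^*(p,q)$, and so $\ns_A(\eps)\ge \ns(p,q)\asymp 1/\dhel^2(p,q)$ by Fact~\ref{fact: sc_simple}. Fact~\ref{fact: hel_tv} gives $\dhel^2(p,q)\le 2\delta$, which yields $\ns_A(\eps)\gtrsim 1/\delta$.

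\textbf{Step 1 (reduce to TV).} For the upper bound, it suffices to handle a TV ball of radius $c\eps$ for a small absolute constant $c$. If $p'=(1-\eps)p+\eps h$, then $\dtv(p',p)=\eps\,\dtv(h,p)\le\eps$. If $p'\le(1+\eps)p$, then
\[
\dtv(p',p)=\sum_x (p'(x)-p(x))_+\le \eps .
\]
So both the Huber and subtractive sets are contained in $\cP_\TV(p,\eps)$, and likewise around $q$. We therefore only need the bound for $\nsTV(\eps)$ with $\eps\le\delta/4$.

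\textbf{Step 2 (the test).} Let $S=\{x:p(x)>q(x)\}$, so that $p(S)-q(S)=\delta$. Any $P\in\cP_\TV(p,\eps)$ satisfies $P(S)\ge p(S)-\eps$. Any $Q\in\cP_\TV(q,\eps)$ satisfies $Q(S)\le q(S)+\eps$. Since $\eps\le\delta/4$, the gap between these two bounds is at least $\delta/2$. The test computes the empirical frequency $\hat\mu$ of $S$ among the $n$ samples. It declares $p$ if $\hat\mu$ exceeds the midpoint $\tau := (p(S)+q(S))/2$, and $q$ otherwise. Each worst-case error is then the probability that a Binomial mean deviates by at least $\delta/4$ from its expectation.

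\textbf{Step 3 (concentration).} Hoeffding's inequality bounds each such probability by $\exp(-n\delta^2/8)$. Taking $n = C/\delta^2$ for a suitable absolute constant $C$ makes the sum of the two errors at most $1/10$. This holds uniformly over all $P$ and $Q$ in the uncertainty sets, since the samples are i.i.d.\ from $P$ or $Q$. Hence $\nsTV(\eps)\lesssim 1/\delta^2$, and by Step 1 the same bound holds for $\nsHub(\eps)$ and $\nsSub(\eps)$.

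The argument is essentially routine. The only point needing care is the containment in Step 1, which has to cover every model with the same radius $\eps$, together with the fact that the margin $\delta/2$ survives the contamination. Because of the containment there is no need to invoke the LFD formulas.
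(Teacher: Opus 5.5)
Your proof is correct and follows the paper's argument. The lower bound comes from the clean pair lying in every uncertainty set, and the upper bound comes from Scheffé's test on $\{p>q\}$ with a midpoint threshold plus Hoeffding, using that each model's $\eps$-set lies in the $\eps$-TV ball. The only cosmetic difference is that the paper gets the lower bound directly from $\dtv(p^{\otimes n},q^{\otimes n})\le n\,\dtv(p,q)$, rather than via $\ns(p,q)\asymp 1/\dhel^2(p,q)$ and $\dhel^2\le 2\dtv$.
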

An immediate consequence is that any change in the value of $\eps$ or in the contamination model can cause at most a quadratic jump in the sample complexity. The lower bound can be shown even for the uncontaminated setting (i.e.\ $\eps=0$) whereas the upper bound is proved by analysing Scheffe's test for TV-contamination, which also implies that the same bound holds for Huber and subtractive contamination. The proofs are standard and are deferred to Appendix~\ref{app: baseline}.

Next, we consider what happens when $\eps$ is very small. It is natural to expect that as $\eps$ becomes sufficiently small, the sample complexity with $\eps$-contamination will be almost the same as that without any contamination. In \citet{PenEtal23}, such a bound was established for $\eps \lesssim \dtv^2(p,q)$. Here, we prove such a result for $\eps \lesssim \dhel^2(p,q)$, which improves upon the range of $\eps$ addressed in \citet{PenEtal23} .

\begin{proposition}[Sample complexities for small $\eps$]\label{prop:small_eps}
Suppose $\eps \le \frac{\dhel^2(p,q)}{9}$. Then 
$\nsTV(\eps) \asymp \nsHub(\eps) \asymp \nsSub(\eps) \asymp \frac{1}{\dhel^2(p,q)}$. 
\end{proposition}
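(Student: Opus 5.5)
Both directions reduce to comparing Hellinger distances. The lower bound $\ns_A(\eps)\gtrsim 1/\dhel^2(p,q)$ for $A\in\{\Hub,\TV,\Sub\}$ is immediate, since $p\in\cP_A(p,\eps)$ and $q\in\cP_A(q,\eps)$, so any robust test is also a test for the simple problem; Fact~\ref{fact: sc_simple} then applies. The same monotonicity gives $\nsHub(\eps),\nsSub(\eps)\le \nsTV(\cdot)$ at a comparable level. For Huber, $(1-\eps)p+\eps h$ is within TV $\eps$ of $p$. For subtractive, $p'\le(1+\eps)p$ gives $\dtv(p',p)=\sum(p'-p)_+\le\eps$. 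So it suffices to prove $\nsTV(\eps)\lesssim 1/\dhel^2(p,q)$. Since TV LFDs exist, $\nsTV(\eps)\asymp 1/\dhel^2(\ptv,\qtv)$, and the goal becomes
\[
\dhel^2(\ptv,\qtv)\gtrsim \dhel^2(p,q)\qquad\text{whenever } \eps\le \dhel^2(p,q)/9 .
\]

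To get this I would avoid the clip formulas and use the triangle inequality for the Hellinger metric $\dhel$, which is an $\ell_2$ distance of square-root densities. We have
\[
\dhel(p,q)\le \dhel(p,\ptv)+\dhel(\ptv,\qtv)+\dhel(\qtv,q),
\]
and Fact~\ref{fact: hel_tv} bounds $\dhel^2(p,\ptv)\le 2\dtv(p,\ptv)\le 2\eps$, and likewise for $q$. Hence
\[
\dhel(\ptv,\qtv)\ge \dhel(p,q)-2\sqrt{2\eps}\ge \dhel(p,q)\bigl(1-2\sqrt2/3\bigr)>0,
\]
using $\sqrt{\eps}\le \dhel(p,q)/3$. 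Squaring gives $\dhel^2(\ptv,\qtv)\ge c\,\dhel^2(p,q)$ with $c=(1-2\sqrt2/3)^2>0$.

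The same argument in fact works for any pair $p'\in\cP_\TV(p,\eps)$, $q'\in\cP_\TV(q,\eps)$. So even without invoking LFDs, one can argue through Le Cam's convex-hull characterization noted in the related work: the uncertainty sets are convex, and the minimal Hellinger distance between them is at least $c\,\dhel^2(p,q)$. With LFDs available this route is unnecessary.

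The only delicate point is the constant $9$. It must make $2\sqrt{2\eps}$ strictly smaller than $\dhel(p,q)$. With the paper's normalization of $\dhel^2$ (no factor $1/2$), $\sqrt{2\eps}\le \sqrt2\,\dhel/3$ gives $2\sqrt2/3<1$, so this works. A slightly careless bound such as $\dhel^2\le 2\dtv$ with an extra factor would break it. Otherwise the argument is routine; combining the three bounds gives $\nsTV\asymp\nsHub\asymp\nsSub\asymp 1/\dhel^2(p,q)$.
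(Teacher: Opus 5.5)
Your proposal is correct and follows the paper's proof essentially line for line. The lower bound comes from the uncontaminated problem, and the Huber and subtractive upper bounds transfer from TV via $\cP_\Hub(p,\eps),\cP_\Sub(p,\eps)\subseteq\cP_\TV(p,\eps)$. The TV upper bound uses the Hellinger triangle inequality with $\dhel^2\le 2\dtv$, which gives $\dhel(p',q')\ge(1-2\sqrt2/3)\,\dhel(p,q)$ for every pair in the $\eps$-TV balls, and in particular for the TV-LFDs.
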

The proof follows by a simple application of the triangle inequality for Hellinger distance. Specifically, for any $p'$ and $q'$ in the respective uncertainty sets, we can show that $\dhel(p', q') \asymp \dhel(p,q)$. Thus, the smallest Hellinger distance between the uncertainty sets remains to be $\Theta(\dhel(p,q))$, and so the sample complexity remains $\asymp 1/\dhel^2(p,q)$.
The proof details are deferred to Appendix~\ref{app: small_eps}.

\subsection{Distribution family exhibiting instability}\label{subsection: family}

When $\eps \lesssim \dhel^2(p,q)$ the robust sample complexity coincides with the classical one and is essentially constant in $\eps$. We now show that beyond this regime, the behaviour may change drastically. We do this by constructing an example such that a small change in the contamination parameter $\eps$ causes a polynomial jump in the sample complexity. Our construction ensures that the contamination parameters are always at most $\dtv(p,q)/4$ to highlight that these jumps do not happen because of the uncertainty  sets intersecting, but because of the inherent mismatch between the Hellinger metric---which characterises the sample complexity---and the shapes of uncertainty sets. In particular, the sample complexity jumps do not occur if the uncertainty sets around $p$ and $q$ are Hellinger balls instead of the TV, Huber, or subtractive uncertainty sets.

Our example showing the instability of sample complexity in $\eps$ is very simple, and yet quite illustrative. Consider the following distributions supported on the three point set $\{1, 2, 3\}$:
\begin{align*}
p(1) &= \frac{1}{2} - 10\eps, &\quad q(1) &= \frac{1}{2},\\
p(2) &= \frac{1}{2} + 8\eps, &\quad q(2) &= \frac{1}{2},\\
p(3) &= 2\eps           &\quad q(3) &= 0.
\end{align*}
The total variation distance is $\dtv(p,q) = 10\eps$. In all our calculations, we'll assume contamination levels of $\eps$ or lower, which ensures that the uncertainty sets are strictly separated for TV, Huber, and subtractive contamination. 

The key intuition underlying the sample complexity jumps in all three models is the following. Observe that the most informative symbol is symbol $\{3\}$: its presence is irrefutable evidence for $p$, and it is the largest contributor to the Hellinger divergence between $p$ and $q$ (contributing $\Theta(\eps)$ compared to $\Theta(\eps^2)$ from $\{1\}$ and $\{2\}$). We show that if the contamination level is large enough to completely cancel out the impact of symbol $\{3\}$, then the sample complexity is very high. But if the contamination level is slightly smaller and cannot completely cancel out the impact of $\{3\}$, the resulting sample complexity drops down significantly. 

\paragraph{TV contamination:} Consider two possible contamination parameters: $\eps_2 = \eps$ and $\eps_1 = \eps - \delta$, where $\delta \le \eps/2$ is thought of as a small perturbation to the contamination parameter. Observe that under $\eps$-contamination, the LFDs are given by
\begin{align*}
p^*_{\eps_2}(1) &= \frac{1}{2} - 9\eps, &\quad q^*_{\eps_2}(1) &= \frac{1}{2} - \eps,\\
p^*_{\eps_2}(2) &= \frac{1}{2} + 8\eps, &\quad q^*_{\eps_2}(2) &= \frac{1}{2},\\
p^*_{\eps_2}(3) &= \eps           &\quad q^*_{\eps_2}(3) &= \eps.
\end{align*}
It is easy to see that
\[
\dhel^2(p^*_{\eps_2},q^*_{\eps_2})
=\left(\sqrt{\tfrac12-9\eps}-\sqrt{\tfrac12-\eps}\right)^2
 +\left(\sqrt{\tfrac12+8\eps}-\sqrt{\tfrac12}\right)^2
=\Theta(\eps^2).
\]
So the sample complexity must satisfy $\nsTV(\eps_2) \asymp \frac{1}{\eps^2}$. Now set $\eps_1 = \eps-\delta$. We may calculate the LFDs in a similar manner and directly evaluate the Hellinger divergence. Observe that the contribution to the Hellinger divergence from $\{1\}$ and $\{2\}$ is $\Theta(\eps^2)$. We shall show that the contribution from $\{3\}$ may be much larger. We have 
\begin{align*}
p^*_{\eps_1}(3) &= p(3) - \eps_1 = \eps+\delta \quad \text{ and }\\
q^*_{\eps_1}(3) &= q(3) + \eps_1 = \eps - \delta.
\end{align*}
Hence, 
\begin{align*}
\dhel^2(p^*_{\eps_1}, q^*_{\eps_1}) &\asymp \eps^2+ \left(\sqrt{p_{\eps_1}(3)} - \sqrt{q_{\eps_1}(3)}\right)^2 \\
&= \eps^2 + \left(\sqrt{\eps+\delta} - \sqrt{\eps-\delta}\right)^2\\
&= \eps^2 + \eps\left(\sqrt{1 + \frac{\delta}{\eps}} - \sqrt{1 - \frac{\delta}{\eps}} \right)^2\\
&\asymp \eps^2 + \eps \left( \frac{\delta^2}{\eps^2}\right) \tag{$\sqrt{1+x} = 1+ x/2 + o(x)$}\\
&\asymp \eps^2 + \frac{\delta^2}{\eps}.
\end{align*}
Choosing $\delta = \eps^{1+t}$ for some $0 < t < 1/2$, we have shown
\begin{align*}
\nsTV(\eps_2) \asymp \frac{1}{\eps^2} \quad \text{and} \quad \nsTV(\eps_1) \asymp \frac{1}{\eps^{1+2t}}
\end{align*}
In particular, for $t = 0$, we see that perturbations on the same order of $\eps$ lead to a quadratic jump in the sample complexity: from $1/\eps$ to $1/\eps^2$. In general, for $0< t < 1/2$, perturbations on the scale of $\eps^{1+t} = o(\eps)$ lead to polynomial jumps from $1/\eps^{1+2t}$ to $1/\eps^2$. This shows that even if we know $\eps$ very well, up to errors that are as small as $\eps^{3/2-\eta}$ for $\eta>0$, the sample complexity at $\eps$ and its perturbed value could be polynomially far apart. 

\paragraph{Huber contamination:} We continue using the same construction of $p$ and $q$ as used in the TV-contamination setting. Consider $\eps_2 = 2\eps/(1+2\eps) = 2\eps + O(\eps^2)$. The LFDs are given by
\begin{align*}
p^*_{\eps_2}(1) &= p(1)+(p(2) + p(3))\eps_2 &\quad q^*_{\eps_2}(1) &= q(1)(1-\eps_2)\\
p^*_{\eps_2}(2) &= p(2)(1-\eps_2) &\quad q^*_{\eps_2}(2) &= q(2)(1-\eps_2)\\
p^*_{\eps_2}(3) &= p(3)(1-\eps_2) &\quad q^*_{\eps_2}(3) &= (q(1) + q(2))\eps_2.
\end{align*}
Crucially, we see that 
\begin{align*}
p^*_{\eps_2}(3) = (1-\eps_2)2\eps = 2\eps/(1+2\eps) = \eps_2 = q^*_{\eps_2}(3).
\end{align*}
Thus, the Hellinger divergence between $p^*_{\eps_2}$ and $q^*_{\eps_2}$ is determined by the values at 1 and 2, and is easily seen to be $\Theta(\eps^2)$. This gives
\begin{align*}
\nsHub(\eps_2) \asymp \frac{1}{\eps^2}.
\end{align*}
Now, just like in the TV case, we choose a value of $\eps_1$ slightly smaller than $\eps_2$ such that it is impossible to cancel out the contribution to the Hellinger divergence from $\{3\}$. In particular, choose $\eps_1 = \eps_2-\eps^{1+t}$ for $0<t<1/2$. This means $\eps_1 = 2\eps - \Theta(\eps^{1+t})$. Carrying out a similar calculation as above, we have
\[
\left(\sqrt{2\eps(1-\eps_1)}-\sqrt{\eps_1}\right)^2
=\Theta(\eps^{1+2t}),
\]
while the first two coordinates contribute \(O(\eps^2)\).
Hence, $\dhel^2(p^*_{\eps_1}, q^*_{\eps_1}) \asymp \eps^{1+2t}$ and we observe the same phenomenon as before, that is,
\begin{align*}
\nsHub(\eps_2) \asymp \frac{1}{\eps^2} \quad \text{and} \quad \nsHub(\eps_1) \asymp \frac{1}{\eps^{1+2t}}
\end{align*}
and $\eps_1 = \eps_2 - o(\eps_1)$. The same $3/2$ threshold appears in this calculation as well.

\paragraph{Subtractive contamination:} We continue using the same construction of $p$ and $q$ as used in the TV-contamination setting. Choose $\eps_2 = 2\eps/(1-2\eps) = 2\eps + O(\eps^2)$. The LFDs are given by 
\begin{align*}
p^*_{\eps_2}(1) &= p(1)(1+\eps_2) &\quad q^*_{\eps_2}(1) &= q(1)(1-\eps_2)\\
p^*_{\eps_2}(2) &= p(2)(1+\eps_2) &\quad q^*_{\eps_2}(2) &= q(2)(1+\eps_2)\\
p^*_{\eps_2}(3) &= 0 &\quad q^*_{\eps_2}(3) &= 0.
\end{align*}
Note that $p^*_{\eps_2}$ is a valid distribution as 
\begin{align*}
p^*_{\eps_2}(1) + p^*_{\eps_2}(2) = (1+\eps_2)(p(1) + p(2)) = (1+\eps_2)(1-2\eps) = 1.
\end{align*}
We can check that $\dhel^2(p^*_{\eps_2}, q^*_{\eps_2}) \asymp \eps^2$, and so 
\begin{align*}
\nsSub(\eps_2) \gtrsim \frac{1}{\eps^2}.
\end{align*}
Pick $0<t<1$ and set $\eps_1 = \eps_2 - \eps^{1+t} = 2\eps - \eps^{1+t} + O(\eps^2)$. Calculating the Hellinger divergence between the LFDs $p^*_{\eps_1}$ and $q^*_{\eps_1}$ as before, we have
\[
p^*_{\eps_1}(3)=2\eps(1+\eps_1)-\eps_1=\Theta(\eps^{1+t}),\qquad
q^*_{\eps_1}(3)=0,
\]
so the coordinate \(3\) contribution is \(\Theta(\eps^{1+t})\), and the other coordinates contribute \(O(\eps^2)\). %
Thus, $\dhel^2(p^*_{\eps_1}, q^*_{\eps_1}) \asymp  \eps^{1+t}$ and we observe a similar polynomial jump in the sample complexity, that is
\begin{align*}
\nsSub(\eps_2) \asymp \frac{1}{\eps^2} \quad \text{and} \quad \nsSub(\eps_1) \asymp \frac{1}{\eps^{1+t}}
\end{align*}
where $\eps_1 = \eps_2 - o(\eps_1)$.

We summarise our findings in the following theorem:

\begin{theorem}[Polynomial jumps in sample complexity]\label{thm:poly-jumps}
Let $p$ and $q$ be defined by
\begin{align*}
p(1) &= \frac{1}{2}-10\eps,
&
q(1) &= \frac{1}{2},\\
p(2) &= \frac{1}{2}+8\eps,
&
q(2) &= \frac{1}{2},\\
p(3) &= 2\eps,
&
q(3) &= 0.
\end{align*}
Then $\dtv(p,q)=10\eps$. Moreover, the following hold for all $\eps$ sufficiently small (depending only on the constants in $p,q$): %

\begin{enumerate}[label=(\roman*)]
\item For TV contamination, for every $0<t<1/2$, if $\eps_2 = \eps$ and $\eps_1 = \eps-\eps^{1+t}$,
then $0<\eps_1<\eps_2\leq \dtv(p,q)/4$, and $\nsTV(\eps_2)
\asymp
\frac{1}{\eps^2}$ and $\nsTV(\eps_1)
\asymp
\frac{1}{\eps^{1+2t}}.$

\item For Huber contamination, for every $0<t<1/2$, if $\eps_2 = \frac{2\eps}{1+2\eps}$ and 
$\eps_1 = \eps_2-\eps^{1+t},$ then $0<\eps_1<\eps_2\leq \dtv(p,q)/4$, and
$\nsHub(\eps_2) \asymp \frac{1}{\eps^2}$
and
$\nsHub(\eps_1) \asymp \frac{1}{\eps^{1+2t}}.$

\item For subtractive contamination, for every $0<t<1$, if
$\eps_2 = \frac{2\eps}{1-2\eps},$ and $\eps_1 = \eps_2-\eps^{1+t},$ then $0<\eps_1<\eps_2\leq \dtv(p,q)/4$, and $\nsSub(\eps_2) \asymp \frac{1}{\eps^2}$ and $\nsSub(\eps_1) \asymp \frac{1}{\eps^{1+t}}.$
\end{enumerate}
\end{theorem}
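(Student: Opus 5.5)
The proof reduces each part to a Hellinger computation between least favourable distributions. Since LFDs exist for all three models (\citet{Hub65} for Huber and TV, Lemma~\ref{lemma: exists_lfd} and Theorem~\ref{prop: lfd_sub_formulas} for subtractive), the fact on sample complexity when LFDs exist gives $\ns_A(\eps') \asymp 1/\dhel^2(p^*_{\eps'},q^*_{\eps'})$. So in each model I need three things: explicit clips $\cl,\cu$ at $\eps_1$ and $\eps_2$, a check that the resulting candidates satisfy the defining fixed-point equations~\eqref{eq: clips_tv}, \eqref{eq: clips_hub}, \eqref{eq: clips_sub}, and then the evaluation of $\dhel^2$ coordinate by coordinate. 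The preliminary claims are routine: $\dtv(p,q)=\tfrac12(10\eps+8\eps+2\eps)=10\eps$, and $\eps_1<\eps_2\le \tfrac{5}{2}\eps$ holds for small $\eps$ because $\eps_2\le 2\eps+O(\eps^2)$. Positivity of $\eps_1$ follows from $\eps^{1+t}=o(\eps)$.

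\textbf{Verifying the clips.} For each model I would guess the partition from the heuristic already described. Symbol $3$ (likelihood ratio $\infty$) forms $H$, symbol $1$ (ratio $1-20\eps$) forms $L$, and symbol $2$ lies in $M$. I then solve the two clip equations with this partition and check consistency: the lower clip must satisfy $\cl\ge 1-20\eps$, $\cu$ must be finite, and symbol $2$'s ratio $1+16\eps$ must lie in $[\cl,\cu]$.
\begin{itemize}
\item \textbf{TV.} $H=\{3\}$ gives $\cu\cdot 0$ terms, so $p(H)/(1+\cu)=\eps_i$, i.e.\ $\cu=2\eps/\eps_i-1$, which is finite and at least $1+16\eps$. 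The lower clip from $(\cl/2-p(1))/(1+\cl)=\eps_i$ is $1-O(\eps)$. This reproduces the LFDs displayed in the TV paragraph; in particular $p^*(3)=2\eps-\eps_i$ and $q^*(3)=\eps_i$.
\item \textbf{Huber.} The Huber paragraph writes $\phub(1)=p(1)+(p(2)+p(3))\eps_2$, which does not literally match~\eqref{eq: lfd_hub}. I would redo this case directly from~\eqref{eq: lfd_hub}. Symbol $3$ lies in $H$ with $\qhub(3)=(1-\eps_i)2\eps/\cu$, and the equation $p(H)/\cu=\eps_i/(1-\eps_i)$ gives $\qhub(3)=\eps_i$. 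The Hellinger contribution of symbol $3$ is therefore $(\sqrt{2\eps(1-\eps_i)}-\sqrt{\eps_i})^2$. At $\eps_2=2\eps/(1+2\eps)$ this vanishes, since $2\eps(1-\eps_2)=\eps_2$. At $\eps_1=\eps_2-\eps^{1+t}$ the two square roots differ by $\Theta(\eps^{1+t})/\Theta(\sqrt\eps)$, giving $\Theta(\eps^{1+2t})$.
\item \textbf{Subtractive.} $H=\{3\}$ with $q(3)=0$ makes the upper clip equation read $p(3)=\eps_i/(1+\eps_i)$. At $\eps_2$ we are in the boundary case: $p(\bar H)=2\eps=\eps_2/(1+\eps_2)$, and part~(b) of Theorem~\ref{prop: lfd_sub_formulas} gives $\psub(3)=(1+\eps_2)2\eps-\eps_2=0$. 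At $\eps_1$ part~(b) applies again, since $p(\bar H)>\eps_1/(1+\eps_1)$, and gives $\psub(3)=2\eps(1+\eps_1)-\eps_1=\Theta(\eps^{1+t})$. Meanwhile $\qsub(3)=0$, so the contribution is linear rather than quadratic in the gap, which explains the range $0<t<1$.
\end{itemize}

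\textbf{Controlling symbols $1$ and $2$.} In every case the LFD entries at symbols $1$ and $2$ are $\tfrac12+O(\eps)$. Hence each of those two coordinates contributes $(\sqrt a-\sqrt b)^2=(a-b)^2/(\sqrt a+\sqrt b)^2=O(\eps^2)$. For the lower bound $\Omega(\eps^2)$ at $\eps_2$, I need $|a-b|=\Theta(\eps)$ on at least one of these coordinates; this holds because the contamination budget $\approx 2\eps$ cannot close the $\Theta(\eps)$ gaps at both symbols. Concretely, for TV at symbol $2$ the gap is $8\eps$ and stays untouched. For Huber and subtractive I would check via~\eqref{eq: clips_hub} and~\eqref{eq: clips_sub} that the lower clip stays at $1-\Theta(\eps)$ bounded away from $1$, so symbol $1$ keeps a $\Theta(\eps)$ gap. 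Adding the coordinate-$3$ contributions then gives $\dhel^2 \asymp \eps^2+\eps^{1+2t}\asymp\eps^{1+2t}$ for $t<1/2$ (TV, Huber) and $\asymp\eps^{1+t}$ for $t<1$ (subtractive), while at $\eps_2$ the total is $\Theta(\eps^2)$.

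\textbf{Main obstacle.} The delicate step is making sure the guessed partitions actually solve the fixed-point equations at both contamination levels. This matters most in the Huber and subtractive lower clips, where the paper's displayed formulas are schematic. It also requires confirming that symbol $2$ never enters $H$, which would change the structure. I would handle this by solving the one-variable equation for $\cl$ explicitly and expanding it to first order in $\eps$.
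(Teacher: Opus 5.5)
Your route is the paper's: write down the LFDs in each model and bound $\dhel^2$ coordinate by coordinate. Your $\eps_1$ computations in all three models and the whole subtractive case are correct.

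\textbf{The gap.} The consistency check you single out as the main obstacle actually fails at $\eps_2$ in the TV and Huber cases.
\begin{itemize}
\item With $H=\{3\}$, the TV upper-clip equation gives $\cu=2\eps/\eps_2-1=1$ at $\eps_2=\eps$. It is not ``at least $1+16\eps$'' as you assert.
\item The Huber equation gives $\cu=2\eps(1-\eps_2)/\eps_2=1$ at $\eps_2=2\eps/(1+2\eps)$.
\end{itemize}
This is no accident. $\eps_2$ is chosen exactly so that symbol $3$ is neutralised, i.e.\ clipped to ratio $1$, and then symbol $2$, with ratio $1+16\eps>1$, must also lie in $H$.

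The true upper clips at $\eps_2$ are $\cu=(1+18\eps)/(1+2\eps)$ for TV and $\cu=(1+20\eps)/(1+4\eps)$ for Huber, with $H=\{2,3\}$ and $M=\emptyset$. Symbol $2$ is therefore clipped, not ``untouched''. Symbol $3$ has LFD ratio $\cu$ and contributes $O(\eps^3)$, not $0$. The pair you compute at $\eps_2$ has ratio $1$ at symbol $3$ but $1+16\eps$ at symbol $2$, so it is not of clipped form and is not the LFD pair. The pairs displayed in Section~\ref{subsection: family} at $\eps_2$ have the same feature, so matching them does not validate your guess.

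\textbf{The repair.} The conclusion survives, and the cleanest fix avoids LFDs at $\eps_2$ altogether.
\begin{itemize}
\item Your pair $(p',q')$ at $\eps_2$ does lie in the respective uncertainty sets. Restricting the adversary to i.i.d.\ draws from $p'$ or $q'$ gives $\ns(\eps_2)\gtrsim 1/\dhel^2(p',q')\asymp 1/\eps^2$.
\item The matching bound $\ns(\eps_2)\lesssim 1/\dtv^2(p,q)\asymp 1/\eps^2$ is Proposition~\ref{prop:baseline}, since $\eps_2\le\dtv(p,q)/4$. Alternatively, symbol $1$, whose lower-clip computation with $L=\{1\}$ is correct, already contributes $\Theta(\eps^2)$ to the true LFD Hellinger divergence.
\item The LFDs are genuinely needed only at $\eps_1$. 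There your partition is consistent, because $\cu-1=\Theta(\eps^t)\gg 16\eps$: about $2\eps^t$ for TV and $\eps^t/2$ for Huber.
\end{itemize}

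\textbf{A minor point.} The paper's Huber formula $\phub(1)=p(1)+(p(2)+p(3))\eps_2$ does agree with~\eqref{eq: lfd_hub}. It is just $(1-\eps_2)\cl q(1)$ rewritten using normalisation.
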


\begin{remark}
The fact that $p$ and $q$ have different supports is not critical to the above result. We may allow $p(3) = 3\eps$ and $q(3) = \eps$ (and adjust the other values suitably) and still get polynomial jumps in the sample complexity. Here, the $\eps$-TV contamination will be able to cancel the impact of $\{3\}$ but $(\eps-\delta)$-TV contamination cannot. From this observation (and analogous observations in the Huber and subtractive settings) we may derive identical conclusions about the instability of the sample complexity for such an example. Thus, even under a bounded likelihood ratio condition on $p$ and $q$, such polynomial jumps in the sample complexity cannot be ruled out.
\end{remark}

\subsection{Observations and consequences} 

We make several observations.

\begin{itemize}
\item[(i)] \emph{Instability in all models:} Our primary observation from Theorem~\ref{thm:poly-jumps} is that in all three contamination models, the sample complexity can change drastically for small changes in $\eps$. It is interesting to note that for subtractive contamination, the sample complexity with a $\eps^{1+t}$-sized perturbation ($\asymp 1/\eps^{1+t}$) is smaller than the sample complexity under the Huber and TV-contamination settings ($\asymp 1/\eps^{1+2t}$). This difference is because subtractive contamination is, in some sense, weaker than Huber or TV contamination. To be precise, any $q'$ within the subtractive uncertainty set has to satisfy $q'(3) = 0$ (as  $q(3)=0$). In contrast, the Huber and TV-contamination settings allow $q(3)$ to be non-zero.

\item[(ii)] \emph{Different degrees of instability:} In the Huber and TV settings, our examples show that perturbations larger than $\sim \eps^{3/2}$ may lead to polynomial jumps in the sample complexity. For the subtractive adversary, the situation is even worse: perturbations larger than $\sim \eps^2$ may cause polynomial jumps. 

\item[(iii)] \emph{TV-contamination is strictly worse:} In Lemma~\ref{lemma: containment} we establish some properties of TV, Huber, and subtractive uncertainty sets. In particular, for the same contamination $\eps$, the TV uncertainty set contains both the Huber and subtractive uncertainty sets. Thus, TV-contamination is stronger than Huber or subtractive contamination. Our example shows that it can be \emph{much} stronger. Specifically, in our example we have $\nsTV(\eps) \asymp 1/\eps^2$, which is much larger than $\nsHub(\eps) \asymp \nsSub(\eps) \asymp \frac{1}{\eps}$. This is  because TV contamination can completely cancel out the impact of symbol $\{3\}$, which is the largest contributor to the Hellinger divergence, but Huber and subtractive contamination cannot.

\item[(iv)] \emph{Sandwich bounds:} In our example, both Huber and subtractive contamination \emph{can} cancel the impact of symbol $\{3\}$ at $2\eps + O(\eps^2)$ contamination. This suggests a possible positive direction, that although sample complexities cannot be compared across models for the same $\eps$, they can be compared by constant-factor scalings of $\eps$. That is, for any $A, B \in \{\TV, \Hub, \Sub\}$, there may be universal constants $c, C >0$ such that
\begin{align*}
\ns_A(c\eps) \lesssim \ns_B(\eps) \lesssim \ns_A(C\eps).
\end{align*}
\end{itemize}

Observation (i) suggests that there is unlikely to be a simple formula for the sample complexity of robust hypothesis testing in terms of, for example, a divergence between $p$ and $q$. Observation (ii) raises the question of whether $\eps^{3/2}$ and $\eps^2$ are true thresholds for the TV/Huber and subtractive settings, respectively; that is, whether perturbations much smaller than these scales may only cause constant-factor changes in the sample complexity. Observation (iii) suggests that comparing contamination models at the same $\eps$ may not be the right approach as the sample complexity may span the entire range from $1/\eps$ to $1/\eps^2$ stated in Proposition~\ref{prop:baseline}.
  
\subsection{Stability under $O(\eps^2)$ perturbations}\label{subsection: no_jumps}
We shall partially address the question raised by observation (ii) here. Specifically, we establish Proposition~\ref{prop: no_jumps} that shows that perturbations $\sim \eps^2$ or smaller may only cause constant factor changes in the sample complexity in all three models. In particular, this shows that the subtractive adversary has a threshold precisely at $\sim \eps^2$: perturbations of order $\eps^{2-\eta}$ can cause polynomial jumps in the sample complexity, whereas perturbations of the order $\eps^{2+\eta}$ can only cause constant factor changes. 

\begin{proposition}[No jumps for $O(\eps^2)$ perturbations]\label{prop: no_jumps}
Let $p$, $q$ be arbitrary probability distributions on a discrete space and let $\eps_0 \le \dtv(p,q)/4$. There exists a constant $c>0$ such that for all $\eps$ satisfying $0 \le \eps - \eps_0 \le c\eps_0^2$, the sample complexities at $\eps$ and $\eps_0$ are comparable; i.e., $\nsTV(\eps_0) \asymp \nsTV(\eps)$, $\nsHub(\eps_0) \asymp \nsHub(\eps)$, and $\nsSub(\eps_0) \asymp \nsSub(\eps)$.
\end{proposition}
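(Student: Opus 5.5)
One direction is free. The uncertainty sets are nested in $\eps$ for each model, so $\ns_A(\eps_0)\le \ns_A(\eps)$ whenever $\eps\ge\eps_0$. It therefore suffices to show $\ns_A(\eps)\lesssim \ns_A(\eps_0)$. By the LFD facts this amounts to showing
\[
\dhel^2(p^*_\eps,q^*_\eps)\gtrsim \dhel^2(p^*_{\eps_0},q^*_{\eps_0}).
\]
For the subtractive model the LFDs come from Theorem~\ref{prop: lfd_sub_formulas}.

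The plan is to compare the two LFD pairs with the Hellinger triangle inequality:
\[
\dhel(p^*_{\eps_0},q^*_{\eps_0})\le \dhel(p^*_{\eps_0},p^*_\eps)+\dhel(p^*_\eps,q^*_\eps)+\dhel(q^*_\eps,q^*_{\eps_0}).
\]
By Fact~\ref{fact: hel_tv}, each cross term satisfies $\dhel^2\le 2\dtv$. So it is enough to show $\dtv(p^*_\eps,p^*_{\eps_0})\lesssim \eps-\eps_0\le c\eps_0^2$, and likewise for $q$. Proposition~\ref{prop:baseline} gives $\dhel^2(p^*_{\eps_0},q^*_{\eps_0})\gtrsim 1/\ns(\eps_0)\gtrsim \dtv^2(p,q)\ge 16\eps_0^2$. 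Hence the cross terms satisfy $\dhel\le\sqrt{2c}\,\eps_0$, which is at most a small fraction of $\dhel(p^*_{\eps_0},q^*_{\eps_0})$ once $c$ is small enough. Rearranging gives the claim.

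The key step is the Lipschitz bound $\dtv(p^*_\eps,p^*_{\eps_0})\lesssim |\eps-\eps_0|$ in each model. Rather than differentiate the clips, I would argue through the clip equations.

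\textbf{TV.} The clip $\cu$ is defined by $\sum_x (p-\cu q)_+/(1+\cu)=\eps$. The LFD mass moved on $H$ is exactly $\eps$, and the LFD varies monotonically in $\cu$. So the LFDs at two levels differ by the extra mass moved, giving $\dtv\le|\eps-\eps_0|$. I would check this by writing $p^*$ as a coordinatewise monotone function of the clip, so that total variation equals the change in total moved mass on each side.

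\textbf{Huber and subtractive.} The same monotonicity argument applies. The right-hand sides $\eps/(1-\eps)$ and $\eps/(1+\eps)$ are Lipschitz on $[0,1/2]$. In each case the LFD on $L\cup M$ is a scalar multiple of $p$ whose scalar changes by $O(|\eps-\eps_0|)$. On $H$ the LFD is monotone in $\cu$, and the total mass there changes by $O(|\eps-\eps_0|)$. Together these give $\dtv=O(|\eps-\eps_0|)$. The non-existence cases (b), (c) of Theorem~\ref{prop: lfd_sub_formulas} are explicit and equally Lipschitz in $\eps$.

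The main obstacle I expect is making this monotone-coupling argument rigorous uniformly, in particular when a clip crosses a likelihood-ratio value so that points move between $M$ and $H$. I would handle it by noting that each coordinate of $p^*$ is a continuous, monotone function of the clip. Summing coordinatewise absolute changes then equals the net mass change on each side, which is controlled by the clip equation.
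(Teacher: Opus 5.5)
Your argument is correct, but its key step differs from the paper's. Both proofs use the same three ingredients:
\begin{itemize}
\item the free direction from monotonicity;
\item the Hellinger triangle inequality with $\dhel^2\le 2\dtv$;
\item a lower bound $\dhel(p^*_{\eps_0},q^*_{\eps_0})\gtrsim\eps_0$. The paper gets this directly from $\dhel\ge\dtv(p^*_{\eps_0},q^*_{\eps_0})\ge\dtv(p,q)-2\eps_0\ge 2\eps_0$; your route through Proposition~\ref{prop:baseline} also works.
\end{itemize}

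You prove that the LFD map itself is Lipschitz, $\dtv(p^*_\eps,p^*_{\eps_0})\lesssim\eps-\eps_0$. The paper's Lemma~\ref{lemma: tv_ball_at_p_eps} needs much less. It only asks that every $p_\eps$ in the $\eps$-set be within $\eps-\eps_0$ in TV of \emph{some} element of the $\eps_0$-set. That element comes from a convex combination: $(1-\eps_0/\eps)p+(\eps_0/\eps)p_\eps$ for TV and subtractive, or $(1-\eps_0)p+\eps_0 h$ for Huber. Since $(p^*_{\eps_0},q^*_{\eps_0})$ minimises Hellinger distance over the $\eps_0$-sets, the nearby point need not be the LFD. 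So the paper's argument needs no clip equations, no bookkeeping for points crossing between $M$ and $H$, and no case analysis for non-existent subtractive clips. It also lower-bounds $\dhel$ for every pair in the $\eps$-sets at once.

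Your route gives a stronger by-product: the LFDs move $1$-Lipschitzly in TV. Your sketch of this can be completed, but for Huber and subtractive the LFD is not monotone in the clip alone. For example, $\psub=\min\{(1+\eps)p,(1+\eps)\cu q\}$. Multiplying the clip equation by $1+\eps$ shows that $(1+\eps)\cu$ is nonincreasing in $\eps$. Hence each coordinate increases by at most $(\eps-\eps_0)p(i)$, including in the non-existence cases. Since both are probability vectors, this gives $\dtv\le\eps-\eps_0$. Huber is the mirror image: $\phub=\max\{(1-\eps)p,(1-\eps)\cl q\}$ with $(1-\eps)\cl$ nondecreasing in $\eps$.
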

\begin{proof}[Proof of Proposition~\ref{prop: no_jumps}]
Suppose $\eps = \eps_0 + c\eps_0^2$ for some $c$ to be determined later. Consider the TV-contamination case for now. Let the Hellinger minimising pairs at $\eps$ and $\eps_0$-contamination be $(p_\eps^*, q_\eps^*)$ and $(p_{\eps_0}^*, q_{\eps_0}^*)$, respectively. By monotonicity,
\begin{align*}
\dhel(p_{\eps_0}^*, q_{\eps_0}^*) \geq \dhel(p_\eps^*, q_\eps^*).
\end{align*}
We now prove a matching lower bound for $\dhel(p_\eps^*, q_\eps^*)$. We need the following technical lemma, proved in Appendix~\ref{app: tv_ball_at_p_eps}:
\begin{lemma}\label{lemma: tv_ball_at_p_eps}
Let $p$ be a probability distribution and let $0 < \eps_0 < \eps$. Suppose $p_\eps \in \cP_\eps$, where $\cP_\eps$ is the $\eps$-uncertainty set in either the TV, Huber, or subtractive contamination settings. Then there exists a $p_{\eps_0} \in \cP_{\eps_0}$ such that $\dtv(p_\eps, p_{\eps_0}) \leq \eps-\eps_0.$
\end{lemma}
By Lemma~\ref{lemma: tv_ball_at_p_eps}, for any $p_\eps, q_\eps$ in the $\eps$-TV uncertainty sets, there exist $p_{\eps_0}$ and $q_{\eps_0}$ in the $\eps_0$-TV uncertainty sets such that
\begin{align*}
\dtv(p_\eps, p_{\eps_0}) \leq c\eps_0^2, \quad \text{ and } \quad \dtv(q_\eps, q_{\eps_0}) \leq c\eps_0^2. 
\end{align*}
From Fact~\ref{fact: hel_tv}, we have the relation $\dhel \leq \sqrt{2\dtv}$, and applying the triangle inequality for the Hellinger distance, we have
\begin{align*}
\dhel(p_\eps, q_\eps) &\geq \dhel(p_{\eps_0}, q_{\eps_0}) - \dhel(p_{\eps_0}, p_\eps) - \dhel(q_{\eps_0}, q_\eps)\\
&\geq \dhel(p_{\eps_0}, q_{\eps_0}) - 2\eps_0\sqrt{2c}\\
&\geq \dhel(p_{\eps_0}^*, q_{\eps_0}^*) - 2\eps_0\sqrt{2c} \tag{$(p_{\eps_0}^*, q_{\eps_0}^*)$ is the Hellinger-minimising pair}
\end{align*}
However, observe also that
\begin{align*}
\dhel(p_{\eps_0}^*, q_{\eps_0}^*) &\geq \dtv(p_{\eps_0}^*, q_{\eps_0}^*) \tag{$\dhel \geq \dtv$}\\
&\geq \dtv(p,q) - 2\eps_0 \tag{triangle inequality}\\
&\geq 2\eps_0. \tag{$\eps_0 \leq \dtv(p,q)/4$}
\end{align*}
Choosing $c$ to be $1/8$, for instance, we can thus ensure
\begin{align*}
\dhel(p_\eps, q_\eps) &\geq \frac{\dhel(p_{\eps_0}^*, q_{\eps_0}^*)}{2}.
\end{align*}
As this is true for any choice of $p_\eps, q_\eps$, it also true for the Hellinger minimising pair. Hence,
\begin{align*}
\dhel(p_\eps^*, q_\eps^*) &\geq \frac{\dhel(p_{\eps_0}^*, q_{\eps_0}^*)}{2}.
\end{align*}
Combining the two bounds, we have shown
\begin{align*}
\dhel(p_\eps^*, q_\eps^*) \asymp \dhel(p_{\eps_0}^*, q_{\eps_0}^*),
\end{align*}
which immediately gives the desired sample complexity result. The proofs for the Huber and subtractive settings are identical once we replace $p_{\eps_0}$ and $q_{\eps_0}$ by the choices given in Lemma~\ref{lemma: tv_ball_at_p_eps}.
\end{proof}

\section{Sample complexity under model misspecification}\label{sec: mismatch}

In this section we consider the impact on the sample complexity when contamination is thought to be $\eps$, but the true value is different. We show that overestimating $\eps$ can lead to a significant increase in sample complexity, even when the overestimation is only up to a small $o(\eps)$ error. On the other hand, underestimating $\eps$ can be catastrophic: even a small $o(\eps)$ underestimation may cause the clipped likelihood ratio test calibrated to $\eps$ to completely break down. 

\subsection{Overestimating $\eps$}

We first discuss the setting where the statistician only knows that the contamination parameter
lies in an interval $[\eps_1,\eps_2]$, where $\eps_1<\eps_2$. For all three contamination models
considered in this paper, there is no difference between knowing $\eps=\eps_2$ and knowing only
that $\eps\in[\eps_1,\eps_2]$. Indeed, for each $A\in\{\Hub,\TV,\Sub\}$, the uncertainty sets are
monotone in $\eps$, and hence
\begin{align*}
\bigcup_{\eps\in[\eps_1,\eps_2]} \cP_A(p,\eps)
=
\cP_A(p,\eps_2).
\end{align*}
The same identity holds for the uncertainty set around $q$. Therefore, the minimax robust test
continues to be the clipped likelihood ratio test calibrated to $\eps_2$, and the relevant sample
complexity is $\ns_A(\eps_2)$.

However, if the true contamination level is actually $\eps_1$, this procedure may be highly
wasteful. The results of Section~\ref{subsection: family} show that the robust sample complexity
can be highly unstable in the contamination parameter. In particular, for each
$A\in\{\Hub,\TV,\Sub\}$, there exist distributions $p,q$ and contamination levels
$\eps_2=\eps_1+o(\eps_1)$ such that
\begin{align*}
\ns_A(\eps_1)
\ll
\ns_A(\eps_2).
\end{align*}
Thus, even an asymptotically negligible overestimate of the contamination level can force the
statistician to take polynomially more samples than would have been necessary if the true value
$\eps_1$ were known exactly. Equivalently, in the fixed-sample setting, the price of not knowing
$\eps$ exactly can be as large as the jumps exhibited in Theorem~\ref{thm:poly-jumps}.

\subsection{Underestimating $\eps$}
We now consider the opposite form of misspecification, where the statistician calibrates the
test to a contamination level $\eps_1$, but the true contamination level is some larger
$\eps_2>\eps_1$. In this case, the issue is not merely that the resulting test may be suboptimal:
it may fail completely. The
following theorem shows that this breakdown can occur in all three contamination models, even
when the underestimation error is only $o(\eps_2)$.

\begin{theorem}[Breakdown under underestimated contamination]\label{thm:underestimate_breakdown}
Consider the distributions on $\{1,2,3\}$ given by  $p = \left( \frac12 - 10\eps, \frac12 + 8\eps, 2\eps\right)$ and $q = \left( \frac12, \frac12, 0\right)$. For each model $A\in\{\TV,\Hub,\Sub\}$, there exist contamination levels
$\eps_{1}<\eps_{2}$ with $\eps_{1}=\eps_{2}-o(\eps_{2})$ such that the
clipped likelihood ratio test calibrated to $\eps_{1}$ breaks down under
$\eps_{2}$-contamination. More precisely, the following hold.
\begin{enumerate}[label=(\roman*)]
\item For TV contamination, fix $0<t<1/3$ and set
$\eps_{2} = \eps$ and $\eps_{1} = \eps-\eps^{1+t}.$

\item For Huber contamination, fix $0<t<1/2$ and set $\eps_{2} = \frac{2\eps}{1+2\eps}$ and $\eps_{1} = \eps_{2}-\eps^{1+t}.$

\item For subtractive contamination, fix $0<t<1$ and set $\eps_{2} = \frac{2\eps}{1-2\eps}$ and $\eps_{1} = \eps_{2}-\eps^{1+t}.$
\end{enumerate}

Let $\varphi_{A,\eps_{1},n}$ denote the $n$-sample likelihood ratio test between the
$A$-LFDs at contamination level $\eps_{1}$, with decision $0$ corresponding to $p$ and
decision $1$ corresponding to $q$. Then for all sufficiently small $\eps$, there exist
distributions $P_{2} \in \cP_A(p,\eps_{2})$ and $Q_{2} \in \cP_A(q,\eps_{2})$ such that
\begin{align*}
\liminf_{n\to\infty}\max\!\Bigl\{\prob_{P_2^{\otimes n}}(\varphi_{A,\eps_1,n}=1),\quad
\prob_{Q_2^{\otimes n}}(\varphi_{A,\eps_1,n}=0).\Bigr\}
\;=\;1.
\end{align*}
Thus, underestimating the contamination level can make the calibrated likelihood ratio test
fail by having at least one of the two errors tend to 1.
\end{theorem}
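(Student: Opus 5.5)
Each model is handled the same way. The test $\varphi_{A,\eps_1,n}$ thresholds $\sum_i \log s_1(X_i)$, where $s_1 = p^*_{\eps_1}/q^*_{\eps_1}$ is the $\eps_1$-clipped likelihood ratio and the threshold is $T_n$; under equal priors we take $T_n = 0$, and we note that any sublinear $T_n$ behaves the same. By the strong law of large numbers the statistic divided by $n$ converges to $\E_{X\sim P}[\log s_1(X)]$. The plan is to exhibit $P_2\in\cP_A(p,\eps_2)$ with
\begin{align*}
\E_{P_2}[\log s_1(X)] < 0,
\end{align*}
which makes the type-I error tend to $1$. Symmetrically, we could look for $Q_2$ with $\E_{Q_2}[\log s_1] > 0$, which makes the type-II error tend to $1$. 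Showing one of the two suffices for the $\max$. The natural candidate is $P_2 = p^*_{\eps_2}$, the LFD at level $\eps_2$, which cancels symbol $3$ as computed in Section~\ref{subsection: family}. If needed, we tilt it slightly within the slack $\eps_2-\eps_1$ to push mass toward low-$s_1$ symbols.

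First, compute $s_1$ explicitly on $\{1,2,3\}$ from the formulas \eqref{eq: lfd_tv}, \eqref{eq: lfd_hub} and \eqref{eq:lfd_sub} at level $\eps_1$. Write $\eta:=\eps^{1+t}$.
\begin{itemize}
\item TV: symbol $3$ has $s_1(3)=(\eps+\eta)/(\eps-\eta)$, so $\log s_1(3)\approx 2\eta/\eps = 2\eps^t$. Symbols $1,2$ carry $\log s_1 = O(\eps)$ with explicit signs: symbol $1$ is negative, about $-16\eps$, and symbol $2$ is positive, about $+16\eps$.
\item Huber: $\log s_1(3)\approx \eta/\eps$.
\item Subtractive: $q^*_{\eps_1}(3)=0$, so $s_1(3)=+\infty$ is the clip. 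Then $P_2$ must place zero mass on $3$; this is exactly what $p^*_{\eps_2}$ does, since $p^*_{\eps_2}(3)=0$.
\end{itemize}

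Second, evaluate $\E_{P_2}[\log s_1]$ to the relevant orders. Symbol $3$ contributes at most $P_2(3)\cdot O(\eps^t)$. We then choose $P_2$ to reduce mass at symbol $3$ (TV and Huber) or remove it entirely (subtractive), and to move mass onto symbol $1$, where $\log s_1<0$. In TV we can spend the full budget $\eps$, removing $\eps$ from symbol $3$ and placing it on symbol $1$. The positive contribution is then of order
\begin{align*}
(\eps+\eta)\cdot \eps^t = \Theta(\eps^{1+t}),
\end{align*}
and it must be beaten by the $O(\eps)\cdot O(\eps)=O(\eps^2)$ drift from symbols $1,2$. That fails when $t<1$, so pure first-moment balancing on $P_2$ looks wrong for TV.

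The main obstacle is this sign bookkeeping, and it suggests using the $Q$ side instead. Under $Q_2$, TV can add $\eps$ mass to symbol $3$ while $s_1(3)>1$. This gives a positive drift of order $\eps\cdot\eps^t = \eps^{1+t}$, while symbols $1,2$ contribute only $O(\eps^2)$ in either direction. Hence $\E_{Q_2}[\log s_1]>0$ and the type-II error tends to $1$. The analogous choice works for Huber: take $Q_2 = q^*_{\eps_2}$, which has $Q_2(3)=\eps_2$, exceeding $q^*_{\eps_1}(3)$ by $\eta$.

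Once one expectation has the wrong sign, the SLLN or Hoeffding's inequality on the bounded summands finishes the argument. The threshold $T_n$ must be $o(n)$; for a fixed-prior test it is constant. The restriction $t<1/3$ in TV should come from requiring the wrong-sign drift $\Theta(\eps^{1+t})$ to dominate the exact second-order terms of $\log s_1$ at symbols $1,2$ together with $\eps^{1+3t}$-type corrections. So the key computation to carry out carefully is the expansion of $\E_{Q_2}[\log s_1]$ through terms of order $\eps^{1+3t}$ and $\eps^2$, keeping exact constants.

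For subtractive contamination, $Q_2$ cannot touch symbol $3$. We instead use $P_2=p^*_{\eps_2}$, which has no mass on $3$ and inflated mass on $1,2$, and check that $\E_{P_2}[\log s_1]$ over symbols $1,2$ is negative. This rests on the sign of the $O(\eps^2)$ terms, which should hold for this specific $p,q$.
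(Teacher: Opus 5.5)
Your witnesses match the paper's: $Q_2=\qtv(\eps_2)$ and $Q_2=\qhub(\eps_2)$ with the type-II error for TV and Huber, and $P_2=\psub(\eps_2)$ with the type-I error for subtractive. Your law-of-large-numbers finish also matches. The gap is in the step that carries all the content, namely the sign of the drift. Your order-of-magnitude argument for it is wrong in all three models.

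Write $\delta=\eps^{1+t}$ (your $\eta$). For TV you claim symbols $1,2$ contribute only $O(\eps^2)$. But $s_1(1)=\frac{1-18\eps-2\delta}{1-2\eps+2\delta}$ depends on $\delta$ at first order, so symbol $1$ contributes $(\tfrac12-\eps)\log s_1(1)=-8\eps-2\delta+O(\eps^2)$. Its $-2\delta$ exactly cancels the $+2\delta$ from symbol $3$, which is $\eps\log\frac{1+\delta/\eps}{1-\delta/\eps}$.

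This cancellation is structural. The $\delta$-derivative of the drift at $\delta=0$ equals $\sum_x \partial_\delta p^*_{\eps_2-\delta}(x)\,\bigl(Q_2(x)/P_2(x)-1\bigr)$. At level $\eps_2$ the pair $(P_2,Q_2)$ has likelihood ratio $1+O(\eps)$ on every symbol, so this derivative is only $O(\eps)$. In TV, $\log s_1(3)$ is odd in $\delta/\eps$, so even the $\delta^2/\eps$ terms vanish. The paper's exact expansion gives
\[
\E_{Q_2}\log s_1(X)=\tfrac23\,\eps^{1+3t}-128\,\eps^2+O\bigl(\eps^{2+t}+\eps^{1+5t}+\eps^3\bigr).
\]
So the wrong-sign drift is $\Theta(\eps^{1+3t})$, not $\Theta(\eps^{1+t})$, and this is exactly where $t<1/3$ comes from. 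Your reasoning as written would apply for every $t<1$. Yet for $t\in(1/3,1)$ the drift under this $Q_2$ is negative and the type-II error tends to $0$, so the heuristic is not merely loose; it predicts something false. Huber behaves the same way. Symbol $1$ contributes $-\delta$ at first order and cancels the $+\delta$ from symbol $3$. The surviving term is $\delta^2/(4\eps)=\tfrac14\eps^{1+2t}$ against $-128\eps^2$, which is where $t<1/2$ comes from.

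For subtractive contamination the picture is reversed. The paper finds
\[
\E_{P_2}\log s_1(X)=-\delta+128\,\eps^2+O\bigl(\eps\delta+\delta^2+\eps^3\bigr).
\]
The $\eps^2$ part is positive, and it must be: at $\delta=0$ the expectation equals $\mathrm{KL}(P_2\,\|\,Q_2)>0$. So negativity cannot ``rest on the sign of the $O(\eps^2)$ terms.''

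It comes instead from the first-order term $-\delta$. Here $\log s_1(1)$ contains $2\eps_1=2\eps_2-2\delta$, so the $\eps_1$-calibrated ratio at symbol $1$ is too small by a factor of about $1-2\delta$. Each occurrence of symbol $1$, which carries $P_2$-mass about $\tfrac12$, therefore counts too strongly for $q$. This term beats $128\eps^2$ exactly when $t<1$.

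In short, the computation you defer to the end is not a check; it is the proof. That is the exact Taylor expansion through orders $\eps^{1+3t}$ (TV), $\eps^{1+2t}$ (Huber), $\delta$ (subtractive) and $\eps^2$, with constants. The reasoning you offered in its place predicts the wrong leading term in every case.
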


\begin{proof}
We prove the theorem separately for the three contamination models.

\paragraph{TV contamination.}
Let $\delta = \eps^{1+t}, \eps_{2} = \eps, \eps_{1} = \eps-\delta,$ and $\eta = \frac{\delta}{\eps}.$ The TV-LFDs at contamination level $\eps_{1}$ are $\ptv(\eps_{1}) = \left(\frac12 - 9\eps-\delta, \frac12+8\eps, \eps+\delta \right)$ and $\qtv(\eps_1) = \left(\frac12-\eps+\delta, \frac12, \eps-\delta  \right)$.
At contamination level $\eps_{2}=\eps$, the TV-LFDs are $\ptv(\eps_{2}) = \left(\frac12 - 9\eps, \frac12+8\eps, \eps \right)$ and $\qtv(\eps_2) = \left(\frac12-\eps, \frac12, \eps  \right)$. Set
$P_{2} = \ptv(\eps_{2})$ and $Q_{2} = \qtv(\eps_{2}).$
Define the $\eps_{1}$-calibrated log-likelihood statistic
\begin{align*}
Z_{\TV,\eps_{1}}(x)
=
\log \frac{\ptv(\eps_{1})(x)}{\qtv(\eps_{1})(x)}.
\end{align*}
Then under $Q_{2}$,
\begin{align*}
\E_{Q_{2}} Z_{\TV,\eps_{1}}(X)
&=
\left(\frac12-\eps\right)
\log
\frac{1-18\eps-2\delta}{1-2\eps+2\delta}
+
\frac12\log(1+16\eps)
+
\eps \log \frac{1+\eta}{1-\eta}.
\end{align*}
Using Taylor expansion around $(\eps,\eta)=(0,0)$ gives
\begin{align*}
\E_{Q_{2}} Z_{\TV,\eps_{1}}(X)
&=
\frac{2}{3}\eps\eta^3
-
128\eps^2
+
O(\eps\eta^5+\eps^2\eta+\eps^3).
\end{align*}
Since $0<t<1/3$, we have
\begin{align*}
\eps\eta^3
=
\eps^{1+3t}
\gg
\eps^2,
\qquad
\eps\eta^5+\eps^2\eta+\eps^3
=
o(\eps\eta^3).
\end{align*}
Therefore, for all sufficiently small $\eps$,
\begin{align*}
\E_{Q_{2}} Z_{\TV,\eps_{1}}(X)
>
0.
\end{align*}
Thus, if $S_n = \sum_{i=1}^n Z_{\TV,\eps_{1}}(X_i),$
then by the law of large numbers,
\begin{align*}
\prob_{X^n\sim Q_{2}^{\otimes n}}
\left(
S_n\geq 0
\right)
\to
1.
\end{align*}
Since the $\eps_{1}$-calibrated likelihood ratio test decides in favour of $p$ when
$S_n\geq 0$, we get
\begin{align*}
\prob_{X^n\sim Q_{2}^{\otimes n}}
\left(
\varphi_{\TV,\eps_{1},n}(X^n)=0
\right)
\to
1.
\end{align*}
This proves the desired breakdown for TV contamination.

\paragraph{Huber contamination.}
Let $\delta = \eps^{1+t}, \eps_{2} = \frac{2\eps}{1+2\eps},$ and
$\eps_{1} = \eps_2-\delta.$ The Huber-LFDs at contamination level $\eps_1$ are
\begin{align*}
\phub(\eps_1)
&=
\left(
(1-\eps_1)\left(\frac12-10\eps\right)+\eps_1,
(1-\eps_1)\left(\frac12+8\eps\right),
2\eps(1-\eps_1)
\right),\\
\qhub(\eps_1)
&=
\left(
\frac{1-\eps_1}{2},
\frac{1-\eps_1}{2},
\eps_1
\right).
\end{align*}
At contamination level $\eps_2$, the Huber-LFDs are given by the same formulas with
$\eps_1$ replaced by $\eps_2$. Since $\eps_2=2\eps/(1+2\eps)$, we have
\begin{align*}
2\eps(1-\eps_2)=\eps_2,
\end{align*}
so symbol $\{3\}$ is neutralised at level $\eps_2$. Set
$P_2=\phub(\eps_2)$ and $Q_2=\qhub(\eps_2).$
Define the $\eps_1$-calibrated log-likelihood statistic
\begin{align*}
Z_{\Hub,\eps_1}(x)
=
\log \frac{\phub(\eps_1)(x)}{\qhub(\eps_1)(x)}.
\end{align*}
Then under $Q_2$,
\begin{align*}
\E_{Q_2} Z_{\Hub,\eps_1}(X)
&=
\frac{1-\eps_2}{2}
\left[
\log
\frac{2\left((1-\eps_1)(\frac12-10\eps)+\eps_1\right)}
{1-\eps_1}
+
\log(1+16\eps)
\right]\\
&\qquad
+
\eps_2
\log
\frac{2\eps(1-\eps_1)}{\eps_1}.
\end{align*}
Using Taylor expansion around $(\eps,\delta)=(0,0)$ gives
\begin{align*}
\E_{Q_2} Z_{\Hub,\eps_1}(X)
&=
\frac{\delta^2}{4\eps}
-
128\eps^2
+
O\left(\eps\delta+\frac{\delta^3}{\eps^2}+\eps^3\right).
\end{align*}
Since $0<t<1/2$, we have
\begin{align*}
\frac{\delta^2}{\eps}
=
\eps^{1+2t}
\gg
\eps^2,
\qquad
\eps\delta+\frac{\delta^3}{\eps^2}+\eps^3
=
o\left(\frac{\delta^2}{\eps}\right).
\end{align*}
Therefore, for all sufficiently small $\eps$,
\begin{align*}
\E_{Q_2} Z_{\Hub,\eps_1}(X)
>
0.
\end{align*}
Thus, if $S_n = \sum_{i=1}^n Z_{\Hub,\eps_1}(X_i),$
then by the law of large numbers,
\begin{align*}
\prob_{X^n\sim Q_2^{\otimes n}}
\left(
S_n\geq 0
\right)
\to
1.
\end{align*}
Since the $\eps_1$-calibrated likelihood ratio test decides in favour of $p$ when
$S_n\geq 0$, we get
\begin{align*}
\prob_{X^n\sim Q_2^{\otimes n}}
\left(
\varphi_{\Hub,\eps_1,n}(X^n)=0
\right)
\to
1.
\end{align*}
This proves the desired breakdown for Huber contamination.

\paragraph{Subtractive contamination.}
Let $\delta = \eps^{1+t}, \eps_{2} = \frac{2\eps}{1-2\eps},$ and
$\eps_{1} = \eps_2-\delta.$ The subtractive LFDs at contamination level $\eps_1$ are
\begin{align*}
\psub(\eps_1)
&=
\left(
(1+\eps_1)\left(\frac12-10\eps\right),
(1+\eps_1)\left(\frac12+8\eps\right),
2\eps(1+\eps_1)-\eps_1
\right),\\
\qsub(\eps_1)
&=
\left(
\frac{1-\eps_1}{2},
\frac{1+\eps_1}{2},
0
\right).
\end{align*}
At contamination level $\eps_2$, the subtractive LFDs are given by the same formulas with
$\eps_1$ replaced by $\eps_2$. Since $\eps_2=2\eps/(1-2\eps)$, we have
\begin{align*}
2\eps(1+\eps_2)-\eps_2=0,
\end{align*}
so symbol $\{3\}$ is deleted at level $\eps_2$. Set
$P_2=\psub(\eps_2)$ and $Q_2=\qsub(\eps_2).$
Define the $\eps_1$-calibrated log-likelihood statistic
\begin{align*}
Z_{\Sub,\eps_1}(x)
=
\log \frac{\psub(\eps_1)(x)}{\qsub(\eps_1)(x)}.
\end{align*}
Although $Z_{\Sub,\eps_1}(3)=+\infty$, the point $\{3\}$ has zero mass under both $P_2$ and
$Q_2$. Under $P_2$,
\begin{align*}
\E_{P_2} Z_{\Sub,\eps_1}(X)
&=
\frac{\frac12-10\eps}{1-2\eps}
\log
\frac{(1+\eps_1)(1-20\eps)}{1-\eps_1}
+
\frac{\frac12+8\eps}{1-2\eps}
\log(1+16\eps).
\end{align*}
Using Taylor expansion around $(\eps,\delta)=(0,0)$ gives
\begin{align*}
\E_{P_2} Z_{\Sub,\eps_1}(X)
&=
-\delta
+
128\eps^2
+
O(\eps\delta+\delta^2+\eps^3).
\end{align*}
Since $0<t<1$, we have
\begin{align*}
\delta
=
\eps^{1+t}
\gg
\eps^2,
\qquad
\eps\delta+\delta^2+\eps^3
=
o(\delta).
\end{align*}
Therefore, for all sufficiently small $\eps$,
\begin{align*}
\E_{P_2} Z_{\Sub,\eps_1}(X)
<
0.
\end{align*}
Thus, if $S_n = \sum_{i=1}^n Z_{\Sub,\eps_1}(X_i),$
then by the law of large numbers,
\begin{align*}
\prob_{X^n\sim P_2^{\otimes n}}
\left(
S_n<0
\right)
\to
1.
\end{align*}
Since the $\eps_1$-calibrated likelihood ratio test decides in favour of $q$ when
$S_n<0$, we get
\begin{align*}
\prob_{X^n\sim P_2^{\otimes n}}
\left(
\varphi_{\Sub,\eps_1,n}(X^n)=1
\right)
\to
1.
\end{align*}
This proves the desired breakdown for subtractive contamination.

\end{proof}
\section{Sandwich bounds for sample complexity}\label{sec: sandwich}

In this section, we consider \Cref{ques:2} which asks for relationships between different adversaries. For two different contamination models $A$ and $B$, our goal is to show sandwich bounds of the form 
\begin{align*}
\ns_A(c\eps) \lesssim \ns_B(\eps) \lesssim \ns_A(C\eps),
\end{align*}
where $c$ and $C$ are universal constants; i.e., they do not depend on $p$, $q$, or $\eps$.

A natural approach for proving sandwich bounds would be to use a \emph{simulation-based} strategy. To be precise, we could prove sandwich bounds by showing that the $c\eps$- and $C\eps$-uncertainty sets for $A$ sandwich the $\eps$-uncertainty set for $B$. In our case, since LFDs exist in all cases, we may also try to show something narrower: that the LFDs for $\eps$-contamination with $B$ lie in the $C\eps$-uncertainty set for $A$ (i.e.\ $A$ can simulate $B$'s LFDs), and the LFDs for $c\eps$-contamination with $A$ lie within the $\eps$-uncertainty set with $B$ (i.e.\ $B$ can simulate $A$'s LFDs). 

Before embarking on our proofs, we note that simulation-based strategies are destined to fail for proving our desired sandwich bounds. In Appendix~\ref{app: lemma: containment}, we show that apart from the simple containment that $\eps$-Huber and $\eps$-subtractive uncertainty sets are contained in the $\eps$-TV uncertainty set, no other containment result holds in general. To be precise, for each pair of models, there exist distributions for which no containment of the form $\cP^A(c\eps)\subseteq\cP^B(\eps)$ or $\cP^B(\eps)\subseteq\cP^A(C\eps)$ holds, for any choice of constants $c,C>0$. The narrower approach of showing LFD-simulation also fails in a similar manner, no matter what constants are chosen. The proof proceeds by constructing explicit examples on a binary alphabet. 

\subsection{Sandwich bounds: Huber and TV}

We prove the following theorem:

\begin{theorem}[Comparing $\nsTV$ and $\nsHub$]\label{theorem: tv_hub}
Let $p$ and $q$ be probability distributions over a finite discrete space $\cX$ and let $\eps \leq \dtv(p,q)/4$. Then
\begin{align*}
\nsTV(\eps/2) \lesssim \nsHub(\eps) \lesssim \nsTV(\eps).
\end{align*}
The constants implicit in $\lesssim$ may depend on $\delta_0$, but are independent of
$p$, $q$, and $\eps$. 
\end{theorem}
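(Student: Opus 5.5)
The upper bound $\nsHub(\eps)\lesssim\nsTV(\eps)$ follows from containment. Any $p'=(1-\eps)p+\eps h$ satisfies $\dtv(p',p)=\eps\,\dtv(h,p)\le\eps$, so $\cP_\Hub(p,\eps)\subseteq\cP_\TV(p,\eps)$, and likewise around $q$. Any test that is robust to $\eps$-TV contamination is therefore robust to $\eps$-Huber contamination with the same sample size. (This is also the easy containment from Appendix~\ref{app: lemma: containment}.) The real work is the lower bound $\nsTV(\eps/2)\lesssim\nsHub(\eps)$. Since LFDs exist in both models, it suffices to show
\begin{align*}
\dhel^2\big(\phub(\eps),\qhub(\eps)\big)\lesssim \dhel^2\big(\ptv(\eps/2),\qtv(\eps/2)\big).
\end{align*}
As noted before the theorem, we cannot simulate one model's LFDs inside the other's uncertainty set. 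So I will compare the clips directly and then compare the two Hellinger sums coordinate by coordinate.

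\emph{Step 1: the TV clips at $\eps/2$ are at least as wide as the Huber clips at $\eps$.} Let $f(c)=\sum_x (cq(x)-p(x))_+$, which is increasing in $c$. The Huber lower clip satisfies $f(\clHub)=\eps/(1-\eps)$, while the TV lower clip at level $\eps/2$ satisfies $f(\clTV)=\tfrac{\eps}{2}(1+\clTV)\le\eps\le\eps/(1-\eps)$, because $\clTV<1$. By monotonicity of $f$, $\clTV(\eps/2)\le\clHub(\eps)$. For the upper side let $g(c)=\sum_x(p(x)-cq(x))_+$, which is decreasing. The Huber equation reads $g(\cuHub)=\cuHub\,\eps/(1-\eps)$, and the TV equation reads $g(\cuTV)=\tfrac{\eps}{2}(1+\cuTV)$. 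For $c>1$ we have $\tfrac{\eps}{2}(1+c)\le c\le c\,\eps/(1-\eps)$, so $g(\cuHub)\ge \tfrac{\eps}{2}(1+\cuHub)$. Since $g(c)-\tfrac{\eps}{2}(1+c)$ is decreasing, its zero satisfies $\cuTV(\eps/2)\ge\cuHub(\eps)$. I also need to handle the degenerate cases in which a clip is $0$ or $\infty$ because of mass at likelihood ratio $0$ or $\infty$. These should follow from the same monotonicity statements read with extended values, and the assumption $\eps\le\dtv(p,q)/4$ rules out intersecting uncertainty sets.

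\emph{Step 2: write both Hellinger divergences as sums of per-coordinate terms that are monotone in the clip.} Write $s_A(x)$ for $p(x)/q(x)$ clipped to model $A$'s interval. Then $\dhel^2(p^*,q^*)=\sum_x q^*(x)(\sqrt{s_A(x)}-1)^2$. Huber and TV contribute the same terms, up to a factor $(1-\eps)$, on coordinates whose ratio lies in the Huber middle interval, since there both TV clips are inactive by Step 1. On a Huber-clipped coordinate in $L$, the Huber term is $(1-\eps)q(x)(\sqrt{c}-1)^2$ with $c=\clHub\ge r(x)$. The TV term is either unclipped, $(\sqrt{p(x)}-\sqrt{q(x)})^2=q(x)(\sqrt{r}-1)^2$, or equal to $\tfrac{p(x)+q(x)}{1+c_T}(\sqrt{c_T}-1)^2$ with $r\le c_T\le c$.

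\emph{Step 3: compare the terms.} For $c_T<1$ we have $\tfrac{p+q}{1+c_T}\ge q/2$, and the map $c\mapsto(\sqrt c-1)^2$ decreases on $(0,1)$. Hence the TV term is at least $\tfrac12 q(x)(\sqrt{\clHub}-1)^2$. On $H$ the Huber term is $(1-\eps)\tfrac{p(x)}{\cuHub}(\sqrt{\cuHub}-1)^2$. The TV term has weight $\tfrac{p+q}{1+c_T}\ge\tfrac{p}{2c_T}$ with $c_T\ge\cuHub$. The map $c\mapsto(\sqrt c-1)^2/c$ is increasing on $(1,\infty)$, so the TV term is at least $\tfrac12$ times the Huber term. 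Summing over coordinates gives the Hellinger comparison with a universal constant, and hence the sample-complexity bound. I expect the main obstacle to be Step 3: making the termwise comparison airtight when the TV clip is only partially active relative to the Huber clip, that is, when $r(x)$ lies between the two clips, and when $p(x)$ or $q(x)$ vanishes. In those cases I would fall back on the unclipped term $(\sqrt p-\sqrt q)^2$ and use the same monotonicity of $(\sqrt c-1)^2$ and $(\sqrt c-1)^2/c$ away from $1$.
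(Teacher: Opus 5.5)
Your proof is correct and essentially follows the paper: your Step~1 is exactly Lemma~\ref{lemma: threshold-order}, proved via the same monotone one-sided radius functions. Your termwise comparison merges the paper's other two lemmas (Hellinger comparability of the Huber and TV formulas at common clips, and monotonicity of the TV Hellinger sum in the clips) into a single step. The ``partially active'' case you flagged closes at once, since $q(x)(\sqrt{r}-1)^2\ge q(x)(\sqrt{\clHub(\eps)}-1)^2$ for $r<\clHub(\eps)$ and $p(x)(\sqrt{r}-1)^2/r\ge p(x)(\sqrt{\cuHub(\eps)}-1)^2/\cuHub(\eps)$ for $r>\cuHub(\eps)$.

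Two small fixes. First, the chain in Step~1 should read $\tfrac{\eps}{2}(1+c)\le\eps c\le c\,\eps/(1-\eps)$; you dropped an $\eps$. Second, no degenerate cases arise, because for $\eps\le\dtv(p,q)/4$ the Huber and TV clips always lie in $(0,1)$ and $(1,\infty)$.
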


\begin{proof}[Proof of Theorem~\ref{theorem: tv_hub}]	

It is enough to prove that
\[
\nsTV(\eps/2) \lesssim \nsHub(\eps).
\]
Recall that the sample complexity is determined by the Hellinger divergence between the corresponding LFD pairs:
\begin{align*}
\nsTV(\eps/2) &\asymp \frac{1}{\dhel^2(\ptv(\eps/2), \qtv(\eps/2))}, \\
\nsHub(\eps) &\asymp \frac{1}{\dhel^2(\phub(\eps), \qhub(\eps))}.
\end{align*}
Thus, it suffices to show that
\[
\dhel^2(\ptv(\eps/2), \qtv(\eps/2)) 
\gtrsim 
\dhel^2(\phub(\eps), \qhub(\eps)).
\]

Given the formulas for the LFDs, a natural approach is to explicitly evaluate the LFD pairs, compute the corresponding Hellinger divergences, and compare them. We follow this strategy, however, the final comparison step turns out to be nontrivial because the LFDs do not have analytical expressions.

To address this, we proceed in three steps:

\paragraph {Step I:} We first show that if the lower and upper clips $\cl$ and $\cu$ are identical for Huber and TV contamination, the Hellinger divergences between the TV-LFDs and the Huber-LFDs are within constant factors of each other. Observe that formulas for LFDs given by equations~\eqref{eq: lfd_hub} and \eqref{eq: lfd_tv} continue to make sense for any choice of thresholds $\cl < 1 < \cu$, but the resulting measures may not  integrate to 1 if the clips are not calibrated to $\eps$ according to equations~\eqref{eq: clips_hub} and~\eqref{eq: clips_tv}. For two non-necessarily probability measures, we continue to use the notation $\dhel^2(p,q) := \norm{\sqrt p - \sqrt q}^2$.

\begin{lemma}[Hellinger comparability]\label{lemma: hellinger-comparability}
Fix thresholds $\cl<1<\cu$ and set $(\ptv,\qtv)$ using equation~\eqref{eq: lfd_tv} and $(\phub,\qhub)$ using equation~\eqref{eq: lfd_hub}. Note that these may not be probability distributions as they may not integrate to 1.
If $\eps \le \frac{1}{2}$, then
\begin{align*}
\frac{1}{2}\dhel^2(\ptv,\qtv) \le \dhel^2(\phub,\qhub) \le 2\dhel^2(\ptv,\qtv).
\end{align*}
\end{lemma}
The proof is deferred to Appendix~\ref{app: hellinger-comparability}. 

\paragraph{Step II:} 
Next, we compare the TV-clips at $\eps/2$ with the Huber-clips at $\eps$. Specifically, we show that the upper TV-clip is larger than the upper Huber-clip, and the lower TV-clip is smaller than the lower Huber-clip.

\begin{lemma}[Ordering of TV and Huber clips]\label{lemma: threshold-order}
Fix $\eps\in [0,1/2]$ and let $\clHub(\eps),\,\cuHub(\eps)$ be the Huber thresholds for radius $\eps$,
and $\clTV(\eps/2),\,\cuTV(\eps/2)$ the TV thresholds for radius $\eps/2$.
Then
\[
\clTV(\eps/2)\ \le\ \clHub(\eps),
\qquad
\cuHub(\eps)\ \le\ \cuTV(\eps/2).
\]
\end{lemma}
The proof is deferred to Appendix~\ref{app: threshold-order}. 

\paragraph{Step III:} 
Our last step is showing that the Hellinger divergence under TV contamination is monotone in the clips. Intuitively, increasing the upper clip (and decreasing the lower clip) makes the LFDs closer to the uncontaminated $p$ and $q$, thereby increasing the Hellinger divergence.

\begin{lemma}[Monotonicity of TV Hellinger in the clips]\label{lemma: tv-hellinger-monotone}
Denote the Hellinger divergence between the TV-LFDs $(\ptv, \qtv)$ (not necessarily probability distributions) at clips $(\cl, \cu)$ 
\begin{align*}
\Hsq_{\TV}(\cl,\cu):=\dhel^2(\ptv,\qtv).
\end{align*}
Then the function $\Hsq$ is (i) nondecreasing in the upper clip $\cu$, and (ii) nonincreasing in the lower clip $\cl$. Equivalently: pushing the upper clip \emph{up} and/or the lower clip \emph{down} can only increase $\Hsq_{\TV}$.
\end{lemma}
The proof is deferred to Appendix~\ref{app: tv-hellinger-monotone}.

Combining these steps yields the result. Step (i) shows that the Hellinger divergences in the TV and Huber settings are comparable when evaluated at the same clips (namely, $\cl_{\Hub}(\eps)$ and $\cu_{\Hub}(\eps)$). Step (ii) shows that the TV-clips at $\eps/2$ move further away from 1 than these reference clips. Finally, step (iii) implies that the Hellinger divergence for TV at level $\eps/2$ is larger than the Hellinger divergence for Huber at level $\eps$. We explain this in more detail below.

Let $(\phub,\qhub)$ be the Huber LFDs at $(\clHub(\eps),\cuHub(\eps))$. Note that $(\phub,\qhub)$ are probability measures and we have the sample complexity relation $$\nsHub(\eps) \asymp \frac{1}{\dhel^2(\phub,\qhub)}.$$
Let $(\ptvpr,\qtvpr)$ denote the TV-LFDs evaluated at the \emph{same} thresholds $(\clHub(\eps),\cuHub(\eps))$. Note that these need not be probability distributions. Let $(\ptv(\eps/2),\qtv(\eps/2))$ be the TV-LFDs at the TV thresholds $(\clTV(\eps/2),\cuTV(\eps/2))$. Note that $(\ptv(\eps/2),\qtv(\eps/2))$ are probability measures and we have the sample complexity relation 
$$\nsTV(\eps/2) \asymp \frac{1}{\dhel^2(\ptv(\eps/2),\qtv(\eps/2))}.$$
By Lemma~\ref{lemma: threshold-order} and Lemma~\ref{lemma: tv-hellinger-monotone},
\begin{align*}
\dhel^2(\ptvpr,\qtvpr)\ \le\ \dhel^2\!\big(\ptv(\eps/2),\qtv(\eps/2)\big).
\end{align*}
By Lemma~\ref{lemma: hellinger-comparability} at the common thresholds $(\clHub(\eps),\cuHub(\eps))$,
\begin{align*}
\frac{1}{2}\,\dhel^2(\ptvpr,\qtvpr)\ \le\ \dhel^2(\phub,\qhub)\ \le\ 2\,\dhel^2(\ptvpr,\qtvpr).
\end{align*}
Combining, 
\begin{align*}
\tfrac{1}{2}\,\dhel^2(\phub,\qhub)\ \le\ \dhel^2(\ptv(\eps/2),\qtv(\eps/2)).
\end{align*}
Taking reciprocals yields
\begin{align*}
\nsHub(\eps) \gtrsim \nsTV(\eps/2).
\end{align*}

\end{proof}

\subsection{Sandwich bounds: Subtractive and TV}

Our main result in this section is the following:

\begin{theorem}[Comparing $\nsTV$ and $\nsSub$]\label{theorem: sub_tv}
Let $p$ and $q$ be distributions over a finite discrete space $\cX$ and let $\eps \leq \dtv(p,q)/4$. Let $\delta_0 > 0$. Then for all $\eps$ smaller than a constant that depends only on $\delta_0$ (and not on $p$ or $q$),
\begin{align*}
\nsSub(\eps) \lesssim \nsTV(\eps) \lesssim \nsSub((2+\delta_0)\eps).
\end{align*}
The constants implicit in $\lesssim$ may depend on $\delta_0$, but are independent of
$p$, $q$, and $\eps$. 
\end{theorem}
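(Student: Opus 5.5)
The left inequality $\nsSub(\eps) \lesssim \nsTV(\eps)$ follows from containment. For every $p'\in\cP_\Sub(p,\eps)$ we have $p'(x)\le (1+\eps)p(x)$, so
\begin{align*}
\dtv(p',p)=\sum_x (p'(x)-p(x))_+ \le \eps.
\end{align*}
Hence $\cP_\Sub(p,\eps)\subseteq\cP_\TV(p,\eps)$, and likewise for $q$. Any test that is robust to $\eps$-TV contamination is therefore robust to $\eps$-subtractive contamination. This is the containment recorded in Lemma~\ref{lemma: containment}.

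For the right inequality I will mirror the three-step proof of Theorem~\ref{theorem: tv_hub}. It suffices to show
\begin{align*}
\dhel^2\big(\ptv(\eps),\qtv(\eps)\big)\gtrsim \dhel^2\big(\psub(\eps'),\qsub(\eps')\big), \qquad \eps'=(2+\delta_0)\eps .
\end{align*}

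\textbf{Step I (comparability at common clips).} Fix $\cl<1<\cu$ and evaluate the formulas \eqref{eq: lfd_tv} and \eqref{eq:lfd_sub} at these same clips, as unnormalised measures.
\begin{itemize}
\item On $M$ the subtractive pair is $(1+\eps')(p,q)$ and the TV pair is $(p,q)$, so their contributions agree up to the factor $1+\eps'$.
\item On $H$ the subtractive pair is $(1+\eps')(\cu q, q)$ and the TV pair is $\tfrac{p+q}{1+\cu}(\cu,1)$. Each has likelihood ratio exactly $\cu$. Their contributions are $(\sqrt{\cu}-1)^2 q(i)$ times $(1+\eps')$ and times $\tfrac{p(i)+q(i)}{(1+\cu)q(i)}$, respectively. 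The second factor is $\ge 1$ because $p(i)>\cu q(i)$.
\item On $L$ the symmetric computation gives the same inequality.
\end{itemize}
So at common clips the TV Hellinger dominates the subtractive one up to a factor $1+\eps'\le 2$. This is the only direction I need. Points of $\bar H$ and $\bar L$, together with the cases where the subtractive clips fail to exist, I will handle separately via Theorem~\ref{prop: lfd_sub_formulas}(b),(c). In those cases $\psub$ has reduced mass on $\bar H$ and $\qsub$ vanishes there, and I expect the comparison to be direct.

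\textbf{Step II (ordering of clips).} This is the main obstacle. I need
\begin{align*}
\cuSub\ge\cuTV \text{ is false; rather } \cu_{\Sub}(\eps')\le\cuTV(\eps), \qquad \cl_{\Sub}(\eps')\ge\clTV(\eps).
\end{align*}
Write $f(c)=\sum_x(p(x)-cq(x))_+$, which is decreasing in $c$. The defining equations are
\begin{align*}
f\big(\cuTV(\eps)\big)&=\eps\big(1+\cuTV(\eps)\big), \\
f\big(\cu_{\Sub}(\eps')\big)&=\frac{\eps'}{1+\eps'}.
\end{align*}
By monotonicity of $f$, it suffices to show $\eps(1+\cuTV(\eps))\le \eps'/(1+\eps')$, i.e.\ roughly $\cuTV(\eps)\le 1+\delta_0+O(\eps)$. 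That fails when $\cuTV$ is large, so the argument must split into two regimes.

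\emph{Large upper clip.} If $\cuTV(\eps)$ is large, I expect the $H$-region already to give a big Hellinger term for the TV pair. Using $p(H)\ge \eps(1+\cu)$, its contribution is at least of order $\eps\,(\sqrt{\cu}-1)^2/(1+\cu)\gtrsim_{\delta_0}\eps$. The subtractive Hellinger on $H$ is at most of order $\eps'$ times a comparable factor: the mass it moves is $\eps'/(1+\eps')$, and the ratio is capped at its own clip. So the $H$-parts are comparable directly, without ordering the clips.

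\emph{Small upper clip.} If $\cuTV(\eps)\le 1+\delta_0/2$, the monotonicity argument above yields the ordering, for $\eps$ small depending on $\delta_0$. This is precisely where $2+\delta_0$ and the smallness of $\eps$ enter.

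Since the two ends of the likelihood ratio decouple, I will carry out this case split separately for the $H$-part and the $L$-part of the Hellinger sum.

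\textbf{Step III (monotonicity).} Lemma~\ref{lemma: tv-hellinger-monotone} shows that moving the TV clips outward from the subtractive clips only increases $\Hsq_\TV$. Chaining this with Steps I and II in the small-clip regime, and using the direct comparison in the large-clip regime, gives $\dhel^2(\ptv(\eps),\qtv(\eps))\gtrsim_{\delta_0}\dhel^2(\psub(\eps'),\qsub(\eps'))$. Taking reciprocals yields $\nsTV(\eps)\lesssim\nsSub((2+\delta_0)\eps)$.
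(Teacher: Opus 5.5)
Your lower bound, Step I, and the small-clip branch of Steps II--III are sound. If $\cuTV(\eps)\le 1+\delta_0/2$ and $\eps$ is small in terms of $\delta_0$, monotonicity of $f$ gives $\cu_\Sub(\eps')\le\cuTV(\eps)$, so in particular the subtractive upper clip exists. Then Lemma~\ref{lemma: tv-hellinger-monotone}, applied index by index on $A=\{p/q\ge 1\}$ (where contributions depend only on the upper clip), followed by Step I, gives the $A$-part comparison. This differs from the paper's Case 2, which matches clips exactly at some level below $(2+\delta_0)\eps$ and then uses approximate monotonicity of the subtractive parts in $\eps$ (Lemma~\ref{prop: monotonic_hell}). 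The gap is the large-clip branch, which is precisely the regime Remark~\ref{rem: csub} flags as the hard one.

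\emph{The large-clip claim is false.} You assert that the subtractive Hellinger on $H$ is of order $\eps'$. On $H$ the subtractive pair contributes $(1+\eps')q(H)(\sqrt{\cu}-1)^2$ with $\cu=\cu_\Sub(\eps')$. Since $\cu q(H)=p(H)-\eps'/(1+\eps')$, this equals $(1+\eps')\big(p(H)-\tfrac{\eps'}{1+\eps'}\big)(1-1/\sqrt{\cu})^2$. That can be of constant order, e.g.\ for a point with $p(i)=0.6$, $q(i)=0.006$. So a TV lower bound of order $\eps$ cannot dominate it. You are also comparing two different sets (the TV and subtractive $H$'s) and ignoring the points with ratio in $[1,\cu]$.

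\emph{What is needed, and one way to get it.} You need a TV lower bound in the same currency as the subtractive upper bound. Let $h(c)=(\sqrt c-1)^2/(1+c)$, which is increasing on $[1,\infty)$, and let $H=\{p/q>\cuTV(\eps)\}$. On $H$ the TV pair contributes $h(\cuTV(\eps))(p(H)+q(H))\ge h(1+\delta_0/2)\sum_{i\in H}(\sqrt{p(i)}-\sqrt{q(i)})^2$, using $(\sqrt p-\sqrt q)^2\le p$ there. On $A\setminus H$ it coincides with $(p,q)$. Hence its $A$-part is at least $h(1+\delta_0/2)$ times the clean $A$-part. On the other side, the subtractive $A$-part at any level $\eps'$ is at most $(1+\eps')$ times the clean $A$-part. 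This is because clipping the ratio down to $\cu$ only shrinks each term $(1+\eps')q(i)(\sqrt{p(i)/q(i)}-1)^2$, and on $\bar H$ it only removes mass. The $B$-side is symmetric.

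The paper closes this case differently. It uses the mass-based decomposition of Lemma~\ref{prop: approx_hell} and the observation that both LFDs only remove mass $\eps$ (resp.\ $\eps/(1+\eps)$) from $p(A_2)\ge 3\eps$. It then uses Lemma~\ref{prop: monotonic_hell} to pass to the larger contamination level. As written, your large-clip argument does not establish $\nsTV(\eps)\lesssim\nsSub((2+\delta_0)\eps)$.
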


\begin{remark}\label{rem: delta_0} The lower bound on $\nsTV$ is immediate as TV-contamination is stronger than subtractive contamination. The upper bound expression is very similar to that in Theorem~\ref{theorem: tv_hub}. Specifically, if we ignore the role of $\delta_0$ (which may be fixed to any small constant), we have $\nsTV(\eps) \lesssim \nsSub(2\eps)$. We cannot prove the bound with the constant exactly 2 like in the Huber case. This can be seen by revisiting our example in Section~\ref{sec: example}. Suppose $0 < t < 1/2$, $\eps_1 = \eps - \eps^{1+t}$ and $\eps_2 = 2\eps/(1-2\eps) - \eps^{1+t}$. The calculations from Section~\ref{sec: example} show that
\begin{align*}
\nsTV(\eps_1) \asymp \frac{1}{\eps^{1+2t}} \gg \frac{1}{\eps^{1+t}} \asymp \nsSub(\eps_2).
\end{align*}
But observe that $\eps_2 > 2\eps_1$ for all small enough $\eps$, as
\begin{align*}
\eps_2 = 2\eps - \eps^{1+t} + O(\eps^2) >  2\eps - 2\eps^{1+t} = 2\eps_1.
\end{align*}
By monotonicity of $\nsSub$, we conclude
\begin{align*}
\nsTV(\eps_1) \gg \nsSub(2\eps_1).
\end{align*}
Thus, we cannot hope to prove Theorem~\ref{theorem: sub_tv} with a constant exactly 2 like in the Huber case, and the current statement is essentially optimal.
\end{remark}

\begin{remark}\label{rem: csub}
Unfortunately, the proof strategy used in comparing Huber and TV contamination in Theorem~\ref{theorem: tv_hub} fails when comparing TV and subtractive contamination. A one-sided version of Lemma~\ref{lemma: hellinger-comparability} continues to hold, that is, we can show that for the same thresholds $(\cl, \cu)$, 
$$\dhel^2(\ptv, \qtv) \gtrsim \dhel^2(\psub, \qsub).$$
However, the proof breaks down when trying to prove the analogue of Lemma~\ref{lemma: threshold-order} that shows that $\eps/2$-TV-clips are further outward from 1 compared to the $\eps$-Huber clips. In the subtractive setting it is easy to construct examples where this cannot hold even if we replace $\eps/2$ by $\eps/M$ for any large $M$. For example, consider an extreme case when there is a point $i$ where $p(i) = \eps$ and $q(i) = \delta \ll \eps$, with all other points having likelihood ratios bounded by, say, 2. For all small enough $\eps$, the upper clip only acts on the singleton set $\{i\}$.  Using the clip formulas~\eqref{eq: clips_sub} and ~\eqref{eq: clips_tv}, we can check that 
\begin{align*}
\cu_\Sub(\eps) &\asymp \frac{\eps}{\delta}, \quad \text{ whereas }\\
\cu_\TV (\eps/M) &\asymp M. 
\end{align*}
Thus, no matter how large an $M$ is picked, we may choose $\delta$ very small, say $\eps/M^2$ and ensure that $\cu_\Sub(\eps) > \cu_\TV(\eps/M)$. This is in contrast to the Huber case, where we could establish $\cuHub(\eps)\ \le\ \cuTV(\eps/2)$.

\end{remark}

\begin{proof}[Proof of Theorem~\ref{theorem: sub_tv}]
To make our exposition more readable, we shall prove the weaker result that for all $\eps \leq 1/4$
\begin{align*}
\nsTV(\eps) \lesssim \nsSub(12\eps). 
\end{align*}
The modifications needed to the proof to drive down the constant 12 to any $(2+\delta_0)$ are minor, and we shall point them out at the end of the proof. 

Remark~\ref{rem: csub} suggests that breakdown in establishing Theorem~\ref{theorem: sub_tv} along the lines of Theorem~\ref{theorem: tv_hub} occurs when the subtractive-clips tend to the extremes of 0 and $+\infty$. Thus we break our proof into two cases: When the clips are extreme, and when they are not. When the clips aren't extreme, a proof strategy similar to that of Theorem~\ref{theorem: tv_hub} works. When the clips are extreme, it turns out that both TV and subtractive LFDs alter probabilities only points with extreme likelihood ratios, and their resulting impact on the Hellinger divergence turn out to be similar.

To formalise this proof strategy, we first state a formula that decomposes the Hellinger divergence into contributions from points with extreme-likelihood ratios and moderate likelihood ratios. Such a decomposition is not new, appearing in~\cite{PenEtal23} as well.

\begin{lemma}[Formula for Hellinger divergence up to constant factors]\label{prop: approx_hell}
 Let $p$ and $q$ be probability distributions over a discrete space $\cX$. Define the following sets:
    \begin{align*}
        A_1 \coloneqq \{i : p(i)/q(i) \in [1,2)\}, \quad &\text{ and } \quad A_2 = \{i : p(i)/q(i) \in [2, \infty)\},\\
        B_1 \coloneqq \{i : p(i)/q(i) \in [1/2, 1)\}, \quad &\text{ and } \quad B_2 \coloneqq \{i : p(i)/q(i) \in [0, 1/2) \},\\
        A \coloneqq A_1 \cup A_2, \quad &\text{ and } \quad B \coloneqq B_1 \cup B_2.
    \end{align*}
    Then
    \begin{align}
        h^2_A \coloneqq \sum_{i \in A} \left(\sqrt{p(i)} - \sqrt{q(i)}\right)^2 &\asymp \tilde h^2_A \coloneqq  p(A_2) + \sum_{i \in A_1} \frac{(p(i) - q(i))^2}{p(i)} , \quad \text{and} \label{eq: approx_hell_A}\\
        h^2_B \coloneqq \sum_{i \in B} \left(\sqrt{p(i)} - \sqrt{q(i)}\right)^2 &\asymp \tilde h^2_B \coloneqq q(B_2) + \sum_{i \in B_1} \frac{(p(i) - q(i))^2}{q(i)}, \label{eq: approx_hell_B}
    \end{align}
    and consequently,
    \begin{align}
        \dhel^2(p,q) = h^2_A + h^2_B \asymp \tilde h^2_A + \tilde h^2_B. \label{eq: approx_hell}
    \end{align}
\end{lemma}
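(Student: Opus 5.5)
The plan is to prove the estimate pointwise: for each single index $i$, the term $(\sqrt{p(i)}-\sqrt{q(i)})^2$ is within universal constant factors of the corresponding summand on the right. Summing over $A$ and $B$ then gives \eqref{eq: approx_hell_A} and \eqref{eq: approx_hell_B}, and adding them gives \eqref{eq: approx_hell}. The two statements are symmetric under swapping $p$ and $q$, which maps $A$ to $B$ up to the boundary point ratio $1$, where both sides vanish. So it suffices to treat $A_1$ and $A_2$.

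For $i \in A_1$, write $p=p(i)$ and $q=q(i)$ with $1\le p/q<2$. Use the identity
\begin{align*}
(\sqrt p-\sqrt q)^2=\frac{(p-q)^2}{(\sqrt p+\sqrt q)^2}.
\end{align*}
Since $q\in(p/2,p]$, the denominator lies between $p$ and $4p$. Hence $(\sqrt p-\sqrt q)^2$ lies between $\tfrac14 (p-q)^2/p$ and $(p-q)^2/p$, which is the $A_1$ summand up to a factor of $4$.

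For $i\in A_2$ we have $q\le p/2$, including the case $q=0$. Then
\begin{align*}
(\sqrt p-\sqrt q)^2 \le p, \qquad (\sqrt p-\sqrt q)^2 \ge p\left(1-\tfrac{1}{\sqrt2}\right)^2 .
\end{align*}
So this term is $\asymp p(i)$, and summing over $A_2$ gives the $p(A_2)$ term.

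The $B$ case is identical with the roles of $p$ and $q$ exchanged: the denominator is now comparable to $q(i)$, and the extreme set $B_2$ contributes $\asymp q(B_2)$. Points with $q(i)=0$ lie in $A_2$, and points with $p(i)=0$ lie in $B_2$, so the degenerate cases are covered.

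There is no real obstacle here. The only care needed is bookkeeping at the boundaries: points with ratio exactly $2$ or $1/2$, and the ratio-$\infty$ and ratio-$0$ points. It also helps to note explicitly that the constants ($4$ and $(1-1/\sqrt2)^{-2}$) are universal, which is what later comparisons in the proof of Theorem~\ref{theorem: sub_tv} will use. The argument never uses normalisation, so the lemma also holds for the unnormalised measures appearing in Step I-type comparisons.
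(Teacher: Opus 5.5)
Your proof is correct and matches the paper's argument: compare term by term on $A_1$ and $A_2$ (and symmetrically on $B_1$ and $B_2$), then sum. The only cosmetic difference is that on $A_1$ you use $(\sqrt p-\sqrt q)^2=(p-q)^2/(\sqrt p+\sqrt q)^2$ with explicit constants, whereas the paper writes the term as $(1-\sqrt{1-\delta_i/p_i})^2p_i$ and invokes $(1-\sqrt{1-x})^2\asymp x^2$, which is the same estimate.
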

The proof is deferred to Appendix~\ref{app: approx_hell}.

Next, we show that for the subtractive adversary, the contribution to the Hellinger divergence from extreme- and moderate-likelihood ratio terms are monotonic in the contamination parameter $\eps$. %

\begin{lemma}\label{prop: monotonic_hell}
    Let $p$ and $q$ be distributions over $\cX$ and let $\eps \ge 0$. Let the sets $A, A_1, A_2, B, B_1,$ and $B_2$ be as in Lemma~\ref{prop: approx_hell}. Denote the LFDs for $\eps$-subtractive contamination by $(p^*_\eps, q^*_\eps)$. Let 
    \begin{align*}
        h^2_A(\eps) &\coloneqq \sum_{i \in A} \left( \sqrt{p^*_\eps(i)} - \sqrt{q^*_\eps(i)} \right)^2, \text{ and }\\
        h^2_B(\eps) &\coloneqq \sum_{i \in B} \left( \sqrt{p^*_\eps(i)} - \sqrt{q^*_\eps(i)} \right)^2.
    \end{align*}
    Then for any $\eps_1 \le \eps_2$, we have $h^2_A(\eps_1) \gtrsim  h^2_A(\eps_2)$ and $h^2_B(\eps_1) \gtrsim h^2_B(\eps_2)$. The universal constant on the right may be taken to be 2.
\end{lemma}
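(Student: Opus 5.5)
We would prove this directly from the explicit subtractive LFD formulas of Theorem~\ref{prop: lfd_sub_formulas}. The argument treats the set $A$; the set $B$ is symmetric, with the roles of $(p,\cu)$ and $(q,\cl)$ exchanged.

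The first observation is that on $A$ only the upper clip matters. Every $i\in A$ has $p(i)/q(i)\ge 1>\cl$, so $i\notin L$ for any admissible lower clip, and the lower-clip modification of $\qsub$ never touches $A$. In case (a), the formulas \eqref{eq:lfd_sub} give $q^*_\eps(i)=(1+\eps)q(i)$ on $A$. Moreover $p^*_\eps(i)=(1+\eps)q(i)\min\{r(i),\cu(\eps)\}$, where $r(i)=p(i)/q(i)$. Hence
\begin{align*}
h^2_A(\eps)=(1+\eps)\sum_{i\in A,\,q(i)>0} q(i)\Bigl(\sqrt{\min\{r(i),\cu(\eps)\}}-1\Bigr)^2 + \sum_{i\in A\cap \bar H} p^*_\eps(i).
\end{align*}
When $\cu(\eps)$ exists, the last sum vanishes, because $p^*_\eps(i)=\cu(1+\eps)q(i)=0$ on $\bar H$. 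In case (b) we set $\cu(\eps)=\infty$. The first sum then involves $(1+\eps)p$ on the finite-ratio points, and the $\bar H$ term equals $(1+\eps)p(\bar H)-\eps$.

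The key step is monotonicity of the clip in $\eps$. The map $\cu\mapsto\sum_x(p(x)-\cu q(x))_+$ is nonincreasing, while the right-hand side $\eps/(1+\eps)$ of \eqref{eq: clips_sub} is increasing in $\eps$. Therefore $\cu(\eps)$ is nonincreasing in $\eps$, including the convention $\cu=\infty$ in the regime $p(\bar H)>\eps/(1+\eps)$. In that regime the clip exists for no smaller $\eps$ either, so the regimes are ordered consistently: if $\eps_1\le\eps_2$ and the clip fails to exist at $\eps_2$, it fails to exist at $\eps_1$ as well.

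With this, each summand $q(i)\bigl(\sqrt{\min\{r(i),\cu\}}-1\bigr)^2$ is nondecreasing in $\cu$ because $r(i)\ge1$, and so it is nonincreasing in $\eps$. The $\bar H$ term behaves the same way. It is $(1+\eps)p(\bar H)-\eps=p(\bar H)-\eps(1-p(\bar H))$ in case (b) and $0$ once the clip exists, so it is nonincreasing in $\eps$. The only non-monotone factor is the prefactor $(1+\eps)$, which lies in $[1,2]$ because $\eps<1$. This gives $h^2_A(\eps_2)\le (1+\eps_2)\,S(\eps_2)\le 2S(\eps_1)\le 2h^2_A(\eps_1)$, where $S$ denotes the bracketed sum.

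Two edge cases remain. In the degenerate regime $\dtv(p,q)\le\eps/(1+\eps)$ the LFDs coincide and both contributions vanish, so the inequality holds trivially there; this regime is also monotone in $\eps$. The step needing the most care is the transition between case (b) and case (a) as $\eps$ crosses $p(\bar H)(1+\eps)=\eps$. We would check it by verifying that $\cu(\eps)\to\infty$ and the $\bar H$ term tends to $0$ at the threshold, so the piecewise expression is continuous and monotone across it. The bound for $B$ follows symmetrically, using the lower clip $\cl(\eps)$, which is nondecreasing in $\eps$, together with the $\bar L$ formula of part (c).
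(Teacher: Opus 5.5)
Your proposal is correct and follows essentially the same route as the paper. Both arguments write each contribution on $A$ as $(1+\eps)$ times a quantity that is nonincreasing in $\eps$ because the upper clip $\cu(\eps)$ is nonincreasing (the paper phrases this as $c_{A,\eps}(i)/(1+\eps)$ being decreasing). Both treat the $\bar H$ mass separately as a nonincreasing term and absorb $(1+\eps_2)/(1+\eps_1)\le 2$ into the constant.

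One minor slip: your final chain drops the $\bar H$ term. Either carry it along, or define $S(\eps):=h^2_A(\eps)/(1+\eps)$, which is still nonincreasing because its $\bar H$ part equals $p(\bar H)-\eps/(1+\eps)$ in case (b) and $0$ afterwards. Your explicit proof that the clip is monotone, and your handling of the degenerate regime and the (b)-to-(a) transition, fill in details the paper leaves implicit.
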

Note that as $\eps$ grows, the Hellinger divergence between the LFDs will go down monotonically, that is, $h^2_A(\eps_1)+h^2_B(\eps_1) \geq h^2_A(\eps_2)+h^2_B(\eps_2)$. The above result simply says that the contributions from $A$ and $B$ both decrease approximately monotonically (i.e., not just their sum). The proof is deferred to Appendix~\ref{app: monotonic_hell}.

Next we combine Lemmas~\ref{prop: approx_hell} and~\ref{prop: monotonic_hell} in the following simple corollary:
\begin{corollary}[Approximate monotonicity of $\tilde h_A(\eps)$ and $\tilde h_B(\eps)$]\label{corr: monotonic_approx_hell}
Consider the same setting as in Lemma~\ref{prop: monotonic_hell}. Then for $\eps_1 \le \eps_2$, we have
\begin{align*}
\tilde h_A(\eps_1) \gtrsim \tilde h_A(\eps_2), \quad \text{ and } \quad \tilde h_B(\eps_1) \gtrsim \tilde h_B(\eps_2).
\end{align*}
\end{corollary}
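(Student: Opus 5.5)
The plan is to chain the three quantities directly. Here $\tilde h_A(\eps)$ and $\tilde h_B(\eps)$ denote the quantities $\tilde h_A$ and $\tilde h_B$ of Lemma~\ref{prop: approx_hell}, evaluated with $p,q$ replaced by the subtractive LFDs $(p^*_\eps,q^*_\eps)$. Crucially, the sets $A_1,A_2,B_1,B_2$ stay fixed: they are defined from the likelihood ratio of the original pair $(p,q)$, as in Lemma~\ref{prop: monotonic_hell}. The intended argument is
\begin{align*}
\tilde h^2_A(\eps_1) \asymp h^2_A(\eps_1) \gtrsim h^2_A(\eps_2) \asymp \tilde h^2_A(\eps_2),
\end{align*}
and similarly for $B$. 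The middle inequality is exactly Lemma~\ref{prop: monotonic_hell}, with constant $2$. The outer equivalences should come from the restricted estimates~\eqref{eq: approx_hell_A} and~\eqref{eq: approx_hell_B} of Lemma~\ref{prop: approx_hell}, applied to the LFD pair. Taking square roots then gives the corollary as stated, in terms of $\tilde h_A$ and $\tilde h_B$.

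The one point needing care is that Lemma~\ref{prop: approx_hell} was stated with the partition defined by the ratio of the same pair whose Hellinger terms are computed. Here the partition comes from $p/q$, while the terms use $p^*_\eps/q^*_\eps$. So I would first check that the LFD ratio on each set stays in the corresponding range. On $A$, the subtractive LFD formulas give $p^*_\eps/q^*_\eps = \min\{p/q,\cu\}$ whenever $\cl\le 1$. Since $\cu>1$, this ratio lies in $[1,\infty)$ on $A$. On $A_1$ it stays in $[1,2)$. On $A_2$ it lies in $[\min\{2,\cu\},\infty)$, which may dip into $(1,2)$.

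This dip is not a problem, because the pointwise estimate behind Lemma~\ref{prop: approx_hell} holds uniformly. For a pair with ratio $r\ge 1$,
\[
(\sqrt{a}-\sqrt{b})^2 = a\,(1-r^{-1/2})^2 \asymp a\,(1-r^{-1})^2 = \frac{(a-b)^2}{a}.
\]
This quantity is $\asymp a$ once $r\ge 2$ and is at most $a$ always. Hence on points of $A_2$ where $\cu<2$, the two candidate formulas $a$ and $(a-b)^2/a$ may disagree by more than a constant. I expect this to be the main obstacle.

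For that case I would first argue pointwise, using the formulas directly. On $A_2$ with $\cu<2$, both $\tilde h$-terms at $\eps_2$ are dominated by the corresponding terms at $\eps_1$, either by monotonicity of the clips in $\eps$ or because $p^*$ on $A_2$ is at most a constant times $p^*_{\eps_1}$. Alternatively, I would simply interpret the corollary with the partition induced by each LFD pair, which makes Lemma~\ref{prop: approx_hell} apply verbatim. In that case the only content is the two-sided equivalence combined with Lemma~\ref{prop: monotonic_hell}, which is what the paper's "simple corollary" indicates.

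The $B$ side is symmetric, using the lower clip and the cases of Theorem~\ref{prop: lfd_sub_formulas} in which a clip fails to exist. In those cases the LFD simply rescales mass at infinite ratio, and the equivalence still holds.
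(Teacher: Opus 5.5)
Your chain $\tilde h^2_A(\eps_1)\asymp h^2_A(\eps_1)\gtrsim h^2_A(\eps_2)\asymp \tilde h^2_A(\eps_2)$ is exactly the paper's proof: the outer steps come from Lemma~\ref{prop: approx_hell} and the middle step from Lemma~\ref{prop: monotonic_hell}. Of your two readings, the paper implicitly uses the second one, where each $\tilde h$ is computed with the partition induced by the LFD pair itself; this is forced by case~2 of the proof of Theorem~\ref{theorem: sub_tv} and by the final appeal to Lemma~\ref{prop: approx_hell} there, and under it Lemma~\ref{prop: approx_hell} applies verbatim, so the fixed-partition detour (salvageable by your pointwise monotonicity of $p^*_\eps(i)/(1+\eps)$) is not needed.
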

This is easily seen as 
\begin{align*}
\tilde h_A(\eps_1) \stackrel{(a)}\asymp h_A(\eps_1) \stackrel{(b)}\gtrsim  h_A(\eps_2) \stackrel{(c)}\asymp \tilde h_A(\eps_2),
\end{align*}
where $(a)$ and $(c)$ follow from Lemma~\ref{prop: approx_hell} and $(b)$ follows from Lemma~\ref{prop: monotonic_hell}.

We are now in a position to complete the proof of Theorem~\ref{theorem: sub_tv}. Let $A, A_1, A_2, B, B_1,$ and $B_2$ be as in Lemma~\ref{prop: approx_hell}. For a given $\eps$, the LFDs for TV-contamination be $\ptv(\eps)$ and $\qtv(\eps)$, and the LFDs for subtractive-contamination be $\psub(\eps)$ and $\qsub(\eps)$. We shall produce an $\eps'$ such that 
\begin{align}
\tilde h_A^2(\ptv(\eps), \qtv(\eps)) &\gtrsim \tilde h_A^2(\psub(\eps'), \qsub(\eps')), \quad \text{ and } \label{eq: hA_tv_sub}\\
\tilde h_B^2(\ptv(\eps), \qtv(\eps)) &\gtrsim \tilde h_B^2(\psub(\eps'), \qsub(\eps')). \label{eq: hB_tv_sub}
\end{align}
Adding these inequalities, using Lemma~\ref{prop: approx_hell} and taking the inverse will yield $\nsTV(\eps) \lesssim \nsSub(\eps')$. By checking  $\eps' \leq 12\eps$, the proof will be complete. We shall prove~\eqref{eq: hA_tv_sub}, as a similar proof will work for proving~\eqref{eq: hB_tv_sub}.

We consider two cases, either $\cu_\TV(\eps) \in [2, \infty)$ or $\cu_\TV(\eps) \in (1, 2)$.

\paragraph{First case when $\cu_\TV(\eps) \in [2, \infty)$:} We first argue that $\cu_\TV(\eps) \le \cu_\Sub(\eps)$. Recall that $\cu_\TV(\eps)$ is determined as per equations~\eqref{eq: clips_tv}; i.e.,
\begin{align}\label{eq: cupper_tv}
\frac{p(H) - \cu q(H)}{1+\cu} = \eps,
\end{align}
where $H$ is understood to be the set of points where $p_i/q_i \geq \cu_\TV(\eps)$.
In contrast, the value of $\cu_\Sub(\eps)$ is determined by the equations~\eqref{eq: clips_sub}; i.e.,
\begin{align}\label{eq: cupper_sup}
p(H) - \cu q(H) = \frac{\eps}{1+\eps},
\end{align}
when the solution exists, where $H$ is understood to be the set of points where $p_i/q_i \geq \cu_\Sub(\eps)$, otherwise we use the convention $\cu_\Sub(\eps) = \infty$. Observe that if we use the same $\cu_\TV$ in the formula for subtractive contamination above, the left hand side will evaluate to $(1+\cu_\TV)\eps \geq 3\eps > \eps/(1+\eps)$. Hence, the $\cu_\Sub(\eps)$ must be larger than $\cu_\TV(\eps)$ (the left hand side is a decreasing function of $\cu$, and as it needs to go down from $3\eps$ to $\eps/(1+\eps)$, we will have to increase $\cu$ beyond $\cu_\TV(\eps)$). In particular, we can say that 
\begin{align*}
\cu_\Sub(\eps) \ge \cu_\TV(\eps) \ge 2.
\end{align*}

In a sense, for both TV and subtractive contamination, ``most of the action'' (of reducing the mass of $p$) happens at high likelihood ratios. We can use this observation to calculate $\tilde h_A$ as follows:
\begin{align}
\tilde h^2_A(\ptv(\eps), \qtv(\eps)) &= \sum_{i\in A_1} \frac{(\ptv(i)-\qtv(i))^2}{\ptv(i)} + \ptv(A_2)\notag \\ 
&\stackrel{(a)}= \sum_{i\in A_1} \frac{(p_i-q_i)^2}{p_i} + p(A_2) - \eps, \label{eq: case1tv}
\end{align}
where in $(a)$ we used the formulas for the TV-LFDs and note that the $\ptv$ only differs from $p$ at some points in $A_2$ (those with likelihood ratios at least $\cu_\TV(\eps)$), and the cumulative decrease in mass of $\ptv$ on $A_2$ is exactly $\eps$. 
A similar calculation for subtractive contamination yields
 \begin{align}
\tilde h^2_A(\psub(\eps), \qsub(\eps)) &= \sum_{i\in A_1} \frac{(\psub(i)-\qsub(i))^2}{\psub(i)} + \psub(A_2)\notag \\
&\stackrel{(a)}= (1+\eps) \sum_{i\in A_1} \frac{(p_i-q_i)^2}{p_i} + (1+\eps)p(A_2) - \eps, \label{eq: case1sub}\\
& \asymp \sum_{i\in A_1} \frac{(p_i-q_i)^2}{p_i} + (p(A_2) - \eps/(1+\eps)). \label{eq: case1sub_2}
\end{align}
We justify the equality in $(a)$ as follows. First, consider the case when $\cu_\Sub(\eps) \neq \infty$. Using the formulas for the LFDs and the fact that $\cu_\Sub(\eps) \geq 2$ we see that for all points in $A_1$, $\psub(i)/p(i) = \qsub(i)/q(i) = 1+\eps$. Moreover, the only points in $A_2$ for which this relation does not hold are those whose likelihood ratios are at least $\cu_\Sub(\eps)$, and so we may write
\begin{align*}
\psub(A_2) &= (1+\eps)p(A_2) - \sum_{i: p_i/q_i \geq \cu_\Sub(\eps)} (1+\eps)p(i) - (1+\eps)\cu_\Sub(\eps)q(i) \\
&= (1+\eps) p(A_2) - \eps,
\end{align*}
where the final equality follows from the identity~\eqref{eq: cupper_sup}. When $\cu_\Sub(\eps) = \infty$, the equality in $(a)$ continues to hold because the ``$\eps$-reduction of mass'' only happens in the set $\bar H$, which lies in $A_2$. More precisely, we have the equality $\psub(\bar H) = (1+\eps)p(\bar H) - \eps$, and $\psub(A \setminus \bar H) = (1+\eps)p(A \setminus \bar H)$. Adding these two gives $(a)$. 
Now observe that the expressions \eqref{eq: case1tv} and \eqref{eq: case1sub_2} have the same first terms, and furthermore, we claim that 
\begin{align*}
p(A_2) - \eps \asymp p(A_2) - \frac{\eps}{1+\eps}. 
\end{align*}
This is because $p(A_2)$ is quite large, specifically,
\begin{align*}
p(A_2) \geq p(\{i: \frac{p_i}{q_i} \ge \cu_\TV(\eps)\}) \stackrel{(a)}\geq (1+\cu_\TV(\eps)) \eps \geq 3\eps,
\end{align*}
where $(a)$ uses identity \eqref{eq: cupper_tv}, and thus
\begin{align*}
p(A_2) - \eps \asymp p(A_2) \asymp p(A_2) - \eps/(1+\eps).
\end{align*}
This shows that for the case of $\cu_\TV(\eps) \geq 2$, we have 
\begin{align}
\tilde h^2_A(\psub(\eps), \qsub(\eps)) \asymp \tilde h^2_A(\ptv(\eps), \qtv(\eps)). \label{eq: case1_A}
\end{align}
A similar proof will show that if $\cl_\TV (\eps) \le 1/2$, then 
\begin{align}
\tilde h^2_B(\psub(\eps), \qsub(\eps)) \asymp \tilde h^2_B(\ptv(\eps), \qtv(\eps)). \label{eq: case1_B}
\end{align}

\paragraph{Second case when $\cu_\TV(\eps) \in (1, 2)$:} The identity \eqref{eq: cupper_tv} gives
\begin{align}
p(H) - \cu_\TV(\eps) q(H) = (1+\cu_\TV(\eps))\eps \le 3\eps, \tag{as $\cu < 2$}
\end{align}
Choose $\eps'$ such that 
\begin{align*}
\frac{\eps'}{1+\eps'} = (1+\cu_\TV(\eps))\eps,
\end{align*}
that is
\begin{align}\label{eq: 12eps}
\eps' = \frac{(1+\cu_\TV(\eps))\eps}{1 - (1+\cu_\TV(\eps))\eps} \le \frac{3\eps}{1-3\eps} \le 12\eps,
\end{align}
where the final expression assumes $\eps \le \dtv(p,q)/4 \le 1/4.$ By the identity~\eqref{eq: cupper_sup}, it is immediate that 
\begin{align*}
\cu_\Sub(\eps') = \cu_\TV(\eps).
\end{align*}
The important part of the proof so far is the observation that $\eps' \asymp \eps$. We now evaluate $\tilde h_A^2(\ptv(\eps), \qtv(\eps))$ and compare it to $\tilde h_A^2(\psub(\eps'), \qsub(\eps'))$, crucially aided by the fact that they both have the \emph{same} clips $\cu = \cu_\TV(\eps) = \cu_\Sub(\eps') \le 2$ and since likelihood ratios are bounded by $\cu$, there are no terms from $A_2$. Specifically,
\begin{align}
\tilde h_A^2(\ptv(\eps), \qtv(\eps)) &= \sum_{i \in A} \frac{(\ptv(i) - \qtv(i))^2}{\ptv(i)} \nonumber\\
&= \sum_{i: \frac{p_i}{q_i} \in [1, \cu)} \frac{(p_i-q_i)^2}{p_i} + \sum_{i: \frac{p_i}{q_i}  \in [\cu, \infty)} \frac{(\cu - 1)^2(p_i+q_i)}{\cu(1+\cu)} \nonumber\\
&\ge \sum_{i: \frac{p_i}{q_i} \in [1, \cu)} \frac{(p_i-q_i)^2}{p_i} + \sum_{i: \frac{p_i}{q_i}  \in [\cu, \infty)} \frac{(\cu - 1)^2q_i}{\cu} \label{eq: case2tv}
\end{align}
where we used $p_i \geq \cu q_i$ in the last inequality.

Now let's compute $\tilde h_A^2(\psub(\eps'), \qsub(\eps'))$. The same argument as above applies, and we only need to consider the first term as likelihood ratios are bounded by $\cu \le 2$. Hence,
\begin{align}
\tilde h_A^2(\psub(\eps'), \qsub(\eps')) &= \sum_{i \in A} \frac{(\psub(i) - \qsub(i))^2}{\psub(i)}\nonumber \\
&= (1+\eps') \sum_{i: \frac{p_i}{q_i} \in [1, \cu)} \frac{(p_i-q_i)^2}{p_i} + \sum_{i: \frac{p_i}{q_i}  \in [\cu, \infty)} (1+\eps')q_i \frac{(\cu-1)^2}{\cu} \nonumber \\ 
&= (1+\eps') \left[\sum_{i: \frac{p_i}{q_i} \in [1, \cu)} \frac{(p_i-q_i)^2}{p_i} + \sum_{i: \frac{p_i}{q_i}  \in [\cu, \infty)} q_i \frac{(\cu-1)^2}{\cu}\right] \label{eq: case2sub}.
\end{align}
Comparing expressions \eqref{eq: case2tv} and \eqref{eq: case2sub}, we see that
\begin{align*}
\tilde h_A^2(\psub(\eps'), \qsub(\eps')) &\lesssim \tilde h_A^2(\ptv(\eps), \qtv(\eps)).
\end{align*}
\paragraph{Putting it all together:} Finally, by the monotonicity of $\tilde h_A$, we conclude that no matter which of case 1 or case 2 holds, we must have
\begin{align}
\tilde h_A^2(\psub(12\eps), \qsub(12\eps)) \lesssim  \tilde h_A^2(\ptv(\eps), \qtv(\eps)).
\end{align}
The same result holds for $B$ as well, that is,
\begin{align}
\tilde h_B^2(\psub(12\eps), \qsub(12\eps)) \lesssim  \tilde h_B^2(\ptv(\eps), \qtv(\eps)).
\end{align}
Adding up, using Lemma~\ref{prop: approx_hell}, and taking the inverse, we finally arrive at the claimed sample complexity bound
\begin{align}
\nsSub(12\eps) \gtrsim \nsTV(\eps).
\end{align}
\emph{Improving the constant to $2+\delta_0$:} To improve the constant 12 to $2+\delta_0$ for any $\delta_0 > 0$ we need to do only two changes: Adjust Lemma~\ref{prop: approx_hell} to use the thresholds $1-\delta_0'$ and $1+\delta_0'$ for some $\delta_0' < \delta_0$ instead of $1/2$ and $2$. The same result continues to hold, with constants that now depend on $\delta_0'$. Now alter the case 1 and case 2 above depending on whether $\cu_\TV(\eps)$ lies beyond or below $1+\delta_0'$. Case 1 remains unchanged, but in case 2, the upper bound on $\eps'$ in inequality~\eqref{eq: 12eps} becomes
\begin{align*}
\eps' \leq \frac{(2+ \delta_0')\eps}{1 - (2+ \delta_0')\eps} = (2+ \delta_0')\eps + O(\eps^2) < (2+\delta_0) \eps.
\end{align*}
Thus, for all small enough $\eps$, we are able to show 
\begin{align}
\nsSub((2+\delta_0)\eps) \gtrsim \nsTV(\eps).
\end{align}
This completes the proof.
\end{proof}

\subsection{Sandwich bounds: Huber and subtractive}
Theorems~\ref{theorem: tv_hub} and~\ref{theorem: sub_tv} directly imply comparisons between $\nsHub$ and $\nsSub$. To be precise, we can deduce that
\begin{align*}
\nsSub(\eps/2) \lesssim \nsHub(\eps) \lesssim \nsSub((2+\delta_0) \eps).
\end{align*}
However, this is not entirely satisfactory because the constants $1/2$ and $2+\delta$ may not be tight. Moreover, our guiding example from Section~\ref{sec: example} suggests that the constants should be much closer to 1 when comparing Huber and subtractive contamination. The main goal of this section is to prove such a result. 

\begin{theorem}[Comparing $\nsSub$ and $\nsHub$]\label{theorem: hub_sub}
Let $p$ and $q$ be distributions over a finite discrete space $\cX$ and let $\eps \leq \dtv(p,q)/4$. Let $\delta_0 > 0$. Then for all $\eps$ smaller than a constant that depends only on $\delta_0$ (and not on $p$ or $q$),
\begin{align*}
\nsSub(\eps) \lesssim \nsHub(\eps) \lesssim \nsSub((1+\delta_0)\eps). 
\end{align*}
The constants implicit in $\lesssim$ may depend on $\delta_0$, but are independent of
$p$, $q$, and $\eps$. 
\end{theorem}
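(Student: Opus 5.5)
The lower bound $\nsSub(\eps)\lesssim\nsHub(\eps)$ is the harder-looking half but I plan to get it by the same Hellinger-decomposition strategy as in the proof of Theorem~\ref{theorem: sub_tv}, comparing $\tilde h_A^2$ and $\tilde h_B^2$ separately via Lemma~\ref{prop: approx_hell}. Note first that neither inequality follows from containment (Appendix~\ref{app: lemma: containment}), so both directions must go through the LFD formulas \eqref{eq: lfd_hub}, \eqref{eq: clips_hub}, \eqref{eq:lfd_sub}, \eqref{eq: clips_sub}. The key structural fact is that on the high-ratio side the Huber LFD $\qhub$ adds mass $\eps/(1-\eps)$ (after rescaling by $1-\eps$) to $q$ on $H$, while $\psub$ removes mass $\eps/(1+\eps)$ (after rescaling by $1+\eps$) from $p$ on $H$; both clip equations have the form ``excess of one of $p/\cu$ vs.\ $q$ (resp.\ $p$ vs.\ $\cu q$) on $H$ equals $\approx\eps$''. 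Since $p(H)/\cu - q(H)=\eps/(1-\eps)$ is equivalent to $p(H)-\cu q(H)=\cu\eps/(1-\eps)$, the Huber upper clip at $\eps$ equals the subtractive upper clip at the level $\eps'$ with $\eps'/(1+\eps')=\cu\eps/(1-\eps)$. Similarly for the lower clip, with factor $1/\cl$.

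For each side (say $A$, the high-ratio side) I split into two cases according to whether the relevant clip is near $1$ or not, with threshold $1+\delta_0'$ (replacing $2$ in Lemma~\ref{prop: approx_hell} by $1+\delta_0'$, as already done at the end of the proof of Theorem~\ref{theorem: sub_tv}). \emph{Moderate case}: $\cuHub(\eps)\le 1+\delta_0'$. Then $\eps'\le(1+\delta_0')\eps/(1-\eps)+O(\eps^2)<(1+\delta_0)\eps$ for small $\eps$, and at the common clip $\cu$ the $A$-contributions of Huber and subtractive LFDs are both, up to the factors $(1\pm\eps)$, equal to $\sum_{[1,\cu)}(p_i-q_i)^2/p_i+\sum_{\ge\cu}\Theta((\cu-1)^2 q_i)$ (Huber scales $p_i$ by $1/\cu$ on $H$, subtractive scales $q_i$ by $\cu$; with $\cu\le 2$ these give comparable terms since $p_i\ge \cu q_i$ gives $(p_i/\cu - q_i)\cdot$ vs.\ $(p_i-\cu q_i)$ — here one must be careful: I'd compute $\tilde h$ directly and use $p_i\ge\cu q_i$ to lower bound both by the same expression, and upper bound subtractive by it). Then approximate monotonicity (Corollary~\ref{corr: monotonic_approx_hell}) gives $\tilde h_A^2(\text{Sub},(1+\delta_0)\eps)\lesssim\tilde h_A^2(\text{Hub},\eps)$. \emph{Extreme case}: $\cuHub(\eps)>1+\delta_0'$. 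Then, mimicking Case 1 of Theorem~\ref{theorem: sub_tv}, both LFDs act only on $A_2$; the Huber $A$-term becomes $\asymp \sum_{A_1}(p_i-q_i)^2/p_i+p(A_2)$ (Huber never reduces $p$ on $A$, only raises $q$, and $q^*$ stays below $p^*/(1+\delta_0')$), while subtractive gives $\asymp\sum_{A_1}+p(A_2)-\eps/(1+\eps)$, and I need $p(A_2)\gtrsim\eps$ to absorb the subtraction. This is the main obstacle: unlike TV, the Huber clip equation only gives $p(H)\ge \cu\eps/(1-\eps)>\eps$, i.e.\ $p(A_2)-\eps'/(1+\eps')$ could be tiny. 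I'd handle it by moving to level $(1+\delta_0)\eps$ and using $p(H)\ge(1+\delta_0')\eps$, so $p(A_2)-\eps$ and $p(A_2)$ are comparable with constant depending on $\delta_0'$ — here is exactly where the slack $\delta_0$ is needed, matching the tightness discussion.

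For the reverse inequality $\nsSub(\eps)\lesssim\nsHub(\eps)$, i.e.\ $\dhel^2(\psub(\eps),\qsub(\eps))\gtrsim\dhel^2(\phub(\eps),\qhub(\eps))$, I'd run the symmetric argument: the subtractive upper clip at $\eps$ satisfies $p(H)-\cu q(H)=\eps/(1+\eps)$, which is the Huber equation at level $\eps''$ with $\eps''/(1-\eps'')=\eps/(\cu(1+\eps))\le\eps$, so $\cuSub$-matched Huber level is $\le\eps$; in the moderate case Huber at $\eps''\le\eps$ dominates via monotonicity, and in the extreme case Huber's $A$-term $\asymp p(A_2)+\sum_{A_1}\ge$ subtractive's. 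Combining both sides $A,B$ and Lemma~\ref{prop: approx_hell} yields both sample-complexity bounds.
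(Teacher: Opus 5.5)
\textbf{Upper bound.} Your half $\nsHub(\eps)\lesssim\nsSub((1+\delta_0)\eps)$ is essentially the paper's Part~II. You split on whether $\cuHub(\eps)$ (resp.\ $\clHub(\eps)$) is close to $1$. In the moderate case you match clips via $\eps'/(1+\eps')=\cu\eps/(1-\eps)$ and use $p_i\ge\cu q_i$. In the extreme case you compare both LFDs at level $\eps$. You finish with Corollary~\ref{corr: monotonic_approx_hell}.

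One correction: in the extreme case this direction needs no absorption at all. The subtractive $A$-term is trivially at most $(1+\eps)\bigl[\sum_{A_1}(p_i-q_i)^2/p_i+p(A_2)\bigr]$. So compare at level $\eps$, and only then pass to $(1+\delta_0)\eps$ by monotonicity. The slack $\delta_0$ enters through the moderate case, where the matched level is $\eps'\approx\cu\eps$ with $\cu$ up to $1+\delta_0'$. It does not enter through the step you flagged as the main obstacle.

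\textbf{Lower bound: genuine gap.} There is no ``symmetric argument'' for $\nsSub(\eps)\lesssim\nsHub(\eps)$, because the common-clip comparison only goes one way. At a common upper clip $\cu$, a point $i\in H$ contributes $\asymp p_i(\cu-1)^2/\cu^2$ for the Huber LFD, which keeps $p_i$ (up to scaling) and raises $q_i$ to $p_i/\cu$. It contributes only $\asymp q_i(\cu-1)^2/\cu$ for the subtractive LFD, which lowers $p_i$ to $\cu q_i$. Since $p_i\ge\cu q_i$, this gives Huber $\gtrsim$ Sub, which is the upper-bound direction; the reverse fails as soon as $p_i\gg q_i$.

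Concretely, take
\begin{itemize}
\item $(p_1,q_1)=(\frac{\eps}{1+\eps}-\frac{\eps^2}{2},0)$,
\item $(p_2,q_2)=(2\eps^2,\eps^2)$,
\item $(p_3,q_3)=(0.4(1+10\eps),0.4)$,
\item a fourth symbol carrying the remaining mass.
\end{itemize}
Then $\dtv(p,q)\approx5\eps$ and $\cu_\Sub(\eps)=3/2$. The subtractive LFD has $\psub(1)=\qsub(1)=0$ and $\dhel^2(\psub(\eps),\qsub(\eps))=O(\eps^2)$. The Huber LFD at your matched level $\eps''$, whose clip is $3/2$, already receives $\Theta(\eps)$ from symbol $1$. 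So ``Sub at $\eps$ $\gtrsim$ Huber at $\eps''$'' fails by a factor $1/\eps$, and monotonicity in the Huber level cannot repair that.

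Your extreme-case sentence has the same defect. ``Huber's $A$-term $\ge$ subtractive's'' is again the upper-bound direction. The lower bound instead needs to absorb the $-\eps$ in $(1+\eps)p(A_2)-\eps$. That requires $p(A_2)\ge(1+c)\eps$, which is guaranteed only when the \emph{Huber} clip is extreme; the subtractive LFD may delete all of $A_2$, as in Section~\ref{sec: example}.

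\textbf{Missing idea.} Avoid the Huber LFD altogether. The paper turns the subtractive LFD into a feasible Huber pair:
\begin{itemize}
\item add the deleted mass $\eta$ to $q$ instead of removing it from $p$ (symmetrically on $B$);
\item rescale by $\frac{1+\eps}{1+2\eps}$, so the pair lies in the $\frac{\eps}{1+2\eps}$-Huber sets.
\end{itemize}
Coordinatewise, the new contribution on $H$ is $\asymp(\cu-1)^2q_i^2/p_i\le(\cu-1)^2q_i/\cu$. Hence its Hellinger divergence is $\lesssim\dhel^2(\psub,\qsub)$. Finally, $\nsHub(\eps)$ is at least the simple-testing sample complexity of this pair, which gives the lower bound.
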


\begin{remark}
We cannot replace $1+\delta_0$ by 1 for similar reasons as outlined in Remark~\ref{rem: delta_0}. For the example in Section~\ref{sec: example}, we have that for $\eps_1 = 2\eps/(1+2\eps) - \eps^{1+t}$, the sample complexity under Huber contamination is $\nsHub(\eps_1) \asymp 1/\eps^{1+2t}$. This is much larger than the sample complexity with $\eps_2$-subtractive contamination where $\eps_2 = 2\eps/(1-2\eps) - \eps^{1+t}$, which is $\nsSub(\eps_2) \asymp 1/\eps^{1+t}$. However, $\eps_2 > \eps_1$, and hence
\begin{align*}
\nsHub(\eps_1) \gg \nsSub(\eps_1).
\end{align*}
Thus, the constant $1+\delta_0$ is essentially optimal.
\end{remark}

\begin{proof}[Proof of Theorem~\ref{theorem: hub_sub}]

The proof consists of two parts, one for the lower bound on $\nsHub(\eps)$ and one for the upper bound. Recall that proving $\nsSub(\eps) \lesssim \nsTV(\eps)$ was straightforward by noting the containment of uncertainty sets. However, the No-Simulation Lemma~\ref{lemma: containment} implies that we cannot argue $\nsSub(\eps) \lesssim \nsHub(\eps)$ in a similar manner. This makes both parts of the proof non-trivial.

\paragraph{Part I:} We first show the inequality $\nsSub(\eps) \lesssim \nsHub(\eps)$. We will argue that there exists some pair $(p'_\Hub, q'_\Hub)$ in the $\eps$-Huber uncertainty set such that the Hellinger divergence between this pair is at most the Hellinger divergence between the LFDs $(\psub, \qsub)$ for $\eps$-subtractive contamination.

Let $A = \{i \in \cX \,:\, p(i)/q(i) \ge 1\}$ and $B = \{i \in \cX \,:\, p(i)/q(i) < 1\}$. Recall that $A$ and $B$ are also the sets where $\psub(i) \ge \qsub(i)$ and  $\psub(i) < \qsub(i)$ as the likelihood ratios for LFDs are merely clipped versions of $p(i)/q(i)$ at some $\cu > 1 > \cl$. Suppose for each $i \in A$ and $j \in B$, we use the formulas~\eqref{eq:lfd_sub} to write
\begin{align*}
\psub(i) = (1+\eps)(p(i) - \eta(i)), \quad &\text{ and } \quad \qsub(i) = (1+\eps)q(i), \quad \text { and }\\
\psub(j) = (1+\eps)p(j), \quad &\text{ and } \quad \qsub(j) = (1+\eps)(q(j) - \eta(j)),
\end{align*}
for $\eta(i), \eta(j) \geq  0$ satisfying the following to ensure $\psub$ and $\qsub$ integrate to 1:
\begin{align*}
\sum_{i \in A} \eta(i) = \frac{\eps}{1+\eps}, \quad \text{ and } \quad \sum_{j \in B} \eta(j) = \frac{\eps}{1+\eps}
\end{align*}
 Observe that the contribution of $i \in A$ to $\dhel^2(\psub, \qsub)$ is
\begin{align}
\left( \sqrt{\psub(i)} - \sqrt{\qsub(i)} \right)^2 &\asymp \frac{(\psub(i) - \qsub(i))^2}{\psub(i)} \nonumber \\
&\asymp \frac{\left(p(i) - q(i) - \eta(i)\right)^2}{p(i) - \eta(i)}\nonumber \\
&\gtrsim \frac{\left(p(i) - q(i) - \eta(i)\right)^2}{p(i)}. \label{eq: psubA}
\end{align}
Similarly, the contribution of $j \in B$ to $\dhel^2(\psub, \qsub)$ is
\begin{align}
\left( \sqrt{\psub(j)} - \sqrt{\qsub(j)} \right)^2 &\asymp \frac{(\psub(j) - \qsub(j))^2}{\qsub(j)} \nonumber\\
&\asymp \frac{\left(p(j) - q(j) + \eta(j)\right)^2}{q(j) - \eta(j)}\nonumber \\
&\gtrsim \frac{\left(p(j) - q(j) + \eta(j)\right)^2}{q(j)}. \label{eq: psubB}
\end{align}
Now define the following Huber-contaminated distributions for $i \in A$ and $j \in B$:
\begin{align*}
p'_\Hub(i) = p(i)\frac{(1+ \eps)}{(1 + 2\eps)} \quad &\text{ and } \quad q'_\Hub(i) = \frac{(1+ \eps)}{(1 + 2\eps)}(q(i) + \eta(i)), \quad \text{ and }\\
p'_\Hub(j) = \frac{(1+ \eps)}{(1 + 2\eps)}(p(j) + \eta(j)) \quad &\text{ and } \quad q'_\Hub(j) = q(j)\frac{(1+ \eps)}{(1 + 2\eps)},
\end{align*}
Observe that these are valid distributions, since
\begin{align*}
\sum_{x \in \cX} p'_\Hub(x) &= \frac{(1+ \eps)}{(1 + 2\eps)} + \frac{(1+ \eps)}{(1 + 2\eps)}\sum_{j \in B} \eta_j\\
&= \frac{(1+ \eps)}{(1 + 2\eps)} + \frac{\eps}{(1+2\eps)}\\
&= 1. 
\end{align*}
A similar calculation may be done to verify $q'_\Hub$ also integrates to 1. Moreover, since $\eta(x) \geq 0$, we can immediately check that $p'_\Hub$ and $q'_\Hub$ lie in the $\frac{\eps}{1+2\eps}$-Huber uncertainty set, which in turn lies in the $\eps$-Huber uncertainty set.

Now the contribution of $i \in A$ to the Hellinger divergence to $\dhel^2(p'_\Hub, q'_\Hub)$ is
\begin{align}
\left( \sqrt{p'_\Hub(i)} - \sqrt{q'_\Hub(i)} \right)^2 &\asymp \frac{(p'_\Hub(i) - q'_\Hub(i))^2}{p'_\Hub(i)} \nonumber\\
&\asymp \frac{\left(p(i) - q(i) - \eta(i)\right)^2}{p(i)}. \label{eq: phubA}
\end{align}
Similarly, for $j \in B$,
\begin{align}
\left( \sqrt{p'_\Hub(j)} - \sqrt{q'_\Hub(j)} \right)^2 &\asymp \frac{(p'_\Hub(j) - q'_\Hub(j))^2}{q'_\Hub(j)} \nonumber\\
&\asymp \frac{\left(p(j) - q(j) + \eta(j)\right)^2}{q(j)}. \label{eq: phubB}
\end{align}

Comparing equations~\eqref{eq: psubA} with~\eqref{eq: phubA}, and~\eqref{eq: psubB} with~\eqref{eq: phubB}, we conclude
\begin{align*}
\sum_{i \in A} \left( \sqrt{\psub(i)} - \sqrt{\qsub(i)} \right)^2 &\gtrsim \sum_{i \in A} \left( \sqrt{p'_\Hub(i)} - \sqrt{q'_\Hub(i)} \right)^2, \quad \text{ and }\\
\sum_{j \in B} \left( \sqrt{\psub(j)} - \sqrt{\qsub(j)} \right)^2 &\gtrsim \sum_{j \in B} \left( \sqrt{p'_\Hub(j)} - \sqrt{q'_\Hub(j)} \right)^2.
\end{align*}
Adding these two, we conclude 
\begin{align*}
\dhel^2(\psub, \qsub) \gtrsim \dhel^2(p'_\Hub, q'_\Hub),
\end{align*}
which is what we wanted to prove.

\paragraph{Part II:} We now move to proving the upper bound. The proof is along similar lines to that of Theorem~\ref{theorem: sub_tv}, where we showed $\nsTV(\eps) \lesssim \nsSub((2+\delta_0) \eps)$. For readability, we shall prove $\nsHub(\eps) \lesssim \nsSub(8 \eps)$  and explain the minor modifications needed to drive the constant down to $1+\delta_0$. 

\paragraph{First case when $\cu_\Hub(\eps) \in [2,\infty)$:} From the formula for Huber clips in equation~\eqref{eq: clips_hub}, we have
\begin{align}\label{eq: cupper_hub2}
\frac{p(H) - \cu_\Hub(\eps)q(H)}{\cu_\Hub(\eps)} = \frac{\eps}{1-\eps},
\end{align}
where $H$ is the set of points where $p(i)/q(i) \geq \cu_\Hub(\eps)$. In contrast, the value of $\cu_\Sub(\eps)$ is determined by the equations~\eqref{eq: clips_sub}; i.e.,
\begin{align}\label{eq: cupper_sup2}
p(H) - \cu_\Sub(\eps) q(H) = \frac{\eps}{1+\eps},
\end{align}
when the solution exists, where $H$ is understood to be the set of points where $p_i/q_i \geq \cu_\Sub(\eps)$, otherwise we use the convention $\cu_\Sub(\eps) = \infty$. Observe that if we use the same $\cu_\Hub$ in the formula for subtractive contamination above, the left hand side will evaluate to $\cu_\Hub(\eps)\eps/(1-\eps) > \eps/(1+\eps)$.  Hence, the $\cu_\Sub(\eps)$ must be larger than $\cu_\Hub(\eps)$. In particular, we can say that 
\begin{align*}
\cu_\Sub(\eps) \ge \cu_\Hub(\eps) \ge 2.
\end{align*}
Thus for both Huber and subtractive contamination, ``most of the action'' (of reducing the mass of $p$) happens at high likelihood ratios. Consider the same notation as in Lemmas~\ref{prop: approx_hell} and~\ref{prop: monotonic_hell}. Computing $\tilde h_A^2$ for the Huber LFDs yields:
\begin{align}
\tilde h^2_A(\phub(\eps), \qhub(\eps)) &= \sum_{i\in A_1} \frac{(\phub(i)-\qhub(i))^2}{\phub(i)} + \phub(A_2)\notag \\ 
&\stackrel{(a)}= (1-\eps)\sum_{i\in A_1} \frac{(p_i-q_i)^2}{p_i} + (1-\eps)p(A_2) \label{eq: case1hub}\\
&\asymp \sum_{i\in A_1} \frac{(p_i-q_i)^2}{p_i} + p(A_2)\label{eq: case1hub2}
\end{align}
where in $(a)$ we used the equality $\phub(i) = (1-\eps)p(i)$ for all $i \in A_2$. Recall the calculation for subtractive contamination from equation~\eqref{eq: case1sub_2}, and the subsequent argument showing $p(A_2) \asymp p(A_2) - \eps/(1+\eps)$, which yielded 
 \begin{align}
\tilde h^2_A(\psub(\eps), \qsub(\eps)) 
& \asymp \sum_{i\in A_1} \frac{(p_i-q_i)^2}{p_i} + p(A_2). \label{eq: case1sub22}
\end{align}
It is clear that expressions \eqref{eq: case1hub2} and \eqref{eq: case1sub22} are within constants of each other. This shows that for the case of $\cu_\Hub(\eps) \geq 2$, we have 
\begin{align}
\tilde h^2_A(\psub(\eps), \qsub(\eps)) \asymp \tilde h^2_A(\phub(\eps), \qhub(\eps)). \label{eq: case1_A1}
\end{align}
A similar proof will show that if $\cl_\Hub (\eps) \le 1/2$, then 
\begin{align}
\tilde h^2_B(\psub(\eps), \qsub(\eps)) \asymp \tilde h^2_B(\phub(\eps), \qhub(\eps)). \label{eq: case1_B1}
\end{align}

\paragraph{Second case when $\cu_\Hub(\eps) \in (1, 2)$:} The identity \eqref{eq: cupper_hub2} gives
\begin{align}
p(H) - \cu_\Hub(\eps) q(H) = \cu_\Hub(\eps)\frac{\eps}{1-\eps} \le 3\eps. \tag{as $\cu < 2, \eps \leq 1/4$}
\end{align}
Choose $\eps'$ such that 
\begin{align*}
\frac{\eps'}{1+\eps'} = \cu_\Hub(\eps)\frac{\eps}{1-\eps},
\end{align*}
that is
\begin{align}\label{eq: 123eps}
\eps' = \frac{\cu_\Hub(\eps)\eps}{1 - (1+\cu_\Hub(\eps))\eps} \le \frac{2\eps}{1-3\eps} \le 8\eps,
\end{align}
where the final expression assumes $\eps \le \dtv(p,q)/4 \le 1/4.$ By the identity~\eqref{eq: cupper_sup2}, it is immediate that 
\begin{align*}
\cu_\Sub(\eps') = \cu_\Hub(\eps).
\end{align*}
The important part of the proof so far is the observation that $\eps' \asymp \eps$. We now evaluate $\tilde h_A^2(\phub(\eps), \qhub(\eps))$ and compare it to $\tilde h_A^2(\psub(\eps'), \qsub(\eps'))$, crucially aided by the fact that they both have the \emph{same} clips $\cu = \cu_\Hub(\eps) = \cu_\Sub(\eps') \le 2$ and since likelihood ratios are bounded by $\cu$, there are no terms from $A_2$. Specifically,
\begin{align}
\tilde h_A^2(\phub(\eps), \qhub(\eps)) &= \sum_{i \in A} \frac{(\phub(i) - \qhub(i))^2}{\phub(i)} \nonumber\\
&= (1-\eps)\sum_{i: \frac{p_i}{q_i} \in [1, \cu)} \frac{(p_i-q_i)^2}{p_i} + \sum_{i: \frac{p_i}{q_i}  \in [\cu, \infty)} \frac{(\cu - 1)^2p_i}{\cu^2} \nonumber\\
&\ge \sum_{i: \frac{p_i}{q_i} \in [1, \cu)} \frac{(p_i-q_i)^2}{p_i} + \sum_{i: \frac{p_i}{q_i}  \in [\cu, \infty)} \frac{(\cu - 1)^2q_i}{\cu} \label{eq: case2hub}
\end{align}
where we used $p_i \geq \cu q_i$ in the last inequality.

The calculation for $\tilde h_A^2(\psub(\eps'), \qsub(\eps'))$ is the same as in the proof of Theorem~\ref{theorem: sub_tv}, namely,
\begin{align}
\tilde h_A^2(\psub(\eps'), \qsub(\eps')) &= (1+\eps') \left[\sum_{i: \frac{p_i}{q_i} \in [1, \cu)} \frac{(p_i-q_i)^2}{p_i} + \sum_{i: \frac{p_i}{q_i}  \in [\cu, \infty)} q_i \frac{(\cu-1)^2}{\cu}\right] \label{eq: case2sub2}.
\end{align}
Comparing expressions \eqref{eq: case2hub} and \eqref{eq: case2sub2}, we see that
\begin{align*}
\tilde h_A^2(\psub(\eps'), \qsub(\eps')) &\lesssim \tilde h_A^2(\phub(\eps), \qhub(\eps)).
\end{align*}

\paragraph{Putting it all together:} Finally, by the monotonicity of $\tilde h_A$ from Lemma~\ref{prop: monotonic_hell}, we conclude that no matter which case holds, we must have
\begin{align}
\tilde h_A^2(\psub(8\eps), \qsub(8\eps)) \lesssim  \tilde h_A^2(\phub(\eps), \qhub(\eps)).
\end{align}
The same result holds for $B$ as well, that is,
\begin{align}
\tilde h_B^2(\psub(8\eps), \qsub(8\eps)) \lesssim  \tilde h_B^2(\phub(\eps), \qhub(\eps)).
\end{align}
Adding up, using Lemma~\ref{prop: approx_hell}, and taking the inverse, we finally arrive at the claimed sample complexity bound
\begin{align}
\nsSub(8\eps) \gtrsim \nsHub(\eps).
\end{align}
\emph{Improving the constant to $1+\delta_0$:} To improve the constant 8 to $1+\delta_0$ for any $\delta_0 > 0$ we need to do only two changes: Adjust Lemma~\ref{prop: approx_hell} to use the thresholds $1-\delta_0'$ and $1+\delta_0'$ for some $\delta_0' < \delta_0$, instead of $1/2$ and $2$. The same result continues to hold, with constants that now depend on $\delta_0'$. Now alter the case 1 and case 2 above depending on whether $\cu_\Hub(\eps)$ lies beyond or below $1+\delta_0'$. Case 1 remains unchanged, but in case 2, the upper bound on $\eps'$ in inequality~\eqref{eq: 123eps} becomes
\begin{align*}
\eps' \leq \frac{(1 + \delta_0')\eps}{1 - (1 + \delta_0')\eps} = (1 + \delta_0')\eps + O(\eps^2) \leq (1+\delta_0) \eps.
\end{align*}
Thus, for all small enough $\eps$, we are able to show 
\begin{align}
\nsSub((1 +\delta_0)\eps) \gtrsim \nsHub(\eps).
\end{align}
This completes the proof.
\end{proof}

\section{Adaptive contamination}\label{sec: adaptive}

In this section we consider adaptive variants of additive, subtractive, and general contamination models. Our goal is to show that adaptive models are essentially as strong as the oblivious models.  We first review adaptive contamination models.

\subsection{Adaptive contamination models}\label{subsection: adaptive_models}

The main difference between adaptive and oblivious contamination is that in the former, an adversary sees the entire dataset, and then either adds more samples, deletes existing samples, or does both. 

\paragraph{Adaptive-$\TV$ contamination:} Given a dataset $S = (X_1, \dots, X_n)$, an adaptive-$\TV$ adversary chooses any $\lfloor n\eps \rfloor$ samples to replace with any samples of their choosing, and then releases the corrupted dataset $S' = (X_1', \dots, X_n')$ after uniformly permuting it. The sample complexity of hypothesis testing with this adversary is denoted by $\nsATV(\eps)$.

\paragraph{Adaptive-$\Hub$ contamination:} Given a dataset $S = (X_1, \dots, X_{n-\lfloor n\eps \rfloor})$, an adaptive-$\Hub$ adversary adds extra $\lfloor n\eps \rfloor$ samples of their choosing to $S$, and releases the $n$-sample dataset $S'$ after uniformly permuting it. The sample complexity of hypothesis testing with this adversary is denoted by $\nsAHub(\eps)$.

\paragraph{Adaptive-$\Sub$ contamination:} Given a dataset $S = (X_1, \dots, X_n)$, an adaptive-$\Sub$  adversary replaces $\lfloor n\eps \rfloor$ samples of their choosing by $\perp$ and releases the dataset $S'$. The sample complexity of hypothesis testing with this adversary is denoted by $\nsASub(\eps)$.

Observe that the adaptive-TV adversary can simulate the adaptive-Huber and adaptive-subtractive adversaries. 

\subsection{Baseline and small-$\eps$ bounds}
The following baseline bounds can be easily stated and are contained in the proof of Proposition~\ref{prop:baseline} in Appendix~\ref{app: baseline}.

\begin{proposition}\label{prop:Abaseline}
Let $p$ and $q$ be probability distributions and let $\eps \le \frac{\dtv(p,q)}{4}$. Then  
\begin{align*}
\frac{1}{\dtv(p,q)} \lesssim \nsATV(\eps), \nsAHub(\eps), \nsASub(\eps) \lesssim \frac{1}{\dtv(p,q)^2}.
\end{align*}
\end{proposition}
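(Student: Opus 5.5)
The plan is to prove the two sides separately. For the upper bound, give a single test (Scheffé's test) that works against the strongest adversary, adaptive-TV. For the lower bound, use the fact that even with $\eps=0$ the task needs $\Omega(1/\dtv(p,q))$ samples, and that every adaptive adversary can choose to do nothing, or almost nothing in the Huber case.

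\textbf{Upper bound.} Let $S=\{x: p(x)>q(x)\}$, so $p(S)-q(S)=\dtv(p,q)=:d$. The test computes the empirical frequency $\hat\mu=\frac1n\sum_i \mathbbm{1}\{X_i'\in S\}$ on the released dataset and outputs $0$ iff $\hat\mu\ge (p(S)+q(S))/2$.

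Take first the adaptive-TV adversary. It changes at most $\lfloor n\eps\rfloor$ entries, so it moves $\hat\mu$ by at most $\eps\le d/4$ relative to the clean empirical frequency. The clean frequency concentrates around $p(S)$ or $q(S)$. By Chebyshev (variance at most $1/(4n)$) or Hoeffding, it deviates by more than $d/4$ with probability $O(1/(nd^2))$. Hence both errors are at most $1/20$ once $n\gtrsim 1/d^2$.

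For adaptive-Huber, $n-\lfloor n\eps\rfloor\ge n/2$ clean samples are observed and $\lfloor n\eps \rfloor$ are adversarial, so the same perturbation bound holds. For adaptive-subtractive, I will compute $\hat\mu$ over all $n$ slots, counting $\perp$ as ``not in $S$''. This again changes $\hat\mu$ by at most $\eps$. Alternatively, one can note that an adaptive-TV adversary simulates both of these adversaries, since outputting $\perp$ is replacement by an out-of-alphabet symbol that the test treats as outside $S$. This gives $\nsAHub,\nsASub\lesssim \nsATV \lesssim 1/d^2$.

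\textbf{Lower bound.} Each adversary may choose not to corrupt: the adaptive-TV and adaptive-subtractive adversaries can leave the data untouched. The adaptive-Huber adversary must add $\lfloor n\eps\rfloor$ points, but it can add fresh samples drawn from the true distribution ($p$ or $q$), so the output is exactly $n$ i.i.d.\ clean samples.

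Therefore any successful test yields a successful simple test between $p$ and $q$ with $n$ samples. This requires $\dtv(p^{\otimes n},q^{\otimes n})\ge 9/10$, and by subadditivity $\dtv(p^{\otimes n},q^{\otimes n})\le n\,\dtv(p,q)$, so $n\ge 9/(10\,d)$.

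\textbf{Main obstacle.} The only delicate point is being careful with the floor functions and the random permutation. Concretely, I must check that in the Huber model the fraction of clean samples stays a constant, which it does since $\eps\le 1/4$, and that the concentration step uses only the clean part. Everything else mirrors the proof of Proposition~\ref{prop:baseline}.
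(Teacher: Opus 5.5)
Your proposal is correct and is essentially the paper's argument: Scheff\'e's test on $\{p>q\}$, the observation that an adaptive-TV adversary moves the statistic by at most $\eps\le \dtv(p,q)/4$, Hoeffding for $n\gtrsim 1/\dtv^2(p,q)$, simulation for the Huber and subtractive adversaries, and the clean-data bound $\dtv(p^{\otimes n},q^{\otimes n})\le n\,\dtv(p,q)$ for the lower bound. You add one piece of care that the paper leaves implicit in its ``set $\eps=0$'' remark: the adaptive-Huber adversary can insert fresh draws from the true distribution. The same idea covers a subtractive adversary forced to delete exactly $\lfloor n\eps\rfloor$ points, which can delete uniformly random positions and leave $n-\lfloor n\eps\rfloor$ i.i.d.\ clean samples.
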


Recall that in Proposition~\ref{prop:small_eps}, we showed that for $\eps \lesssim \dhel^2(p,q)$, the sample complexity with contamination is essentially the same as without contamination. Here, we prove a similar result for adaptive contamination. The proof of Proposition~\ref{prop:small_eps} relied on the existence of LFDs, so it does not work for the adversarial contamination models. However, it is still possible to prove a similar result by explicitly constructing a test for the small $\eps$ regime. 

\begin{proposition}[Adversarial sample complexity for small $\eps$]\label{prop:small_eps_adv}
    Fix $p$ and $q$ and let $\eps \le \frac{\dhel^2(p,q)}{8}$. Then $\nsATV(\eps), \nsAHub(\eps), \nsASub(\eps) \asymp \frac{1}{\dhel^2(p,q)}$.
\end{proposition}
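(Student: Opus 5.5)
The lower bound $\nsATV(\eps),\nsAHub(\eps),\nsASub(\eps)\gtrsim 1/\dhel^2(p,q)$ is immediate. Each adaptive adversary may simply do nothing; for Huber it can add fresh samples from the true distribution. Then the tester faces simple hypothesis testing, and Fact~\ref{fact: sc_simple} applies. Since the adaptive-TV adversary simulates the other two, the whole task reduces to one upper bound: a test that works against the adaptive-TV adversary with $n\asymp 1/\dhel^2(p,q)$ samples whenever $\eps\le \dhel^2(p,q)/8$. Replacing a $\perp$ symbol by an arbitrary element of $\cX$ is a legal TV corruption, so the subtractive case is also covered.

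The test I would use is a clipped, Hellinger-type statistic that is bounded per sample. Writing $h=\dhel^2(p,q)$, set
\[
g(x)=\frac{\sqrt{p(x)}-\sqrt{q(x)}}{\sqrt{p(x)}+\sqrt{q(x)}}\in[-1,1],
\]
and output $p$ iff $\frac1n\sum_i g(X_i')\ge 0$. One computes
\[
\E_p g-\E_q g=\sum_x\frac{(\sqrt{p(x)}-\sqrt{q(x)})(p(x)-q(x))}{\sqrt{p(x)}+\sqrt{q(x)}}=\dhel^2(p,q)=h.
\]
Moreover $\E_p g\ge h/2$ and $\E_q g\le -h/2$ by symmetry. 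Indeed,
\[
\E_p g+\E_q g=\sum_x\frac{(\sqrt{p(x)}-\sqrt{q(x)})(p(x)+q(x))}{\sqrt{p(x)}+\sqrt{q(x)}},
\]
which need not vanish. So rather than use threshold $0$, I would use the midpoint $\tau=(\E_p g+\E_q g)/2$, which gives margin $h/2$ on each side.

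Variances are controlled as follows. Since $|g|\le 1$ and $g^2\le (\sqrt p-\sqrt q)^2/(p+q)$ pointwise, $\Var_p(g)\le \E_p g^2\le \sum_x(\sqrt{p(x)}-\sqrt{q(x)})^2=h$, and similarly under $q$. On clean data, Chebyshev then gives error probability at most $\frac{h}{n(h/4)^2}\cdot$const, which is small once $n\ge C/h$. The adaptive-TV adversary alters at most $\eps n$ terms, each by at most $2$, so it shifts the empirical mean by at most $2\eps\le h/4$. This is still within the margin $h/2$ minus the fluctuation allowance $h/4$. The constants are then tuned: the clean-sample deviation is allowed to be $h/8$ using $n\ge C/h$ with larger $C$, and the corruption contributes at most $h/4$. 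The key point is that adaptivity does not matter, because the bound on the corruption's effect is deterministic given the clean sample. For Huber, the clean part has $n-\lfloor \eps n\rfloor\ge n/2$ samples, and the same argument applies.

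The main obstacle is making the constants close with the stated $\eps\le h/8$. The corruption shift $2\eps\le h/4$ must be strictly less than the margin $h/2$, with room left for concentration. If the factor $2$ from $|g|\le 1$ turns out too loose, I would instead shift $g$ so that it takes values in an interval of length $1$, or use a slightly smaller normalization. I would verify that $\E_p g-\E_q g=h$ exactly, since the whole argument rests on that identity.
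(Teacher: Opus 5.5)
Your proposal is correct and follows the paper's proof essentially step for step. It uses the same bounded statistic $(\sqrt{p}-\sqrt{q})/(\sqrt{p}+\sqrt{q})$, the midpoint threshold $(\E_p g+\E_q g)/2$, the identity $\E_p g-\E_q g=\dhel^2(p,q)$ together with the variance bound $\le\dhel^2(p,q)$, Chebyshev at deviation $\dhel^2(p,q)/4$, and the deterministic corruption shift $2\eps\le\dhel^2(p,q)/4$, with the Huber and subtractive adversaries covered by TV simulation. Two small points: the intermediate claim that $\E_p g\ge h/2$ and $\E_q g\le -h/2$ ``by symmetry'' is false in general and should be deleted, and your worry about constants is unfounded, since margin $h/2$ minus shift $h/4$ leaves $h/4$ and Chebyshev gives each error at most $16/(nh)\le 1/20$ once $n\ge 320/h$.
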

\begin{proof}[Proof of Proposition~\ref{prop:small_eps_adv}]
The lower bound of $\Omega(1/\dhel^2(p,q))$ follows from the uncontaminated case, so we'll focus on proving a matching upper bound for $\nsATV(\eps)$, as it will automatically imply upper bounds for the other two settings.   

Consider $n$ i.i.d.\ samples $X_1,\dots,X_n$ drawn from either $p$ or $q$.
An adversary, after seeing the entire sample, may replace up to $\eps n$ of the samples arbitrarily. Define the (single-sample) statistic
\begin{align*}
\hstat(x) = \frac{\sqrt{p(x)}-\sqrt{q(x)}}{\sqrt{p(x)}+\sqrt{q(x)}} \in [-1,1],
\end{align*}
and let $\olh=\tfrac1n\sum_{i=1}^n \hstat(X_i)$.
Statistics similar to this have appeared in the literature~\citep{LeCam86, Sur21}. The test is: declare $\cP$  iff $\olh \ge \tau$ where $\tau=(\mu_p+\mu_q)/2$ and $\mu_p=\E_p[\hstat],\ \mu_q=\E_q[\hstat]$. We claim that this test achieves Type I and Type II errors of at most $1/20$ given $O(1/\dhel^2(p,q))$ samples provided $\eps \le \dhel^2(p,q)/8$. The following lemma is proved in Appendix~\ref{app: h-properties}.

\begin{lemma}[Properties of $\hstat$]
\label{lem:h-properties}
With $\hstat$ as above:
\begin{enumerate}
  \item $|\hstat(x)|\le 1$ for all $x$.
  \item $\mu_p-\mu_q = \dhel^2(p,q)$.
  \item $\Var_p(\hstat)\le \dhel^2(p,q)$ and $\Var_q(\hstat)\le \dhel^2(p,q)$.
\end{enumerate}
\end{lemma}

Our next lemma, proved in Appendix~\ref{app: adv-shift}, shows that the adaptive-TV adversary cannot change the test statistic significantly.
\begin{lemma}
\label{lem:adv-shift}
Suppose the uncorrupted single-sample values of the test statistic are $h_1,\dots,h_n\in[-1,1]$ and the overall test statistic be $\olh$. Let the $\olh^{\Adv}$ be the test statistic a dataset contaminated by an adaptive-TV adversary. Then
\begin{align*}
\big|\olh - \olh^{\Adv}\big| \le 2\eps.
\end{align*}
\end{lemma}

Under $p$, we empower the adversary to reduce the test statistic by $\dhel^2(p,q)/4$ (the maximum possible value of $2\eps$) each time and increase the test statistic by the same value under $q$. We can upper-bound the Type-I and Type-II errors for this (empowered) adversary as follows. Under $p$, our test will err only if $\olh \le \mu_p - \frac{\dhel^2}{4}$, because when this event happens, the adversary can bump it further lower by $\dhel^2(p,q)/4$ which is beyond the threshold $\tau$.  The probability of this event is bounded via Chebyshev's inequality and Lemma \ref{lem:h-properties},
\[
\prob_p\big(|\olh-\mu_p| \ge \tfrac{1}{4}\dhel^2(p,q)\big)
\le \frac{\Var_p(\hstat)/n}{(\dhel^2(p,q)/4)^2}
\le \frac{\dhel^2(p,q)/n}{(1/16)\dhel^4(p,q)}
= \frac{16}{n\dhel^2(p,q)}.
\]
The same bound holds for the Type-II error under $q$. Therefore to make both errors $\le 1/20$ it suffices to pick $n \geq \frac{320}{\dhel^2(p,q)}$. This shows the desired upper bound of $O(1/\dhel^2(p,q))$ on $\nsATV(\eps)$ for all $\eps \leq \dhel^2(p,q)/8$.
\end{proof}

\begin{remark}
The test used in the proof of Proposition~\ref{prop:small_eps_adv} also works for oblivious-TV contamination. To see this, we may replace the right hand side in Lemma~\ref{lem:adv-shift} by $4\eps$ and use the multiplicative-Chernoff bound to show that the claimed bound holds with high probability (say, 0.99) provided $n \gtrsim \frac{1}{\eps} \gtrsim \frac{1}{\dhel^2(p,q)}$. The rest of the proof remains largely the same: we empower the adversary to reduce the test statistic by $4\eps$, and adjust the values of the constants to ensure the errors of both types are at most $1/20$.
\end{remark}

\subsection{Comparing adaptive and oblivious contamination models}

Our goal in this section is to show that the sample complexities under adaptive contamination are comparable to those with oblivious contamination, after scaling the contamination parameter by $(1\pm \delta_0)$.

We first show that adaptive contamination can simulate oblivious contamination, thereby showing a lower bound on the sample complexity with adaptive contamination. Throughout this section, we shall consider $\eps > \dhel^2(p,q)/9$ to address the regime not covered already by Propositions~\ref{prop:small_eps} and~\ref{prop:small_eps_adv}.

\begin{theorem}[Adaptive adversaries are harder than oblivious adversaries]
\label{thm: adaptive_simulates}
Let $\eps > \dhel^2(p,q)/9$ and fix $\delta_0 > 0$. Then for all oblivious contamination models $\mathrm{O} \in \{\TV, \Hub, \Sub\}$ and their adaptive counterparts $\mathrm{A} \in \{\mathrm{A\!-\!TV}, \mathrm{A\!-\!Hub}, \mathrm{A\!-\!Sub}\}$, we have
\begin{align*}
\ns_{\mathrm{O}}(\eps(1-\delta_0)) \lesssim \ns_{\mathrm{A}}(\eps)
\end{align*}
where the implied constant depends only on \(\delta_0\), and not on \(p,q,\eps\).
\end{theorem}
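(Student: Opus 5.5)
The plan is a simulation argument: an adaptive adversary at level $\eps$ can, with high probability, mimic an oblivious adversary at level $(1-\delta_0)\eps$. Consequently any test that succeeds against the adaptive adversary also succeeds against the oblivious one, up to a small additive loss in error. Write $\eps_0=(1-\delta_0)\eps$ and let $(p^*,q^*)$ be the LFDs of the oblivious model $\mathrm O$ at level $\eps_0$. These exist by Huber's results and by Lemma~\ref{lemma: exists_lfd}. Because $\ns_{\mathrm O}(\eps_0)\asymp 1/\dhel^2(p^*,q^*)$, it suffices to show the following: if a test $\phi$ on $n$ samples has error at most $1/10$ against the adaptive adversary, then with $n'=Cn$ samples (for a constant $C$ depending on $\delta_0$) one can distinguish $p^*$ from $q^*$ with error at most $1/5$, say. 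Standard amplification then brings this back to error $1/10$ at a constant-factor cost.

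The first step is a coupling representation of each oblivious contamination.
\begin{itemize}
\item \emph{TV.} Couple $X\sim p$ and $Y\sim p^*$ with $\prob(X\neq Y)=\dtv(p,p^*)\le\eps_0$.
\item \emph{Huber.} Write $p^*=(1-\eps_0)p+\eps_0 h$. Draw a Bernoulli$(\eps_0)$ flag per sample and, when the flag is on, output a draw from $h$ in place of a clean sample.
\item \emph{Subtractive.} Use the censoring representation from Section~\ref{sec: pseudo_lfd}: $Y=X$ with probability $a(X)$ and $\perp$ otherwise, with the $\perp$ removed. A censoring fraction of $\eps_0/(1+\eps_0)\le\eps_0$ then yields i.i.d.\ samples from $p^*$.
\end{itemize}
Given $n$ clean samples, the adaptive adversary draws these couplings itself. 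With Bernoulli flags/coins it produces $n$ i.i.d.\ samples from $p^*$ (respectively $q^*$) after modifying a random number $K\sim\mathrm{Bin}(n,\eps_0)$ of positions. The Huber case is arranged by having the adversary fill the $\lfloor n\eps\rfloor$ added slots, and the subtractive case by deleting. The uniform permutation preserves the i.i.d.\ law.

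The second step is concentration. The simulation is legal only if $K\le\lfloor n\eps\rfloor$. By a multiplicative Chernoff bound,
\[
\prob(K>\eps n)\le \exp(-c\,\delta_0^2\eps_0 n).
\]
When $K$ exceeds the budget, the adversary simply does something legal, and this costs at most $\prob(K>\eps n)$ in error. To make this probability small we need $n\gtrsim 1/(\delta_0^2\eps)$. This holds in the regime considered: $\eps>\dhel^2(p,q)/9\ge\dhel^2(p^*,q^*)/9$ by monotonicity of uncertainty sets, so $1/\eps\lesssim 1/\dhel^2(p^*,q^*)\asymp\ns_{\mathrm O}(\eps_0)$.

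If the adaptive sample size $n$ happens to be smaller than $C_{\delta_0}/\eps$, I would first pad it, running $\phi$ on more samples, which is harmless since $\ns_{\mathrm A}$ is monotone up to constants. Alternatively, I would show directly that $n\gtrsim 1/\eps$ is necessary anyway: Proposition~\ref{prop:baseline} gives $n\gtrsim 1/\dtv(p,q)$, and with $\eps\le\dtv/4$ this yields $\ns_{\mathrm A}(\eps)\gtrsim 1/\dtv\gtrsim$ const$/\eps$ only up to constants. Making this clean is the main obstacle, and I would handle it by taking $C=C(\delta_0)$ repetitions with a majority vote.

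Finally, combining the two steps: the test $\phi$ applied to simulated data drawn from $p^{*\otimes n}$ has error at most $1/10+o(1)$. Hence $1/\dhel^2(p^*,q^*)\lesssim n$, which gives $\ns_{\mathrm O}(\eps(1-\delta_0))\lesssim\ns_{\mathrm A}(\eps)$.
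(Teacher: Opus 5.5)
Your overall route matches the paper's. You couple each oblivious contamination at level $(1-\delta_0)\eps$ with an adaptive adversary that copies it whenever the $\mathrm{Bin}(n,(1-\delta_0)\eps)$ number of modified positions fits in the budget $\lfloor n\eps\rfloor$. You control the overflow with a multiplicative Chernoff bound and then transfer the error guarantee of the adaptive test. You phrase the conclusion via the LFDs, whereas the paper bounds the error against every element of the oblivious set directly; either works.

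The gap is exactly at the step you flag as the main obstacle. The Chernoff step needs $n\gtrsim 1/\eps$ for the \emph{adaptive} sample size $n$, and none of your three justifications delivers it.
\begin{itemize}
\item The chain $\eps>\dhel^2(p,q)/9\ge\dhel^2(p^*,q^*)/9$ yields $\ns_{\mathrm O}((1-\delta_0)\eps)\gtrsim 1/\eps$. That is a lower bound on the quantity you are trying to bound from above, so it says nothing about $n$.
\item The appeal to Proposition~\ref{prop:baseline} has the inequality backwards: $\eps\le\dtv(p,q)/4$ gives $1/\dtv(p,q)\le 1/(4\eps)$, which is an upper bound. The theorem also does not assume $\eps\le\dtv(p,q)/4$.
\item Padding, or majority voting over repetitions of the adaptive test, is not harmless. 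With $Cn$ samples the adversary's budget is $\lfloor Cn\eps\rfloor$. Under the uniform permutation, the number of corrupted points landing in a given batch of size $n$ is hypergeometric with mean about $n\eps$, so it exceeds $\lfloor n\eps\rfloor$ with constant probability. In those batches the guarantee of $\phi$ simply does not apply, so no monotonicity of $\ns_{\mathrm A}$ in the sample size is available this way.
\end{itemize}

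The missing idea, which is what the paper uses, is to apply the hypothesis to $n$ itself. An adaptive adversary can always act so that the released data are distributed as clean i.i.d.\ samples: it can change nothing, fill the added slots with fresh draws from $p$ (resp.\ $q$), or delete a data-independent set of positions. Hence a test that succeeds against it also solves the uncontaminated problem with at most $n$ samples. This gives $n\gtrsim\ns(p,q)\asymp 1/\dhel^2(p,q)>1/(9\eps)$, and it is the only place the assumption $\eps>\dhel^2(p,q)/9$ is used.

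Be aware that this yields $n\gtrsim 1/\eps$ only with a small absolute constant. The paper then simply assumes $N\ge 6\log(20)/\eps$ (resp.\ $N\gtrsim 1/(\delta_0^2\eps)$) ``for easier calculations''. So the upgrade to the constant the Chernoff bound actually needs is asserted rather than proved there as well. You should not present padding or majority voting on the adaptive side as supplying it.
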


\begin{proof}
We first prove the result for oblivious and adaptive TV contamination. We'll prove the claim with $\delta_0 = 1/2$, and remark on the minor modifications needed to adapt the proof to any $\delta_0 > 0$. 

\paragraph{TV contamination:} Fix distributions $p,q$. Let $T$ be any test that on $N \asymp \nsATV(\eps)$ samples such that both type-I and type-II error at most $1/20$ against an adaptive-TV adversary. First, observe that $N \gtrsim \frac{1}{\dhel^2(p,q)} \gtrsim 1/\eps$, since we are assuming $\eps \gtrsim \dhel^2(p,q)$. For easier calculations in what follows, assume $N \geq 6\log(20)/\eps$ in what follows.

Let $p^*$ satisfy $\dtv(p,p^*)\le\eps/2$. By the coupling characterisation of total variation there exists a coupling $\pi$ of $(X,Y)$ with
marginals $X\sim p$, $Y\sim p^*$ such that when $(X,Y) \sim \pi$, the disagreement probability satisfies the bound $\prob(X\neq Y)\le\eps/2$.
Draw $N$ independent coupled pairs $(X_i,Y_i)$ and define
\begin{align*}
K:= |\{i: Y_i\ne X_i\}| \sim \mathrm{Bin}(N,p_0),\qquad p_0\le\eps/2,
\end{align*}
so $\mu:=\E K=N p_0\le N\eps/2$.

Applying the multiplicative Chernoff bound, we have for any
$\delta\ge 1$,
\begin{align*}
\prob (K \ge (1+\delta)\mu)\le \exp\left(-\frac{\mu\delta}{3}\right).
\end{align*}
Set $(1+\delta)\mu=N\eps$ (this is feasible since
$\mu\le N\eps/2$, hence $\delta \ge 1$). Then
$\mu\delta = N\eps-\mu \ge N\eps/2$, so
\begin{align*}
\prob(K\ge N\eps)\le\exp \left(-\frac{N\eps}{6}\right).
\end{align*}
In particular, since $N \ge 6(\log 20)/\eps$, we have $\prob(K \ge N\eps)\le 1/20$.

Now consider an oblivious adversary which, given the clean sample $X_{1:N} = (X_1,\dots,X_N) \sim p^{\otimes N}$, samples the corresponding
$Y_i$ from the conditional distribution $\pi_{Y|X}$. This produces a dataset $Y_{1:N} = (Y_1, \dots, Y_N) \sim p^{* \otimes N}$. Crucially, with probability at least $1-1/20$, there are at most $N\eps$ positions of disagreement between $X_i$s and $Y_i$s. Consider an adaptive adversary that generates $Y'_{1:N} = (Y'_1,\dots, Y'_N)$ that is as similar to $(Y_1,\dots, Y_N)$ as possible. Specifically,  set $Y'_i = Y_i$ for the first $N\eps$ locations where $X_i \neq Y_i$, and set $Y'_i = X_i$ at all other locations. For this $\eps$-TV adaptive adversary, we know that the test $T$ yields an error of at most $1/20$. However, observe that since $(Y_1,\dots, Y_N)$ is identical to $(Y'_1, \dots, Y'_N)$ with probability at least 19/20 (because $K \leq N\eps$ with probability at least $19/20$) the same test $T$ will output the incorrect hypothesis on $(Y_1, \dots, Y_N)$ with probability at most $1/20 + 1/20 = 1/10$. To be more precise, the type-I error is bounded as
\begin{align*}
\prob(T(Y_{1:N}) = 1) &= \prob(Y_{1:N} = Y'_{1:N}, T(Y'_{1:N}) = 1) + \prob(Y_{1:N} \neq Y'_{1:N}, T(Y_{1:N}) = 1)\\
&\leq \prob(T(Y'_{1:N}) = 1) + \prob(Y_{1:N} \neq Y'_{1:N})\\
&\leq 1/20 + 1/20 = 1/10. 
\end{align*}
Repeating the symmetric argument for $q$ and $q^*$ shows $T$ distinguishes $p$ vs $q$ under TV-contamination radius $\eps/2$ with error at most $1/10$. This yields the bound
\begin{align*}
\nsTV(\eps/2) \lesssim \nsATV(\eps).
\end{align*}

\emph{Adapting the proof for any $\delta_0 > 0$:} When adapting the proof for a constant of $1-\delta_0$, we follow the same steps as above but make the following observations. The upper-bound on $p_0$ is $(1-\delta_0)\eps$, and so $\mu \leq N\eps(1-\delta_0)$. Choose $\delta$ such that $N\eps = (1+\delta)\mu$. Note that $\delta \geq \delta_0/(1-\delta_0)$. Now apply the multiplicative Chernoff bound to get
\begin{align*}
\prob(K \geq N\eps) \leq \exp\left(- \frac{\mu \delta^2}{2+\delta} \right) = \exp\left(-\frac{N\eps\delta^2}{(1+\delta)(2+\delta)} \right) \leq \exp\left(-\frac{N\eps\delta_0^2}{(2-\delta_0)} \right),
\end{align*}
where in the last line we substituted $\delta$ by $\delta_0/(1-\delta_0)$, used that $f(x) = x^2/((2+x)(1+x))$ is increasing for $x \geq 0$, and simplified the resulting expression. It is now clear that starting with $N \geq  (\log(20)(2-\delta_0)/(\eps\delta_0^2)) \asymp 1/\eps$, we get the desired error bounds.

\paragraph{Huber contamination:} Let $p^* = (1-\eps/2)p + (\eps/2)h$ be any distribution in the $\eps/2$-Huber uncertainty set. Consider i.i.d.\ $\Ber(\eps/2)$ random variables $B_1,\dots, B_N$, and set $S = \sum_{i=1}^N B_i$. We think of $B_i$ as the indicator of whether sample $X_i$ is drawn from $p$ or $h$. Observe that $Y_{1:N} \sim p^{*\otimes N}$ can be generated by first drawing $S \sim \mathrm{Bin}(N, \eps/2)$, generating $S$ i.i.d.\ samples from $h$ and $N-S$ i.i.d.\ samples from $p$, and randomly permuting them. Let us call the generated contaminated set (up to permutation) as $Y_{1:N} = (X_{1:N-S}, Z_{1:S})$. 

Using the multiplicative Chernoff bound as before, we argue that the number of contaminated points satisfies $S \leq N\eps$ with probability at least 19/20. Now consider an adaptive adversary that imitates $Z_{1:S}$ as much as possible, that is, it generates $Y'=(X_{1:N-N\eps},X'_{N-N\eps+1:N-S},Z_{1:S})$ where $X'_{N-N\eps+1},\dots,X'_{N-S}\sim p$ are fresh i.i.d.\ samples, and $Z'_i = Z_i$ for the first $N\eps$ indices. (When $S > N\eps$, the middle block of $X'_{N-N\eps+1:N-S}$ is dropped from $Y'_{1:N}$.) Note that when $S \leq N\eps$, the distribution of $Y_{1:N}$ is identical to that of $Y'_{1:N}$. Since $Y'_{1:N}$ corresponds to an $\eps$-adaptive Huber contamination, the test $T$ is assured to give a type-I error of at most $1/20$ for this adversary. Using a similar argument as above, this shows that the same test $T$ will yield a type-I error of at most $1/10$ over $Y_{1:N}$. Arguing in the same manner for the type-II error, we conclude 
\begin{align*}
\nsHub(\eps/2) \lesssim \nsAHub(\eps).
\end{align*}
The final argument where we tighten the bound using the multiplicative Chernoff bound is identical to the TV case, and we may conclude that for any $\delta_0 > 0$,
\begin{align*}
\nsHub((1-\delta_0)\eps) \lesssim \nsAHub(\eps).
\end{align*}

\paragraph{Subtractive contamination:} It is easier to argue the required bound by interpreting subtractive contamination that leads to a random-sized dataset. Recall that for characterising the sample complexity up to constant factors, there is no difference between the fixed-size and random-sized versions of subtractive contamination. 

Consider $p^*$ in the $\eps/2$-subtractive uncertainty set around $p$. Let $a(x) = p'(x)/(p(x)(1+\eps/2))$. Recall that subtractive contamination can be thought of as starting with the clean dataset $(X_1,\dots, X_N) \sim p^{\otimes N}$, generating the contaminated set $(Y_1, \dots, Y_N)$ by setting $Y_i = X_i$ with probability $a(X_i)$ and $Y_i \,=\, \perp$ with probability $1-a(X_i)$, independently over all $i$. Observe that $\prob(Y_i \,=\, \perp) = (\eps/2)/(1+(\eps/2)) < \eps/2$. Using the multiplicative Chernoff bound, the number of locations where $Y_i \,=\, \perp$ is at most $N\eps$ with probability at least $19/20$.

Now consider an $\eps$-adaptive subtractive adversary that tries to imitate $Y_{1:N}$ as much as possible. Specifically, this adversary sets $Y'_i \,=\, \perp$ for the first $N\eps$ locations, and keeps $Y'_i = X_i$ for the remaining locations. As before, we have $Y'_{1:N} = Y_{1:N}$ with probability at least 19/20, and a test $T$ that succeeds with type-I error of 1/20 for $\eps$-adaptive subtractive adversaries with $N$ samples will continue to succeed for the $\eps/2$-oblivious adversary with $\asymp N$ samples.  The $\eps/2$ can be replaced by any $(1-\delta_0)$ by tightening the analysis as outlined earlier, giving
\begin{align*}
\nsSub((1-\delta_0)\eps) \lesssim \nsASub(\eps).
\end{align*}
\end{proof}

We now prove the upper bound:
\begin{theorem}[Oblivious adversaries are harder than adaptive adversaries]
\label{thm: adaptive_coupling}
Let $\eps > \dhel^2(p,q)/9$ and fix $\delta_0 > 0$. Then for all oblivious contamination models $\mathrm{O} \in \{\TV, \Hub, \Sub\}$ and their adaptive counterparts $\mathrm{A} \in \{\mathrm{A\!-\!TV}, \mathrm{A\!-\!Hub}, \mathrm{A\!-\!Sub}\}$, we have
\begin{align*}
\ns_{\mathrm{A}}(\eps) \lesssim \ns_{\mathrm{O}}((1+\delta_0)(\eps))
\end{align*}
where the implied constant depends only on $\delta_0$ and not on $p,q,$ or $\eps$.
\end{theorem}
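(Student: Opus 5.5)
Let $\eps' = (1+\delta_0)\eps$ and let $\phi^*$ be the clipped likelihood ratio test between the $\eps'$-LFDs $(p^*,q^*)$ of model $\mathrm{O}$. It uses $n \asymp \ns_{\mathrm{O}}(\eps') \asymp 1/\dhel^2(p^*,q^*)$ samples, times a constant to be chosen. I would show that the same test, possibly with $O(1)$ more samples, succeeds against the adaptive $\eps$-adversary. The idea is a coupling, the reverse of the one in Theorem~\ref{thm: adaptive_simulates}. An adaptive adversary that touches only $\lfloor n\eps\rfloor$ points cannot push the clipped statistic $\gamma = \sum_i \log s(X_i)$ further than an oblivious adversary of level $\eps'$ typically does, because the latter corrupts about $n\eps'$ points by Chernoff.

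\textbf{Step 1 (structure of the worst adaptive attack on $\gamma$).} Since $\phi^*$ thresholds the sum of bounded scores $\log s(x)\in[\log\cl,\log\cu]$, under hypothesis $0$ the best adaptive-TV move is to replace the $\lfloor n\eps\rfloor$ samples with the largest scores by points of score $\log\cl$. The corrupted statistic is then a monotone function of the clean sample. I would couple this with the oblivious LFD mechanism. For TV, generate $X_i\sim p$ and, independently, a corruption indicator with the conditional law realising $\ptv(\eps')$: points in $H$ are moved down to $L$-type scores with the appropriate probabilities. The oblivious LFD removes exactly $\eps'$ of mass from the top of the $s$-distribution and places it at $\cl$. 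Thus, whenever the number of oblivious corruptions is at least $n\eps$, the oblivious-corrupted statistic is stochastically below the adaptive-corrupted one, provided we compare order statistics: oblivious corruption acts on the high-score region, and the adaptive one removes the top $n\eps$ order statistics. The cleanest route is to show domination of empirical quantile functions directly. Call the adaptive output $Y'$ and the oblivious output $Y\sim (p^*)^{\otimes n}$. On the event that the empirical count of $p$-samples in $H$ exceeds $n\eps$ plus $n\eps'$ minus the corrupted count, with deviations $O(\sqrt{n\eps})$, $\gamma(Y')\ge\gamma(Y)$ up to an additive error of $O(\sqrt{n\eps})\cdot\log(\cu/\cl)$.

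\textbf{Step 2 (concentration).} The multiplicative Chernoff bound shows that the number of oblivious corruptions is at least $(1+\delta_0/2)n\eps \ge n\eps$ with probability $1-e^{-c\delta_0^2 n\eps}$. Here we use $n\gtrsim 1/\dhel^2(p,q)\gtrsim 1/\eps$, the assumed regime, after inflating $n$ by a $\delta_0$-dependent constant. On that event the type-I error of $\phi^*$ under the adaptive attack is at most its error under $(p^*)^{\otimes n}$ plus a small term. The type-II error is handled symmetrically with $q^*$. For Huber, the adaptive adversary adds $\lfloor n\eps\rfloor$ points of score $\log\cl$, while the oblivious $\eps'$-Huber LFD moves mass $\eps'$ (via $\cl$ on $L$) and, by Chernoff, effectively adds more low-score points. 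For subtractive, the adaptive adversary deletes the top $n\eps$ scores. I would use the random-size equivalence: $\psub$ censors $\eps'/(1+\eps')$ of mass from $H$, and Chernoff guarantees this exceeds $n\eps$ deletions. The sample-size mismatch changes counts only by constants.

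\textbf{Main obstacle.} The delicate step is Step 1. Oblivious LFD corruption acts only on points with $s$ at the clips, whereas the adaptive adversary selects the top scores. So I need that, under $p$, the number of samples with score $\log\cu$ (the set $H$) is at least $n\eps$. Otherwise the adaptive adversary removes points with middle scores, and the coupling must compare the partial sums of order statistics. I would handle this by proving that for every $k\le(1+\delta_0/2)n\eps$, the sum of the top-$k$ scores is dominated stochastically, and then bound the discrepancy using the additive margin $n\,\dhel^2(p^*,q^*)\cdot\Theta(1)$ available from $\E_{p^*}\gamma-\E_{q^*}\gamma$ (a KL/Hellinger gap). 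Chebyshev then controls the fluctuations, as in Proposition~\ref{prop:small_eps_adv}.
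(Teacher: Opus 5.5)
Your overall plan matches the paper's. You run the clipped likelihood ratio test for the $(1+\delta_0)\eps$-LFDs and couple the clean sample with the oblivious LFD mechanism. You then use a lower-tail Chernoff bound, valid since $n\gtrsim 1/\dhel^2(p,q)\gtrsim 1/\eps$, to get at least $n\eps$ oblivious corruptions with high probability. Step~2 is fine; the gap is in Step~1.

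Your first claim there is the right one: on the event of at least $n\eps$ oblivious corruptions, the oblivious statistic lies below the adaptive one. But you do not prove it. Instead you fall back to $\gamma(Y')\ge\gamma(Y)$ up to an additive error of order $\sqrt{n\eps}\,\log(\cu/\cl)$, on a loosely specified event. You then treat as the ``main obstacle'' the case of fewer than $n\eps$ clean points in $H$. You propose to handle it by comparing partial sums of order statistics and absorbing the discrepancy into a margin $n\,\dhel^2(p^*,q^*)$ via Chebyshev. None of this is carried out, and it would need clip-dependent variance and margin bounds for the clipped log-likelihood that you do not supply. It also discards what you get for free: $\phi^*$ already has error at most $1/20$ under $(p^*)^{\otimes n}$.

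The missing observation makes the comparison exact. All scores lie in $[\log\cl,\log\cu]$. Hence any adaptive $\eps$-TV attack satisfies $\gamma(Y')\ge\gamma(X)-\lfloor n\eps\rfloor\Delta$ with $\Delta=\log\cu-\log\cl$. This holds deterministically, without identifying the optimal attack; hitting middle scores only weakens the adversary.

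Meanwhile, the TV-optimal coupling of $p$ and $\ptv(\eps')$ puts all its off-diagonal mass on $H\times L$. So $\gamma(Y)=\gamma(X)-K\Delta$ exactly, with $K\sim\mathrm{Bin}(n,\eps')$. Every corrupted index has $X_i\in H$, so on $\{K\ge n\eps\}$ there are automatically at least $n\eps$ clean points of score $\log\cu$. This dissolves your obstacle.

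Thus $\gamma(Y')\ge\gamma(Y)$ pointwise on $\{K\ge n\eps\}$, and the type-I error is at most $\prob(\gamma(Y)<0)+\prob(K<n\eps)\le 1/20+1/20$. No error term, order statistics, or Chebyshev is needed; this is precisely the paper's argument.

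The same comparison handles the other two models:
\begin{itemize}
\item Huber: added points have score at least $\log\cl$, and the LFD contamination $h$ is supported on $L$. On $\{S\ge n\eps\}$, the surplus dropped clean points have $\psi\ge\log\cl$.
\item Subtractive: deleted points have score at most $\log\cu$, and the LFD censors only points of $H$.
\end{itemize}

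You also omit the subtractive case $\cu=\infty$ (or $\cl=0$), where $\Delta=\infty$ and the shift bound is vacuous. There the paper argues directly that the test declares $p$ whenever a point of $\bar H$ survives. Since $p(\bar H)>\eps'/(1+\eps')$, Chernoff gives more than $n\eps$ such points with high probability, so the adversary cannot delete them all.
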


\begin{proof}
We prove the TV-contamination case first with $\delta_0 = 1$. The proof adaptations required to establish the result for any $\delta_0 > 0$ are minor, and we shall describe them at the end of the proof.

\paragraph{TV-contamination:} We'll show that the clipped-likelihood ratio test continues to work for adaptive-contamination. Fix distributions $p$ and $q$ and let $(p^*,q^*)$ denote the TV-LFDs for $(2\eps)$-contamination, and let $\cl, \cu$ be the corresponding clips with $\Delta:= \log \cu - \log \cl.$ Define the clipped log-likelihood
\begin{align*}
\psi(x) = \log \frac{p^*(x)}{q^*(x)} \in [\log \cl, \log \cu].
\end{align*}
Consider the test $T$ given by
\begin{align*}
T(x_1, \dots, x_N) = 
\begin{cases}
0 \quad &\text{ if } \quad \sum_{i=1}^N \psi(x_i) \geq 0,\\
1 \quad &\text{ otherwise.}
\end{cases}
\end{align*}
Let $N \asymp \nsTV(2\eps)$ such that test $T$ has   type-I and type-II errors of at most $1/20$ under $(2\eps)$-oblivious contamination. For easier calculations later, assume $N \geq 4\log(20)/\eps$. Note that this may be assumed as $N \gtrsim 1/\dhel^2(p,q) \gtrsim 1/\eps$, where we used the assumption $\eps \geq \dhel^2/9$.

Note that $\dtv(p, p^*) = 2\eps$, and there exists a coupling $\pi$ such that for $(X,Y) \sim \pi$, we have $X \sim p$, $Y \sim p^*$, and $\prob(X \neq Y) = 2\eps$. Consider the set $L$, $M$, and $H$ defined by thresholding the likelihood ratio $p(x)/q(x)$ at $\cl$ and $\cu$. The coupling $\pi$ may be explicitly written as 
\begin{align*}
\pi(x,y) = 
\begin{cases}
\min\{p(x), p^*(x)\} \quad &\text{ when } \quad x=y\\
\frac{(p(x)-p^*(x))(p^*(y)-p(y))}{2\eps}&\text{ when } \quad (x,y) \in H \times L \\
0 \quad &\text{ otherwise.}
\end{cases}
\end{align*}
The key point is to note that when $x \neq y$, the above coupling only assigns positive mass to pairs where $(x,y) \in H \times L$, where we have $\psi(x) - \psi(y) = \log \cu - \log \cl = \Delta$.

Now consider any $\eps$-adaptive adversary that acts on $X_{1:N}$ to produce $Y'_{1:N}$. Observe that 
\begin{align*}
\sum_{i=1}^N \psi(Y_i') \geq \sum_{i=1}^N \psi(X_i) - N\eps\Delta.
\end{align*}
Thus, the error probability of the test $T$ when used for the $\eps$-adaptive adversary may be upper-bounded as
\begin{align*}
\prob(T(Y'_{1:N}) = 1) &= \prob \left( \sum_{i=1}^N \psi(Y'_i) \leq 0 \right)\\
&\leq \prob \left( \sum_{i=1}^N \psi(X_i) \leq N\eps\Delta \right)\\
&= \prob \left( \sum_{i=1}^N \psi(Y_i) + \sum_{i=1}^N \psi(X_i) - \psi(Y_i) \leq N\eps\Delta \right)\\
&\leq \prob \left( \sum_{i=1}^N \psi(Y_i) \leq 0\right) + \prob \left( \sum_{i=1}^N \psi(X_i) - \psi(Y_i) \leq N\eps\Delta \right).
\end{align*}
Since $T$ yields a type-I error of at most $1/20$ on the contaminated dataset $Y_{1:N}$, the first term is at most $1/20$. To bound the second term, note that 
\begin{align*}
\psi(X_i) - \psi(Y_i) = 
\begin{cases}
0 &\quad \text{ with probability } \quad (1-2\eps),\\
\Delta &\quad \text{ with probability } \quad (2\eps).
\end{cases}
\end{align*}
Hence, we may think of the second term as a sum of $N$ i.i.d.\ $\Ber(2\eps)$ random variables $B_1, \dots, B_N$, scaled by $\Delta$. By the multiplicative Chernoff bound for the lower tail,
\begin{align*}
\prob\left(\sum_{i=1}^N \psi(X_i) - \psi(Y_i) \le N\eps\Delta \right)
&=\prob\left(\sum_{i=1}^N B_i \le N\eps \right)\\
&\le \exp\left(-\frac{N\eps}{4}\right)\\
&\leq \frac{1}{20},
\end{align*}
where the final line used $N \geq 4\log(20)/\eps$. Thus, we conclude that 
\begin{align*}
\prob(T(Y'_{1:N}) = 1) &\leq \frac{1}{10}.
\end{align*}
Using a similar argument for the type-II error, we conclude that the same test $T$ continues to succeed for $\eps$-adaptive contamination with $N$ samples, yielding 
\begin{align*}
\nsATV(\eps) \lesssim \nsTV(2\eps).
\end{align*}

\emph{Adapting the proof to any $\delta_0>0$:} In this case, we have $\dtv(p, p^*) = (1+\delta_0)\eps$. Following the same coupling-based strategy, we arrive at 
\begin{align*}
\prob(T(Y'_{1:N}) = 1) &\leq \prob \left( \sum_{i=1}^N \psi(Y_i) \leq 0\right) + \prob \left( \sum_{i=1}^N \psi(X_i) - \psi(Y_i) \leq N\eps\Delta \right).
\end{align*}
The first term is again bounded above by $1/20$, and for the second term we observe that 
\begin{align*}
\psi(X_i) - \psi(Y_i) = 
\begin{cases}
0 &\quad \text{ with probability } \quad (1-(1+\delta_0)\eps),\\
\Delta &\quad \text{ with probability } \quad ((1+\delta_0)\eps).
\end{cases}
\end{align*}
Now using the multiplicative Chernoff bound for $\sum_{i=1}^N B_i$ where $B_i$ are i.i.d.\ $\Ber(\eps(1+\delta_0))$ random variables yields
\begin{align*}
\prob \left( \sum_{i=1}^N \psi(X_i) - \psi(Y_i) \leq N\eps\Delta \right) = \prob \left( \sum_{i=1}^N B_i \leq N\eps \right) \leq e^{-\frac{\delta_0^2 N \eps}{2(1+\delta_0)}}.
\end{align*}
Thus, taking $N \geq 2(1+\delta_0)\log(20)/(\delta_0^2 \eps)$ gives the desired bound of $1/20$.

\paragraph{Huber contamination: } Just as the TV-case, we'll show that the clipped-likelihood ratio for $(2\eps)$-Huber contamination continues to work for $\eps$-adaptive Huber contamination. 

Fix distributions $p$ and $q$ and let $(p^*,q^*)$ denote the Huber-LFDs for $(2\eps)$-contamination, and let $\cl, \cu$ be the corresponding clips. Consider the set $L$, $M$, and $H$ defined by thresholding the likelihood ratio $p(x)/q(x)$ at $\cl$ and $\cu$. Define the clipped log-likelihood
\begin{align*}
\psi(x) = \log \frac{p^*(x)}{q^*(x)} \in [\log \cl, \log \cu].
\end{align*}
Consider the optimal test $T$ given by
\begin{align*}
T(x_1, \dots, x_N) = 
\begin{cases}
0 \quad &\text{ if } \quad \sum_{i=1}^N \psi(x_i) \geq 0,\\
1 \quad &\text{ otherwise.}
\end{cases}
\end{align*}
Let $N \asymp \nsHub(2\eps)$ such that test $T$ with $N$ samples with $(2\eps)$-oblivious Huber contamination has type-I and type-II errors of at most $1/20$. For easier calculations later, assume $N \geq 4\log(20)/\eps$. Note that this may be assumed as $N \gtrsim 1/\dhel^2(p,q) \gtrsim 1/\eps$, where we used the assumption $\eps \geq \dhel^2/9$.

Express $p^* = (1 - 2\eps)p + (2\eps)h$. Observe that, based on the LFD-formula~\eqref{eq: lfd_hub}, the contaminating distribution $h$ is supported only on $L$. The $(2\eps)$-contaminated dataset $Y_{1:N}$ is generated by setting $Y_i = X_i$ with probability $1-2\eps$, and drawing $Y_i \sim h$ otherwise, independently over all $i$. Let $S \sim \mathrm{Bin}(N, 2\eps)$ be the number of contaminated samples.  

Consider any $\eps$-adaptive Huber adversary that adds $N\eps$ new points to a clean dataset $X_{1:N-N\eps}$ to generate $Y'_{1:N}$. Observe that
\begin{align*}
\sum_{i=1}^N \psi(Y'_i) \geq \sum_{i=1}^{N-N\eps} \psi(X_i) + N\eps \log (\cl).
\end{align*}
For this adversary, the type-I error of the test $T$ may be upper-bounded as
\begin{align*}
\prob(T(Y'_{1:N}) = 1) &= \prob \left( \sum_{i=1}^N \psi(Y'_i) \leq 0 \right)\\
&\leq \prob \left( \sum_{i=1}^{N-N\eps} \psi(X_i) \leq N\eps\log(1/\cl) \right)\\
&\leq  \prob \left( \sum_{i=1}^{N-N\eps} \psi(X_i) + N\eps\log(\cl) \leq 0, S \geq N\eps \right) +  \prob \left( \sum_{i=1}^{N-N\eps} \psi(X_i) + N\eps\log(\cl) \leq 0, S < N\eps \right).
\end{align*}
For the first term, since $S \geq N\eps$, we have 
\begin{align*}
\sum_{i=1}^N \psi(Y_i) = \sum_{i=1}^{N-S} \psi(X_i) + S\log(\cl) \leq \sum_{i=1}^{N-N\eps} \psi(X_i) + N\eps\log(\cl),
\end{align*}
Here we used (i) the choice of coupling that places the clean
indices of $Y$ at positions $1,\dots,N-S$; (ii) $h$ is supported entirely on $L$ and so $\psi(Y_i) = \log \cl$ a.s. when $X_i$ is contaminated; and (iii) that
$\psi(X_i)\ge\log\cl$ for every $i$, so
$\sum_{i=N-S+1}^{N-N\eps}\psi(X_i)\ge(S-N\eps)\log\cl$. Plugging this back,
\begin{align*}
\prob(T(Y'_{1:N}) = 1) &\leq \prob \left( \sum_{i=1}^{N} \psi(Y_i) \leq 0\right) +  \prob \left(S < N\eps \right)\\
&\leq 1/20 + 1/20 = 1/10, 
\end{align*}
where in the final line we used the fact that $T$ has a type-I error of at most 1/20 under $(2\eps)$-Huber contamination, and a multiplicative Chernoff bound for the second term. Showing a similar bound for the type-II error, and adapting the argument suitable for any $\delta_0 > 0$, we conclude
\begin{align*}
\nsAHub(\eps) \lesssim \nsHub((1+\delta_0)\eps).
\end{align*}

\paragraph{Subtractive contamination: } We'll show that the clipped-likelihood ratio test corresponding to the LFDs continues to work for adaptive subtractive contamination as well. Fix distributions $p$ and $q$ and let $(p^*,q^*)$ denote the LFDs for $(2\eps)$-contamination, and let $\cl, \cu$ be the corresponding clips. We first bound the type-I error in the $\eps$-adaptive setting assuming that $\cu < \infty$. When $\cu = \infty$, the proof is more straightforward; we shall deal with it later. Define the clipped log-likelihood
\begin{align*}
\psi(x) = \log \frac{p^*(x)}{q^*(x)} \in [\log \cl, \log \cu].
\end{align*}
Consider the optimal test $T$ given by
\begin{align*}
T(x_1, \dots, x_N) = 
\begin{cases}
0 \quad &\text{ if } \quad \sum_{i=1}^N \psi(x_i) \geq 0,\\
1 \quad &\text{ otherwise.}
\end{cases}
\end{align*}
Let $N \asymp \nsSub(2\eps)$ such that test $T$ with $N$ samples with $(2\eps)$-oblivious subtractive contamination has type-I and type-II errors of at most $1/20$. As before, assume $N \gtrsim 1/\eps$. 

Consider any adaptive adversary that observes $X_{1:N} \sim p^{\otimes N}$ and changes it to $Y'_{1:N}$ such that $Y'_i \,=\, \perp$ for at most $N\eps$ indices. Observe that
\begin{align*}
\prob\left( T(Y'_{1:N}) = 1 \right) &= \prob\left(\sum_{i=1}^N \psi(Y_i') \leq 0 \right)\\ 
&\leq \prob\left(\sum_{i=1}^N \psi(X_i) \leq N\eps \log(\cu)\right).
\end{align*}
Above, we set $\psi(\perp) = 0$. Generate the oblivious contaminated dataset $Y_{1:N}$ by setting $Y_i = X_i$ with probability $a(X_i) = p^*(X_i)/((1+2\eps)p(X_i))$ and $\perp$ otherwise, independently over $i$. As in the Huber case, consider $S \sim \mathrm{Bin}(N, 2\eps/(1+2\eps))$, be the number $\perp$'s in $Y_{1:N}$. The crucial point to note is that $a(x) < 1$ only for those $x$ with $p(x)/q(x) \geq \cu$; i.e., only samples with high-likelihood ratios are deleted. We may upper-bound the desired error by
\begin{align*}
\prob\left( T(Y'_{1:N}) = 1 \right) &\le   \prob \left( \sum_{i=1}^{N} \psi(X_i) - N\eps\log(\cu) \leq 0, S \geq N\eps \right) +  \prob \left( \sum_{i=1}^{N} \psi(X_i) - N\eps\log(\cu) \leq 0, S < N\eps \right).
\end{align*}
When $S \geq N\eps$, we must have
\begin{align*}
\sum_{i=1}^N \psi(Y_i) = \sum_{i=1}^{N} \psi(X_i) - S\log(\cu) \leq \sum_{i=1}^{N} \psi(X_i) - N\eps\log(\cu),
\end{align*}
where we used the observation that only samples with high-likelihood ratios get deleted. Thus, we arrive at the upper-bound
\begin{align*}
\prob\left( T(Y'_{1:N}) = 1 \right) &\le   \prob \left( \sum_{i=1}^N \psi(Y_i) \leq 0, S \geq N\eps \right) +  \prob \left( \sum_{i=1}^{N} \psi(X_i) - N\eps\log(\cu) \leq 0, S < N\eps \right)\\
&\leq \prob \left( \sum_{i=1}^N \psi(Y_i) \leq 0  \right) + \prob \left(S < N\eps \right)\\
&\leq 1/20 + 1/20 = 1/10,
\end{align*}
where in the last line we used the fact that type-I error under $(2\eps)$-oblivious contamination is at most 1/20, and a multiplicative Chernoff bound for the second term. 

\paragraph{Subtractive contamination with $\cl = 0$ or $\cu = \infty$:} Suppose $\cu = +\infty$. Recall that for $(2\eps)$-subtractive contamination, this happens when $p(\bar H) \geq \frac{2\eps}{(1+2\eps)}$, where $\bar H = \{x : p(x) > 0 \text{ and } q(x) = 0\}$. Observe that the likelihood ratio test outputs $p$ whenever the dataset contains $x \in \bar H$. Given $N$ samples from $p$, the expected number of samples from $\bar H$ is at least $2N\eps/(1+2\eps)$. For $N \asymp \nsSub(2\eps) \gtrsim 1/\eps$, a simple application of the multiplicative Chernoff bound yields that with probability at least $9/10$, the number of samples from $\bar H$ is at least $N\eps$. This means that the $\eps$-adaptive subtractive adversary cannot delete all samples from $\bar H$---which comprise irrefutable evidence for $p$--- from the dataset with probability at least $9/10$, hence yielding a type-I error of at most $1/10$. 

This shows that for any $\cu$, finite or infinite, the type-I error under $\eps$-adaptive subtractive contamination is bounded above by 1/10. A similar proof works to bound the type-II error for any $\cl \ge 0$ as well. We may adapt the proof to any $\delta_0 > 0$ using similar ideas from the TV-contamination case, and thereby conclude
\begin{align*}
\nsASub(\eps) \lesssim \nsSub((1+\delta_0)\eps).
\end{align*}
\end{proof}

As a final observation, we note that $\nsAHub$, $\nsATV$, and $\nsASub$ may have polynomial jumps in the sample complexity for $O(\eps)$ perturbations in the contamination parameter $\eps$. To see this, consider the TV-contamination case. We have already shown that in some cases, we may have
\begin{align*}
\nsTV(\eps) \ll \ns(\eps(1+\delta_1)),
\end{align*}
for some arbitrarily small $\delta_1 > 0$. Using Theorems~\ref{thm: adaptive_simulates} and~\ref{thm: adaptive_coupling}, we have
\begin{align*}
\nsATV(\eps/(1+\delta_0)) \lesssim \nsTV(\eps) \ll \nsTV(\eps(1+\delta_1)) \lesssim \nsATV(\eps(1+\delta_0)(1+\delta_1))
\end{align*}
In particular, the adversarial sample jumps polynomially when the contamination parameter is scaled by $(1+\delta_0)^2(1+\delta_1)$, which may be arbitrarily close to 1. This is noted in the following corollary:

\begin{corollary}
For each $A \in \{\mathrm{A\!-\!Hub},\mathrm{A\!-\!Sub},\mathrm{A\!-\!TV} \}$, there exist instances for which the adaptive sample complexity $\ns_{A}(\eps)$ undergoes a polynomial jump when the contamination parameter is scaled by a factor arbitrarily close to $1$.
\end{corollary}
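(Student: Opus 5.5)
The plan is to chain Theorems~\ref{thm: adaptive_simulates} and~\ref{thm: adaptive_coupling} with the instability of the oblivious sample complexity from Theorem~\ref{thm:poly-jumps}. Fix an oblivious model $O\in\{\Hub,\Sub,\TV\}$ and let $A$ be its adaptive counterpart. I will take the three-point distribution pair $(p,q)$ of Theorem~\ref{thm:poly-jumps}, together with the levels $\eps_1<\eps_2$ given there for model $O$.

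In each case $\eps_2=\eps_1(1+\delta_1)$ with $\delta_1=\Theta(\eps^{t})\to 0$, and $\ns_O(\eps_1)\ll\ns_O(\eps_2)$ by a polynomial factor in $1/\eps$. Fix any $\delta_0>0$ and set $\eps_A:=\eps_1/(1+\delta_0)$. The chain is
\begin{align*}
\ns_A(\eps_A)\lesssim \ns_O(\eps_1)\ll \ns_O(\eps_2)\lesssim \ns_A\bigl(\eps_2/(1-\delta_0)\bigr).
\end{align*}
The first inequality is Theorem~\ref{thm: adaptive_coupling} applied at level $\eps_A$. The last is Theorem~\ref{thm: adaptive_simulates} applied at level $\eps_2/(1-\delta_0)$. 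Both hidden constants depend only on $\delta_0$, so for small $\eps$ they cannot absorb a factor polynomial in $1/\eps$.

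The ratio of the two adaptive levels is $(1+\delta_1)(1+\delta_0)/(1-\delta_0)$. We first choose $\delta_0$ small and then $\eps$ small, which makes this ratio arbitrarily close to $1$. This gives a polynomial jump in $\ns_A$ under scaling of the contamination level by a factor arbitrarily close to $1$.

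The main obstacle is checking the hypotheses of the two adaptive theorems. Both require the level to exceed $\dhel^2(p,q)/9$. In the example $\dhel^2(p,q)=\Theta(\eps)$, because the contribution of symbol $\{3\}$ is $\Theta(\eps)$. So I must check that the constants work out: for instance, in the TV case $\dhel^2(p,q)\approx 2\eps$, so $\eps_A\approx\eps$ does exceed $\dhel^2(p,q)/9$.

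If this check failed for some model, I would rescale the example so that the perturbed levels sit comfortably above $\dhel^2(p,q)/9$. The TV example, for instance, already has levels of order $\eps$ against $\dhel^2\approx 2\eps$.

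The remaining conditions are routine. The levels must stay below $\dtv(p,q)/4$, which Theorem~\ref{thm:poly-jumps} already guarantees, up to the negligible factor $(1\pm\delta_0)^{-1}$ that can be handled by taking $\delta_0$ small.
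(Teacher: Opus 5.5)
Your proposal is correct and is essentially the paper's argument: sandwich the oblivious jump of Theorem~\ref{thm:poly-jumps} between Theorem~\ref{thm: adaptive_coupling} at the lower level and Theorem~\ref{thm: adaptive_simulates} at the upper level. As you do, $\delta_0$ must be fixed before sending $\eps\to 0$, so that the $\delta_0$-dependent constants cannot absorb the polynomial gap; the paper writes the upper level as $\eps(1+\delta_0)(1+\delta_1)$ rather than $\eps_2/(1-\delta_0)$, which is cosmetic. Your explicit check that all levels exceed $\dhel^2(p,q)/9\approx 2\eps/9$ fills in a hypothesis the paper leaves implicit, and since it holds in all three models, the rescaling fallback is unnecessary.
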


\section{Comparing private and robust hypothesis testing}\label{sec: privacy}

\def\priv{\mathrm{privacy}}
\def\rob{\mathrm{robust}}
\def\nstar{n^\star}
\def\privp{\gamma}
\def\estar{\privp^\star}
\def\npriv{\nstar_{\priv}}
\def\nrob{\nstar_{\rob}}
\def\NT{N_{\mathrm{Transformation}}}

In this section, we reconcile our results on the sample complexity curve as a function of the contamination fraction $\eps$ with the corresponding sample complexity curve as a function of the differential privacy parameter $\privp$. We refer the reader to \cite{DworkRoth13} for background on differential privacy.

\begin{definition}[Differential privacy] 
Let $\cX$ be the domain and let $\cY$ be the output space.
A randomized algorithm $\cA$: $\cX^n \to \cY$ is $\privp$-differentially private ($\privp$-DP for short) if, for all measurable events $E \subseteq \cY$ and all datasets $S, S' \in \cX^n$ that differ in at most one observation, 
 $\Pr[\cA(S) \in E] \leq e^{\privp}\Pr[\cA(S') \in E]$.
\end{definition}

\begin{definition}[Private sample complexity]
Let $\privp > 0$ and $p$ and $q$ be two distributions. The private sample complexity under $\privp$-central differential privacy, denoted by $\npriv(\privp):= \npriv(p,q,\privp)$, is defined as 
\begin{align*}
\npriv(\privp):= \npriv(p,q,\privp) := \min\left\{n: e^*_{n, \priv}(p,q,\privp) \leq 1/10  \right\}\,,
\end{align*}
where  $e_{n, \priv}^*(p,q,\privp) := \inf \{e_n(\phi;p,q) : \phi \text{ is $\privp$-DP}\}.$
\end{definition}
Using the sample-and-aggregate technique in differential privacy \cite{NissRS07}, one can show that for $\privp \gtrsim 1$, $\npriv(\privp) \asymp \frac{1}{\dhel^2(p,q)}$.
Furthermore, amplification by subsampling (see, for example, \cite{Steinke22}) shows that,  for any $\privp_1 < \privp_2 < O(1)$:
\begin{align*}
    \npriv(\privp_2) \leq \npriv(\privp_1) \lesssim \frac{\privp_2}{\privp_1} \npriv(\privp_2)\,.
\end{align*}
Thus, even without knowing the exact form of the curve $\npriv$, we can deduce that it is relatively stable in the following sense: changing the privacy parameter by a constant factor changes the corresponding sample complexity by at most a constant factor.
On the other hand, \cite{DwoLei09,AsiUZ23,HopKMN23} have shown that, for binary hypothesis testing, robustness and privacy are essentially equivalent in a certain sense (see also \cite{AsiEtal24,CanHLLN23}). 
An incorrect reading of these two observations might suggest that increasing the contamination fraction by a constant factor should lead only to a constant-factor increase in robust sample complexity, contradicting \Cref{thm:poly-jumps}.
In this section, we show that this apparent paradox arises from an incorrect translation of the results of \cite{DwoLei09,AsiUZ23,HopKMN23}. In fact, combining these works with an explicit example from \cite{CanEtal19} already yields an example in which changing the contamination fraction from $\eps$ to $C\eps$ for a sufficiently large constant $C$ leads to a polynomial blow-up in sample complexity.
However, the current transformations do not seem to be strong enough to deduce polynomial blowups for $o(\eps)$ perturbations to $\eps$, established in the previous sections.

\cite{CanEtal19} studied the sample-complexity curve $\npriv(\privp)$, along with the associated optimal private tests, and gave an expression for computing this curve. 
The resulting expression is closely related to least favorable distributions. 
In recent work, \cite{AsiEtal24} gave a simpler expression for the sample complexity, which we record below.
For two discrete distributions $p$ and $q$, let $D_\privp(p,q) := \sum_{i} q_i f_\gamma\left(\frac{p_i}{q_i}\right)$ be the (non-negative) expression, where $f_\gamma(t):= (t-1)\left[\log t\right]^{\privp}_{-\privp} = (t-1)\max( \min(\log t,\privp) , -\privp)$.

\begin{fact}[Private sample complexity~\cite{AsiEtal24}]
\label{fact:priv-sample-complexity}
Let $\privp > 0 $. Then 
       $\npriv(p,q,\privp) \asymp \frac{1}{\dhel^2(p,q)}  + \frac{1}{D_\privp(p,q)}$.
Furthermore, for $\privp = O(1)$, the private sample complexity satisfies that
\begin{align*}
  \frac{1}{\dhel^2(p,q)} + \frac{1}{\privp \dtv(p,q)}  \lesssim    \npriv(\privp) \lesssim \min\left(\frac{1}{\privp \dhel^2(p,q)}, \frac{1}{(\privp\dtv(p,q) - \privp^2)_+}\right)\,.
\end{align*}
In particular, if $\privp \ll \dtv(p,q)$, then $\npriv(\privp) \asymp \frac{1}{\privp \dtv(p,q)}$.
\end{fact}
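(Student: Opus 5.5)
We take the first claim, $\npriv(p,q,\privp) \asymp \frac{1}{\dhel^2(p,q)} + \frac{1}{D_\privp(p,q)}$, directly from \cite{AsiEtal24}; it is the characterisation we are recording. The plan is to deduce the two-sided bounds for $\privp = O(1)$ from it by sandwiching $D_\privp(p,q)$ between multiples of $\privp\,\dhel^2(p,q)$ and $\privp\,\dtv(p,q)$. This reduces the work to pointwise inequalities for $f_\privp(t) = (t-1)\,[\log t]^{\privp}_{-\privp}$, weighted by $q_i$ with $t = p_i/q_i$. Points with $q_i = 0$ are handled by the usual convention $q_i f_\privp(p_i/q_i) := \privp\, p_i$, which is consistent with every bound below.

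\emph{Lower bound on $\npriv$.} We have $|[\log t]^{\privp}_{-\privp}| \le \privp$ for every $t$. Hence
\begin{align*}
D_\privp(p,q) \le \privp \sum_i |p_i - q_i| = 2\privp\,\dtv(p,q).
\end{align*}
Plugging this into the characterisation gives $\npriv(\privp) \gtrsim \frac{1}{\dhel^2(p,q)} + \frac{1}{\privp\,\dtv(p,q)}$.

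\emph{First upper bound.} We show pointwise that $f_\privp(t) \gtrsim \privp\,(\sqrt t - 1)^2$ for $\privp = O(1)$, splitting into three regions.
\begin{itemize}
\item If $|\log t| \le \privp$, then $f_\privp(t) = (t-1)\log t$. The elementary inequality $(t-1)\log t \ge 2(\sqrt t-1)^2$ (from $\log t$ versus $2(\sqrt t-1)/\sqrt t$) gives the claim, since $\privp \lesssim 1$.
\item If $t > e^{\privp}$, then $f_\privp(t) = \privp(t-1) \ge \privp(\sqrt t-1)^2$.
\item If $t < e^{-\privp}$, then $f_\privp(t) = \privp(1-t) \ge \privp(1-\sqrt t)^2$.
\end{itemize}
Summing against $q_i$ gives $D_\privp \gtrsim \privp\,\dhel^2$. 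Therefore $\npriv \lesssim \frac{1}{\dhel^2} + \frac{1}{\privp \dhel^2} \lesssim \frac{1}{\privp \dhel^2}$.

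\emph{Second upper bound.} We write
\begin{align*}
f_\privp(t) = \privp|t-1| - |t-1|\bigl(\privp - |\log t|\bigr)_+ .
\end{align*}
The correction term is nonzero only when $|\log t| < \privp$. There $|t-1| \le e^{\privp}-1 \lesssim \privp$, so the correction is at most $C\privp^2$ per unit of $q$-mass. Summing gives
\begin{align*}
D_\privp \ge 2\privp\,\dtv(p,q) - C\privp^2 .
\end{align*}
Hence $\frac{1}{D_\privp} \lesssim \frac{1}{(\privp\dtv - \privp^2)_+}$ after absorbing constants. The main care point is this absorption: the constant in front of $\privp^2$ must be matched to the form stated, which I would do by rescaling $\privp$ by a constant inside the $(\cdot)_+$, using that $\npriv$ changes by at most constant factors under constant rescalings of $\privp$ (the subsampling stability noted above). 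It remains to absorb the $\frac{1}{\dhel^2}$ term. By Fact~\ref{fact: hel_tv}, $\dhel^2 \ge \dtv^2$, and $(\privp\dtv - \privp^2)_+ \le \privp \dtv \lesssim \dtv^2$ once $\privp \lesssim \dtv$. Outside that range the bound is vacuous or dominated by the first upper bound, which also justifies taking the minimum of the two expressions.

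\emph{The ``in particular'' claim.} If $\privp \ll \dtv$, then $(\privp\dtv - \privp^2)_+ \asymp \privp\dtv$, so the upper and lower bounds match at $\frac{1}{\privp\,\dtv}$. Here the $\frac{1}{\dhel^2}$ term in the lower bound is dominated because $\frac{1}{\dhel^2} \le \frac{1}{\dtv^2} \le \frac{1}{\privp\,\dtv}$.
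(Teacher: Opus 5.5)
The paper gives no proof of this fact. It imports the whole statement, including the two-sided bounds for $\gamma = O(1)$ and the ``in particular'' clause, from \cite{AsiEtal24}. You instead take only the first sentence, $n^{\star}_{\mathrm{privacy}}(\gamma) \asymp 1/\dhel^2(p,q) + 1/D_\gamma(p,q)$, as a black box. You then derive everything else from pointwise bounds on $f_\gamma(t) = |t-1|\min\{|\log t|,\gamma\}$: the bound $D_\gamma \le 2\gamma\dtv(p,q)$, the bound $D_\gamma \gtrsim \gamma\dhel^2(p,q)$, and the bound $D_\gamma \ge 2\gamma\dtv(p,q) - C\gamma^2$. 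This derivation is correct. Compared with the bare citation, it shows that the explicit bounds are elementary consequences of the characterisation, with constants depending only on an upper bound $\gamma \le \gamma_0$.

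Two small points should be tidied.

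First, for $t<1$ the comparison $\log t \ge 2(\sqrt t-1)/\sqrt t$ gives an \emph{upper} bound on $(t-1)\log t$, because $t-1<0$. In that range use $\log t \le 2(\sqrt t-1)$ instead, which yields $(t-1)\log t \ge 2(\sqrt t+1)(\sqrt t-1)^2$. Alternatively, note that $g(t) := (t-1)\log t - 2(\sqrt t-1)^2$ satisfies $g(1/t) = g(t)/t$, so the case $t \ge 1$ suffices.

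Second, the rescaling step does not need subsampling. Since $f_\gamma(t)$ is nondecreasing in $\gamma$, so is $D_\gamma$. Applying your bound at $\gamma/K$ with $K = \max\{1, C/2\}$ gives
\[
D_\gamma \ge D_{\gamma/K} \ge \frac{2}{K}\Bigl(\gamma\dtv(p,q) - \frac{C}{2K}\gamma^2\Bigr) \ge \frac{2}{K}\bigl(\gamma\dtv(p,q) - \gamma^2\bigr).
\]
In fact the sharper estimate $|e^u-1|(\gamma-|u|) \le e^{\gamma}\gamma^2/4$ for $|u|<\gamma$ gives $D_\gamma \ge 2\bigl(\gamma\dtv(p,q)-\gamma^2\bigr)$ outright whenever $\gamma \le \log 8$, so no rescaling is needed in that range.

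Your absorption of the $1/\dhel^2(p,q)$ term is fine. If $\gamma \ge \dtv(p,q)$ the right-hand side is infinite. Otherwise
\[
(\gamma\dtv(p,q) - \gamma^2)_+ \le \gamma\dtv(p,q) < \dtv^2(p,q) \le \dhel^2(p,q).
\]
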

However, as we will see, the connections between robustness and privacy place us in the regime $\privp \gtrsim \dtv(p,q)$, where the sample-complexity curve is more delicate. 
The relevant object is not the curve $\npriv(\privp)$ itself, but rather its inverse $\estar(n)$, defined below, and in particular the curve $n \cdot \estar(n)$.

\subsection{Connection between privacy and robustness}
{In this section, \(\eta\)-robustness refers to robustness under the strong contamination model, i.e. the adaptive-TV model from Section~\ref{sec: adaptive}. Accordingly, \(\nrob(\eta)\) denotes the sample complexity \(\nsATV(\eta)\).} We shall treat $p$ and $q$ as implicit throughout, but the underlying constants would be independent of $p$ and $q$. 

We begin by mentioning the transformations between privacy and robustness that have been established in \cite{DwoLei09,AsiUZ23,HopKMN23}.\footnote{In the following, the constant factor blow up in the sample complexity comes from boosting the success probability to accommodate the additional failure  probability of these transformations.}
\begin{fact}
\label{fact:priv-rob-transformations}
There exist constants $0 < C_2< C_1$, $\eps_0 \in (0,1/2)$, and $C_3 > 0$ such that the following holds.
Let $\cA_1$ be an $\eps$-robust algorithm for testing $p$ versus $q$ that uses $n_1$ samples, where $\eps < \eps_0$.
Let $\cA_2$ be an $\privp$-DP algorithm for testing between $p$ and $q$ that uses $n_2$ samples, where $\privp>0$.
Then:
\begin{enumerate}
    \item (Private to Robust: Group privacy \cite{DwoLei09}) There exists an algorithm $\cA_2'$ that is $\eps'$-robust with $\eps' = \frac{C_2}{n_2 \privp}$ and uses $O(n_2)$ samples.
    \item (Robust to Private: Inverse Sensitivity Mechanism~\cite{AsiUZ23,HopKMN23}) There exists an algorithm $\cA_1'$ that is $\privp'$-DP with $\privp' = \frac{C_1}{n_1\eps}$ and uses $C_3n_1$ samples.
\end{enumerate}
\end{fact}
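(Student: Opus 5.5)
This is a transfer statement, so the plan is to prove the two parts separately with the standard tools the paper cites. In both parts the aim is to check that the adversary or sensitivity structure meets the hypotheses of the cited transformations, and then to track the parameters.

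\emph{Part 1 (private to robust).} We use group privacy. Fix two datasets $S,S'\in\cX^{n_2}$ that differ in at most $k$ coordinates. Iterating the DP inequality along a path of $k$ single-coordinate changes gives $\Pr[\cA_2(S)\in E]\le e^{k\privp}\Pr[\cA_2(S')\in E]$.
\begin{itemize}
\item Take $S$ clean and $S'$ the adaptive-TV corruption with $k=\lfloor \eps' n_2\rfloor$.
\item Choose $\eps'=C_2/(n_2\privp)$. Then $k\privp\le C_2$, so $e^{k\privp}\le e^{C_2}$.
\item Hence for every corruption, $\Pr[\cA_2(S')\text{ errs}]\le e^{C_2}\Pr[\cA_2(S)\text{ errs}]$, and the same bound holds in expectation over $S$.
\end{itemize}
To get error at most $1/10$ after this blow-up, we first boost $\cA_2$ to error $e^{-C_2}/20$. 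Boosting repeats the test on $O(1)$ disjoint blocks and takes a majority vote. This keeps $\privp$-DP, since each sample lies in only one block, and costs a constant factor in $n_2$. There is one subtlety: the adversary corrupts the whole dataset of size $O(n_2)$, so its corruption budget must be divided among blocks. We handle this by applying group privacy to the full boosted algorithm directly, rather than block by block, which keeps $\eps'\asymp 1/(n_2\privp)$ with adjusted constants.

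\emph{Part 2 (robust to private).} We use the inverse sensitivity mechanism. Define the score $d(S)$ as the minimum number of changed points needed to move $S$ into the set on which $\cA_1$, or a deterministic majority-boosted version of it, outputs the other answer. Then output $0/1$ with probability proportional to $\exp(-\privp'\cdot\mathrm{dist})$.
\begin{itemize}
\item \emph{Privacy.} The distance to each decision region changes by at most $1$ between neighbouring datasets. The exponential mechanism on two outcomes is therefore $O(\privp')$-DP, and we rescale constants accordingly.
\item \emph{Utility.} This needs robustness in the strong sense. With high probability over clean $S\sim p^{\otimes C_3n_1}$, every dataset within distance $\eps C_3 n_1$ of $S$ is still classified correctly by the boosted robust test. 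For the boosted test, robustness holds for every adversary simultaneously, with failure probability small for all corruptions.
\item On this event the distance to the wrong answer is at least $\eps C_3 n_1$. The mechanism then errs with probability at most $\exp(-\privp'\eps C_3n_1)$, which is a small constant once $\privp'=C_1/(n_1\eps)$ and $C_1$ is large.
\end{itemize}

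The main obstacle is the utility step of Part 2. The adaptive guarantee gives, for each adversary, that the error is small. What we need is that with high probability the entire Hamming ball around the sample is classified correctly. I would obtain this by taking the adversary that, on each $S$, chooses a misclassified dataset in the ball if one exists. Its error probability equals exactly the probability of the bad event, so adaptive robustness bounds that probability.

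The constants $C_1>C_2$ then come from combining the boosting constants with the exponential-mechanism constants.
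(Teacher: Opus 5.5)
Your route is the one the paper intends. The paper gives no proof; it cites group privacy and the inverse sensitivity mechanism and notes that boosting absorbs the constant-factor losses. Part~1 is fine: applying group privacy to the whole block-majority test is the right move. The constants close because with $m$ blocks the exponent becomes $mC_2$, while the boosted error decays like $\beta^m$ for a fixed $\beta<1$; any $C_2$ with $e^{C_2}\beta<1$ works.

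The gap is in the utility step of Part~2. You claim that the boosted test on $C_3 n_1$ samples classifies the entire Hamming ball of radius $\eps C_3 n_1$ correctly with high probability, and you justify this by running the worst-case-adversary trick on the boosted test. That trick is only valid for a test that actually has the adaptive $\eps$-guarantee at that sample size, and a majority vote over blocks does not inherit it. With budget $\eps C_3 n_1$, the adversary can place roughly $2\eps n_1$ corruptions in each of just over half the blocks, a level at which $\cA_1$ promises nothing. Running the trick on $\cA_1$ itself only bounds the bad event by the base error ($1/10$, or $1/5$ after derandomising), which is too large for the final $1/10$ target.

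The repair is to note that radius $\eps C_3 n_1$ is never needed. Because $\gamma' = C_1/(n_1\eps)$, radius $k=\lfloor \eps n_1\rfloor$ suffices.
\begin{itemize}
\item Call a block good if every dataset within Hamming distance $k$ of it is classified correctly by the base test. Your trick, applied at sample size $n_1$, shows each block is good with probability at least $4/5$, independently across blocks.
\item A dataset within total distance $k$ of the full sample moves each block by at most $k$. So whenever a majority of blocks is good (probability $1-e^{-\Omega(C_3)}$), the majority test is correct on the whole radius-$k$ ball.
\item On that event the distance to the wrong region is at least $k+1>\eps n_1$, and the exponential mechanism errs with probability $e^{-\Omega(C_1)}$.
\end{itemize}
Equivalently, privatise $\cA_1$ first and then boost the private test over disjoint blocks, which preserves $\gamma'$-DP by parallel composition.

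A smaller omission concerns derandomisation. Decision regions, and your identity ``error of the worst-case adversary $=$ probability of the bad event'', require a deterministic, permutation-invariant test, since the strong contamination model permutes the corrupted sample. Majority boosting does not derandomise. Replace $\cA_1$ by $g(S)=\mathbbm{1}\{\bar\phi(S)\ge 1/2\}$, where $\bar\phi(S)$ is the probability that $\cA_1$ outputs $1$ on a uniformly permuted copy of $S$. By Markov's inequality, $g$ is still adaptively $\eps$-robust with each error at most $1/5$.
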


To describe these results compactly, we introduce the function $\estar:\mathbb N \to \mathbb R$, an information-theoretic quantity that captures the best privacy budget achievable with $n$ samples:\begin{align*}
    \estar(n) := \min\left\{\privp: \npriv(\privp) \leq n\right\}\,.
\end{align*}
We set $\estar(n)=\infty$ if no such $\privp$ exists. Observe that $\estar$ is nonincreasing in $n$. As a simple consequence of \Cref{fact:priv-sample-complexity}, we obtain
\begin{align}
\label{eq:estar-condition}
\estar(n)
\text{ satisfies }
\begin{cases}
    \estar(n)=\infty,
    & \text{if } n\ll \frac{1}{\dhel^2(p,q)},\\[4pt]
    \frac{1}{n\dtv(p,q)}
    \lesssim \estar(n)
    \lesssim
    \frac{1}{n\dhel^2(p,q)},
    & \text{if } \frac{1}{\dhel^2(p,q)}
    \ll n \ll
    \frac{1}{\dtv^2(p,q)},\\[8pt]
    \estar(n)\asymp \frac{1}{n\dtv(p,q)},
    & \text{if } n\gg \frac{1}{\dtv^2(p,q)}.
\end{cases}
\end{align}

That is, outside the range of $\frac{1}{\dhel^2(p,q)}
    \ll n \ll
    \frac{1}{\dtv^2(p,q)}$, the function $\estar(n)$ has a simple description. Within this range, however, the behavior of $\estar$ can be much more complicated. 
In particular, 
\begin{align*}
    n \estar(n) \text{ satisfies } \begin{cases}
        \infty & \text{if } n \ll \frac{1}{\dhel^2(p,q)} \\
         \frac{1}{ \dtv(p,q)} \lesssim n \estar(n) \lesssim \frac{1}{ \dhel^2(p,q)}   & \text{if }  \frac{1}{\dhel^2(p,q)} \ll n \ll \frac{1}{\dtv^2(p,q)} \\
        \asymp \frac{1}{ \dtv(p,q)}  & \text{if } n \gg \frac{1}{\dtv^2(p,q)}
    \end{cases}\,.
\end{align*}

We now define a quantity $\NT: (0,1) \to \mathbb N$ that captures the sample complexity of the transformations described in \Cref{fact:priv-rob-transformations}. Define the nondecreasing function
\begin{align}
    \NT(\eta) := \min \left\{n  : n\cdot \estar(n) \leq \frac{1}{\eta}\right\},
\end{align}
with $\NT(\eta)=\infty$ when no such $n$ exists. As \Cref{eq:estar-condition} shows, for $\eta \gg \dtv(p,q)$, $\NT(\eta)$ is infinite.

The following simple corollary of \Cref{fact:priv-rob-transformations} shows that $\NT$ captures the robust sample complexity.
\begin{corollary}
\label{cor:robust-private-sample-complexity}
Let $\eta \ll \dtv(p,q)$. Then
\begin{align*}
\NT(\eta/C_1C_3) \lesssim    \nrob(\eta) \lesssim \NT(\eta/C_2)\,.
\end{align*}
\end{corollary}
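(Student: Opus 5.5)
We prove the two inequalities separately, each by applying one direction of \Cref{fact:priv-rob-transformations} and converting its privacy/robustness parameter into the language of $\estar$ and $\NT$. Throughout we use that $\estar$ is nonincreasing, $\NT$ is nondecreasing, and that boosting a test's success probability costs only a constant factor in samples.

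\emph{Upper bound $\nrob(\eta)\lesssim \NT(\eta/C_2)$.} Let $n_2=\NT(\eta/C_2)$; this is finite because $\eta\ll\dtv(p,q)$. By definition, $n_2\,\estar(n_2)\le C_2/\eta$. Set $\privp:=\estar(n_2)$, so that $\npriv(\privp)\le n_2$. Then there is a $\privp$-DP test $\cA_2$ using $n_2$ samples with error at most $1/10$; if the minimum defining $\estar$ is not attained, replace $\privp$ by a slightly larger value at a constant-factor loss. Part 1 of \Cref{fact:priv-rob-transformations} turns $\cA_2$ into an $\eps'$-robust test on $O(n_2)$ samples with
\[
\eps' = \frac{C_2}{n_2\privp}\ \ge\ \eta .
\]
A test robust at level $\eps'\ge\eta$ is also robust at level $\eta$, since the adversary's power is monotone in the contamination level. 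Hence $\nrob(\eta)\lesssim n_2$.

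\emph{Lower bound $\NT(\eta/C_1C_3)\lesssim \nrob(\eta)$.} Let $n_1=\nrob(\eta)$. Note that $\eta<\eps_0$ holds, because $\eta\ll\dtv(p,q)\le 1$ and we may take the implied constant small. Part 2 of \Cref{fact:priv-rob-transformations} gives a $\privp'$-DP test using $C_3n_1$ samples, with $\privp'=C_1/(n_1\eta)$. Therefore
\[
\estar(C_3n_1)\le \frac{C_1}{n_1\eta},
\qquad\text{so}\qquad
C_3n_1\cdot\estar(C_3n_1)\le \frac{C_1C_3}{\eta}.
\]
By the definition of $\NT$, this yields $\NT(\eta/C_1C_3)\le C_3n_1$, which is the claimed bound.

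\emph{Main obstacle.} The substance lies in \Cref{fact:priv-rob-transformations}; what remains is bookkeeping. The delicate step is making the constant-factor sample blowups, which come from boosting the success probability to absorb the transformations' failure probabilities, consistent with the definitions of $\estar$ and $\NT$. Concretely, the blowup appears as the factor $C_3$ inside the argument and inside the product $n\,\estar(n)$. One must keep it outside the argument of $\NT$ where possible, or absorb it using the monotonicity of $\estar$. The other point to check is that $\eta\ll\dtv(p,q)$ guarantees both the finiteness of $\NT$ and the condition $\eta<\eps_0$ needed to invoke the transformations.
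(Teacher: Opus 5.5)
Your proposal is correct and follows the paper's proof essentially step for step: the upper bound comes from applying part~1 of Fact~\ref{fact:priv-rob-transformations} with $n_2 = N_{\mathrm{Transformation}}(\eta/C_2)$. The lower bound comes from applying part~2 at $n_1 = n^\star_{\mathrm{robust}}(\eta)$ to get $C_3 n_1\,\gamma^\star(C_3 n_1) \le C_1C_3/\eta$, hence $N_{\mathrm{Transformation}}(\eta/C_1C_3) \le C_3 n_1$. Your side checks are points the paper leaves implicit, and you handle them correctly: finiteness of $N_{\mathrm{Transformation}}$, the condition $\eta<\eps_0$, attainment of the minimum defining $\gamma^\star$, and monotonicity of robustness in the contamination level.
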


However, the function $\NT(\eta)$ can exhibit large jumps under constant-factor changes in $\eta$. This, in turn, implies that $\nrob(\eta)$ can also exhibit large jumps under constant-factor changes in $\eta$. Recall that irregular behavior in $\estar(n)$ can occur in the range $n \in \left( \frac{1}{\dhel^2(p,q)}, \frac{1}{\dtv^2(p,q)}\right)$,
which is precisely the regime of interest for robustness. See \Cref{sec:private-example} for a concrete example.
\begin{proof}
    By part (i) of \Cref{fact:priv-rob-transformations}, any $\frac{C_2}{\eta n_2}$-private algorithm yields an $\eta$-robust algorithm using $O(n_2)$ samples. Equivalently, the robust sample complexity is $O(n)$ whenever
    $\estar(n) \leq \frac{C_2}{\eta n}$.
Therefore,
\begin{align*}
    \nrob(\eta) \lesssim \min\left\{n: \estar(n) \leq \frac{C_2}{\eta n} \right\} = \NT(\eta/C_2)\,.
\end{align*}

On the other hand, part (ii) of \Cref{fact:priv-rob-transformations} implies that an $\eta$-robust $n$-sample algorithm gives a $\frac{C_1}{\eta n}$-DP algorithm using $C_3n$ samples. In particular,
\begin{align}
    \nrob(\eta) \leq n
    \implies
    \estar(C_3 n) \leq \frac{C_1}{\eta n}
    =
    \frac{C_1 C_3}{\eta C_3 n}
    \implies
    \NT(\eta/C_1C_3) \leq C_3n.
\end{align}
Minimizing over $n$, we obtain
    $\nrob(\eta) \gtrsim \NT(\eta/C_1C_3)$.
\end{proof}

\subsection{Example: Polynomial jumps in robust sample complexity using privacy} 
\label{sec:private-example}
We now show how to deduce polynomial jumps in robust sample complexity from private sample-complexity curves, using an example from \cite{CanEtal19}. Section 1.3 of \cite{CanEtal19} gives an explicit example with $p = (0,0.5,0.5)$ and $q= (2 \alpha^{3/2}, 0.5 + \alpha - \alpha^{3/2}, 0.5 - \alpha - \alpha^{3/2}$). For this pair, we have $\dhel^2 = \alpha^{3/2}$ and $\dtv = \alpha$, and 
\begin{align}
    \npriv(\privp) \asymp \begin{cases}
        \frac{1}{ \privp \dtv} & \qquad \text{if }\privp \in (0,\dtv)\\
        \frac{1}{\dtv^2} & \qquad \text{if }\privp \in (\dtv, \dtv^2/\dhel^2)\\
        \frac{1}{\min(\privp,1)  \dhel^2} & \qquad \text{if }\privp \in (\dtv^2/\dhel^2,1)
    \end{cases}\,.
\end{align}
The middle flat region is what changes the behavior of $\estar(n)$. Inverting this function, we find that, for
 $n_0 \asymp \frac{1}{\dtv^2}$, 
 we have
\begin{align*}
    \estar(n)  \text { satisfies } \begin{cases}
     \asymp  \frac{1}{n \dhel^2} & \qquad \text{if }  \frac{1}{\dhel^2} \ll n \ll  n_0 \\
   \frac{1}{n\dtv} \lesssim \estar(n) \lesssim \frac{1}{n\dhel^2}  &   \qquad \text{if }   n \asymp n_0  \\
      \asymp \frac{1}{n \dtv}   & \qquad \text{if } n \gg n_0 
    \end{cases}\,.
\end{align*}
Thus, $\estar(n)$ has a sharp transition around $n_0$. 
In particular, 
\begin{align*}
    n \estar(n)  \text { satisfies } \begin{cases}
     \asymp  \frac{1}{\dhel^2} & \qquad \text{if }  \frac{1}{\dhel^2} \ll n \ll  n_0 \\
   \frac{1}{\dtv} \lesssim n \estar(n) \lesssim \frac{1}{\dhel^2}  &   \qquad \text{if }   n \asymp n_0  \\
      \asymp \frac{1}{ \dtv}   & \qquad \text{if } n \gg n_0 
    \end{cases}\,.
\end{align*}
We can use this to compute the following behavior of  $\NT(\eta)$:
\begin{align}
    \label{eq:robust-sample-complexity}
    \NT(\eta)  \text{ satisfies } \begin{cases}
        \asymp \frac{1}{\dhel^2} & \text{ if } \eta \lesssim \dhel^2 \\
       \asymp \frac{1}{\dtv^2}  & \text{ if }  \dhel^2 \ll \eta \lesssim \dtv \\
       \infty & \text{ if } \eta \gtrsim \dtv
    \end{cases}\,,
\end{align}
where we use that $n_0 \asymp \frac{1}{\dtv^2}$.
Since $\frac{1}{\dtv^2}$ is polynomially larger than $\frac{1}{\dhel^2}$ for this particular example, the function $\NT(\cdot)$ exhibits a polynomial jump at a point $\eta_0 \asymp \dhel^2$ when its argument $\eta$ is changed by a constant factor. By \Cref{cor:robust-private-sample-complexity}, there must exist an $\eta$ such that $\nrob(C\eta)$ is polynomially larger than $\nrob(\eta)$.
Furthermore, it is easy to check that the resulting expression in \Cref{eq:robust-sample-complexity} is tight by computing $\nrob(\eta)$: it can be seen that $\nrob(\eta) \asymp \frac{1}{\dhel^2}$ if $\eta \ll \dhel^2$ and $\nrob(\eta) \asymp \frac{1}{\dtv^2}$ for $\dhel^2 \lesssim \eta \lesssim \dtv$.

\section{Conclusion}

In this paper we studied the sample complexity of robust hypothesis testing under three natural models of contamination and their adaptive counterparts. We showed that, just as in Huber and TV contamination, the subtractive contamination setting also admits least favourable distributions, and we found explicit formulas for them. When least favourable distributions exist, the problem of analysing the sample complexity is equivalent to analysing the Hellinger divergence between the least favourable distributions. In all three models, we showed that this quantity is highly unstable in the contamination parameter $\eps$. Hence, small changes in $\eps$ can cause polynomial jumps in the sample complexity. Similarly, a small mismatch in the assumed and true contamination values may lead to polynomial jumps in the sample complexity or even a complete breakdown of the test. The instability of the sample complexity suggests that there is no easy formula for the sample complexity of robust hypothesis testing. It also rules out any robustness-amplification procedures that amplify robustness by taking at most a constant factor more samples. This is in contrast to privacy, where simple procedures like subsampling can boost the privacy parameter by taking a constant factor more samples. 

Despite the almost pathological behaviour of the sample complexity with respect to $\eps$ in each model, we show the surprising property that up to scaling of $\eps$ by universal constants, the sample complexities of the three models are comparable. We note that there are models in the literature for which such a comparisons do not hold. For instance, the uncertainty set for the \emph{$\eps$-realisable contamination model} from~\cite{MaEtal24} (when the probability of deletion is at most a constant) may be shown to be contained within the $C_0\eps$-Huber and $C_0\eps$-subtractive uncertainty sets for a constant $C_0$, that is the set
\begin{align*}
\cP_{\mathrm{HS}}(p, \eps) = \{p' \in \Delta(\cX) : (1-C_0\eps)p(x) \leq p'(x) \leq (1+C_0\eps)p(x) \text{ for all } x \in \cX\}.
\end{align*}
This model of contamination turns out to be too restrictive, and in particular, we cannot upper bound $\nsTV(\eps)$ by $n^*_{\mathrm{HS}}(C\eps)$ no matter how large a $C$ is chosen. To see this, consider the same example in Section~\ref{sec: example}. Here, the impact of $\{3\}$ simply cannot be cancelled by this type of contamination as $p'(3) \geq 2\eps(1-C\eps) \asymp \eps$, whereas $q'(3) = 0$. Thus, sample complexity is $O(1/\eps)$ no matter how large a $C$ is chosen. However, the sample complexity with $\eps$-TV contamination is $\Theta(1/\eps^2)$.

Our work leaves open several interesting research directions. We showed that in Huber and TV models, there may be sample complexity jumps between $\eps$ and $\eps + \Omega(\eps^{3/2})$, but not between $\eps$ and $\eps + O(\eps^2)$. It is unclear if $\eps^{3/2}$ is actually the threshold that determines whether or not sample complexity jumps occur. In a similar vein, we showed that underestimating the true contamination $\eps$ by $\Omega(\eps^{4/3})$ for TV contamination, $\Omega(\eps^{3/2})$ for Huber contamination, and $\Omega(\eps^2)$ for subtractive contamination may lead to a breakdown of the likelihood ratio test. It would be interesting to analyse if these thresholds could be improved or if they hold more generally. We did not analyse the sample complexity of robust testing when $p$ and $q$ have different levels of contamination, say $\eps_1$ for $p$ and $\eps_2$ for $q$. Part of the challenge here is that explicit formulas for LFDs are not known for TV contamination when $p$ and $q$ have different levels of contamination. However, the questions studied in this paper continue to be interesting and non-trivial for such settings as well. Our results on sample complexity comparisons across models raise the question whether there are more contamination models for which such comparisons hold beyond the three considered in this paper. Generalising further, it would also be interesting to explore whether the phenomenon of comparability of different contamination models after scaling $\eps$ holds for more general testing problems in robust statistics, and whether there are problems where the instability of the sample complexity is even more pronounced leading to super-quadratic jumps for small changes in contamination. 

\section*{Acknowledgements}

Part of this work was done when Shankar Vallinayagam was an intern as a part of the Summer Research in Maths (SRIM) program at the University of Cambridge. Shankar is grateful to Trinity College, Cambridge, for supporting his SRIM internship. 

Part of this work was completed while Ankit Pensia was supported by the Research Pod on Resilience in Brain, Natural, and Algorithmic Systems at the Simons Institute for the Theory of Computing, UC Berkeley. Ankit is also grateful to Tata Institute of Fundamental Research at Mumbai, where another part of this work was carried out during his visit.

Varun Jog thanks the Simons Institute for the Theory of Computing at Berkeley and the Simons--Laufer Mathematical Sciences Institute at Berkeley, where a part of this work was done during his visits in Fall 2024 and Spring 2025, respectively. Varun also gratefully acknowledges support from the Leverhulme Trust through the grant RPG-2025-226 titled ``Old Problems, New Perspectives: A Fresh Look at Classical Hypothesis Testing.'' 

We thank Alston Xu for his work on a version of the example in Section~\ref{sec: example} as a part of his SRIM internship. We also thank Aditya Dhawan for helpful discussions in the early part of this work.

\appendix

\section{Proofs for Section~\ref{sec: pseudo_lfd}}

\subsection{Proof of Lemma~\ref{lemma: exists_lfd}}\label{app: exists_lfd}

The theory of \citet{HubStra73} shows that LFDs exist when the uncertainty set can be associated with a \emph{two-alternating capacity}. A two-alternating capacity $v$ is a function on measurable sets that satisfies: (i) $v(\emptyset) = 0$, $v(\cX) = 1$, (ii) if $A \subseteq B$, then $v(A) \leq v(B)$, (iii) $A_n \uparrow A$ implies $v(A_n) \uparrow v(A)$, and (iv) for any measurable sets $A$ and $B$, 
\begin{align*}
v(A \cup B) + v(A \cap B) \leq v(A) + v(B).
\end{align*}
Given a capacity $v$, the uncertainty set is defined as $\cP_v = \{P : P(A) \leq v(A) \text{ for all measurable } A\}$. 

\citet{HubStra73} show that for Huber contamination, the associated capacity is $v(A) = (1-\eps)p(A) + \eps$, and for TV contamination it is $v(A) = \min\{1, p(A)+\eps\}$. For subtractive contamination, it is natural to consider 
\begin{align*}
v(A) = \sup_{P \in \cP_\Sub(p, \eps)} P(A) = \min\{(1+\eps)p(A), 1\}.
\end{align*}
It is easy to see that $\cP_v$ is exactly $\cP_\Sub(p, \eps)$. Moreover, $v$ satisfies the (i), (ii), and (iii) in a straightforward manner. If $v$ also satisfies (iv), then this would imply the existence of LFDs. 

We now argue that (iv) also holds. Suppose $p(A) = x$, $p(B) = y$, and $p(A\cap B) = z$. Let $g(t) = \min\{(1+\eps)t, 1\}$, which is a concave function. We need to show that
\begin{align*}
g(x+y-z) + g(z) \leq g(x) + g(y).
\end{align*}
Since $g$ is concave, we have for $u \leq v$ and $t \geq 0$,
\begin{align*}
g(v+t) - g(u+t) \leq g(v) - g(u).
\end{align*}
Substituting $u = z$, $v = x$, $t = y - z$, we have
\begin{align*}
g(x+y-z) - g(y) \leq g(x) - g(z),
\end{align*}
which, upon rearranging, gives the desired inequality.

\section{Section~\ref{sec: example} proofs}

\subsection{Proof of Proposition~\ref{prop:baseline}}\label{app: baseline}
    The lower bound is proved for the uncontaminated setting ($\eps = 0$). Note that the sum of the Type-I and Type-II errors with $n$ samples is simply $1 - \dtv(p^{\otimes n}, q^ {\otimes n})$. So we have
    \begin{align*}
        \frac{9}{10} \leq \dtv(p^{\otimes n}, q^{\otimes n}) \leq n\dtv(p, q),
     \end{align*}
     which immediately gives $n \gtrsim \frac{1}{\dtv(p,q)}.$
     To prove the upper bound, we show that Scheffe's test works for all models with a sample complexity of at most $\lesssim \frac{1}{\dtv^2(p,q)}$. Let $A \coloneqq \{i \,:\, p(i) \ge q(i)\}$. Scheffe's test computes the test statistic
     \begin{align*}
         S = \frac{1}{n} \sum_{i=1}^n \mathbbm{1}\{x_i \in A\} \, ,
     \end{align*}
     and declares $\cP$ if $S \ge \frac{p(A)+q(A)}{2}$ and $\cQ$ otherwise. Observe that in the oblivious TV, Huber, and subtractive contamination models, for any contaminated $p'$ and $q'$ we must have $\E_{p'} S = p'(A) \geq p(A) -\eps$ and $\E_{q'} S = q'(A) \leq q(A) + \eps$. In particular, the means are separated by at least $p(A)- q(A) - 2\eps \asymp \dtv(p,q)$, as $\eps \leq \dtv(p,q)/4$. A simple application of Hoeffding's inequality shows that the clean test statistics concentrates around its mean:
     \begin{align}
         \prob_{p'}\left(S - p'(A) \leq -\frac{\dtv(p,q)}{4}\right) \le e^{-\frac{n\dtv(p,q)^2}{8}},
     \end{align}
    and so choosing $n = \frac{8\log 20}{\dtv^2(p,q)}$ ensures that the Type-I error is at most $1/20$. A similar calculation may be done to bound the Type-II error by $1/20$ as well, showing that the sample complexity is upper bounded by $\lesssim \frac{1}{\dtv^2(p,q)}$.

In fact, the proof largely continues to work even with adaptive contamination. For adaptive-TV, observe that $S$ may change by at most $\eps $ after the adversary corrupts the dataset. To bound the impact of contamination on Scheffe's test, we may as well assume the worst case scenario where the clean test statistic is shifted by $\dtv(p,q)/4$. Under $\cP$, Scheffe's test will make an error if the clean test statistic lies more than $\frac{\dtv(p,q)}{4}$ below from its mean of $p(A)$, since the additional perturbation by $\dtv(p,q)/4$ will push it to the other side of the decision threshold. A similar Hoeffding bound shows that for $n \gtrsim 1/\dtv^2(p,q)$, the errors remain bounded by constants. Since adaptive-TV is stronger than adaptive-Huber or adaptive-Subtractive contamination, the upper bound extends to these cases as well.

\subsection{Proof of Proposition~\ref{prop:small_eps}}\label{app: small_eps}
We prove the upper bound $\nsTV(\eps) \lesssim \frac{1}{\dhel^2(p,q)}$. Lemma~\ref{lemma: containment} implies that $\max\{ \nsHub(\eps), \nsSub(\eps)\} \le \nsTV(\eps)$, so the same upper bound will hold for $\nsHub(\eps)$ and $\nsSub(\eps)$ as well. Note that the upper bound matches the lower bound obtained without any contamination, giving a tight characterisation of the sample complexity. 

The sample complexity with $\eps$-TV contamination is characterised by the Hellinger divergence between the least favourable distributions. Consider any $p'$ such that $\dtv(p',p) \le \eps$ and any $q'$ such that $\dtv(q',q) \le \eps$. We will show that when $\eps$ is small enough, the Hellinger divergence $\dhel^2(p', q')$ is $\dhel^2(p,q)$, up to constant factors. We note the inequality:
\[
\dhel^2(p,p') \le 2\dtv(p,p') \le \frac{2}{9}\dhel^2(p,q),
\]
hence \(\dhel(p,p')\le \frac{\sqrt 2}{3}\dhel(p,q)\), and similarly for \(q,q'\). Therefore
\[
\dhel(p',q')\ge \left(1-\frac{2\sqrt 2}{3}\right)\dhel(p,q).
\]
Since this inequality holds for any $p'$ and $q'$ in the respective TV-balls, it also holds for the least favourable distributions (LFD) in particular. Denoting the LFD-pair by $(\ptv(\eps), \qtv(\eps))$, we conclude 
\begin{align*}
\nsTV(\eps) \asymp \frac{1}{\dhel^2(\ptv(\eps), \qtv(\eps))}\lesssim \frac{1}{\dhel^2(p,q)}.
\end{align*}

\subsection{Proof of Lemma~\ref{lemma: tv_ball_at_p_eps}}\label{app: tv_ball_at_p_eps}

We explicitly construct $p_{\eps_0}$ in all three cases. For TV contamination, choose
\begin{align*}
p_{\eps_0} = \left(1 - \frac{\eps_0}{\eps}\right) p + \frac{\eps_0}{\eps} p_\eps.
\end{align*}
Observe that
\begin{align*}
\dtv(p, p_{\eps_0}) = \frac{\eps_0}{\eps}\dtv(p_\eps, p) \leq \eps_0,
\end{align*}
and so $p_{\eps_0}$ lies in $\cP_{\eps_0}$. It is also easy to check that
\begin{align*}
\dtv(p_\eps, p_{\eps_0}) = \left(1 - \frac{\eps_0}{\eps}\right)\dtv(p, p_\eps) \le \eps-\eps_0.
\end{align*}

For Huber contamination, suppose $p_\eps = (1-\eps)p + \eps h$. Consider $p_{\eps_0} = (1-\eps_0)p + \eps_0h$. Clearly, $p_{\eps_0} \in \cP_{\eps_0}$, and
\begin{align*}
\dtv(p_\eps, p_{\eps_0}) = (\eps-\eps_0) \dtv(p,h) \leq (\eps-\eps_0).
\end{align*}

For subtractive contamination, consider
\begin{align*}
p_{\eps_0} = p_\eps \frac{\eps_0}{\eps} + p\left( 1 - \frac{\eps_0}{\eps}\right).
\end{align*}
Observe that
\begin{align*}
\frac{p_{\eps_0}(i)}{p(i)} &\leq \frac{p_\eps(i)}{p(i)} \frac{\eps_0}{\eps} + \left( 1 - \frac{\eps_0}{\eps}\right)\\
&\leq (1+\eps) \frac{\eps_0}{\eps} + \left( 1 - \frac{\eps_0}{\eps}\right)\\
&= (1+\eps_0).
\end{align*}
Hence, $p_{\eps_0} \in \cP_{\eps_0}$. Furthermore,
\begin{align*}
\dtv(p_\eps, p_{\eps_0}) = \left( 1 - \frac{\eps_0}{\eps}\right)\dtv(p_\eps, p) \leq \eps - \eps_0.
\end{align*}
This concludes the proof.

\section{Section~\ref{sec: sandwich} proofs} \label{app: sandwich}

\subsection{No simulation lemma}\label{app: lemma: containment}

\begin{lemma}[No simulation lemma]\label{lemma: containment}
Let $p$ be a probability distribution and let $\eps > 0$. Let $\cP^{\TV}(\eps)$, $\cP^\Hub(\eps),$ and $\cP^\Sub(\eps)$ be the TV, Huber, and subtractive uncertainty sets at contamination $\eps$. Then the following results hold:
\begin{itemize}
\item[(i)] For any $C > 0$, there exist $p$ and $\eps>0$ such that $\cP^\TV(\eps) \not\subseteq \cP^{\Hub}(C\eps)$. Similarly, for any $C > 0$, there exist $p$ and $\eps>0$ such that $\cP^\TV(\eps) \not\subseteq \cP^{\Sub}(C\eps)$.
\item[(ii)] For any $c > 0$, there exist $p, q,$ and $\eps >0$ such that $\cP^\Hub(c\eps) \not\subseteq \cP^\Sub(\eps)$. For any $C > 0$, there exists a $p$ such that $\cP^\Sub(\eps) \not\subseteq \cP^\Hub(C\eps)$.
\item[(iii)]
For any $p$ and $\eps>0$, $\cP^\Hub(\eps) \subseteq \cP^\TV(\eps)$ and $\cP^\Sub(\eps) \subseteq \cP^\TV(\eps)$.
\item[(iv)] For any $C>0$, there exist $p, q,$ and $\eps>0$ such that LFD-pair $(p^*_\TV(\eps), q^*_\TV(\eps))$ under $\eps$-TV contamination satisfies $(p^*_\TV(\eps), q^*_\TV(\eps)) \notin \cP^\Hub(C\eps) \times \cP^\Hub(C\eps).$ Similarly, for any $C>1$, there exist $p, q,$ and $\eps>0$ such that $(p^*_\TV(\eps), q^*_\TV(\eps)) \notin \cP^\Sub(C\eps) \times \cP^\Sub(C\eps).$ 
\item[(v)] For any $C>0$, there exist $p, q,$ and $\eps>0$ such that LFD-pair $(p^*_\Hub(\eps), q^*_\Hub(\eps))$ under $\eps$-Huber contamination satisfies $(p^*_\Hub(\eps), q^*_\Hub(\eps)) \notin \cP^\Sub(C\eps) \times \cP^\Sub(C\eps).$ Similarly, for any $C>0$, there exist $p, q,$ and $\eps>0$ such that LFD-pair $(p^*_\Sub(\eps), q^*_\Sub(\eps))$ under $\eps$-subtractive contamination satisfies $(p^*_\Sub(\eps), q^*_\Sub(\eps)) \notin \cP^\Hub(C\eps) \times \cP^\Hub(C\eps).$
\end{itemize}
\end{lemma}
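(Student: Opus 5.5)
We prove each item by exhibiting explicit distributions, mostly on a binary alphabet $\{1,2\}$. Throughout we use the pointwise characterisations of the sets: $p'\in\cP^\Hub(\eps)$ iff $p'(x)\ge(1-\eps)p(x)$ for all $x$, and $p'\in\cP^\Sub(\eps)$ iff $p'(x)\le(1+\eps)p(x)$ for all $x$.

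\textbf{Containment, part (iii).} If $p'(x)\ge(1-\eps)p(x)$ for all $x$, then
\[
\dtv(p,p')=\sum_x (p(x)-p'(x))_+\le \eps\sum_x p(x)=\eps .
\]
Symmetrically, if $p'(x)\le(1+\eps)p(x)$, then $\dtv(p,p')=\sum_x (p'(x)-p(x))_+\le\eps$.

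\textbf{Non-containment of the sets, parts (i) and (ii).} Each failure comes from mass near zero, which the multiplicative constraints cannot absorb.

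\begin{itemize}
\item For $\cP^\TV(\eps)\not\subseteq\cP^\Hub(C\eps)$, take $p=(\eps,1-\eps)$ and $p'=(0,1)$. Then $p'\in\cP^\TV(\eps)$, but $p'(1)=0<(1-C\eps)\eps$ whenever $C\eps<1$.
\item For $\cP^\TV(\eps)\not\subseteq\cP^\Sub(C\eps)$, take $p=(0,1)$ and $p'=(\eps,1-\eps)$. Subtractive sets preserve zero mass, so $p'\notin\cP^\Sub(C\eps)$ for any $C$.
\item The same choice $p=(0,1)$, with $p'=(c\eps,1-c\eps)\in\cP^\Hub(c\eps)$, shows $\cP^\Hub(c\eps)\not\subseteq\cP^\Sub(\eps)$.
\item For $\cP^\Sub(\eps)\not\subseteq\cP^\Hub(C\eps)$, take $p=(\eta,1-\eta)$ with $\eta$ small and $p'=(0,1)$. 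We have $p'(2)=1\le(1+\eps)(1-\eta)$ once $\eta\le\eps/(1+\eps)$, but $p'(1)=0<(1-C\eps)\eta$.
\end{itemize}

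\textbf{LFD statements, parts (iv) and (v).} Here we pick pairs $(p,q)$ for which the relevant clip exists and the LFD formulas \eqref{eq: lfd_tv}, \eqref{eq: lfd_hub}, \eqref{eq:lfd_sub} can be evaluated explicitly. Two natural test cases are:
\begin{itemize}
\item $p=(\eta,1-\eta)$ and $q=(0,1)$ with $\eta$ small, or the three-point example of Section~\ref{sec: example};
\item a two-point pair with one extreme likelihood ratio, as in Remark~\ref{rem: csub}: $p(i)=\eps$, $q(i)=\delta\ll\eps$.
\end{itemize}

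For (iv), Huber case: the TV-LFD $\ptv$ lowers $p$ on $H$ by a total of $\eps$. If $H$ is a single point with $p(i)\approx\eps(1+\cu)$, the reduced value is a constant fraction of $p(i)$. Violating $\ptv(i)\ge(1-C\eps)p(i)$ then needs $\cu$ large and the lowered value small compared with $(1-C\eps)p(i)$. We will tune $q(i)\to0$ so that the ratio $\ptv(i)/p(i)=\cu/(1+\cu)\cdot(1+q(i)/p(i))$ is strictly below $1-C\eps$.

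For (iv), subtractive case: on $L$, $\qtv$ raises $q$ by a constant factor relative to $(1+C\eps)q(i)$ when the lower clip is at a point with $q(i)$ comparable to $\eps$. A point where $q$ is tiny but $p$ is not forces $\qtv(i)/q(i)\to\infty$.

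For (v): the Huber LFD $\qhub$ puts mass $(1-\eps)p(i)/\cu$ on $H$. Making $q(i)$ tiny makes $\qhub(i)/q(i)$ unbounded, so $\qhub\notin\cP^\Sub(C\eps)$. Conversely, the subtractive LFD $\psub(i)=\cu(1+\eps)q(i)$ on $H$ can be much smaller than $(1-C\eps)p(i)$ when $\cu\asymp\eps/\delta$ is small relative to $p(i)/q(i)$. Concretely, take two points of $H$ with very different likelihood ratios, and put $p$-mass on the extreme one comparable to $\eps/(1+\eps)$ plus a tiny amount.

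\textbf{Main obstacle.} The main work is parts (iv)--(v). There the LFDs depend on implicitly defined clips, so we must choose parameters for which the clip equations \eqref{eq: clips_hub}, \eqref{eq: clips_tv}, \eqref{eq: clips_sub} can be solved in closed form. In particular we need $H$ (or $L$) to be a single point, and then verify the pointwise violation as the free parameter ($\delta$ or $\eta$) tends to zero. The constraint $C>1$ in the second half of (iv) hints that the TV-versus-subtractive violation is only of constant size. We will therefore expect a point where $\qtv(i)=(p(i)+q(i))/(1+\cl)$ approaches $(1+\eps')q(i)$ only for $\eps'$ up to a fixed multiple of $\eps$, and compute it carefully rather than via a limit.
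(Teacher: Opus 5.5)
Parts (i)--(iii) are correct and follow the paper's argument: the same two-point examples up to relabelling, and the same pointwise bound for the containments. Parts (iv) and (v), however, are not proved. They are existence claims, and you never fix a concrete $(p,q,\eps)$ or verify anything. Several of your heuristics also point the wrong way:
\begin{itemize}
\item \emph{Huber half of (iv), with $H=\{i\}$.} The TV clip equation gives $\ptv(i)=p(i)-\eps$ exactly, whatever $q(i)$ is. So the Huber constraint at $i$ reads $p(i)-\eps\ge(1-C\eps)p(i)$, i.e.\ $p(i)\ge 1/C$, and sending $q(i)\to0$ changes nothing. Moreover $\cu=(p(i)-\eps)/(q(i)+\eps)<p(i)/\eps$, so a violation forces $\cu<1/(C\eps)$. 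You need an upper bound on $\cu$, not ``$\cu$ large''.
\item \emph{Subtractive half of (iv).} $\qtv$ does not raise $q$ on $L$: there $\qtv(i)=(p(i)+q(i))/(1+\cl)\le q(i)$. So the computation planned in your last paragraph can never violate $\qtv(i)\le(1+C\eps)q(i)$.
\item \emph{Size of the violation.} It is not ``of constant size''. The restriction $C>1$ is only a harmless normalisation, since the sets grow with $C$. The right mechanism is a zero-mass point: if $p(i)=0<q(i)$, then $i\in L$ and $\ptv(i)=\cl q(i)/(1+\cl)>0$. No element of $\cP^\Sub(C\eps)$ around $p$ can match this, for any $C$. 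Your aside about $q$ tiny and $p$ not is the mirror image of this, but it lives on $H$, not $L$.
\item \emph{Part (v).} Your mechanisms are sound in spirit but likewise never instantiated.
\end{itemize}

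The missing step is a single explicit example; no general clip equation needs solving. Following the paper, take $p=(0,1)$ and $q=(10\eps,1-10\eps)$. Then $\ptv=\phub=(\eps,1-\eps)$ and $\qtv=(9\eps,1-9\eps)$. Since $q(\bar L)=10\eps>\eps/(1+\eps)$, part (c) of Theorem~\ref{prop: lfd_sub_formulas} gives $\qsub(1)=(1+\eps)10\eps-\eps=9\eps+10\eps^2$.
\begin{itemize}
\item Because $p(1)=0$, neither $\ptv$ nor $\phub$ lies in $\cP^\Sub(C\eps)$ around $p$, for any $C$.
\item Every $u\in\cP^\Hub(C\eps)$ around $q$ satisfies $u_1\ge 10\eps(1-C\eps)$. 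This exceeds both $\qtv(1)$ and $\qsub(1)$ once $10(C+1)\eps<1$.
\end{itemize}
This settles all four claims in (iv)--(v) at once, using exactly two mechanisms. The first is a zero-mass point, which subtractive sets cannot populate but the TV and Huber LFDs do. The second is a point of mass below $1/C$ from which an LFD removes an absolute amount of order $\eps$, which $C\eps$-Huber contamination cannot do.
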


\begin{proof}
We prove each non-containment by an explicit two-point example. Consider the support $\{1, 2\}$.

\begin{itemize}
\item[(i)] Fix $C\ge 1$ without loss of generality. Choose \(0<\eps<1/C\), and let $p=(\eps,1-\eps)$ and $r=(0,1)$. 
Then $\dtv(p,r)=\eps,$ so \(r\in \cP^{\TV}(\eps)\). But \(r\notin \cP^{\Hub}(C\eps)\), because if it were, then we would have $r_1 \geq (1-C\eps)p_1 = (1-C\eps)\eps > 0$, which does not hold. Hence, 
$\cP^{\TV}(\eps)\not\subseteq \cP^{\Hub}(C\eps).$

For the subtractive setting,  let $p=(0,1)$ and $r=(\eps,1-\eps).$
Then $r\in \cP^{\TV}(\eps)$, but $r\notin \cP^{\Sub}(C\eps)$, since the subtractive adversary cannot assign non-zero probability to 
$\{1\}$.

\item[(ii)] Fix $c >0$, and assume without loss of generality that $c < 1$. Let $p=(1,0)$ and $r=(1-c\eps,c\eps)$. Then $r=(1-c\eps)p + c\eps(0,1)$ so $r\in \cP^{\Hub}(c\eps)$. But $r\notin \cP^{\Sub}(\eps)$ as $r_2 > 0$.  Hence $\cP^{\Hub}(c\eps)\not\subseteq \cP^{\Sub}(\eps).$

Finally, fix $C \geq 1$ without loss of generality. Choose \(0<\eps<1/C\), and let $p=(a,1-a)$ and $r=(0,1)$ where $a>0$ is small enough that $1 \le (1+\eps)(1-a)$, for instance any $a\le \eps/(1+\eps)$. Then $r\in \cP^{\Sub}(\eps)$, since
\begin{align*}
r_1 &=0\le (1+\eps)a,\\
r_2&=1 \le (1+\eps)(1-a).
\end{align*}
But $r\notin \cP^{\Hub}(C\eps)$, because any element $u\in \cP^{\Hub}(C\eps)$
must satisfy
\begin{align*}
u_1 \ge (1-C\eps)p_1 = (1-C\eps)a >0,
\end{align*}
again since $C\eps<1$, whereas $r_1=0$. Therefore $\cP^{\Sub}(\eps)\not\subseteq \cP^{\Hub}(C\eps).$

\item[(iii)] We prove the two containments separately.

First, let $q \in \cP^\Hub(\eps)$. Then by definition,
$q = (1-\eps)p + \eps r$ for some probability distribution $r$. Hence
\begin{align*}
\dtv(q,p)
&= \dtv\big((1-\eps)p+\eps r,\; p\big) \\
&= \dtv\big(\eps(r-p),0\big) \\
&= \eps \dtv(r,p) \\
&\le \eps.
\end{align*}
Thus $q \in \cP^\TV(\eps)$, so $\cP^\Hub(\eps)\subseteq \cP^\TV(\eps)$.

Next, let $q \in \cP^\Sub(\eps)$. Then $q(i)\le (1+\eps)p(i)$ for every $i$. Therefore
\begin{align*}
\dtv(q,p)
&= \sum_{i:\, q(i)\ge p(i)} \bigl(q(i)-p(i)\bigr) \\
&\le \sum_{i:\, q(i)\ge p(i)} \eps\, p(i) \\
&\le \eps \sum_i p(i) \\
&= \eps.
\end{align*}
Thus $q \in \cP^\TV(\eps)$, so $\cP^\Sub(\eps)\subseteq \cP^\TV(\eps)$.
\end{itemize}
This proves all the claimed non-containments.

\begin{itemize}
\item[(iv)]
Let $C >1$ without loss of generality. Consider $p = (0,1)$ and $q = (10\eps, 1-10\eps)$. The LFDs are easy to compute without resorting to the formulas, as the obvious choices within the uncertainty sets are those that increase $p_1$ as much as possible and decrease $q_1$ as much as possible to make the two Bernoullis more similar. In particular, it is straightforward to check that:
\begin{align*}
p^*_\TV(\eps) = (\eps, 1-\eps), \quad q^*_\TV(\eps) = (9\eps, 1-9\eps).
\end{align*}
However, for all distributions $u$ in the $C\eps$-Huber uncertainty set around $q$, we require 
\begin{align*}
u_1 \geq 10\eps(1-C\eps) = 10\eps - 10C\eps^2 \gg 9\eps.
\end{align*}
Hence, the $\eps$-TV-LFD pair cannot be simulated via $C\eps$-Huber contamination. It also cannot be simulated with $C\eps$-subtractive contamination, as for any $v$ in the $C\eps$-subtractive uncertainty set around $p$, we require $v_1 = 0$, which is not the case for the TV-LFD.

\item[(v)]
For the same example as in part (iv), we can check that the LFD-pair for $\eps$-Huber contamination is $p^*_\Hub(\eps) = (\eps, 1-\eps)$ and $q^*_\Hub(\eps) = (10\eps(1-\eps), 1-10\eps+10\eps^2)$. Observe that no matter how large $C$ is, any $u \in \cP^\Sub(C\eps)$ around $p$ will have $u_1=0$, which is not the case here. 

Continuing with the same example, LFDs for $\eps$-subtractive contamination may be checked $p^*_\Sub(\eps) = (0,1)$ and $q^*_\Sub(\eps) = (9\eps + 10\eps^2, (1-10\eps)(1+\eps))$. Observe that $C\eps$-Huber contamination cannot change $q$ by more than $O(\eps^2)$, since we require that  any $u$ in the $C\eps$-Huber uncertainty set around $q$ satisfy $u_1 \geq 10\eps(1-C\eps) \gg 9\eps + 10\eps^2$ for all small enough $\eps$. 
\end{itemize}
\end{proof}
\subsection{Proof of Lemma~\ref{lemma: hellinger-comparability}}\label{app: hellinger-comparability}

\begin{proof}[Proof of Lemma~\ref{lemma: hellinger-comparability}]
Let $L, M,$ and $H$ be a partition $\cX$ based on whether the likelihood lies in $(-\infty, \cl)$, $[\cl, \cu]$, or $(\cu, \infty)$, respectively. On $L$ both LFDs have ratio $\cl$, on $M$ the ratio is $r(i) = p(i)/q(i)$, and on $H$ the ratio is $\cu$. We'll use the expression that for any dummy measures $p$ and $q$,
\begin{align*}
\dhel^2(p,q) = \sum_{i \in \cX} \left(\sqrt{\frac{p(i)}{q(i)}} - 1\right)^2 q(i)
\end{align*}
replacing $(p,q)$ with $(\ptv, \qtv)$ and $(\phub, \qhub)$. Since the likelihood ratios satisfy 
\begin{align*}
\frac{\ptv(i)}{\qtv(i)} = \frac{\phub(i)}{\qhub(i)} \quad \text{ for all} \quad i \in \cX,
\end{align*}
it will be enough to prove $\qtv(i) \asymp \qhub(i)$ for all $i \in \cX$. We prove this separately for $i \in L$, $i \in M$, and $i \in H$.

Observe that for $i \in L$,
\begin{align*}
\frac{q(i)}{1+\cl} \le \frac{p(i)+q(i)}{1+\cl} = \qtv(i) \le q(i).
\end{align*}
where the last inequality follows since on $L$ we have $p(i) \leq \cl q(i)$. The above inequality, combined with $\qhub(i) = (1-\eps)q(i)$ gives
\begin{align*}
(1-\eps)\le \frac{\qhub(i)}{\qtv(i)} \le (1-\eps)(1+\cl)\in[\tfrac12,2] \tag{$\cl \in [0,1)$ and $\eps \leq 1/2$}
\end{align*}
Stated simply, on $L$ we have $\qhub(i) \asymp \qtv(i)$.

Moving on to $M$, observe that for $i \in M$,  $\qtv(i) = q(i)$ and $\qhub(i) = (1-\eps)q(i)$. Since $\eps \leq 1/2$, we immediately get $\qhub(i) \asymp \qtv(i)$, with the ratio lying in $[1/2,1]$.

Finally for $i \in H$, $\qtv(i) =(p(i) + q(i))/(1+\cu)$ and $\qhub(i) =(1-\eps)p(i)/\cu$. We have that
\begin{align*}
\frac{p(i)}{1+\cu}\le \frac{p(i) + q(i)}{1+\cu} = \qtv(i) \le \frac{p(i)}{\cu}.
\end{align*}
Hence,
\begin{align*}
(1-\eps)\le \frac{\qhub(i)}{\qtv(i)}\le (1-\eps)\frac{1+\cu}{\cu}\in [\tfrac12,2]. \tag{$\cu > 1$ and $\eps \leq 1/2$}
\end{align*}

Thus, we see that for all $i$, we have the relation $\qtv(i) \asymp \qhub(i)$. This concludes the proof.
\end{proof}
\subsection{Proof of Lemma~\ref{lemma: threshold-order}}\label{app: threshold-order} 

\begin{proof}[Proof of Lemma~\ref{lemma: threshold-order}]
For any clips $\cl < 1 < \cu$, define the following one-sided radii motivated by the clips formulas~\eqref{eq: clips_hub} and~\eqref{eq: clips_tv}:
\begin{align*}
\epsTVup{\cu} &= \frac{1}{1+\cu}\left(p(H)-\cu q(H)\right),\\
\epsTVlow{\cl}&= \frac{1}{1+\cl}\left(\cl q(L)-p(L)\right),\\
\epsHubup{\cu}&= 1-\frac{1}{1-q(H)+ p(H)/\cu},\\
\epsHublow{\cl}&= 1-\frac{1}{1-p(L)+\cl q(L)}.
\end{align*}
We can check the following identities by direct computation:
\begin{equation}\label{eq: tv-up-id}
\epsTVup{\cu}
= \frac{\cu}{1+\cu}\cdot \frac{\epsHubup{\cu}}{1-\epsHubup{\cu}},
\qquad
\epsTVlow{\cl}
= \frac{1}{1+\cl}\cdot \frac{\epsHublow{\cl}}{1-\epsHublow{\cl}}.
\end{equation}

At the clips $(\clHub(\eps),\cuHub(\eps))$, the Huber one-sided radii equal exactly $\eps$---indeed, that is how the clips are defined via \eqref{eq: clips_hub}. That is,
\begin{align*}
\eps = \epsHubup{\cuHub(\eps)}=\epsHublow{\clHub(\eps)}
\end{align*} 
Using the identity~\eqref{eq: tv-up-id} and the inequalities $\clHub(\eps) < 1 < \cuHub(\eps)$, we have
\begin{align}
\epsTVup{\cuHub(\eps)}
=\frac{\cuHub(\eps)}{1+\cuHub(\eps)}\cdot\frac{\eps}{1-\eps}
\ \ge\ \tfrac12\,\eps,\qquad
\epsTVlow{\clHub(\eps)}
=\frac{1}{1+\clHub(\eps)}\cdot\frac{\eps}{1-\eps}
\ \ge\ \tfrac12\,\eps. \label{eq: tv_radii}
\end{align}
Now observe that the one-sided TV radii are monotonic in the clips: $\epsTVup{\cu}$ reduces as $\cu$ increases, and $\epsTVlow{\cl}$ reduces as $\cl$ decreases. This is easily seen by rewriting
\begin{align*}
    \epsTVup{\cu} &= \frac{\sum_i \big(p(i) - \cu q(i)\big)_+}{1+\cu}, \quad \text{ and }\\
    \epsTVlow{\cl} &= \frac{\sum_i \Big(q(i) - \frac{p(i)}{\cl}\Big)_+}{1+ \frac{1}{\cl}}.
\end{align*}
At the clips $(\clTV(\eps/2), \cuTV(\eps/2))$, the TV-radii are exactly $\eps/2$. Inequalities~\eqref{eq: tv_radii} along with monotonicity of the TV-radii imply that we must have $\cuHub(\eps) \le \cuTV(\eps/2)$ and $\clTV(\eps/2)\le \clHub(\eps)$. This concludes the proof.
\end{proof}

\subsection{Proof of Lemma~\ref{lemma: tv-hellinger-monotone}}\label{app: tv-hellinger-monotone}
\begin{proof}
Write $r(i)=p(i)/q(i)$ and fix an index $i$. Using the LFD-formulas~\eqref{eq: lfd_tv}, the per--index contribution to $\dhel^2$ is
\begin{align*}
\phi_{i}(\cl,\cu)\;=\;
\begin{cases}
\dfrac{(\sqrt{\cl}-1)^{2}}{1+\cl}\,\big(p(i)+q(i)\big), & r(i)\le \cl,\\[6pt]
\big(\sqrt{r(i)}-1\big)^{2}\,q(i), & \cl<r(i)<\cu,\\[6pt]
\dfrac{(\sqrt{\cu}-1)^{2}}{1+\cu}\,\big(p(i)+q(i)\big), & r(i)\ge \cu,
\end{cases}
\end{align*}

Define the scalar function
\[
h(c):=\frac{(\sqrt{c}-1)^{2}}{1+c}=\frac{c-2\sqrt{c}+1}{1+c},\qquad c>0.
\]
Let $x=\sqrt{c}$. Then $h(c)=\dfrac{(x-1)^{2}}{1+x^{2}}$ and
\[
\frac{d}{dx}\,h(x)=\frac{2(x^{2}-1)}{(1+x^{2})^{2}},
\]
which is negative for $x<1$ (i.e.\ $c<1$), zero at $x=1$ ($c=1$), and positive for $x>1$ ($c>1$).
Hence $h$ is strictly decreasing on $(0,1]$ and strictly increasing on $[1,\infty)$.

\smallskip
\emph{Upper cutoff.}
Fix $i$ with $r(i)\ge 1$ and consider $\phi_i$ as a function of $\cu$ (with $\cl$ fixed).
If $\cu\le r(i)$, then $\phi_i(\cl,\cu)=h(\cu)\big(p(i)+q(i)\big)$, which is nondecreasing in $\cu$
because $h$ is increasing on $[1,\infty)$.
At $\cu=r(i)$, the ``clipped'' expression matches the mid expression:
\[
h(r(i))\big(p(i)+q(i)\big)=\frac{(\sqrt{r}-1)^{2}}{1+r}\,(r+1)\,q(i)=(\sqrt{r}-1)^{2}\,q(i),
\]
so $\phi_i$ is continuous when the index exits the clipped set.
For $\cu\ge r(i)$, $\phi_i$ remains constant (equal to the mid value).
Therefore $\phi_i$ is \emph{nondecreasing} in $\cu$ for every $i$.
Summing over $i$ gives that $\Hsq_{\TV}(\cl,\cu)$ is nondecreasing in $\cu$.

\smallskip
\emph{Lower cutoff.}
Fix $i$ with $r(i)\le 1$ and consider $\phi_i$ as a function of $\cl$ (with $\cu$ fixed).
If $\cl\ge r(i)$, then $\phi_i(\cl,\cu)=h(\cl)\big(p(i)+q(i)\big)$, which is nonincreasing in $\cl$
because $h$ is decreasing on $(0,1]$.
At $\cl=r(i)$, the clipped and mid expressions match (same calculation as above with $r\le 1$),
and for $\cl\le r(i)$ the contribution is constant (mid).
Thus $\phi_i$ is \emph{nonincreasing} in $\cl$ for every $i$.
Summing over $i$ gives that $\Hsq_{\TV}(\cl,\cu)$ is nonincreasing in $\cl$.

Combining the two sides proves the lemma.
\end{proof}

\subsection{Proof of Lemma~\ref{prop: approx_hell}} \label{app: approx_hell}

Clearly, we can arrive at \eqref{eq: approx_hell} by adding \eqref{eq: approx_hell_A} and \eqref{eq: approx_hell_B}. By symmetry, it is enough to prove \eqref{eq: approx_hell_A}. Observe that
    \begin{align*}
        (\sqrt{p_i} - \sqrt{q_i})^2 = \left(1 - \sqrt{\frac{q_i}{p_i}}\right)^2 p_i.
    \end{align*}
    Note that if $p_i/q_i \ge 2$, then $\left(1 - \sqrt{\frac{q_i}{p_i}}\right) \asymp 1$, and so
    \begin{align}
        \sum_{i \in A_2} (\sqrt{p_i} - \sqrt{q_i})^2 \asymp \sum_{i \in A_2} p_i = p(A_2). \label{eq: approx_hell_A_1}
    \end{align}
    On the other hand, if $p_i/q_i \in [1,2)$, then we have 
    \begin{align*}
        (\sqrt p_i - \sqrt q_i)^2 = \left(1 - \sqrt{1 - \frac{\delta_i}{p_i}}\right)^2 p_i,
        \end{align*}
        where $\delta_i = p_i-q_i$. Since $\delta_i/p_i \in [0,1]$, we may use the approximation $(\sqrt {1-x} - 1)^2 \asymp x^2$ for $x \in [0,1]$ to conclude
        \begin{align}
           \sum_{i \in A_1} (\sqrt{p_i} - \sqrt{q_i})^2 \asymp \sum_{i \in A_1} \frac{\delta_i^2}{p_i}. \label{eq: approx_hell_A_2}
        \end{align}
        Summing up \eqref{eq: approx_hell_A_1} and \eqref{eq: approx_hell_A_2} completes the proof of \eqref{eq: approx_hell_A}. The proof of \eqref{eq: approx_hell_B} follows similarly.

\subsection{Proof of Lemma~\ref{prop: monotonic_hell}}\label{app: monotonic_hell}

    By symmetry, we can prove the monotonicity of $h^2_A(\cdot)$ and a similar proof will work for $h^2_B(\cdot)$. First note that the set where $p^*_\eps$ dominates $q^*_\eps$ continues to be $A$, and similarly the set where $q^*_\eps$ dominates $p^*_\eps$  remains $B$. This is because the LFD construction \emph{never} changes a likelihood ratio above 1 to one below 1. 
    
    Now consider what happens when $\eps$ grows from 0. Suppose $i \in \bar H$ (the set of $i$ for which $p(i)/q(i) = \infty$). Observe that the contribution to $h_A^2$ from all terms in $\bar H$ is simply $p^*_\eps(\bar H)$, and this quantity monotonically decreases and becomes 0 when $\eps = p(\bar H)/(1-p(\bar H))$, and stays at 0 beyond this point. Now let us consider the contribution to $h_A$ from a fixed $i \in  A \cap \bar H^c$. Observe that the likelihood ratio $p^*_\eps(i)/q^*_\eps(i)$ starts at $p(i)/q(i)$ when $\eps = 0$, and remains constant until $\cu(\eps)$ reaches to  $p(i)/q(i)$ (which is finite since we assumed $i \notin \bar H$). After this point, the likelihood ratio becomes equal to $\cu(\eps)$, which we recall is a monotonically decreasing function of $\eps$. Thus, the likelihood ratio $p^*_\eps(i)/q^*_\eps(i)$ decreases monotonically with $\eps$, and equivalently, the term $\left(1- \sqrt{\frac{q_\eps^*(i)}{p^*_\eps(i)}}\right)^2$ also decreases monotonically. Note that the contribution from $i$ to $h_A^2$ is
    \begin{align*}
        c_{A,\eps}(i) \coloneqq \left(1- \sqrt{\frac{q_\eps^*(i)}{p^*_\eps(i)}}\right)^2 p^*_\eps(i).
    \end{align*}
This is not obviously non-decreasing, so let's look instead at $c_{A,\eps}(i)/(1+\eps)$. The ratio $p^*_\eps(i)/(1+\eps)$ is monotonically decreasing with $\eps$: It equals $p(i)$ until $\cu(\eps) \geq p(i)/q(i)$, and then it equals $q(i)\cu(\eps)$ which decreases with $\eps$. Thus, $c_{A, \eps}(i)/(1+\eps)$ is a monotonically decreasing function, and for any $\eps_1 \le \eps_2$ and any $i \in A$, we have
\begin{align*}
\frac{c_{A,\eps_1}(i)}{1+\eps_1} \geq \frac{c_{A,\eps_2}(i)}{1+\eps_2},
\end{align*}
which gives
\begin{align*}
c_{A,\eps_1}(i) \geq c_{A, \eps_2}(i)\left(\frac{1+\eps_1}{1+\eps_2} \right) \geq \frac{c_{A, \eps_2}(i)}{2}.
\end{align*}
Summing up over all $i$, this gives that for $\eps_1 \le \eps_2$, we must have
\begin{align*}
h_A^2(\eps_1) \gtrsim h_A^2(\eps_2).
\end{align*}
This completes the proof.

\section{Proofs for Section~\ref{sec: adaptive}}

\subsection{Proof of Lemma~\ref{lem:h-properties}}\label{app: h-properties}
Let $u(x)=\sqrt{p(x)}$ and $v(x)=\sqrt{q(x)}$. Then $\hstat=(u-v)/(u+v)$ and
$\hstat^2=(u-v)^2/(u+v)^2$. Claim (1) is immediate from $|u-v|\le u+v$. For
(2), we compute
\[
\mu_p-\mu_q
= \sum_x \big(p(x)-q(x)\big)\frac{u-v}{u+v}
= \sum_x (u^2-v^2)\frac{u-v}{u+v}
= \sum_x (u-v)^2
= \dhel^2(p,q).
\]
For (3), note that
\[
\E_p[\hstat^2]+\E_q[\hstat^2]
= \sum_x (u^2+v^2)\frac{(u-v)^2}{(u+v)^2}
\le \sum_x (u+v)^2\frac{(u-v)^2}{(u+v)^2}
= \sum_x (u-v)^2 = \dhel^2(p,q),
\]
so each expectation (hence each variance, since $\Var(h)\le \E[\hstat^2]$) is $\le \dhel^2(p,q)$.

\subsection{Proof of Lemma~\ref{lem:adv-shift}}\label{app: adv-shift}

Since $\hstat$ is bounded, the adversary can change the statistic by at most $2$ for each point changed. Thus, $\olh^{\Adv}$ can differ from $\olh$ by at most $2\eps$. More precisely, let \(I=\{i:x_i\neq x_i'\}\). Then \(|I|\le \lfloor n\eps\rfloor\), and
\[
\left|\olh-\olh^{\Adv}\right|
=
\left|
\frac1n\sum_{i\in I}\big(\hstat(x_i)-\hstat(x_i')\big)
\right|
\le
\frac1n\sum_{i\in I}2
\le
2\eps.
\]

\printbibliography

\end{document}